\documentclass[10pt]{amsart}
\usepackage{amsmath}
\usepackage{amscd,amsthm,amssymb,amsfonts}
\usepackage{mathrsfs}
\usepackage{dsfont}
\usepackage{stmaryrd}
\usepackage{euscript}
\usepackage{expdlist}
\usepackage{enumerate}

\input xy
\xyoption{all}
\setlength{\topmargin}{-0.4in}
\setlength{\headheight}{8pt} \setlength{\textheight}{9in}
\setlength{\oddsidemargin}{-0.15in}
\setlength{\evensidemargin}{-0.15in} \setlength{\textwidth}{6.6in}
\usepackage[OT2,T1]{fontenc}


\theoremstyle{plain}
\newtheorem{thm}{Theorem}[section]
\newtheorem{thmA}{Theorem}

\newtheorem{hypA}{Hypothesis}

\newtheorem*{thm*}{Theorem}

\newtheorem{lm}[thm]{Lemma}

\newtheorem*{cor*}{Corollary}
\newtheorem{prop}[thm]{Proposition}
\newtheorem*{conj*}{Conjecture}



\theoremstyle{remark}


\theoremstyle{definition}
\newtheorem*{defn*}{Definition}
\newtheorem{Remark}[thm]{Remark}
\newtheorem{defn}[thm]{Definition}

\newcommand{\nc}{\newcommand}

\newcommand{\beq}{\begin{equation}}
\newcommand{\eeq}{\end{equation}}
\newcommand{\bpmx}{\begin{pmatrix}}
\newcommand{\epmx}{\end{pmatrix}}
\newcommand{\bbmx}{\begin{bmatrix}}
\newcommand{\ebmx}{\end{bmatrix}}
\newcommand{\wh}{\widehat}
\newcommand{\wtd}{\widetilde}

\newcommand{\beqcd}[1]{\begin{equation*}\label{#1}\tag{#1}}
\newcommand{\eeqcd}{\end{equation*}}

\numberwithin{equation}{section}
\newenvironment{mylist}{
  \begin{enumerate}{}{%
      \setlength{\itemsep}{5pt} \setlength{\parsep}{0in}
      \setlength{\parskip}{0in} \setlength{\topsep}{0in}
      \setlength{\partopsep}{0in}
      \setlength{\leftmargin}{0.17in}}}{\end{enumerate}}

\def\parref#1{\ref{#1}}
\def\thmref#1{Theorem~\parref{#1}}
\def\propref#1{Prop.~\parref{#1}}
     
\def\secref#1{\S\parref{#1}}

\def\lmref#1{Lemma~\parref{#1}}
\def\subsecref#1{\S\parref{#1}}
\def\defref#1{Def.~\parref{#1}}

\def\hypref#1{Hypothesis~\parref{#1}}

\def\makeop#1{\expandafter\def\csname#1\endcsname
  {\mathop{\rm #1}\nolimits}\ignorespaces}

\makeop{Hom}   \makeop{End}   \makeop{Aut}   
\makeop{Pic} \makeop{Gal}       \makeop{Div} \makeop{Lie}
\makeop{PGL}   \makeop{Corr} \makeop{PSL} \makeop{sgn} \makeop{Spf}
 \makeop{Tr} \makeop{Nr} \makeop{Fr} \makeop{disc}
\makeop{Proj} \makeop{supp} \makeop{ker}   \makeop{Im} \makeop{dom}
\makeop{coker} \makeop{Stab} \makeop{SO} \makeop{SL} \makeop{SL}
\makeop{Cl}    \makeop{cond} \makeop{Br} \makeop{inv} \makeop{rank}
\makeop{id}    \makeop{Fil} \makeop{Frac}  \makeop{GL} \makeop{SU}
\makeop{Trd}   \makeop{Sp} \makeop{Tr}    \makeop{Trd} \makeop{Res}
\makeop{ind} \makeop{depth} \makeop{Tr} \makeop{st} \makeop{Ad}
\makeop{Int} \makeop{tr}    \makeop{Sym} \makeop{can} \makeop{SO}
\makeop{torsion} \makeop{GSp} \makeop{Tor}\makeop{Ker} \makeop{rec}
\makeop{Ind} \makeop{Coker}
 \makeop{vol} \makeop{Ext} \makeop{gr} \makeop{ad}
 \makeop{Gr}\makeop{corank} \makeop{Ann}
\makeop{Hol} 
\makeop{Fitt} \makeop{Mp} \makeop{CAP}

\def\Sel{Sel}




\def\Spec{\mathrm{Spec}\,}

\DeclareMathAlphabet{\mathpzc}{OT1}{pzc}{m}{it}
\DeclareSymbolFont{cyrletters}{OT2}{wncyr}{m}{n}
\DeclareMathSymbol{\SHA}{\mathalpha}{cyrletters}{"58}

\def\makebb#1{\expandafter\def
  \csname bb#1\endcsname{{\mathbb{#1}}}\ignorespaces}
\def\makebf#1{\expandafter\def\csname bf#1\endcsname{{\bf
      #1}}\ignorespaces}
\def\makegr#1{\expandafter\def
  \csname gr#1\endcsname{{\mathfrak{#1}}}\ignorespaces}
\def\makescr#1{\expandafter\def
  \csname scr#1\endcsname{{\EuScript{#1}}}\ignorespaces}
\def\makecal#1{\expandafter\def\csname cal#1\endcsname{{\mathcal
      #1}}\ignorespaces}

\def\doLetters#1{#1A #1B #1C #1D #1E #1F #1G #1H #1I #1J #1K #1L #1M
                 #1N #1O #1P #1Q #1R #1S #1T #1U #1V #1W #1X #1Y #1Z}
\def\doletters#1{#1a #1b #1c #1d #1e #1f #1g #1h #1i #1j #1k #1l #1m
                 #1n #1o #1p #1q #1r #1s #1t #1u #1v #1w #1x #1y #1z}
\doLetters\makebb   \doLetters\makecal  \doLetters\makebf
\doLetters\makescr
\doletters\makebf   \doLetters\makegr   \doletters\makegr

\def\Gm{{\bbG}_{m}}

\normalsize

\makeop{Ram} \makeop{Rep} \makeop{mass}

\makeop{Bl}
\def\abs#1{\left|#1\right|}

\def\Fpbar{\bar{\mathbb F}_p}

\def\Qbarp{\C_p}
\def\Qp{\Q_p}
\def\Qbar{\bar\Q}
\def\Zbar{\bar{\Z}}
\def\Zbarp{\Zbar_p}

\def\Zp{\Z_p}



\def\rmT{{\mathrm T}}
\def\rmN{{\mathrm N}}

\def\cA{{\mathcal A}}  

\def\cD{\mathcal D}
\def\cE{{\mathcal E}}
\def\cF{{\mathcal F}}  

\def\cL{{\mathcal L}}

\def\cI{\mathcal I}

\def\cK{{\mathcal K}}  
\def\cM{\mathcal M}
\def\cR{{\mathcal R}}
\def\cO{\mathcal O}
\def\cS{{\mathcal S}}
\def\cf{{\mathcal f}}
\def\cW{{\mathcal W}}

\def\cP{{\mathcal P}}
\def\cC{\mathcal C}

\def\cT{\mathcal T}

\def\cU{\mathcal U}

\def\EucA{{\EuScript A}}

\def\EucF{{\EuScript F}}  

\def\EucO{{\EuScript O}}

\def\bfc{\mathbf c}

\def\bfM{\mathbf M}

\def\bff{\mathbf f}
\def\bdh{\mathbf h}

\def\bfi{\mathbf i}

\def\bdd{\mathbf d}
\def\bdc{\mathbf c}

\def\bfw{\mathbf w}


\def\bfdelta{\boldsymbol{\delta}}
\def\bftheta{\boldsymbol{\theta}}


\def\sL{\mathscr L}

\def\sS{\mathscr S}



\def\bbI{\mathbb I}

\newcommand{\Z}{\mathbf Z}
\newcommand{\Q}{\mathbf Q}
\newcommand{\R}{\mathbf R}
\newcommand{\C}{\mathbf C}
\newcommand{\A}{\mathbf A}    

\def\bbmu{\boldsymbol{\mu}}


\def\fraka{{\mathfrak a}}
\def\frakb{{\mathfrak b}}
\def\frakc{{\mathfrak c}}

\def\frakp{{\mathfrak p}}
\def\frakP{\mathfak P}

\def\frakr{\mathfrak r}

\def\frakm{\mathfrak m}
\def\frakn{\mathfrak n}
\def\frakl{\mathfrak l}
\def\frakP{\mathfrak P}
\def\frakF{{\mathfrak F}}
\def\frakE{\mathfrak E}
\def\frakL{{\mathfrak L}}

\def\frakA{\mathfrak A}

\def\frakC{{\mathfrak C}}

\def\frakX{\mathfrak X}

\def\frakN{\mathfrak N}
\def\il{\mathfrak i\frakl}




\def\ulA{\ul{A}}

\def\ulz{\ul{z}}

\def\ollam{\bar{\lam}}




\def\Zhat{\hat{\Z}}
\def\wbar{\bar{w}}

\def\wbar{\bar{w}}
\def\zbar{\bar{z}}



\def\etale{{\'{e}tale }}

\def\pont{Pontryagin } 
\def\padic{\text{$p$-adic }}

\def\BS{Bruhat-Schwartz }

\def\Frob{\mathrm{Frob}}



\newcommand{\<}{\langle}   
\renewcommand{\>}{\rangle} 

\def\isoto{\stackrel{\sim}{\to}}

\def\imply{\Rightarrow}
\def\ot{\otimes}

\def\hookto{\hookrightarrow}
\def\longto{\longrightarrow}
\def\ol{\overline}  \nc{\opp}{\mathrm{opp}} \nc{\ul}{\underline}


\newcommand{\pair}[2]{\< #1, #2\>}

\newcommand{\pairing}{\pair{\,}{\,}}


\def\XYmatrix{\xymatrix@M=8pt} 
\def\ncmd{\newcommand}
\ncmd{\xysubset}[1][r]{\ar@<-2.5pt>@{^(-}[#1]\ar@<2.5pt>@{_(-}[#1]}
\ncmd{\XYmatrixc}[1]{\vcenter{\XYmatrix{#1}}}
\ncmd{\xyto}[1][r]{\ar@{->}[#1]}
\ncmd{\xyinj}[1][r]{\ar@{^(->}[#1]}
\ncmd{\xysurj}[1][r]{\ar@{->>}[#1]}
\ncmd{\xyline}[1][r]{\ar@{-}[#1]}
\ncmd{\xydotsto}[1][r]{\ar@{.>}[#1]}
\ncmd{\xydots}[1][r]{\ar@{.}[#1]}
\ncmd{\xyleadsto}[1][r]{\ar@{~>}[#1]}
\ncmd{\xyeq}[1][r]{\ar@{=}[#1]} \ncmd{\xyequal}[1][r]{\ar@{=}[#1]}
\ncmd{\xyequals}[1][r]{\ar@{=}[#1]}
\ncmd{\xymapsto}[1][r]{l\ar@{|->}[#1]}\ncmd{\xyimplies}[1][r]{\ar@{=>}[#1]}
\ncmd{\xyiso}{\ar[r]_-{\sim}}
\def\injxy{\ar@{^(->}}


\newcommand{\MX}[4]{\begin{bmatrix}
{#1}& {#2}\\
{#3}&{#4}\end{bmatrix} }

 \newcommand{\DII}[2]{\begin{bmatrix}{#1}&0
 \\0&{#2}\end{bmatrix}}


\newcommand{\seesaw}[4]{{#1}\ar@{-}[rd]\ar@{-}[d]&{#2}\ar@{-}[d]\\
{#3}\ar@{-}[ru]&{#4}}




\def\ie{i.e. }

\def\cf{\mbox{{\it cf.} }}
\def\loccit{\mbox{{\it loc.cit.} }}

\def\can{{can}}



\def\ENS{\mathfrak E\mathfrak n\mathfrak s}
\def\SCH{\mathfrak S\mathfrak c\mathfrak h}






\def\Ch{\mathrm{char}}

\def\uf{\varpi} 
\def\Abs{{|\!\cdot\!|}} 


\def\Sg{{\varSigma}}  
\def\Sgbar{\Sg^c}
\def\ndivides{\nmid}
\def\ndivide{\nmid}
\def\x{{\times}}

\def\onehalf{{\frac{1}{2}}}
\def\e{\varepsilon} 
\def\al{\alpha}

\def\Lam{\Lambda}

\def\om{\omega}
\def\dirlim{\varinjlim}
\def\prolim{\varprojlim}
\def\iso{\simeq}
\def\con{\equiv}
\def\bksl{\backslash}
\newcommand\stt[1]{\left\{#1\right\}}
\def\ep{\epsilon}

\def\lam{\lambda}
\def\pii{\pi i}

\def\sg{\sigma}
\def\vp{\varphi}
\def\disjoint{\bigsqcup}
\def\bigot{\bigotimes}

\def\dx{d^\x}


\def\AFf{\A_{\cF,f}}
\def\AKf{\A_{\cK,f}}
\def\AF{\A_\cF}
\def\AK{\A_\cK}
\def\setp{{(p)}}

\def\bbox{{(\Box)}}

\newcommand{\powerseries}[1]{\llbracket{#1}\rrbracket}
\newcommand{\formal}[1]{\widehat{#1}}
\renewcommand\pmod[1]{\,(\mbox{mod }{#1})}

\renewcommand\Re{\text{Re}\,}
\newcommand\Dmd[1]{\left<{#1}\right>} 
\def\vphi{\varphi}
\def\Cp{\C_p}

\def\alg{\mathrm{alg}} 
\setcounter{tocdepth}{1} \setcounter{secnumdepth}{3}
\thanks{The author is partly supported by National Science Council grant 101-2115-M-002-010-MY2}
\title{
Special values of anticyclotomic Rankin-Selberg $L$-functions}
\author[M.-L. Hsieh]{Ming-Lun Hsieh}
\date{August 23, 2012}
\address{ Department of Mathematics~\\National Taiwan University ~ \\
No. 1, Sec. 4, Roosevelt Road, Taipei 10617, Taiwan~
}
\email{mlhsieh@math.ntu.edu.tw}
\subjclass[2010]{11F67 11G15}
\begin{document}
\begin{abstract}
In this article, we prove an explicit Waldspurger formula for the toric Hilbert modular forms. As an application, we construct a class of anticyclotomic \padic Rankin-Selberg $L$-functions for Hilbert modular forms, generalizing the construction of Bertolini, Darmon, and Prasanna in the elliptic case. Moreover, building on works of Hida, we give a necessary and sufficient condition when the Iwasawa $\mu$-invariant of this \padic $L$-function vanishes and prove a result on the non-vanishing modulo $p$ of central Rankin-Selberg $L$-values with anticyclotomic twists. \end{abstract}
\maketitle
\tableofcontents

\def\ZZbox{\Z_{(\Box)\,}}
\def\cAbox{\cA^{(\Box)}_{K,\bdc}}
\def\cAboxn{\cA^{(\Box)}_{K,\bdc,n}}
\def\Zhatbox{\widehat{\Z}^{(\Box)}}
\def\qchKF{\tau_{\cK/\cF}}
\def\opcpt{K}
\def\sh{Sh}
\def\opn{K^n}
\def\lp{j}
\def\lpp{\eta^{(p)}}
\def\lsgN{\opcpt_1^n}
\def\Om{\boldsymbol{\omega}}
\def\wt{k}
\def\skewhf{\vartheta}
\def\Fv{F}
\def\Kv{E}
\def\OFp{\cO_{\cF_p}}
\def\ads{\chi}
\def\tbar{\ol{t}}
\def\wbar{\ol{w}}
\def\Section{\vp}
\def\adelef{\A_{\cF,f}}
\def\cmpt{\varsigma}
\def\cmptv{\varsigma_v}
\def\skewhf{\vartheta}
\def\OK{\cO_\cK}
\def\cmJ{\boldsymbol{J}}
\def\OFv{\cO_F}
\def\OKv{\cO_E}
\def\OF{O}
\def\LR{\cR}
\def\addchar{\psi}
\def\cK{E}
\def\cF{F}
\def\Op{O_p}
\def\torsbgp{\cU_p}
\def\loca{\mathrm{l.a.}}
\def\baseR{R}
\def\Prd{\boldsymbol{P}}
\def\wtsp{\frakX^{\mathrm{crit}}_{p}}
\def\IwGamma{\Gamma^-}
\def\AF{\A}
\def\AFf{\AF_f}
\def\CMP{\vartheta}
\def\OKbasis{\bftheta}
\newcommand\localW[1]{W_{#1}}
\newcommand\localK[1]{\bfa_{#1}}
\newcommand\localP[2]{\Prd(#1,#2)}
\def\rmN{\mathrm N}
\def\Sgbar{\ol{\Sg}}
\renewcommand\MX[4]{\begin{pmatrix}#1&#2\\
#3&#4\end{pmatrix}}
\renewcommand\DII[2]{\begin{pmatrix}#1&\\
&#2\end{pmatrix}}
\def\Cinert{\frakN^-}
\def\Csplit{\frakF}
\def\Cram{D_{\cK/\cF}}
\def\wt{k}
\def\plideal{\frakc}
\def\dt{dt}
\def\Bad{\frakr}
\def\CMring{{\cW_p}}
\def\#{\sharp}
\section*{Introduction}
The purpose of this article is to (i) construct a class of anticyclotomic Rankin-Selberg \padic $L$-functions and study the vanishing/non-vanishing of the associated Iwasawa $\mu$-invariant, (ii) prove a result on the non-vanishing modulo $p$ of central Rakin-Selberg $L$-values with anticyclotomic twists.
Let $\cF$ be a totally real algebraic number field of degree $d$ over $\Q$ and $\cK$ be a totally imaginary quadratic extension of $\cF$. Denote by $z\mapsto \ol{z}$ the nontrivial element in $\Gal(\cK/\cF)$. Let $\pi$ be an irreducible cuspidal automorphic representation of $\GL_2(\A_{\cF})$ with unitary central character $\om$. Let $\pi_\cK$ be a lifting of $\pi$ to $\cK$ constructed in \cite[Thm.\,20.6]{Jacquet:GLtwoPartII}. Then $\pi_\cK$ is an irreducible automorphic representation of $\GL_2(\A_\cK)$, which is cuspidal if $\pi$ is not obtained from the automorphic induction from $\cK/\cF$. Let $\lam:\AK^\x/\cK^\x\to\C^\x$ be a unitary Hecke character of $\cK^\x$ such that
\beq\label{E:self-dual}\lam|_{\A_{\cF}^\x}=\om^{-1}.\eeq
The automorphic representation $\pi_\cK\ot\lam$ is therefore conjugate self-dual, \ie $\pi_\cK^\vee\ot\lam^{-1}=\ol{\pi_\cK\ot\lam}$. For each place $v$ of $\cF$, we can associate a local $L$-function $L(s,\pi_{\cK_v}\ot\lam_v)$ and a local epsilon factor $\ep(s,\pi_{\cK_v}\ot\lam_v,\psi_v)$ (which depends on a choice of non-trivial character $\addchar_v:\cF_v\to\C^\x$) to the local constituent $\pi_{\cK_v}\ot\lam_v$ of $\pi_{\cK}\ot\lam$ (\cite[Thm.\,2.18 (iv)]{Jacquet_Langlands:GLtwo}). Let $L(s,\pi_\cK\ot\lam)$ be the global $L$-function obtained by the meromorphic continuation of the Euler product of local $L$-functions at all \emph{finite} places. In this paper, we study the \padic variation of the central value $L(\onehalf,\pi_{\cK}\ot\lam)$ with anticyclotomic twists under certain hypotheses.

To introduce our hypotheses precisely, we need some notation.  Fix a CM-type $\Sg$ of $\cK$. Namely, $\Sg$ is a subset of $\Hom(\cK,\C)$ such that
\[\Sg\disjoint \Sgbar=\Hom(\cK,\C)\,;\,\Sg\cap\Sgbar=\emptyset.\]
Then $\Sg$ induces an identification $\cK\ot_\Q\R\iso\C^\Sg$. We shall identify $\Sg$ with the set of archimedean places of $\cF$ via the restriction.  
For each $\wt=\sum_{\sg\in\Sg}k_\sg\sg\in\Z[\Sg]$, we write $\Gamma_\Sg(\wt)=\prod_{\sg\in\Sg}\Gamma(k_\sg)$, and if $x=(x_\sg)\in (A^\x)^\Sg$ for an algebra $A$, we let $x^{\wt}:=\prod_{\sg\in\Sg}x_\sg^{k_\sg}$. A Hecke character $\chi$ of $\cK^\x$ is of infinity type $(k_1,k_2)$ for $k_1,k_2\in 2^{-1}\Z[\Sg]$ such that $k_1-k_2\in\Z[\Sg]$ if \[\chi_\infty(z)=z^{k_1-k_2}(z\zbar)^{k_2}\text{ for all }z\in(\cK\ot_\Q\R)^\x\iso(\C^\x)^\Sg.\] For each ideal $\fraka$ of $\cF$ (resp. ideal $\frakA$ of $\cK$),  we have a unique factorization $\fraka=\fraka^+\fraka^-$ (resp. $\frakA=\frakA^+\frakA^-$), where $\fraka^+$ (resp. $\frakA^+$) is only divisible by primes split in $\cK$ and $\fraka^-$ (resp. $\frakA^-$) is divisible by primes inert or ramified in $\cK$. Let $\frakn=\frakn^+\frakn^-$ be the conductor of $\pi$. We define the normalized local root number attached to $\pi_{\cK_v}$ and $\lam_v$ for each place $v$ by
\[\ep^*(\pi_{\cK_v},\lam_v):=\ep(\onehalf,\pi_{\cK_v}\ot\lam_v,\addchar_v)\cdot \om_v(-1).\]
We remark that the value $\ep(\onehalf,\pi_{\cK_v}\ot\lam_v,\addchar_v)$ does not depend on the choice of $\addchar_v$.

We assume that $\pi$ has infinity type $\wt=\sum_\sg k_\sg\sg\in\Z_{>0}[\Sg]$ and $\lam$ has infinity type $(\frac{\wt}{2},-\frac{\wt}{2})$. In other words, $\pi_\sg$ is a discrete series or limit of discrete series of weight $k_\sg$ with unitary central character at every archimedean place $\sg$. In particular, this implies $\stt{k_\sg}_{\sg\in\Sg}$ have the same parity and the local root number $\ep^*(\pi_{\cK_v},\lam_v)=+1$ at every archimedean place. We further assume the following local root number hypothesis for $(\pi,\lam)$:
\begin{hypA}\label{H:HypA}The local root number $\ep^*(\pi_{\cK_v},\lam_v)=+1$ for each $v\mid\frakn^-$.
\end{hypA}
Let $p$ be an odd rational prime. Fix an embedding $\iota_\infty:\Qbar\hookto\C$ and isomorphism $\iota:\C\isoto\Cp$ once and for all. Each $\xi\in\Z[\Sg\disjoint\Sgbar]$ will be regarded as an algebraic character $\xi:(E\ot\Qp)^\x\to\Cp^\x$ via $\iota$. Throughout this article, we make the following assumption \begin{align}
\label{ord}\tag{ord}&\text{$\Sg$ is $p$-ordinary}.
\end{align} Let $\Sg_p$ be the set of \padic places of $\cK$ induced by $\Sg$. Then the ordinary assumption \eqref{ord} means that $\Sg_p$ and its complex conjugation $\Sgbar_p$ gives a full partition of the set of \padic places of $\cK$. Let $\cK^-_{p^\infty}$ be the maximal anticyclotomic $\Zp^{[\cF:\Q]}$-extension of $\cK$. Let $\Gamma^-=\Gal(\cK^-_{p^\infty}/\cK)$ and let $\Lam=\Zbarp\powerseries{\Gamma^-}$ be the Iwasawa algebra of $[\cF:\Q]$-variables. If $L$ is a number field, we write $G_L=\Gal(\Qbar/L)$ for the absolute Galois group. Denote by $\rec_\cK:\AK^\x\to G_\cK^{ab}$ the geometrically normalized reciprocity law. To each locally algebraic \padic character $\wh\phi:\Gamma^-\to\Cp^\x$ of weight $(m,-m)$, we can associate a Hecke character $\phi:\AK^\x/\cK^\x\to\C^\x$ of infinity type $(m,-m)$ defined dy
\[\phi(a):=\iota^{-1}(\wh\phi(\rec_\cK(a))a_p^{-m}\ol{a}_p^{m})a_\infty^{m}\ol{a}_\infty^{-m},\]
where $a_p\in (\cK\ot_\Q\Qp)^\x$ and $a_\infty\in (\cK\ot_\Q\R)^\x$ are the $p$-component and $\infty$-component of $a$. We say $\wh\phi$ is the \emph{\padic avatar} of $\phi$. Denote by $\wtsp$ the set of \emph{critical specializations}, consisting of locally algebraic \padic characters on $\Gamma^-$ of weight $(m,-m)$ with $m\in\Z_{\geq 0}[\Sg]$ (See \subsecref{SS:padicL}).

Our first result is the construction of the anticyclotomic primitive \padic $L$-function attached to $\pi_{\cK}\ot\lam$. We need more notation. Let $\cD_\cF$ be the different of $\cF$ and $\cD_{\cK/\cF}$ be the relative different of $\cK/\cF$. Let $\frakN$ be the prime-to-$p$ conductor of $\pi_{\cK}\ot\lam$. We have a unique decomposition $\frakN=\frakN^+\frakN^-$ and fix a decomposition $\frakN^+=\Csplit\ol{\Csplit}$ with $(\Csplit,\ol{\Csplit})=1$ such that $\frakN^-$ is only divisible by prime inert or ramified in $\cK/\cF$ and $\Csplit$ is only divisible by primes split in $\cK/\cF$ (Such a decomposition is possible because $\pi_\cK\ot\lam$ is conjugate self-dual). We choose $\delta\in\cK$ such that
\begin{itemize}
\item $\ol{\delta}=-\delta$,
\item  $\Im\sg(\delta)>0$ for all $\sg\in\Sg$,
\item The polarization ideal $\frakc(\cO_\cK):=\cD_\cF^{-1}(2\delta \cD_{\cK/\cF})$ is prime to $p\frakN\ol{\frakN}$.
\end{itemize}
Let $(\Omega_\infty,\Omega_p)$ be the complex and \padic periods attached to $(\cK,\Sg)$ defined in \cite[(4.4a), (4.4b)]{HidaTilouine:KatzPadicL_ASENS}. For each Hecke character $\chi$ of $\cK^\x$, we define the \padic multiplier $E_{\Sg_p}(\pi,\chi)$ by
\beq\label{E:coates_modified}E_{\Sg_p}(\pi,\chi):=
\prod_{\frakP\in\Sg_p,\,\frakp=\frakP\ol{\frakP}}\ep(\onehalf,\pi_\frakp\ot\chi_{\ol{\frakP}},\addchar_\frakp)L(\onehalf,\pi_\frakp\ot\chi_{\ol{\frakP}})^{-2}\om_\frakp^{-1}\chi_{\ol{\frakP}}^{-2}(-2\delta).\eeq
The shape of $E_{\Sg_p}(\pi,\chi)$ has been suggested by Coates \cite{Coates:motivic-padic-L-function}.
\begin{thmA}\label{T:A.W}In addition to \eqref{ord} and \hypref{H:HypA}, we further assume that
\beqcd{sf}\frakn^-\text{ is square-free}.\eeqcd
Then there exists an element $\cP_\Sg(\pi,\lam)\in \Lam$ such that for every $\wh\phi\in\wtsp$ of weight $(m,-m)$, we have
\[\frac{\wh\phi(\cP_\Sg(\pi,\lam)^2)}{\Omega_p^{2(\wt+2m)}}=[\cO_\cK^\x:\cO_\cF^\x]^2\cdot\frac{\Gamma_\Sg(\wt+m)\Gamma_\Sg(m+1)}{(\Im\delta)^{k+2m}(4\pi)^{k+2m+1\cdot\Sg}}\cdot E_{\Sg_p}(\pi,\lam\phi)\cdot \frac{L(\onehalf,\pi_\cK\ot\lam\phi)}{\Omega_\cK^{2(\wt+2m)}}\cdot \phi(\Csplit)C(\pi,\lam),\]
where $\Omega_\cK=(2\pii)^{-1}\Omega_\infty$ and $C(\pi,\lam)\in\Zbar_\setp^\x$ is an explicit \padic unit independent of $\phi$, consisting of a product of local epsilon factors outside $p$.
\end{thmA}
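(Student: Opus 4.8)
The plan is to interpolate, over the anticyclotomic tower, the explicit Waldspurger-type formula established earlier in this article for toric Hilbert modular forms. The starting point is the classical formula: for each $\wh\phi\in\wtsp$ of weight $(m,-m)$, the period integral of the automorphic form in $\pi$ against the theta-type (toric) Eisenstein/automorphic form built from $\lam\phi$ over the torus $\Adele{T}$, with $T=\Res_{\cK/\cF}\Gm/\Gm$, computes the square root of the central value $L(\onehalf,\pi_\cK\ot\lam\phi)$ up to the archimedean $\Gamma$-factors $\Gamma_\Sg(\wt+m)\Gamma_\Sg(m+1)$, the explicit powers of $\Im\delta$ and $4\pi$, the local epsilon-factor unit $C(\pi,\lam)$ coming from places dividing $\frakN$ outside $p$, the twist $\phi(\Csplit)$ from the chosen decomposition $\frakN^+=\Csplit\ol{\Csplit}$, and the index $[\cO_\cK^\x:\cO_\cF^\x]^2$ from the volume of the torus. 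Hypothesis~\ref{H:HypA} together with the archimedean sign computation guarantees the relevant test vector exists and the period is nonzero in the generic fiber; the squarefreeness assumption \eqref{sf} on $\frakn^-$ is what makes the local toric integrals at inert/ramified primes computable in closed form and ensures the local Gross points are well-defined. Dividing by the complex period $\Omega_\cK^{2(\wt+2m)}$ makes the right-hand side algebraic, and by $\iota$ we regard it in $\Cpbar$.

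Next I would package these period integrals $p$-adically. The key input is a $\Lam$-adic family of toric/CM forms: one builds a measure on $\Gamma^-$ valued in a space of $p$-adic modular forms (or rather in $\Lam$ after pairing against the fixed ordinary form in $\pi$) whose specialization at $\wh\phi$ recovers, after the ordinary-projection and up-to-$p$-stabilization, the classical toric form attached to $\lam\phi$. Concretely one uses the CM points on the relevant quaternionic/Shimura variety (or the values of the Katz-type Eisenstein measure, following Hida and Hida--Tilouine), and the periods $(\Omega_\infty,\Omega_p)$ of \cite[(4.4a),(4.4b)]{HidaTilouine:KatzPadicL_ASENS} enter precisely to convert the complex normalization into the $p$-adic one: the ratio $\Omega_p^{2(\wt+2m)}/\Omega_\cK^{2(\wt+2m)}$ is exactly the period discrepancy between the algebraic value of the period integral and its image under $\iota$. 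Taking $\cP_\Sg(\pi,\lam)$ to be (a square root of) the resulting element of $\Frac(\Lam)$ — and then checking it actually lies in $\Lam$ by a density/boundedness argument, since the interpolated values are $p$-adically integral up to the explicit unit $C(\pi,\lam)$ and $[\cO_\cK^\x:\cO_\cF^\x]$ — gives the desired element. The appearance of $\cP_\Sg(\pi,\lam)^2$ and $\wh\phi(\cP_\Sg(\pi,\lam)^2)$ rather than $\cP_\Sg$ itself reflects that Waldspurger's formula computes $|\text{period}|^2\sim L(\onehalf,-)$, so one squares the $\Lam$-adic period integral; the square root $\cP_\Sg$ is extracted using that a $p$-adically continuous square root exists on the relevant (connected) piece.

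The local computation at $p$ is where the multiplier $E_{\Sg_p}(\pi,\lam\phi)$ of \eqref{E:coates_modified} is produced: at each split place $\frakp=\frakP\ol\frakP$ with $\frakP\in\Sg_p$, the ordinary $p$-stabilization replaces the local $L$-factor $L(\onehalf,\pi_\frakp\ot(\lam\phi)_{\ol\frakP})$ by its reciprocal (twice, from squaring), inserts the local epsilon factor $\ep(\onehalf,\pi_\frakp\ot(\lam\phi)_{\ol\frakP},\addchar_\frakp)$, and contributes the extra root-of-unity/unit factors $\om_\frakp^{-1}(\lam\phi)_{\ol\frakP}^{-2}(-2\delta)$ coming from the choice of additive character and of $\delta$ (whose polarization ideal was arranged prime to $p\frakN\ol\frakN$). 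This is the standard ``Coates interpolation factor'' shape and one evaluates it by an explicit local zeta-integral computation at $\frakp$, using that $\pi_\frakp$ is an unramified (or ordinary) principal series.

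\medskip

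The main obstacle I expect is twofold: (i) constructing the $\Lam$-adic toric form with the correct, \emph{optimally normalized} period — i.e.\ pinning down $C(\pi,\lam)$ as an honest $\Zbar_\setp^\x$-unit independent of $\phi$ rather than merely an algebraic number depending on $\phi$, which requires a careful analysis of the local toric integrals at the bad places $v\mid\frakN$ outside $p$ and their interplay with the test-vector choice (here the squarefreeness \eqref{sf} and Hypothesis~\ref{H:HypA} are essential, and one likely invokes the explicit local Waldspurger computations / local Gross points from the body of the paper); and (ii) verifying integrality, i.e.\ that $\cP_\Sg(\pi,\lam)\in\Lam$ and not merely in $\Frac(\Lam)\ot\Qbarp$ — this follows once one knows the $p$-adic modular form underlying the construction is itself $\Lam$-integral (a theorem of Hida for ordinary families) and that the pairing against the ordinary form is integral after the explicit unit normalization. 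Granting these, the interpolation formula is obtained by specializing at each $\wh\phi\in\wtsp$ and matching term by term with the classical Waldspurger formula recalled above.
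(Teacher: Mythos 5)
Your overall strategy --- compute toric periods via an explicit Waldspurger-type formula and interpolate them $p$-adically over $\Gamma^-$ --- is the same as the paper's. The genuine gap is in how you propose to obtain $\cP_\Sg(\pi,\lam)\in\Lam$. You suggest producing an element of $\Frac(\Lam)$ interpolating the central $L$-values and then extracting a ``$p$-adically continuous square root on the relevant connected piece.'' This step is both unnecessary and unjustified: a square root of an element of $\Lam=\Zbarp\powerseries{\Gamma^-}$ need not lie in $\Lam$, nothing pins down a coherent $p$-adic choice of sign across the whole weight space, and your density/boundedness argument at most controls the squared object, not the square root. The paper avoids all of this by constructing $\cP_\Sg(\pi,\lam)$ \emph{directly} as an element of $\Lam$: Proposition~\ref{P:3.W} builds a $V(\frakc,\opcpt,\Zbarp)$-valued measure $\EucF_{\lam,\frakc}$ on $\Gamma^-$ from the explicit Fourier expansion of the toric cusp forms $\bff^*_{\lam\phi}$ (so $\Lam$-integrality is automatic from the $q$-expansion principle and the $p$-integrality of the normalized local Fourier coefficients in Proposition~\ref{P:4.W}), and $\cP_\Sg(\pi,\lam)$ is then defined in \eqref{E:period_measure.E} as a finite weighted sum of evaluations of $\EucF_{\lam,\frakc(a)}$ at CM points $x(a)$. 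The interpolation formula (Theorem~\ref{T:padicL.W}) then follows by specializing and matching with the explicit Waldspurger formula (Theorem~\ref{T:main1.W} via Proposition~\ref{P:1.W}); it is the \emph{square} of the specialization that gives the $L$-value, and no inverse operation is ever performed.

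Two secondary inaccuracies: the $\Lam$-adic family here consists of $p$-adic Hilbert modular \emph{cusp} forms built from the explicit toric Whittaker functions of \S\ref{SS:Whittaker.W} (not a Katz-type Eisenstein measure), and --- because Hypothesis~\ref{H:HypA} together with the archimedean sign computation keeps all local root numbers equal to $+1$ --- one works on the split Hilbert modular Shimura variety rather than a definite quaternionic form; the $p$-stabilization is effected by the $p$-modified Whittaker functions $\localW{\chi,v}^\flat$ at $v\mid p$ rather than an ordinary projector.
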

In the above expression, $\Im\delta=(\Im\sg(\delta))_{\sg\in\Sg}$ and $4\pi$ is considered to be the diagonal element $(4\pi)_{\sg\in\Sg}$ in $(\C^\x)^\Sg$.

In virtue of the above specialization formula, $\cP_\Sg(\pi,\lam)$ is the \padic $L$-function that interpolates the square root of central $L$-values. We shall call $\cL_\Sg(\pi,\lam):=\cP_\Sg(\pi,\lam)^2$ the anticyclotomic \padic $L$-function for $\pi_\cK\ot\lam$ with respect to the $p$-ordinary CM type $\Sg$. When $\cF=\Q$, $\pi$ is unramified at $p$ and $\frakn^-$ is only divisible by primes ramified in $\cK$, $\cL_\Sg(\pi,\lam)$ is constructed in \cite{BDP}, and a non-primitive version of this \padic $L$-function is also considered in \cite{Miljan:1} under different hypotheses. We remark that when $\pi$ is obtained from the automorphic induction of a Hecke character of $\cK^\x$, one can show that, up to an explicit unit in $\Lam$, $\cL_{\Sg}(\pi,\lam)$ is a product of two \padic Hecke $L$-functions for CM fields constructed by Katz \cite{Katz:p_adic_L-function_CM_fields} and Hida-Tilouine \cite{HidaTilouine:KatzPadicL_ASENS} by comparing the interpolation formulas on both sides.

Our second theorem is to prove the vanishing of the Iwasawa $\mu$-invariant $\mu^-_{\pi,\lam,\Sg}$ of $\cP_\Sg(\pi,\lam)$ under certain hypothesis. This in particular implies that the \padic $L$-function $\cL_\Sg(\pi,\lam)$ is non-trivial. Recall that the $\mu$-invariant $\mu^-_{\pi,\lam,\Sg}$ is defined by
 \[\mu^-_{\pi,\lam,\Sg}=\inf\stt{r\in\Q_{\geq 0}\mid p^{-r}\cP_\Sg(\pi,\lam)\not\con 0\pmod{\frakm_p\Lam}},\]where $\frakm_p$ is the maximal ideal of $\Zbarp$. Let $\frakc_\lam$ be the conductor of $\lam$. For each $v\mid\frakc_\lam^-$, let $\Delta_{\lam,v}$ be the finite group $\lam(\cO_{\cK_v}^\x)$.
\begin{thmA}\label{T:B.W}With the assumptions in \thmref{T:A.W}, suppose further that
\begin{mylist}\item $p$ is unramified in $\cF$,
\item the residual Galois representation \[\ol{\rho}_p(\pi_\cK):=\rho_p(\pi)|_{G_\cK}\pmod{\frakm_p}\text{ is absolutely irreducible},\]
\item $p\ndivides \prod_{v|\frakc_\lam^-}\#(\Delta_{\lam,v})$.
\end{mylist}Then $\mu^-_{\pi,\lam,\Sg}=0$.
\end{thmA}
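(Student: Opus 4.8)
The plan is to reduce the vanishing of $\mu^-_{\pi,\lam,\Sg}$ to a non-vanishing statement for a single critical specialization, then prove that non-vanishing by combining the explicit Waldspurger formula of \thmref{T:A.W} with Hida's theory of the $\mu$-invariant of Hilbert modular Eisenstein/theta measures. Concretely, by the Weierstrass preparation theorem it suffices to exhibit one $\wh\phi\in\wtsp$ such that $\wh\phi(\cP_\Sg(\pi,\lam))\not\equiv 0 \pmod{\frakm_p}$, or more robustly to show that the reduction $\ol{\cP_\Sg(\pi,\lam)}$ in $(\Zbarp/\frakm_p)\powerseries{\Gamma^-}$ is non-zero. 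The first step is therefore to go back to the construction of $\cP_\Sg(\pi,\lam)$ in the body of the paper: it is built (via the explicit Waldspurger/toric period formula) as a bounded $\Lam$-adic measure obtained from restricting a Hilbert modular form in the $\pi$-isotypic component to a CM point and pairing against a $\Lam$-adic family of anticyclotomic characters. The $\mu$-invariant of such a measure is governed by the image of the associated $p$-adic modular form modulo $\frakm_p$ along the relevant CM Hecke orbit.

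Next I would invoke Hida's non-vanishing modulo $p$ machinery. The key input is that the CM points attached to the anticyclotomic $\Zp^{[\cF:\Q]}$-tower are equidistributed, modulo $p$, in the relevant Hilbert modular Igusa tower — this is exactly the statement proved by Hida via the $\ell$-adic (here $p$-adic) density of the Galois/Hecke orbit of a CM point, using the $p$-ordinariness hypothesis \eqref{ord} together with ``$p$ unramified in $\cF$'' (hypothesis (i)) to guarantee that the Igusa tower is irreducible over the ordinary locus and that the CM orbit is Zariski dense in it. Granting this, the reduction mod $\frakm_p$ of the restriction-to-CM-points functional is surjective onto the reduction of the space of $p$-adic modular forms, unless the modular form we are restricting is itself identically zero mod $\frakm_p$. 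Hence $\mu^-_{\pi,\lam,\Sg}=0$ is equivalent to the non-vanishing mod $\frakm_p$ of (a normalization of) the automorphic form in the $\pi$-isotypic part — and hypothesis (ii), the absolute irreducibility of $\ol\rho_p(\pi)|_{G_\cK}$, is precisely what rules out congruences that could force this form to vanish mod $p$ on the CM locus: it ensures the relevant Hecke eigensystem does not degenerate to an Eisenstein/CM-type eigensystem after reduction, and it controls the local terms at the ramified (inert/ramified-in-$\cK$) primes.

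The role of hypothesis (iii), $p\nmid\prod_{v\mid\frakc_\lam^-}\#\Delta_{\lam,v}$, is to handle the local Waldspurger/toric integrals at the primes dividing $\frakc_\lam^-$: these contribute local period factors (essentially local Gauss sums / local toric integrals attached to the ramified part of $\lam$) whose $p$-adic valuations are controlled by the orders of the finite groups $\Delta_{\lam,v}=\lam(\cO_{\cK_v}^\x)$, and one needs these factors to be $p$-adic units so that they do not artificially inflate the $\mu$-invariant. I would extract this from the explicit local computations already performed for \thmref{T:A.W} (the constant $C(\pi,\lam)\in\Zbar_\setp^\x$ and the local Whittaker/period calculations), checking that under (iii) every local term outside $p$ in the toric period is a $p$-adic unit. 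Finally, assembling: the explicit formula of \thmref{T:A.W} expresses $\wh\phi(\cP_\Sg(\pi,\lam)^2)$ as an explicit $p$-unit times $\phi(\Csplit)C(\pi,\lam)$ times a ratio of $L$-values over periods, but I do not want to argue via $L$-values (those are exactly the unknowns); instead I use the formula's \emph{construction side} — the statement that $\cP_\Sg(\pi,\lam)$ is a $\Lam$-adic toric period — and apply the density argument. The main obstacle I anticipate is the precise verification that Hida's mod-$p$ density/irreducibility result applies in this Hilbert-modular, $\Sg$-ordinary, possibly-ramified setting — in particular controlling the behaviour at primes dividing $\frakn^-$ (where $\pi$ is ramified and special) and at the ramified primes of $\cK/\cF$, and confirming that the CM orbit for the \emph{anticyclotomic} tower (not the full tower) is still dense in the appropriate sub-Shimura-variety's ordinary Igusa tower; this is where hypotheses (i) and (ii) must be used most carefully, and I would lean on the cited works of Hida for the geometric irreducibility and on the absolute irreducibility of $\ol\rho_p(\pi)|_{G_\cK}$ to exclude the exceptional eigensystems.
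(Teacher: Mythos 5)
Your high-level outline is close to what the paper does: the proof does go through Hida's theory of linear independence of $p$-adic modular forms acted on by ``transcendental'' automorphisms of the local moduli at a CM point (using $p$ unramified in $\cF$), to reduce the computation of $\mu^-_{\pi,\lam,\Sg}$ to the $p$-adic valuations of Fourier coefficients of the toric form $\bff^*_{\lam,u}$ at cusps $(\cO_\cF,\frakc(a))$ in the CM Hecke orbit; and hypothesis~(iii) is indeed used precisely to make the local period/Fourier contributions away from $p$ into $p$-adic units (equivalently, to force the local invariants $\mu_p(\varPsi_{\pi,\lam,v})$ to vanish at all $v\mid\frakc_\lam^-$). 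But there is a genuine gap at the crucial step: you assert that hypothesis~(ii) ``rules out congruences that could force the form to vanish mod $p$ on the CM locus,'' but you never supply the mechanism, and the plausible-sounding mechanisms you gesture at (Eisenstein degeneration, controlling local terms at ramified primes) are not the correct ones --- the latter is in fact hypothesis~(iii)'s job. A modular form cannot vanish identically mod $p$ anyway (it has $a_1\ne 0$); the subtlety is that the \emph{toric} form is supported only on a specific norm coset of ideals, and it can vanish modulo $\frakm_p$ on precisely the Fourier coefficients relevant to CM cusps.

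The two missing ideas are the actual core of the paper's proof of Theorem~B. First, the normalized Fourier coefficient $\localK{\lam}^\setp(a)$ factors as a product of local coefficients, and at primes $v\nmid p\frakr$ the local coefficient is the new-vector value $W^0_v(\DII{\uf_v}{1})$, which by local-global compatibility \eqref{E:Galois.W} equals (up to units) $\Tr\rho_p(\pi)(\Frob_v)$. Second, the CM-cusp Fourier coefficients that must be examined all live on a single coset $\cF^\x\bfc^{-1}\rmN((\AKf^\setp)^\x)$ of $(\AFf^\setp)^\x$, so if they all vanish mod $\frakm_p$ one deduces that $\Tr\ol\rho_p(\pi)(\Frob_v)\equiv 0$ for all $v$ with $\Frob_v|_\cK$ equal to a \emph{fixed} element of $\Gal(\cK/\cF)$, which must then be the nontrivial element $c$. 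The paper then applies a new group-theoretic lemma (\lmref{L:3.W}): if a faithful irreducible $\ol\rho\colon G\hookto\GL_2(\Fpbar)$ has trace identically zero on the nontrivial coset $G\setminus H$ of an index-two subgroup $H$ (with an order-two element $c\in G\setminus H$), then $\ol\rho|_H$ is reducible. This contradicts hypothesis~(ii) and completes the argument. Without identifying the Frobenius-trace formula and proving something like \lmref{L:3.W}, the claim that ``absolute irreducibility of $\ol\rho_p(\pi)|_{G_\cK}$ rules out the mod-$p$ vanishing'' is an unjustified assertion, not a proof. Your proposal as written would not close.
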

The global assumption on the irreducibility of residual Galois representation assures that the new form associated to $\pi$ is not congruent to theta series from $\cK$ or Eisenstein series, and the local assumption (3) is equivalent to saying that the local residual character $\lam_v\pmod{\frakm_p}$ is ramified for all $v|\frakc_\lam^-$. In this situation, our \thmref{T:B.W} roughly suggests that the $\mu$-invariant is essentially contributed by the congruences among primitive but residually non-primitive \padic $L$-functions. In particular, suppose that $(\frakn,\cD_{\cK/\cF})=1$ and $\rho_p(\pi)$ is residually irreducible. Then we always have $\mu^-_{\pi,\lam}=0$ whenever $\lam$ has split conductor over $\cF$ (\ie $\frakc_\lam^-=(1)$).
In addition, \thmref{T:B.W} shares the same flavor with Vatsal's $\mu$-invariant formula. In \cite{Vatsal:nonvanishing}, Vatsal obatins the precise $\mu$-invariant formula of a different class of anticyclotomic \padic Rankin-Selberg $L$-functions associated to elliptic new forms of weight two twisted by \emph{finite order} anticyclotomic characters of $p$-power conductors (so the local root number at the archimedean place is $-1$).  In virtue of his formula, the $\mu$-invariant can be positive when we have either Eisenstein congruence or the congruence between forms with opposite signs in the functional equations. Note that the latter congruence does not happen under the assumption (3).

The \padic $L$-function $\cL_\Sg(\pi,\lam)$ is expected to encode the arithmetic information on certain Selmer groups through the main conjecture \`{a} la R. Greenberg \cite{Greenberg:Iwasawa_theory_and_motives}. To introduce the $\Lam$-adic Selmer groups connected with $\cL_\Sg(\pi,\lam)$, we recall that thanks to the works of Deligne, Carayol, Blasius-Rogawski and Taylor et.al (\cite{Blasius_Rogawski:Hilber_modular_forms}, \cite{Taylor:Galois_rep_HMF}, \cite{Jarvis:wtone}), there exists a finite extension $L_\pi$ of $\Qp$ and a continuous \padic Galois representation $\rho_p(\pi):G_\cF\to\GL_2(\cO_{L_\pi})$ such that $\rho_p(\pi)$ is unramified outside $p\frakn$, and for each finite place $v\ndivide p\frakn$, \[\iota^{-1}(L(s,\rho_p(\pi)|_{W_{\cF_v}}))=L(s+\frac{1-k_{mx}}{2},\pi_v^\vee)\quad(k_{mx}=\max_{\sg\in\Sg}k_\sg),\]
where $W_{\cF_v}$ is the Weil group of $\cF_v$.  Let $\e_\Lam:G_\cK\to\Gamma\hookto \Lam^\x,\,g\mapsto g|_{\cK_{p^\infty}^-}$ be the universal $\Lam$-adic Galois character. We consider the rank two $\Lam$-adic Galois representation:
\[\rho_\Lam:=\rho_p(\pi)|_{G_\cK}\ot\lam\e_\Lam:G_\cK\to\GL_2(\Lam).\]
and define the local condition for each $w\mid p$ by
\[F^+_w\rho_\Lam=\begin{cases}
\rho_\Lam&\text{ if } w\in\Sg_p,\\
\stt{0}&\text{ if } w\in\Sgbar_p.
\end{cases}\]
The triple $(\rho_\Lam,\stt{F^+_w\rho_\Lam}_{w|p},\wtsp)$ satisfies the Panchishkin condition in \cite[\S 4, p.217]{Greenberg:Iwasawa_theory_and_motives}.
Let $\Lam^*$ be the \pont dual of $\Lam$. The associated Greenberg's Selmer group is defined by
\[\Sel_\Sg(\pi,\lam):=\ker\stt{H^1(E,\rho_\Lam\ot_\Lam\Lam^*)\to\prod_{w\not\in\Sg_p}H^1(I_w,\rho_\Lam\ot_\Lam\Lam^*)},\]
where $w$ runs over places of $\cK$ outside $\Sg_p$ and $I_w$ is the inertia group of $w$ in $G_\cK$. It is known that the \pont dual $\Sel_\Sg(\pi,\lam)^*$ is a finitely generated $\Lam$-module. Denote by $\Ch_\Lam\Sel_\Sg(\pi,\lam)^*$ the characteristic ideal of $\Sel_\Sg(\pi,\lam)^*$. We formulate the anticyclotomic main conjecture for $\pi_E\ot\lam$ (under the hypotheses in \thmref{T:A.W}).
\begin{conj*}We have the following equality between ideals in $\Lam$
\[\Ch_\Lam\Sel_\Sg(\pi,\lam)^*=(\cL_\Sg(\pi,\lam)).\]
\end{conj*}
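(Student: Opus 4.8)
The following outlines the strategy by which one expects to establish the equality, granting \thmref{T:A.W}, \thmref{T:B.W}, and the explicit Waldspurger formula proved in this paper. The proof divides into the two divisibilities; as a preliminary one checks that $\Sel_\Sg(\pi,\lam)^*$ is a torsion $\Lam$-module and that both sides have height one, which follows from a control argument for $\rho_\Lam$ (which satisfies the Panchishkin condition) together with the nonvanishing of enough central values $L(\onehalf,\pi_\cK\ot\lam\phi)$, $\wh\phi\in\wtsp$ --- exactly what \thmref{T:A.W} and \thmref{T:B.W} supply, the latter giving $\mu^-_{\pi,\lam,\Sg}=0$ and hence $\cL_\Sg(\pi,\lam)=\cP_\Sg(\pi,\lam)^2\neq 0$ in $\Lam$.

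For the divisibility $(\cL_\Sg(\pi,\lam))\subseteq\Ch_\Lam\Sel_\Sg(\pi,\lam)^*$, the plan is to construct an anticyclotomic Euler system of CM points in the spirit of Bertolini--Darmon. Under \hypref{H:HypA} and \eqref{sf} the global root number of $\pi_\cK\ot\lam\phi$ is generically $+1$, so the base quaternionic Shimura curve over $\cF$ carries only torsion CM classes and one must use level-raising: for each admissible prime $\frakl$ of $\cF$ (inert in $\cK$, trivial on $\Gamma^-$ modulo $p^n$, and satisfying a mod-$p^n$ level-raising congruence for $\pi$) pass to the quaternion algebra over $\cF$ ramified at $\frakl$, at the archimedean places, and at $\frakn^-$ with one place exchanged, realize a form $\pi^{(\frakl)}\equiv\pi\pmod{p^n}$ on the associated Shimura curve $X_\frakl/\cF$, and use its CM divisors by orders in $\cO_\cK$ to build classes $\kappa_n(\frakl)\in H^1(\cK,\rho_\Lam\ot_\Lam\Lam^*)$ obeying Euler-system norm relations. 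The explicit Waldspurger formula of this paper then serves as the first explicit reciprocity law: it computes the relevant toric period, equivalently the $\frakl$-component of $\kappa_n(\frakl)$, in terms of $L(\onehalf,\pi_\cK\ot\lam\phi)$, pinning the bottom class to $\cP_\Sg(\pi,\lam)$. Running the $\Lam$-adic Kolyvagin descent (following Howard's Iwasawa theory of Heegner points, adapted to the totally real base and to the conjugate-self-dual pair via $\pi_\cK^\vee\ot\lam^{-1}=\ol{\pi_\cK\ot\lam}$, with the residual irreducibility hypothesis (2) of \thmref{T:B.W} supplying the big-image input and excluding theta- and Eisenstein-congruence contributions) then yields the divisibility.

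For the reverse divisibility $\Ch_\Lam\Sel_\Sg(\pi,\lam)^*\subseteq(\cL_\Sg(\pi,\lam))$ one must instead \emph{produce} classes in $\Sel_\Sg(\pi,\lam)$, forcing the Selmer group to be as large as $\cL_\Sg(\pi,\lam)$ predicts. The natural tool is an Eisenstein-congruence argument in the style of Mazur--Wiles and Skinner--Urban on a higher-rank group --- a unitary group $\mathrm{GU}(2,2)$ over $\cF$, or $\GSp_4$: construct a $\Lam$-adic family of Eisenstein series whose constant term is, up to periods and units, $\cL_\Sg(\pi,\lam)$, detect a cuspidal family congruent to it modulo $\cL_\Sg(\pi,\lam)$, and extract lattices in the attached Galois representations to build the required Selmer classes. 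In the special case that $\pi$ is automorphically induced from $\cK$, $\cL_\Sg(\pi,\lam)$ factors (up to a unit in $\Lam$) as a product of Katz and Hida--Tilouine $p$-adic Hecke $L$-functions for CM fields, and the equality follows unconditionally from Rubin's and Hida--Tilouine's two-variable main conjectures.

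The main obstacle will be the Euler-system construction over a totally real base $\cF$: carrying the CM-point and level-raising formalism through quaternionic Shimura curves over $\cF$, controlling the failure of strict norm-compatibility at the primes dividing $\Cram$, $\frakN^-$ and $\frakc_\lam^-$, and matching the local Euler factors at $p$ with the $p$-adic multiplier $E_{\Sg_p}(\pi,\chi)$ of \eqref{E:coates_modified} as well as the archimedean and period factors of \thmref{T:A.W}. Independently, the reverse divisibility rests on the full weight of Eisenstein congruences on a higher-rank group over $\cF$, which is substantial in its own right; and relaxing the standing hypotheses --- ordinarity \eqref{ord}, square-freeness \eqref{sf}, and the local and global conditions of \thmref{T:B.W} --- would each require further effort.
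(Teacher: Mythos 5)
The statement you were asked to prove is labeled \textbf{Conjecture} in the paper; the author does not prove it and offers no argument beyond its formulation as the anticyclotomic main conjecture (in the Greenberg framework) attached to the $p$-adic $L$-function $\cL_\Sg(\pi,\lam)$ constructed in \thmref{T:A.W}. There is therefore no ``paper's own proof'' to compare against, and the correct answer here is that the equality $\Ch_\Lam\Sel_\Sg(\pi,\lam)^*=(\cL_\Sg(\pi,\lam))$ is left open.

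What you wrote is honestly framed as a strategy outline rather than a proof, and as a road map it is reasonable and consistent with what one expects: a Bertolini--Darmon style Euler system of CM points with level-raising at admissible primes (yielding $(\cL_\Sg(\pi,\lam))\subseteq\Ch_\Lam\Sel_\Sg(\pi,\lam)^*$, with the explicit Waldspurger formula of this paper serving as the first reciprocity law and \thmref{T:B.W} supplying nontriviality and the big-image/non-Eisenstein input), and a Skinner--Urban style Eisenstein-congruence argument on a higher-rank group such as $\mathrm{GU}(2,2)$ or $\GSp_4$ over $\cF$ for the reverse inclusion, with the automorphically-induced case reducing to Rubin and Hida--Tilouine. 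You also correctly flag the substantial open inputs --- carrying the Euler-system formalism through quaternionic Shimura curves over a totally real $\cF$, the local analysis at $\Cram$, $\frakN^-$, $\frakc_\lam^-$ and at $p$ matching $E_{\Sg_p}$, and the Eisenstein-congruence machinery over $\cF$. But an outline with acknowledged open gaps is not a proof; it would be a mistake to present this as establishing the conjecture, and you should state up front that the paper leaves it as a conjecture.
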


Let $\ell\not =p$ be a rational prime. We next consider the problem of the non-vanishing modulo $p$ of $L$-values twisted by characters of $\ell$-power conductor. This problem has been studied extensively in the literature in various settings (\cf \cite{Washington:nonvanishing}, \cite{Vatsal:nonvanishing}, \cite{Hida:nonvanishingmodp}, \cite{Finis:nonvanishingell}, \cite{Hsieh:Hecke_CM}) and has many arithmetic applications in Iwasawa theory. To state our result along this direction, we introduce some notation. Let $\frakl$ be a prime of $\cF$ above $\ell$ and let $\cK_{\frakl^\infty}^-$ be the anticyclotomic pro-$\ell$ extension in the ray class field over $\cK$ of conductor $\frakl^\infty$. Let $\Gamma_\frakl^-:=\Gal(\cK_{\frakl^\infty}^-/\cK)$ and let $\frakX_\frakl^0$ be the set consisting of finite order characters $\phi:\Gamma_\frakl^-\to\mu_{\ell^\infty}$. Let $\chi$ be a Hecke character of infinity type $(\frac{k}{2}+m,-\frac{k}{2}-m)$ and of conductor $\frakc_\chi$. We obtain the following theorem.
\begin{thmA}\label{T:C.W}
With the same assumptions in \thmref{T:A.W}, we assume that
\begin{mylist}
\item $(p\frakl,\frakn\frakc_\chi \cD_{\cK/\cF})=1$,
\item $(\frakn,\cD_{\cK/\cF})=1$ and $\rho_p(\pi)$ is residually irreducible,
\item $p\ndivides \prod_{v|\frakc_\chi^-}\#(\Delta_{\chi,v})$.
\end{mylist}
Then for almost all characters $\phi\in\frakX_\frakl^0$, we have
\[[\cO_\cK^\x:\cO_\cF^\x]^2\cdot\frac{\Gamma_\Sg(\wt+m)\Gamma_\Sg(m+1)}{(\Im\delta)^{k+2m}(4\pi)^{\wt+2m+1\cdot\Sg}}\cdot \frac{L(\onehalf,\pi_\cK\ot\chi\phi)}{\Omega_\cK^{2(\wt+2m)}}\not\con 0\pmod{\frakm_p}.\]
Here almost all means "except for finitely many $\phi\in\frakX_\frakl^-$" if $\dim_{\Q_\ell} F_\frakl=1$ and "for $\phi$ in a Zariski dense subset of $\frakX_\frakl^0$" if $\dim_{\Q_\ell}F_\frakl>1$ $($\cite[p.737]{Hida:nonvanishingmodp}$)$.
\end{thmA}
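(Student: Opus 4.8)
The plan is to combine the explicit Waldspurger (toric period) formula underlying \thmref{T:A.W} with Hida's technique for the non-vanishing modulo $p$ of toric periods. Throughout, $u$ (resp. $u_\phi$) denotes a $p$-adic unit.

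\emph{Step 1: from $L$-values to a mod $p$ toric period.} First I would check that the analogue of \hypref{H:HypA} holds for $(\pi,\chi)$: by (1), $\chi$ is unramified at every $v\mid\frakn^-$, which by (2) are inert in $\cK$, and $\chi$ has the same archimedean type as $\lam$; hence $\ep^*(\pi_{\cK_v},\chi_v)=+1$ for all $v\mid\frakn^-$ and at every archimedean place, so the explicit Waldspurger formula proven in this paper applies verbatim to $(\pi,\chi\phi)$ for every $\phi\in\frakX_\frakl^0$. Running it with the $p$-stabilized (ordinary) test vector $\varphi$ attached to $\pi$ — the same one used in \thmref{T:A.W} — yields, for each $\phi\in\frakX_\frakl^0$ of conductor $\frakl^n$, an identity
\[[\cO_\cK^\x:\cO_\cF^\x]^2\cdot\frac{\Gamma_\Sg(\wt+m)\Gamma_\Sg(m+1)}{(\Im\delta)^{k+2m}(4\pi)^{\wt+2m+1\cdot\Sg}}\cdot\frac{L(\onehalf,\pi_\cK\ot\chi\phi)}{\Omega_\cK^{2(\wt+2m)}}=\left(\frac{P_{\chi\phi}(\varphi)}{\Omega_p^{\wt+2m}}\right)^2\cdot u_\phi,\]
where $P_{\chi\phi}(\varphi)=\sum_a\varphi(x_a)(\chi\phi)(a)$ is the toric period, a finite sum over the CM points $x_a$ of conductor $\frakl^n$; $P_{\chi\phi}(\varphi)/\Omega_p^{\wt+2m}$ is a $p$-integer whose reduction modulo $\frakm_p$ equals $\sum_a\bar\varphi(x_a)\bar\chi(a)\phi(a)$, with $\bar\varphi$ the reduction of the $p$-adic avatar of $\varphi$; and $u_\phi$ is a product of local $\ep$- and $L$-factors at the places dividing $p\frakn\frakc_\chi\cD_{\cK/\cF}\frakl$. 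Hypothesis (1) makes the $p$- and $\frakl$-local data unramified, and the ordinary $p$-stabilization accounts for the factors at $p$, which are units by \eqref{ord}; (2) together with $(\frakn,\cD_{\cK/\cF})=1$ controls the factors at $\frakn$ and at the ramified primes of $\cK/\cF$; and (3) — equivalent to $\chi_v\bmod\frakm_p$ being ramified for each $v\mid\frakc_\chi^-$ — forces the local toric integral there to be a $p$-unit, exactly as in \thmref{T:B.W}. Since $\frakl\nmid p$, the factor at $\frakl$ is a Gauss-sum-type $p$-unit and $\phi(a)$ is a $p$-unit; hence $u_\phi$ is a $p$-adic unit for every $\phi$, and the left-hand side of the displayed identity is $\not\con 0\pmod{\frakm_p}$ if and only if $\sum_a\bar\varphi(x_a)\bar\chi(a)\phi(a)\neq 0$ in $\Fpbar$.

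\emph{Step 2: non-vanishing of the mod $p$ toric period.} It remains to show that $\sum_a\bar\varphi(x_a)\bar\chi(a)\phi(a)\neq 0$ for almost all $\phi\in\frakX_\frakl^0$, and here I would invoke Hida's density machinery (\cite{Hida:nonvanishingmodp}). Because $p$ splits in $\cK$ by \eqref{ord}, the CM points $x_a$ of $\frakl$-power conductor are ordinary, and as the conductor grows their $\Gamma_\frakl^-$-orbit — equipped with the canonical $p$-adic trivializations — is Zariski dense in the relevant geometrically connected component of the mod $p$ Igusa tower over the (quaternionic) Hilbert modular Shimura variety. The normalization in Step 1 gives $\bar\varphi\neq 0$, and residual irreducibility (2), combined with $(\frakn,\cD_{\cK/\cF})=1$, guarantees that $\bar\varphi$ is not the reduction of a theta series from $\cK$ — such a reduction would force $\ol{\rho}_p(\pi_\cK)$ to be reducible — so that the function $a\mapsto\bar\varphi(x_a)\bar\chi(a)$ on $\Gamma_\frakl^-$ is not ``exceptional'' in the sense required by Hida's argument. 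Hida's theorem then produces the stated dichotomy: the period is non-zero for all but finitely many $\phi$ when $\dim_{\Q_\ell}\cF_\frakl=1$, and for $\phi$ in a Zariski-dense subset of $\frakX_\frakl^0$ when $\dim_{\Q_\ell}\cF_\frakl>1$, precisely as in \cite[p.737]{Hida:nonvanishingmodp}. Combined with Step 1 this gives \thmref{T:C.W}.

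\emph{Main obstacle.} The two genuinely non-formal points are: first, the local analysis in Step 1 — showing that the single test vector $\varphi$ simultaneously realizes the explicit Waldspurger formula for all the twists $\chi\phi$, is $p$-optimally normalized, and makes every local factor of $u_\phi$ a $p$-unit under (1)--(3); this parallels the computations behind \thmref{T:A.W} and \thmref{T:B.W} but must be carried out uniformly in $\phi$, in particular for the $\frakl$-local integral against a ramified character of unbounded conductor. Second, and more essentially, verifying the hypotheses of Hida's density theorem in the present setting — the Zariski density of the $\Gamma_\frakl^-$-orbit of CM points in the mod $p$ Igusa tower, and the non-CM-ness of $\bar\varphi$ supplied by residual irreducibility. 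Once these ingredients are in hand, the passage from Step 1 to the statement is formal.
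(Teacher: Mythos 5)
Your proposal identifies the correct strategy — reduce the normalized $L$-value to a toric period/measure via the explicit Waldspurger formula, then apply Hida's Zariski density theorems and a Galois-theoretic argument to get mod $p$ non-vanishing — and it correctly locates the two non-formal difficulties in the "Main obstacle" paragraph. This is the same broad route as the paper. However, your Step 1 contains a genuine gap, and in fact the assertion that the relevant test vector is "the same one used in \thmref{T:A.W}" is false: the Whittaker function from \thmref{T:A.W} has a component at $v$ given by $\localW{\chi\phi,v}$, which for $v=\frakl$ depends on $\phi$ (the local constituent $\phi_\frakl$ is ramified with unbounded conductor), so it does not furnish a single test vector $\varphi$ with $P_{\chi\phi}(\varphi)=\sum_a\varphi(x_a)(\chi\phi)(a)$ uniformly in $\phi$. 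This is exactly the obstruction you flag but leave open; the paper resolves it by replacing $\localW{\chi,\frakl}$ with the $\frakl$-stabilized $U_\frakl$-eigenfunction $\localW{\frakl}^\dagger(g)=W^0_\frakl(g)-\mu_\frakl\Abs^{\onehalf}(\frakl)W^0_\frakl\bigl(g\DII{\uf_\frakl^{-1}}{1}\bigr)$, building the modular form $\bff^\dagger_\chi$ which satisfies the crucial $\phi$-independence $\bff^\dagger_{\chi\phi}=\bff^\dagger_\chi$ (\lmref{L:6.W}(2)), and then computes the local toric integral at $\frakl$ explicitly against arbitrary ramified $\phi_\frakl$ (\lmref{L:5.W}). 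The $U_\frakl$-eigenvalue $\al_\frakl$ is what makes Hida's measure \eqref{E:lmeasure.W} on $Cl^-_{\frakl^\infty}$ well-defined across varying conductor $n$; without the $\frakl$-stabilization the construction does not close up.

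Your Step 2 has the right flavor but is imprecise where the paper is exact. You argue that residual irreducibility rules out $\bar\varphi$ being "the reduction of a theta series from $\cK$"; the paper instead reduces, via Hida's \cite[Thm.\,3.2, 3.3]{Hida:nonvanishingmodp}, to an explicit condition $(\mathrm{H}')$ on the Fourier coefficients $\bfa_\beta(\bff^\dagger_\chi,\frakc(a))$ at CM cusps, and verifies $(\mathrm{H}')$ by the Frobenius-trace argument: vanishing of a Fourier coefficient at a good unramified $v$ modulo $\frakm$ forces $\Tr\ol\rho_p(\pi)(\Frob_v)\equiv 0$, and if this happened on a full coset of $G_\cF\setminus G_\cK$ then \lmref{L:3.W} would force $\ol\rho_p(\pi)|_{G_\cK}$ to be reducible, contradicting \eqref{ai${}_\cK$}. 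One also needs hypothesis (3) here to guarantee, via \propref{P:4.W}, that the normalized local Fourier coefficients at $v\mid\frakc_\chi^-$ do not vanish identically mod $\frakm$, so that the Frobenius-trace argument actually applies on the relevant coset; this is a point your sketch does not address. So: same route, but your Step 1 test vector is wrong as stated, and the core computations (the $\frakl$-local integral and the verification of $(\mathrm{H}')$) that you earmark as obstacles are precisely what the paper's \lmref{L:5.W}, \propref{P:evaluation.W}, and the argument of \thmref{T:mu-invariant} supply.
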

In our forthcoming work \cite{Hsieh:Namikawa}, we will apply \thmref{T:C.W} to study the non-vanishing (modulo p) of certain theta lifts constructed in \cite{Murase:FJcoeff_Kudla} and \cite{MuraseNarita:Arakawa}.
\bigskip

The proof of \thmref{T:A.W} is based on an explicit Waldspurger formula, which connects toric period integrals of automorphic forms in $\pi$ and central values of $\pi_\cK\ot\lam$ \cite{Waldsupurger:Central_value}. To obtain the optimal $p$-integrality of cental $L$-values, we will consider holomorphic \emph{toric} cusp forms and calculate explicitly their period integrals. To be precise, let $\chi$ be a Hecke character of $\cK^\x$ such that $\chi|_{\AF^\x}=\om^{-1}$ and let $\cT\subset \AK^\x$ be the subgroup consisting of ideles $z=(z_v)\in\prod_v\cK_v^\x$ with $z_v/\ol{z_v}\in\cO_{\cK_v}^\x$ for all primes $v$ split in $\cK$. Fixing an embedding $\iota:\cK^\x\hookto \GL_2(\cF)$, we say an automorphic form $\vp:\GL_2(\cF)\bksl\GL_2(\A_{\cF})\to\C$ in $\pi$ is a toric form of character $\chi$ if
\[\vp(g\iota(t))=\chi^{-1}(t)\vp(g)\text{ for all }t\in\cT.\]
In other words, $\vp$ belongs to the space $\Hom_\cT(\C,\pi\ot\chi)$.

The construction of $\cP_\Sg(\pi,\lam)$ is outlined as follows.
 \begin{mylist}
\item Construct a toric Hilbert modular form $\vp_{\lam\phi}$ of character $\lam\phi$ for each $\wh\phi\in\wtsp$ as above by a careful choice of toric local Whittaker functions in local Whittaker models of $\pi$ (See \defref{D:toric}, \lmref{L:4.W}).
\item Make an explicit calculation of the Fourier expansion of $\vp_{\lam\phi}$.
\item Via the \padic interpolation of the Fourier expansion, construct a \padic family of toric forms $\stt{\vp_{\lam\phi}}_{\wh\phi\in\wtsp}$. The \padic $L$-function $\cP_\Sg(\pi,\lam)$ is obtained by a weighted sum of the evaluation of this family at a finite set of CM points.
\end{mylist}
The evaluation of $\vp_\chi$ with $\chi=\lam\phi$ at CM points in the step (3) is actually the toric period integral $P_{\chi}(\vp_{\chi})$ given by
\[P_\chi(\vp_\chi)=\int_{\AF^\x\cK^\x\bksl\AK^\x}\vp_\chi(\iota(t))\chi(t)dt.\]
To prove the formula in \thmref{T:A.W}, we have to express the square $P_\chi(\vp_\chi)^2$ in terms of the central $L$-value $L(\onehalf,\pi_\cK\ot\chi)$. This is usually referred to as an explicit Waldspurger formula. Such a formula has been exploited widely in the literature based on either explicit theta lifts (\cite{Murase:WaldI}, \cite{Murase:WaldII}, \cite{XH:central_value}, \cite{Hida:central_value} and \cite{BDP}) or the technique of relative trace formula (\cite{Kimble_W:centrals}). In this paper we adopt a different approach, making use of a formula of Waldspurger which is indeed proved but not stated explicitly in \cite{Waldsupurger:Central_value}. This formula decomposes the square $P_\chi(\vp)$ of the global period toric integral into a product of local period integrals involving local Whittaker functions of $\vp$. Explicit computation of these local integrals shows that $P_\chi(\vp)^2$ is essentially equal to the central value of the $L$-function $L(s,\pi_\cK\ot\lam)$. We emphasize that this explicit formula does not depend on the choices of explicit \BS functions in the classical approach of theta lifting but on choices of local Whittaker functions which reflect the arithmetic of modular forms directly via the Fourier expansion. We make a few remarks on our assumptions. The restriction \eqref{sf} is due to the computational difficulty on the local period integrals and the local Fourier coefficients, and it is expected to be unnecessary. We hope to come back to the removal of this in the future. However, \hypref{H:HypA} is fundamental, the failure of which makes the period integral $P_\chi(\vp_\chi)$ vanish by a well-known theorem of Saito-Tunnell (\cite{Saito:SaitoTunnell}, \cite{Tunnell:SaitoTunnell}).

The proofs of \thmref{T:B.W} and \thmref{T:C.W} are based on the ideas of Hida in \cite{Hida:nonvanishingmodp} and \cite{Hida:mu_invariant}. Thanks to Hida's theorems on the linear independence of modular forms applied by transcendental automorphisms of the local moduli of CM points in Hilbert modular varieties modulo $p$ \loccit, the vanishing/non-vanishing modulo $p$ properties of the algebraic part of $L(\onehalf,\pi_\cK\ot\chi)$ with anticyclotomic twists can be deduced from the vanishing/non-vanishing of the Fourier expansions of the toric cusp form $\vp_\lam$ at cusps $(\fraka,\frakb)$ such that $\fraka\frakb^{-1}$ is the polarization of abelian varieties with CM by $\cO_\cK$. A new ingredient of this paper is the explicit computation of Fourier coefficients of toric new forms $\vp_\lam$ and a study on their non-vanishing modulo $p$ property in \subsecref{SS:localFourier}. Exploiting the connection between the Fourier coefficients of Hilbert modular forms and the trace of Frobenius of the associated Galois representations, we deduce from the vanishing modulo $p$ of Fourier coefficients of $\vp_\lam$ at these cusps that the trace of residual Galois representation $\ol{\rho}_p(\pi)$ is vanishing on the coset $G_\cF-G_\cK$. A simple lemma (\lmref{L:3.W}) on modular representations shows that $\ol{\rho}_p(\pi)|_{G_\cK}$ is reducible.

This paper is organized as follows. After fixing notation and definitions in \secref{S:notation.W}, we derive a key formula of Waldspurger on the decomposition of global toric period integrals into local toric period integrals (\propref{P:Waldformula.W}) in \secref{S:Wald}. The bulk of this article is \secref{S:toric}, where we give the choices of local toric Whittaker functions $W_{\chi,v}$ and calculate explicitly these local period integrals attached to $W_{\chi,v}$. The explicit Waldspurger formula is proved in \thmref{T:main1.W}, and a non-vanishing modulo $p$ of these toric Whittaker functions is proved in \propref{P:4.W}. After reviewing briefly theory of complex and geometric Hilbert modular forms in \secref{S:Hilbert.W}, we prove \thmref{T:A.W} in \subsecref{S:anticy_padic_L}. The key ingredient is  \propref{P:3.W}, the construction of a \padic measure $\EucF_{\lam,\frakc}$ on $\Gamma^-$ with values in the space of \padic modular forms, and the \padic $L$-function $\cP_\Sg(\pi,\lam)$ is thus obtained by evaluating $\EucF_{\lam,\frakc}$ at suitable CM points. The precise evaluation formula of $\cP_\Sg(\pi,\lam)^2$ is established in \thmref{T:padicL.W}. In \secref{S:mu_invariant}, we study the $\mu$-invariant of $\cP_\Sg(\pi,\lam)$ and prove \thmref{T:B.W} in \thmref{T:mu-invariant}. Finally, the non-vanishing of central $L$-values modulo $p$ is considered in \secref{S:NV} and \thmref{T:C.W} is proved in \thmref{T:main3.W}.


\section{Notation and definitions}\label{S:notation.W}
\subsection{Measures on local fields}We fix some general notation and conventions on local fields through this article. Let $\addchar_\Q:\A_\Q/\Q\to \C^\x$ be the additive character such that $\addchar_\Q(x_\infty)=\exp(2\pii x_\infty)$ with $x_\infty\in\R$. Let $q$ be a place of $\Q$ and let $F$ be a finite extension of $\Q_q$. Let $\addchar_q$ be the local component of $\addchar$ at $q$ and let $\addchar_F:=\addchar_q\circ\rmT_{F/\Q_q}$, where $\rmT_{F/\Q_q}$ is the trace from $\cF$ to $\Q_q$. Let $dx$ be the Haar measure on $F$ self-dual with respect to the pairing $(x,x')\mapsto \addchar_F(xx')$. Let $\Abs_F$ be the absolute value of $\cF$ normalized by $d(ax)=\abs{a}_Fdx$ for $a\in F^\x$. We often simply write $\Abs=\Abs_F$ if it is clear from the context without possible confusion. We recall the definition of the local zeta function $\zeta_F(s)$. If $\cF$ is non-archimedean, let $\uf_F$ be a uniformizer of $F$ and let \[\zeta_F(s)=\frac{1}{1-\abs{\uf_F}_F^s}.\] If $F$ is archimedean, then \[\zeta_\R(s)=\pi^{-s/2}\Gamma(s/2);\,\zeta_\C(s)=2(2\pi)^{-s}\Gamma(s).\]
The Haar measures $\dx x$ on $F^\x$ is normalized by
\[\dx x=\zeta_{F}(1)\abs{x}_F^{-1}dx.\]
In particular, if $F=\R$, then $dx$ is the Lebesgue measure and $\dx x=\abs{x}_\R^{-1}dx$, and if $F=\C$, then $dx$ is twice the Lebesgue measure on $\C$ and $\dx x=2\pi^{-1}r^{-1}drd\theta\,\,(x=re^{i\theta})$.

 Suppose that $F$ is non-archimedean. Let $\cO_F$ be the ring of integers of $F$ and let $\cD_F$ be the absolute different of $F$. Then $\cD_F^{-1}$ is the \pont dual of $\cO_F$ with respect to $\addchar_F$, and $\vol(\cO_F,dx)=\abs{\cD_F}_F^\onehalf$. If $\mu:F^\x\to\C^\x$ is a character of $F^\x$, define the local conductor $a(\mu)$ by
\[a(\mu)=\inf\stt{n\in \Z_{\geq 0}\mid
\mu(x)=1\text{ for all }x\in (1+\uf_F^n \cO_F)\cap \cO_F^\x}.\]

\subsection{}
If $L$ is a number field, the ring of integers of
$L$ is denoted by $\cO_L$, $\A_L$ is the adele of $L$ and $\A_{L,f}$
is the finite part of $\A_L$. For $a\in\A_L^\x$, we put
\[\il_L(a):=a(\cO_L\ot\Zhat)\cap L.\]
Denote by $G_L$ the absolute Galois group and by $\rec_L:\A_L^\x\to G^{ab}_{L}$ the geometrically normalized reciprocity law. 
 We define $\addchar_L:\A_L^\x/L\to\C^\x$ by
$\addchar_L(x)=\addchar_\Q\circ\Tr_{L/\Q}(x)$.

Let $v_p$ be the \padic valuation on $\Qbarp$ normalized so that $v_p(p)=1$. We regard $L$ as a subfield in $\C$ (resp. $\C_p$) via
$\iota_\infty:\Qbar\hookto\C$ (resp. $\iota_p=\iota^{-1}\circ\iota_\infty:\Qbar\hookto\Cp$) and $\Hom(L,\Qbar)=\Hom(L,\C_p)$.

Let $\Zbar$ be the ring of algebraic integers of $\Qbar$ and let $\Zbarp$ be the \padic completion of $\Zbar$ in $\Qbarp$. Let $\Zbar$ be the ring of algebraic integers of $\Qbar$ and let $\Zbarp$ be the \padic completion of $\Zbar$ in $\Qbarp$ with the maximal ideal $\frakm_p$. Let $\frakm=\iota_p^{-1}(\frakm_p)$.

\subsection{Local $L$-functions}
Let $F$ be a non-archimedean local filed. Let $\mu,\nu:F^\x\to\C^\x$ be two characters of $F^\x$. Denote by $I(\mu,\nu)$ the space consisting of smooth and $\GL_2(\cO_F)$-finite functions $f:\GL_2(F)\to\C$ such that
\[f(\MX{a}{b}{0}{d}g)=\mu(a)\nu(d)\abs{\frac{a}{d}}^\onehalf f(g).\]
Then $I(\mu,\nu)$ is an admissible representation of $\GL_2(F)$. Denote by $\pi(\mu,\nu)$ the unique infinite dimensional subquotient of $I(\mu,\nu)$.  We call $\pi(\mu,\nu)$ a principal series if $\mu\nu^{-1}\not=\Abs^\pm$ and a special representation if $\mu\nu^{-1}=\Abs^\pm$.

Let $E$ be a quadratic extension of $F$ and let $\chi:E^\x\to\C^\x$ be a character. We recall the definition of local $L$-functions $L(s,\pi_{\cK}\ot\chi)$ (\cite[\S 20]{Jacquet:GLtwoPartII}) when $\pi=\pi(\mu,\nu)$ is a subrepresentation of $I(\mu,\nu)$. If $E=F\oplus F$, then we write $\chi=(\chi_1,\chi_2):F^\x\oplus F^\x\to\C^\x$ and put
\begin{align*}L(s,\pi_{\cK}\ot\chi)=&\begin{cases}L(s,\pi\ot\chi_1)L(s,\pi\ot\chi_2)&\text{ if $\mu\nu^{-1}\not =\Abs^\pm$},\\
L(s,\mu\chi_1)L(s,\mu\chi_2)&\text{ if $\mu\nu^{-1}=\Abs$}.\end{cases}
\intertext{If $E$ is a field, then} L(s,\pi_{E}\ot\chi)=&\begin{cases}L(s,\mu'\chi )L(s,\nu'\chi)&\text{ if $\mu\nu^{-1}\not =\Abs^\pm$},\\
L(s,\mu'\chi )&\text{ if $\mu\nu^{-1}=\Abs$}.\end{cases}\end{align*}
Here $\mu'=\mu\circ\rmN_{E/F},\,\nu'=\nu\circ\rmN_{E/F}$ are characters of $E^\x$.
\subsection{Whittaker and Kirillov models}
Let $F$ be a local field. Let $\pi$ be an irreducible admissible representation of $\GL_2(F)$ and let $\addchar:F\to\C^\x$ be a non-trivial additive character. We let $\cW(\pi,\addchar)$ be the Whittaker model of $\pi$. Recall that $\cW(\pi,\addchar)$ is a subspace of smooth functions $W:\GL_2(F)\to\C$ such that
\begin{mylist}
\item $W(\MX{1}{x}{0}{1}g)=\addchar(x)W(g)$ for all $x\in F$.
\item If $v$ is archimedean, $W(\DII{a}{1})=O(\abs{a}^N)$ for some positive number $N$.
\end{mylist}
(\cf\cite[Thm.\,6.3]{Jacquet_Langlands:GLtwo}). Let $\calK(\pi,\addchar)$ be the Kirillov model of $\pi$. If $F$ is non-archimedean, then $\calK(\pi,\addchar)$ is a subspace of smooth $\C$-valued functions on $F^\x$, containing all \BS functions on $F^\x$. A function in $\calK(\pi,\addchar)$ shall be called a \emph{local Fourier coefficient} of $\pi$. In addition, it is well known that we have the following $\GL_2(F)$-equivariant isomorphism
\beq\label{E:KisoW}\begin{aligned}
\cW(\pi,\addchar)&\isoto \calK(\pi,\addchar)\\
W&\mapsto \xi_W(a):=W(\DII{a}{1}).\end{aligned}\eeq

\section{Waldspurger formula}\label{S:Wald}
Let $\cF$ be a number field and $\cK$ be a quadratic field extension of $\cF$. Let $\AF=\A_\cF$. Let $G=\GL_2{}_{/\cF}$. Let $\pi$ be an irreducible cuspidal automorphic representation of $G(\AF)$ with unitary central character $\om$. Denote by $\cA(\pi)$ the realization of $\pi$ in the space $\cA_0(G)$ of cusp forms on $G(\AF)$. Let $\chi$ be a unitary Hecke character of $\cK^\x$ such that $\chi|_{\AF^\x}=\om^{-1}$. Let $\pi_\cK$ be the quadratic base change of $\pi$ to the quadratic extension $\cK/\cF$. The existence of $\pi_\cK$ is established in \cite{Jacquet:GLtwoPartII}. The goal of this section is to deduce from results in \cite{Waldsupurger:Central_value} a formula (\propref{P:Waldformula.W}) which expresses the central value $L(\onehalf,\pi_\cK\ot\chi)$ in terms of a product of local toric period integrals of Whittaker functions.

Let $\addchar:=\addchar_\cF:\AF/\cF\to\C^\x$ be the standard non-trivial additive character. For a place $v$ of $\cF$, we let $G_v=G(\cF_v)$ and let $\chi_v:\cK_v^\x\to\C^\x$ (resp. $\addchar_v:\cF_v\to\C^\x$) denote the local constituent of $\chi$ (resp. $\addchar$).
\subsection{}\label{SS:notation1}For $x\in\cK$, let $\rmT(x):=x+\ol{x}$ and $\rmN(x)=x\ol{x}$.
Let $\stt{1,\CMP}$ be a basis of $\cK$ over $\cF$.  We let $\iota:\cK\to M_2(\cF)$ be the embedding attached to $\CMP$ given by
\beq\label{E:imb.W}\iota(a\CMP+b)=\MX{a\rmT(\CMP)+b}{-a\rmN(\CMP)}{a}{b}\quad(a,b\in\cF).\eeq
Put
\[\cmJ:=\MX{-1}{\rmT(\CMP)}{0}{1}.\]Then $M_2(\cF)=\iota(\cK)\oplus\iota(\cK)\cmJ$. It is clear that $\cmJ^2=1$ and $\iota(t)\cmJ=\cmJ\iota(\ol{t})$ for all $t\in \cK$.
\subsection{The local bilinear form and toric integral} For each place $v$ of $\cF$, denote by $\pi_v$ (resp. $\addchar_v$) the local constituent of $\pi$ (resp. $\addchar$) at $v$.
Define a $\C$-bilinear form $\bfb_v:\cW(\pi_v,\addchar_v)\x\cW(\pi_v,\addchar_v)\to\C$ by
\begin{align*}\bfb_v(W_{1},W_{2}):=&\sum_{n=-\infty}^\infty\int_{\uf^n\cO_F^\x}W_{1}(\DII{a}{1})W_{2}(\DII{-a}{1})\om^{-1}(a)\dx a\quad (W_1,W_2\in \cW(\pi_v,\addchar_v))\\
=&\int_{\cF_v^\x}W_{1}(\DII{a}{1})W_{2}(\DII{-a}{1})\om^{-1}(a)\dx a.
\end{align*}
It is known that this series converges absolutely as $\pi_v$ is a local constituent of a unitary cuspidal automorphic representation. Moreover, the pairing $\bfb_v$ enjoys the property:
\beq\label{E:bilinear.W}\bfb_v(\pi(g)W_1,\pi(g)W_2)=\om(\det g)\bfb_v(W_1,W_2).\eeq
The pairing $\bfb_v$ thus gives rise to an isomorphism between the contragredient representation $\pi^\vee$ and $\pi\ot\om^{-1}$.

The local toric period integral for $W_1,W_2\in\cW(\pi_v,\addchar_v)$ is given by
\[P(W_{1},W_{2},\chi_v):=\int_{\cK_v^\x/\cF_v^\x}\bfb_v(\pi(\iota(t))W_{1},\pi(\cmJ)W_{2})\chi_v(t)\dt\cdot\frac{L(1,\tau_{\cK_v/\cF_v})}{\zeta_{\cF_v}(1)}.\]
The above integral converges as $\chi_v$ is unitary (\cite[LEMME 7]{Waldsupurger:Central_value}).
\subsection{A formula of Waldspurger}
Let $\Lam(s,\pi_{\cK}\ot\chi)$ be the completed $L$-function of $\pi_{\cK}\ot\chi$ given by
\[\Lam(s,\pi_{\cK}\ot\chi):=\prod_v L(s,\pi_{\cK_v}\ot\chi_v)=L(s,\pi_{\cK}\ot\chi)\cdot\prod_{v|\infty}L(s,\pi_{\cK_v}\ot\chi_v).\]
It is well known that $\Lam(s,\pi_{\cK}\ot\chi)$ converges absolutely for $\Re s\gg 0$ and has meromorphic continuation to all $s\in\C$. Moreover, it satisfies the functional equation
\[\Lam(s,\pi_{\cK}\ot\chi)=\ep(s,\pi_\cK\ot\chi)\Lam(s,\pi_{\cK}\ot\chi).\]
Here we have used the self-duality condition $\chi|_{\AF^\x}=\om^{-1}$. The global toric period integral for $\vp\in\cA(\pi)$ is defined by
\[P_\chi(\vp):=\int_{\cK^\x\AF^\x\bksl \AK^\x}\vp(\iota(t))\chi(t)\dt.\]
The following proposition connects the global toric periods and central $L$-values of $\pi_\cK\ot\chi$.
\begin{prop}[Waldspurger]\label{P:Waldformula.W}Let $\vp_1,\vp_2\in \cA(\pi)$ and let $W_{\vp_1},W_{\vp_2}$ be the associated global Whittaker functions. We suppose that $W_{\vp_i}=\prod_vW_{i,v}$, where  $W_{i,v}\in\cW(\pi_v,\addchar_v)$ such that $W_{i,v}(1)=1$ for almost $v$ $(i=1,2)$. Then there exists a finite set $S_0$ of places of $\cF$ including all archimedean places such that for every finite set $S\supset S_0$, we have
\[P_\chi(\vp_1)P_\chi(\vp_2)=\Lam(\onehalf,\pi_\cK\ot\chi)\cdot\prod_{v\in S}\frac{1}{L(\onehalf,\pi_{\cK_v}\ot\chi_v)}\cdot P(W_{1,v},W_{2,v},\chi_v).
\]
\end{prop}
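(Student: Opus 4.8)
The plan is to derive \propref{P:Waldformula.W} from the spectral identity established in \cite{Waldsupurger:Central_value}, which decomposes the square of the global toric period into an Euler product of local factors. I would first recall that, by Rankin--Selberg theory, the global Whittaker function $W_{\vp_i}$ of a factorizable cusp form $\vp_i$ unfolds the period $P_\chi(\vp_i)$; more precisely, Waldspurger's key observation (proven but not stated as such in \loccit) is that the product $P_\chi(\vp_1)P_\chi(\vp_2)$, after inserting the definition of the bilinear form $\bfb_v$ and using the self-duality $\chi|_{\AF^\x}=\om^{-1}$, can be rewritten as an adelic integral over $\cK^\x\AF^\x\bksl\AK^\x$ of a matrix coefficient of $\pi\ot\om^{-1}$ twisted by $\chi$. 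The matrix coefficient factors as a product over all $v$ of the local matrix coefficients $t\mapsto\bfb_v(\pi(\iota(t))W_{1,v},\pi(\cmJ)W_{2,v})$, and the global integral is not in general equal to the product of the local integrals $P(W_{1,v},W_{2,v},\chi_v)$ — the subtlety is that for almost all $v$ the local integral computes $L(\onehalf,\pi_{\cK_v}\ot\chi_v)/\zeta_{\cF_v}(1)\cdot L(1,\tau_{\cK_v/\cF_v})$ against the normalization $W_{i,v}(1)=1$, so one must divide by these unramified values to recover a convergent Euler product.

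The second step is the unramified computation. For $v\notin S_0$ (so $v$ finite, $\pi_v$, $\chi_v$, and the extension $\cK_v/\cF_v$ all unramified, $W_{i,v}$ the normalized spherical vector, and the measures giving volume one to the maximal compacts), I would invoke the standard local unramified calculation — essentially the local Rankin--Selberg integral of Jacquet--Langlands combined with the base-change identity $L(s,\pi_v\ot\chi_{1,v})L(s,\pi_v\ot\chi_{2,v})=L(s,\pi_{\cK_v}\ot\chi_v)$ in the split case and its analogue in the inert case — to get
\[
P(W_{1,v},W_{2,v},\chi_v)=L(\tfrac12,\pi_{\cK_v}\ot\chi_v)
\]
for all $v\notin S_0$, after the chosen normalizations of measures and of $\zeta_{\cF_v}(1)$, $L(1,\tau_{\cK_v/\cF_v})$. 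Granting this, the telescoping identity
\[
\prod_{v}\frac{P(W_{1,v},W_{2,v},\chi_v)}{L(\tfrac12,\pi_{\cK_v}\ot\chi_v)}
=\prod_{v\in S}\frac{P(W_{1,v},W_{2,v},\chi_v)}{L(\tfrac12,\pi_{\cK_v}\ot\chi_v)}
\]
holds for any finite $S\supset S_0$, and multiplying through by the completed $L$-value $\Lam(\tfrac12,\pi_\cK\ot\chi)=\prod_v L(\tfrac12,\pi_{\cK_v}\ot\chi_v)$ gives exactly the asserted formula, provided the global identity $P_\chi(\vp_1)P_\chi(\vp_2)=\Lam(\tfrac12,\pi_\cK\ot\chi)\cdot\prod_v P(W_{1,v},W_{2,v},\chi_v)/L(\tfrac12,\pi_{\cK_v}\ot\chi_v)$ has been verified.

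The third step, and the one I expect to be the main obstacle, is precisely extracting that global identity from \cite{Waldsupurger:Central_value}: Waldspurger works with the Petersson pairing of $\vp_1$ against a translate of $\vp_2$ and with explicit local integrals in the Whittaker model, but does not state the clean product formula above — one has to trace through his Lemmas (in particular the local integrability statement \cite[LEMME 7]{Waldsupurger:Central_value} and the global spectral expansion) and match normalizations of additive characters, Haar measures on $\cF_v$, $\cF_v^\x$, $\cK_v^\x$, and the CM embedding $\iota$. The bookkeeping of the factor $L(1,\tau_{\cK_v/\cF_v})/\zeta_{\cF_v}(1)$ inserted into $P(W_{1,v},W_{2,v},\chi_v)$, the constant $\vol$ of the relevant quotient, and the convergence of the global toric integral (which uses cuspidality of $\pi$ and unitarity of $\chi$) must all be checked to be consistent so that the product over $v$ is literally an equality with no stray global constant. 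Once the normalizations are pinned down and the unramified local integral is computed, the proposition follows formally.
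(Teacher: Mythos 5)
Your outline of the outer structure is right: the unramified local computation $P(W_{1,v},W_{2,v},\chi_v)=L(\tfrac12,\pi_{\cK_v}\ot\chi_v)$ for $v\notin S_0$ and the telescoping against $\Lam(\tfrac12,\pi_\cK\ot\chi)$ are exactly how one passes from a global product identity to the finite-$S$ statement. But the central step has a real gap, and your description of it is misleading in two ways.

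First, the opening claim that ``by Rankin--Selberg theory the global Whittaker function unfolds the period $P_\chi(\vp_i)$'' is false: the toric integral $P_\chi(\vp)=\int_{\cK^\x\AF^\x\bksl\AK^\x}\vp(\iota(t))\chi(t)\,dt$ is over a compact quotient and has no unipotent, hence no Fourier unfolding. Whittaker functions enter through a completely different mechanism. Second, and more importantly, the middle step --- passing from the double integral $P_\chi(\vp_1)P_\chi(\vp_2)$ to a single toric integral of a factorizable matrix coefficient, and thence to the Euler product $\prod_v P(W_{1,v},W_{2,v},\chi_v)/L(\tfrac12,\pi_{\cK_v}\ot\chi_v)$ --- is not a formal rewriting that follows from ``inserting the definition of $\bfb_v$ and using self-duality.'' That factorization \emph{is} the theorem of Waldspurger and is genuinely global. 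The paper obtains it by the theta-correspondence argument from \cite{Waldsupurger:Central_value}: for each $v$ one chooses a Schwartz test function $f_v\in\cS(M_2(\cF_v))\ot\cS(\cF_v^\x)$ such that, under the surjection $U:\cS_v\to\cW_v\ot\cW_v$, one has $U_{f_v}=W_{1,v}\ot\pi(\cmJ)W_{2,v}$; one then forms the theta kernel $\theta_f(\sigma,g)$ and the theta lift $\theta(f,\vp,g)$ and shows $\theta(f,\vp,g_1,g_2)=\vp_1(g_1)\vp_2(g_2\cmJ)$, so that the period $P(f,\chi)$ equals $P_\chi(\vp_1)P_\chi(\vp_2)$; the Euler factorization with the central $L$-value is then read off from Waldspurger's Proposition~4 and Lemme~7, together with the local identity $P(f_v,\chi_v,\tfrac12)=P(W_{1,v},W_{2,v},\chi_v)\cdot L(1,\tau_{\cK_v/\cF_v})^{-1}$ that matches the two kinds of local integrals. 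Your ``trace through the Lemmas and match normalizations'' gestures at this but never names the mechanism, and as written your middle step would be circular or incorrect if taken at face value. To complete the argument you would need to exhibit the test functions $f_v$ and the theta identity explicitly, as the paper does.
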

\begin{proof}The proof is the combination of various formulae established in \cite{Waldsupurger:Central_value}. We first recall some important local integrals. Let $D=G\x G$. For each place $v$ of $\cF$, let $\cS_v=\cS(M_2(\cF_v))\ot\cS(\cF_v^\x)$ and let $D_v=G_v\x G_v$.
Let $r=r'\x r'':G_v\x D_v\to \End\cS_v$ be the Weil representation of $G_v\x D_v$ defined in \cite[\S I.3 p.178]{Waldsupurger:Central_value}

Let $\vp\in\cA(\pi)$ be an automorphic form in the automorphic realization of $\pi$. Recall that the global Whittaker function of $\vp$ is defined by
\[W_\vp(g)=\int_{\cF\bksl\A_F}\vp(\MX{1}{x}{0}{1}g)\addchar(-x)dx.\]
Write $\cW_v=\cW(\pi_v,\addchar_v)$. We further assume that $W_\vp$ has the factorization $W_\vp=\prod _vW_{\vp,v}\in \ot_v'\cW_v$ such that $W_{\vp,v}(1)=1$ for almost $v$. For each $v$, let $U:\cS_v\to \cW_v\ot\cW_v,\,f_v\to U_{f_v}$ be the $G_v\x G_v$-equivaraint surjective morphism associated to $W_v$ introduced in \cite[COROLLAIRE, p.187]{Waldsupurger:Central_value}. Define the following local integrals:
\begin{align*}C(f_v):=&\int_{\cF_v^\x}U_{f_v}(\DII{a}{1},\DII{-a}{1}))\om^{-1}(a)\dx a,\\
B(f_v,1):=&\int_{Z_v\bksl G_vN_v}\int_{\cF_v^\x}W_{\vp,v}(\sg)r'(\sg)f_v(x,x^{-2})dxd\sg,\\
P(f_v,\chi_v,\onehalf):=&\int_{\cF^\x_v\bksl \cK^\x_v}B(r''(\iota(t),1)f_v,1)\chi_v(t)dt.
\end{align*}
The convergence and analytic properties of these local integrals are studied in \cite[LEMME 2, LEMME 3, LEMME 5]{Waldsupurger:Central_value}.
Moreover, we have
\[B(f_v,1)=C(f_v)\cdot\frac{1}{\zeta_{\cF_v}(1)}.\]
For each $v$, we take a special test function $f_v\in\cS_v$  such that
\beq\label{E:1.W}U_{f_v}=W_{1,v}\ot \pi(\cmJ)W_{2,v}.\eeq
Note that $f_v$ can be chosen to be the spherical test function $f_v^0:=\bbI_{M_2(\cO_{\cF_v})}\ot\bbI_{\cO_{\cF_v}^\x}$ for all but finitely many $v$.
With this particular choice of $f_v$, we have
\beq\label{E:2.W}\begin{aligned}
P(f_v,\chi_v,\onehalf)&=\int_{\cF^\x_v\bksl \cK^\x_v}C(r''(\iota(t),1)f_v)dt\cdot\frac{1}{\zeta_{\cF_v}(1)}\\
&=\int_{\cF^\x_v\bksl \cK^\x_v}\bfb_v(\pi(\iota(t))W_1,\pi(\cmJ)W_2)\chi_v(t)dt\cdot\frac{1}{\zeta_{\cF_v}(1)}\\
&=P(W_{1,v},W_{2,v},\chi_v)\cdot \frac{1}{L(1,\tau_{\cK_v/\cF_v})}.
\end{aligned}\eeq

Let $\cS=\ot \cS_v$ be the restricted product with respect to spherical test functions $\stt{f_v^0}_v$.
Define the theta kernel for $f:=\ot f_v\in \cS$ by
\[\theta_f(\sg,g):=\sum_{(x,u)\in M_2(\cF)\x\cF^\x}r(\sg,g)f(x,u),\,\sg\in G(\AF)\quad(g\in D(\AF)=G(\AF)\x G(\AF)),\]
and define the automorphic form $\theta(f,\vp,g)$ on $G(\AF)\x G(\AF)$ by
\[\theta(f,\vp,g)=\int_{G(\cF)\bksl G(\AF)}\vp(\sg)\theta_f(\sg,g)d\sg.\]
Note that according to \eqref{E:1.W}, we have \[\theta(f,\vp,g_1,g_2)=\vp_1(g_1)\vp_2(g_2\cmJ).\]
We define the toric period integral $P(f,\chi)$ by
\[P(f,\chi):=\int_{[\cK^\x\AF^\x\bksl \AK^\x]^2}\theta(f,\vp,\iota(t_1),\iota(t_2))\chi(t_1)\chi(\ol{t}_2)dt_1dt_2.\]
By the relation $\cmJ \iota(t_2)\cmJ=\iota(\ol{t}_2)$ and the automorphy of $\vp_2$, we find that
\[P(f,\chi)=P_\chi(\vp_1)P_\chi(\vp_2).\]
Let $S_0$ be a finite set of places of $\cF$ such that $W_{\vp,v}$, $W_{i,v}$ and $f_v$ are spherical for all $v\not\in S_0$.
From \cite[Prop.\,4, p.196 and LEMME 7, p.219]{Waldsupurger:Central_value}, we deduce the following formula for every finite set $S\supset S_0$:
\begin{align*}P_\chi(\vp_1)P_\chi(\vp_2)=\Lam(\onehalf,\pi_\cK\ot\chi)\cdot \prod_{v\in S}  P(f_v,\chi_v,\onehalf)\cdot \frac{L(1,\tau_{\cK_v/\cF_v})}{L(\onehalf,\pi_{\cK_v}\ot\chi)}.\end{align*}
We thus establish the desired formula in virtue of \eqref{E:2.W}.
\end{proof}

\section{Toric period integrals}\label{S:toric}
\subsection{Notation}
Throughout we suppose that $\cF$ is a totally real number field and $\cK$ is a totally imaginary quadratic extension of $\cF$. We retain the notation in the introduction and \subsecref{SS:notation1}. Let $\Sg$ be a fixed CM type of $\cK$. Let $\pi$ be an irreducible automorphic cuspidal representation of $\GL_2(\AF)$. Let $\frakn$ be the conductor of $\pi$. Suppose that $\pi$ has infinity type $k=\sum_{\sg\in\Sg}k_\sg\sg\in\Z_{\geq 1}[\Sg]$. Let $m=\sum_\sg m_\sg\sg\in\Z_{\geq 0}[\Sg]$ and let $\chi$ be a Hecke character of infinity type $(k/2+m,-k/2-m)$ such that $\chi|_{\AF^\x}=\om^{-1}$. Let $\bdh$ be the set of finite places of $\cF$. Recall that the set of infinite places of $\cF$ is identified with the CM-type $\Sg$..

In this section, we will choose a special local Whittaker function at each place $v$ of $\cF$ in \subsecref{SS:Whittaker.W} and calculate their associated local toric period integrals in \subsecref{SS:toricintegralI.W} and \subsecref{SS:toricII.W}. Finally, we prove in \subsecref{SS:localFourier} a non-vanishing modulo $p$ result of these local Whittaker functions. This result plays an important role in the later application to the calculation of the $\mu$-invariant.

Let $\frakC_\chi$ (resp. $\frakc_\om$) be the conductor of $\chi$ (resp. $\om$).  Let $\frakc_\chi=\frakC_\chi\cap\cF$. We further decompose $\frakn^-=\frakn^-_s\frakn^-_r$, where $\frakn^-_s$ is prime to $\frakc_\om$ and $\frakn^-_r$ is only divisible by prime factors of $\frakc_\om$. Put \beq\label{E:alv.W}\begin{aligned}c_v(\chi)=&\inf\stt{n\in\Z_{\geq 0}\mid \chi=1\text{ on }(1+\uf^n\cO_E)^\x},\\
m_v(\chi,\pi)=&c_v(\chi)-v(\frakn^-).\end{aligned}\eeq
It is clear that $c_v(\chi)=v(\frakc_\chi)$. We put
\begin{align*}
A(\chi)=&\stt{v\in\bdh\mid\text{$\cK_v$ is a field, $\pi_v$ is special and $c_v(\chi)=0$}}.
\end{align*}

Let $p>2$ be a rational prime satisfying \eqref{ord}. The assumption \eqref{ord} in particular implies that every prime factor of $p$ in $\cF$ splits in $\cK$. Let $\Sg_p$ be the \padic places induced by $\Sg$ via $\iota_p$. Thus $\Sg_p$ and its complex conjugation $\Sgbar_p$ give a partition of the places of $\cK$ above $p$. Let $\frakN$ be the prime-to-$p$ conductor of $\pi_{\cK}\ot\chi$. We fix a decomposition $\frakN^+=\Csplit\ol{\Csplit}$ such that $(\Csplit,\ol{\Csplit})=1$.

\subsection{Galois representation attached to $\pi$}\label{SS:Galois.W}
Let $\rho_p(\pi):G_F\to\GL_2(\cO_{{L_\pi}})$ be the \padic Galois representation associated to $\pi$ as in the introduction. Let $v\ndivides p$ and let $W_{\cF_v}$ be the local Weil group at $v$. Suppose that $\pi_v=\pi(\mu_v,\nu_v)$ is a subquotient of the induced representations. 
By the local-global compatibility (\cite{Carayol:GaloisHMF}, \cite{Taylor:Galois_rep_HMF} and \cite{Jarvis:wtone}), we have
\beq\label{E:Galois.W}\rho_p(\pi)|_{W_{\cF_v}}\iso\MX{\mu_v^{-1}\Abs^\frac{1-k_{mx}}{2}}{*}{0}{\nu_v^{-1}\Abs^\frac{1-k_{mx}}{2}}\quad(k_{mx}=\max_{\sg}k_\sg).\eeq
In particular, this implies that $\mu_v(\uf_{\cF_v})$ and $\nu_v(\uf_{\cF_v})$ are \padic units in $\cO^\x_{L_\pi}$.

\subsection{Open-compact subgroups}
For each finite place $v$, we put
\begin{align*}K^0_v=&\stt{g=\MX{a}{b}{c}{d}\in G_v\mid a,d\in \cO_{\cF_v},\,b\in\cD_{F_v}^{-1},\,c\in\cD_{\cF_v},\,\det g\in\cO_{\cF_v}^\x},
\intertext{and for an integral ideal $\fraka$ of $\cF$, we put}
K^0_v(\fraka)=&\stt{g=\MX{a}{b}{c}{d}\in K^0_v\mid c\in\fraka\cD_{\cF_v},\,a-1\in\fraka},\\
U_v(\fraka)=&\stt{g\in \GL_2(\cO_{\cF_v})\mid g\con 1\pmod{\fraka}}.
\end{align*}
Let $\opcpt^0=\prod_{v\in\bdh}\opcpt^0_v$ and $U(\fraka)=\prod_{v\in\bdh} U_v(\fraka)$ be open-compact subgroups of $\GL_2(\AFf)$.
\subsection{The choices of $\CMP$ and $\cmpt_v$}\label{SS:choiceofcmpt}
 Fix an integral ideal $\Bad\subset \frakc_\chi\frakn \cD_{\cK}^2$ of $\cF$. For each finite place, let $d_{\cF_v}$ be a generator of the absolute different $\cD_{F_v}$. We choose $\CMP\in\cK$ such that
\begin{itemize}
\item[(d1)] $\Im \sg(\CMP)>0$ for all $\sg\in\Sg$,
\item[(d2)]$\stt{1,d_{\cF_v}^{-1}\CMP}$ is an $\cO_{\cF_v}$-basis of $\cO_{\cK_v}$ for all $v\mid p\Bad$,
\item[(d3)]$d_{\cF_v}^{-1}\CMP$ is a uniformizer of $\cK_v$ for every $v$ ramified in $\cK$.\end{itemize}
Then $\CMP$ is a generator of $\cK$ over $\cF$ which determines an embedding $\cK\hookto M_2(\cF)$ in \eqref{E:imb.W}. Let
\[\delta=2^{-1}(\CMP-\ol{\CMP})\in \cK^\x.\]

For each $v$ split in $\cK$, we shall fix a place $w$ of $\cK$ above $v$ throughout, and decompose $\cK_v:=\cK\ot_\cF\cF_v=\cF_ve_w\oplus \cF_v e_{\wbar}$, where $e_w$ and $e_{\wbar}$ are the idempotents attached to $w$ and $\wbar$. If $v|p\frakN^+$, we further require that $w|\Csplit\Sgbar_p$, \ie $w|\Csplit$ or $w\in\Sgbar_p$. We identify $\delta\in \cK_w=\cF_v$ and write $\CMP_v=\CMP_we_w+\CMP_{\wbar}e_{\wbar}$ for split $v$.

For each finite place $v$, we fix a uniformizer $\uf_v=\uf_{\cF_v}$ of $\cF_v$. By (d2), we fix a choice of $d_{\cF_v}$ as follows.
\[d_{\cF_v}=\begin{cases}2\delta &\text{ if } v\mid p\Bad\text{ is split },\\
\uf_v^{v(\cD_{\cF})}&\text{ otherwise }.\end{cases}\] We also fix an $\cO_{\cF_v}$-basis $\stt{1,\OKbasis_v}$ of $\cO_{\cK_v}$ such that $\OKbasis_v=\CMP$ except for finitely many $v$ and \[\OKbasis_v=d_{\cF_v}^{-1}\CMP\text{ for }v|p\Bad.\] Write $\OKbasis_v=a_v\CMP+b_v$ with $a_v,b_v\in\cF_v$.

For each place $v$, we define $\cmptv\in \GL_2(\cF_v)$ as follows:
\beq\label{E:cmptv.W}\begin{aligned}
\cmptv=&\MX{\Im\sg(\CMP)}{\Re \sg(\CMP)}{0}{1}\text{ for }v=\sg\in\Sg,\\
\cmptv=&(\CMP_w-\CMP_{\wbar})^{-1}\MX{d_{\cF_v}\CMP_w}{\CMP_{\wbar}}{d_{\cF_v}}{1}\text{ for split }v=w\wbar,\\
\cmptv=&\MX{d_{\cF_v}}{-b_v}{0}{a_v}\text{ for non-split finite $v$.}
\end{aligned}\eeq 
For $t\in \cK_v$, we put
\[\iota_{\cmptv}(t):=\cmptv^{-1}\iota(t)\cmptv.\]
It is straightforward to verify that if $v=\sg\in\Sg$ is archimedean and $t=x+iy\in\C^\x$, then
\begin{align}
\label{E:cm1.W}\iota_{\cmpt_\sg}(t)=&\MX{x}{-y}{y}{x},\,\intertext{ and if $v=w\wbar$ is split and $t=t_1e_w+t_2e_{\wbar}$, then }
\label{E:cm2.W}\iota_{\cmptv}(t)=&\DII{t_1}{t_2}.
\end{align}
Moreover, we note that for all finite places $v$\[\iota_{\cmptv}(\cO_{\cK_v}^\x)=\iota_{\cmptv}(\cK_v^\x)\cap K^0_v.\]

\subsection{Running assumptions}
In this section, we will assume \hypref{H:HypA} for $(\pi,\chi)$ and
\beqcd{sf} \frakn^-\text{ is square-free}.\eeqcd
The assumption \eqref{sf} implies that $\pi_v$ is an unramified special representation if $v|\frakn^-_s$ and $\pi_v$ is a ramified principal series if $v|\frakn^-_r$. In particular, for every place $v$ inert or ramified in $\cK$, $\pi_v$ is a sub-quotient of induced representations and the local $L$-function $L(s,\pi_v)\not =1$. We shall write $\pi_v=\pi(\mu_v,\nu_v)$ such that $L(s,\pi_v)=L(s,\mu_v)$ for $v|\frakn^-$. Moreover, by the local root number formulas \cite[Prop.\,3.5,Thm.\,2.18]{Jacquet_Langlands:GLtwo}, under the assumption \eqref{sf} \hypref{H:HypA} on the sign of local root numbers is equivalent to the following condition:
\beqcd{R1}
\begin{aligned}\text{ For each }v\in A(\chi),\,v\text{ is ramified in $\cK$ and }\mu_v'\chi_v(\uf_{\cK_v})=-\abs{\uf}^\onehalf\quad(\mu'_v=\mu_v\circ\rmN_{\cK_v/\cF_v}).\end{aligned}\eeqcd

\def\Fv{\cF}
\def\Ev{\cK}
\bigskip
In what follows, we fix a place $v$ of $\cF$. Let $F=\cF_v$ and $E=\cK_v$. Let $\cO=\cO_F$ and $\uf=\uf_v$ if $v$ is finite. We shall suppress the subscript $v$ and write $\pi=\pi_v$, $\chi=\chi_v$, $\cmpt=\cmptv$ and $\addchar=\addchar_v$.

\subsection{The choice of local toric Whittaker functions}\label{SS:Whittaker.W}
If $v$ is finite, we let $W^0_v$ denote the new Whittaker function in $\cW(\pi,\addchar)$. In other words, $W^0_v$ is invariant by $K^0_v(\frakn)$ and $W^0_v(1)=1$. The existence of $W^0_v$ is a consequence of the theory of local new vectors \cite{Casselman:Atkin-Lehner}. Now we introduce special local Whittaker functions.

\subsubsection{The archimedean case} Suppose that $v=\sg\in\Sg$ is an archimedean place and $\Fv=\R$. Then $\pi_\sg=\pi(\Abs^\frac{k_\sg-1}{2},\Abs^\frac{1-k_\sg}{2}\sgn^{k_\sg})$ is the discrete series of minimal $\SO(2,\R)$-type $k_\sg$. Let $W_{k_\sg}\in\cW(\pi_v,\psi_v)$ be the Whittaker function given by
\beq\label{E:archiWhittaker.W}W_{k_\sg}(z\DII{a}{1}\kappa_\theta)=a^\frac{k_\sg}{2}e^{-2\pi a}\bbI_{\R_+}(a)\cdot e^{ik_\sg\theta}\sgn(z)^{k_\sg}\quad (z\in\R^\x,\,\kappa_\theta=\MX{\cos\theta}{\sin\theta}{-\sin\theta}{\cos\theta}).\eeq
Let $V_+$ and $V_-$ be the weight raising and lowering differential operators in \cite[p.165]{Jacquet_Langlands:GLtwo} given by
\[V_\pm=\MX{1}{0}{0}{-1}\ot 1\pm\MX{0}{1}{1}{0}\ot i\in\Lie(\GL_2(\R))\ot_\R\C.\]
Define the normalized weight raising differential operator $\wtd V_+$ by
\beq\label{E:Shi_Maass}\wtd V_+=\frac{1}{(-8\pi)}\cdot V_+.\eeq
Then we have
\beq\label{E:15.W}\wtd V_+^{m_\sg} W_{k_\sg}(g\kappa_\theta)=\wtd V_+^{m_\sg} W_{k_\sg}(g)e^{i(k_\sg+2m_\sg)\theta}.\eeq

\subsubsection{The split case} Suppose that $v=w\wbar$ is split with $w|\Sgbar_p\Csplit$ if $v|p\frakN^+$. We introduce some smooth functions $\localK{\chi,v}$ on $F^\x$ in the Kirillov model $\calK(\pi,\addchar)$. Write $\chi=(\chi_w,\chi_{\wbar}):F^\x\oplus F^\x\to\C^\x$. If the local $L$-function $L(s,\pi\ot\chi_w)=1$, we simply put \[\localK{\chi,v}(a)=\bbI_{\cO^\x}(a)\chi_w(a^{-1}).\]
Suppose that $L(s,\pi\ot\chi_w)\not =1$. Then $\pi=\pi(\mu,\nu)$ is a principal series or $\pi=\pi(\mu,\nu)$ is special with $\mu\nu^{-1}=\Abs$ and $\mu\chi_w$ is unramified. If $\pi\ot\chi_w$ is unramified, we set
\[\localK{\chi,v}(a)=\bbI_{\cO}(a)\cdot \chi_w^{-1}\Abs^\onehalf(a)\sum_{i+j=v(a),\,i,j\geq 0}\mu\chi_w(\uf^i)\nu\chi_w(\uf^j).\]
If $\mu_i\chi_w$ is unramified and $\mu_j\chi_w$ is ramified for $\stt{\mu_1,\mu_2}=\stt{\mu,\nu}$, we set
\[\localK{\chi,v}(a)=\mu_i\Abs^\onehalf(a)\bbI_{\cO}(a).\]
If $\pi$ is special, we set
\[\localK{\chi,v}(a)=\mu\Abs^\onehalf(a)\bbI_{\cO}(a).\]

These functions $\localK{\chi,v}$ indeed belong to the Kirillov model $\calK(\pi,\addchar)$ in virtue of the description of the Kirillov models \cite[Lemma 14.3]{Jacquet:GLtwoPartII}.
For each $\xi\in \calK(\pi,\addchar)$, by the isomorphism \eqref{E:KisoW} we denote by $W_\xi\in \cW(\pi,\addchar)$ the unique Whittaker function such that $W_\xi(\DII{a}{1})=\xi(a)$.
We put
\[\localW{\chi,v}:=W_{\localK{\chi,v}}.\]
It follows from the choice of $\localW{\chi,v}$ that
\[\localW{\chi\phi,v}=\localW{\chi,v}\text{ if $\phi: E^\x\to\C^\x$ is unramified}.\]
Recall that the zeta integral $\Psi(s,W,\chi_w)$ for $W\in \cW(\pi,\addchar)$ is defined by
\[\Psi(s,W,\chi_w):=\int_{F^\x}W(\DII{a}{1})\chi_w(a)\abs{a}^{s-\onehalf}\dx a.\]
Then the zeta integral for $\localW{\chi,v}$ satisfies the following equation:
\beq\label{E:DeflocalWsplit.W}\Psi(s,\localW{\chi,v},\chi_w)=L(s,\pi\ot\chi_w)\abs{\cD_F}^\onehalf\quad(\vol(\cO_F^\x,\dx a)=\abs{\cD_F}^\onehalf).\eeq

Suppose that $v=w\wbar$ with $w\in\Sgbar_p$. We define some $p$-modified Whittaker functions as follows. For each $u\in\cO_F^\x$, we put
\[\bfa_{u,v}(a):=\bbI_{u(1+\uf\cO)}(a)\chi_w(a^{-1})\text{ and }\localW{\chi,u,v}=W_{\localK{u,v}}.\]
Let $\localK{\chi,v}^\flat(a):=\bbI_{\cO^\x}(a)\chi_w(a^{-1})$ and let $\localW{\chi,v}^\flat$ be the $p$-modified Whittaker function given by
\beq\label{E:DeflocalWp.W}
\localW{\chi,v}^\flat:=W_{\localK{\chi,v}^\flat}=\sum_{u\in\cU_v}\localW{\chi,u,v},\eeq
where $\cU_v$ is the torsion subgroup of $\cO^\x$.
It is easy to verify that
\beq\label{E:10.W}\begin{aligned}\Psi(s,\localW{\chi,v}^\flat,\chi_w)=1\,;\,\pi(\MX{a}{b}{0}{d})\localW{\chi,v}^\flat=&\chi_w^{-1}(a)\chi_{\wbar}^{-1}(d)\localW{\chi,v}^\flat\text{ for }a,d\in\cO^\x,\,b\in\cD_F^{-1}.
\end{aligned}\eeq

\subsubsection{The inert and ramified case} Suppose that $v$ is an inert or ramified finite place. Then $E$ is a non-archimedean local field. Define the operators $\LR_v$ and $\cP_{\chi,\cmpt}$ on $W\in\cW(\pi,\psi)$ by
\begin{align*}\LR_v W(g):=&W(g\DII{1}{\uf}),\\
\cP_{\chi,\cmpt} W(g):=& \bfv_E^{-1}\cdot \int_{\Kv^\x/\Fv^\x}\pi(\iota_\cmpt(t))W(g)\chi(t)\dt.\\
=&\bfv_E^{-1}\int_{\Kv^\x/\Fv^\x}W(g\cmpt^{-1}\iota(t)\cmpt)\chi(t)\dt.\end{align*}
Note that
\[\bfv_E=\vol(\Kv^\x/\Fv^\x,\dt)=e_v\cdot \abs{\cD_E}_E^{\onehalf}\abs{\cD_F}^{-\onehalf},\quad e_v=\begin{cases}1&\text{ if $v$ is inert},\\
2&\text{ if $v$ is ramified}.\end{cases}
\]
We define the Whittaker function $\localW{\chi,v}$ by
\beq\label{E:DeflocalWinert.W}\localW{\chi,v}:=\cP_{\chi,\cmpt} \LR_v^{m_v(\chi,\pi)} W^0_v.
\eeq
\subsubsection{}
Define the subgroup $\cT_v$ of $E^\x$ by
\[\cT_v=\begin{cases}\cO^\x_{E}F^\x&\text{ if $v$ is split,}\\
E^\x&\text{ if $v$ is non-split.}\end{cases}\]
Then $\cT_v=\stt{x\in E\mid x/\ol{x}\in\cO_E^\x}$ if $v$ is finite.
\begin{defn}[Toric Whittaker functions]\label{D:toric} We say that $W\in \cW(\pi,\addchar)$ is a toric Whittaker function of character $\chi$ if
\[\pi(\iota_\cmpt(t))W=\chi^{-1}(t)\cdot W\text{ for all }t\in\cT_v.\]
\end{defn}
\begin{lm}\label{L:toric.W}The Whittaker functions $\localW{\chi,v}$ chosen as above are toric. To be precise, we have \begin{mylist}
\item $\wtd V_+^{m_\sg} W_{k_\sg}$ is a toric Whittaker function of the character $\chi_{\sg}:\C^\x\to\C^\x,\,z\mapsto z^{k_\sg+m_\sg}\ol{z}^{-m_\sg}\abs{z\ol{z}}^{-k_\sg/2}$.
\item If $v$ is finite, then $\localW{\chi,v}$ are toric Whittaker functions of character $\chi_v$.
\item If $v|p$, then  $\localW{\chi,v}^\flat$ is toric, and for $u\in\cO_F^\x$
\[\pi(\iota_\cmpt(t))\localW{\chi,u,v}=\chi^{-1}(t)\localW{\chi,u.t^{1-c},v},\]
where $u.t^{1-c}:=ut_{\wbar} t_{w}^{-1}$, $t=t_we_w+t_{\wbar} e_{\wbar}\in\cO_E^\x$ with $w\in\Sgbar_p$,
\end{mylist}
\end{lm}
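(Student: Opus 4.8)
The plan is to verify the toric property place by place, each time reducing to a direct check that the chosen Whittaker function is an eigenvector for the action of $\iota_\cmpt(\cT_v)$ with the prescribed eigencharacter $\chi^{-1}$. The conceptual point is always the same: after conjugating by $\cmpt_v$, the torus $\iota(\cK_v^\x)$ is put into a normal form — the rotation torus \eqref{E:cm1.W} at archimedean places, the diagonal torus \eqref{E:cm2.W} at split places, and a torus whose unit part lands inside $K^0_v$ at non-split finite places (the identity $\iota_{\cmptv}(\cO_{\cK_v}^\x)=\iota_{\cmptv}(\cK_v^\x)\cap K^0_v$ from \subsecref{SS:choiceofcmpt}) — so that the eigenvector property becomes transparent from the explicit formula defining $\localW{\chi,v}$.

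\emph{Archimedean case.} Here $\cT_\sg=\C^\x$ and by \eqref{E:cm1.W} the conjugated torus acts through $\iota_{\cmpt_\sg}(re^{i\theta})=r\kappa_{-\theta}$ (up to sign conventions), i.e.\ through the center times $\SO(2,\R)$. The weight-$(k_\sg+2m_\sg)$ equivariance \eqref{E:15.W} together with the action of the center (controlled by the central character $\om_\sg$, which is $\abs{\cdot}^{-k_\sg}$ on the archimedean component) gives precisely $\pi(\iota_{\cmpt_\sg}(z))\wtd V_+^{m_\sg}W_{k_\sg}=z^{-(k_\sg+m_\sg)}\zbar^{m_\sg}\abs{z\zbar}^{k_\sg/2}\wtd V_+^{m_\sg}W_{k_\sg}$, which is $\chi_\sg^{-1}(z)$ with $\chi_\sg$ as stated; so this is a routine unwinding of \eqref{E:cm1.W} and \eqref{E:15.W}.

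\emph{Split case.} By \eqref{E:cm2.W}, $\iota_{\cmptv}(\cO_{\cK_v}^\x F^\x)$ consists of diagonal matrices $\DII{t_1}{t_2}$ with $t_1/t_2\in\cO_F^\x$ (after absorbing the central $F^\x$). For $W=\localW{\chi,v}$, the action of such a diagonal element on the Kirillov function $\localK{\chi,v}$ is $a\mapsto \chi_{\wbar}(t_2)^{-1}\cdots$ — one reads off from each of the four defining formulas for $\localK{\chi,v}$ (unramified $L$-factor trivial; $\pi\ot\chi_w$ unramified; mixed ramification; special) that multiplying $a$ by a unit $t_1/t_2$ and translating by $t_2$ scales the value by $\chi_w^{-1}(t_1)\chi_{\wbar}^{-1}(t_2)$; since $\chi_v=(\chi_w,\chi_{\wbar})$ under $\cK_v^\x=F^\x\oplus F^\x$, this is exactly $\chi_v^{-1}(t)$. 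For the $p$-modified functions one uses \eqref{E:10.W}, which already records $\pi(\MX{a}{b}{0}{d})\localW{\chi,v}^\flat=\chi_w^{-1}(a)\chi_{\wbar}^{-1}(d)\localW{\chi,v}^\flat$; combining with \eqref{E:cm2.W} gives the toric property of $\localW{\chi,v}^\flat$, while for the pieces $\localW{\chi,u,v}$ the support condition $a\in u(1+\uf\cO)$ shifts under $a\mapsto (t_1/t_2)a$ to $t_1t_2^{-1}u(1+\uf\cO)$, which is $u.t^{1-c}(1+\uf\cO)$ with the notation $u.t^{1-c}=ut_{\wbar}t_w^{-1}$, yielding the displayed twisting formula in item (3). \emph{Inert and ramified case.} Here $\cT_v=E^\x$, and the toric property is built into the definition \eqref{E:DeflocalWinert.W}: $\localW{\chi,v}=\cP_{\chi,\cmpt}\LR_v^{m_v(\chi,\pi)}W^0_v$ where $\cP_{\chi,\cmpt}$ averages $\pi(\iota_\cmpt(t))$ against $\chi(t)\dt$ over $E^\x/F^\x$. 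A change of variables $t\mapsto t_0 t$ in the integral immediately gives $\pi(\iota_\cmpt(t_0))\cP_{\chi,\cmpt}W=\chi^{-1}(t_0)\cP_{\chi,\cmpt}W$; the only thing to check is that the integral converges and is nonzero, which one arranges via the continuity of $\pi$ and, for non-triviality, a later computation — but the eigenvector identity itself is formal.

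The main obstacle is the split-case bookkeeping: there are four separate formulas for $\localK{\chi,v}$ depending on the ramification of $\pi\ot\chi_w$, and one must check in each that the Kirillov-model action of the diagonal torus produces exactly $\chi_w^{-1}(t_1)\chi_{\wbar}^{-1}(t_2)$ — in particular that the summations $\sum_{i+j=v(a)}$ in the unramified principal series case transform correctly under $a\mapsto (t_1/t_2)a$ (they do, since $v((t_1/t_2)a)=v(a)$), and that in the special and mixed cases the characters $\mu_i\Abs^\onehalf$ are genuinely unramified so that translating the argument by a unit does not disturb the formula. Everything else is a mechanical consequence of \eqref{E:cm1.W}, \eqref{E:cm2.W}, \eqref{E:15.W}, \eqref{E:10.W}, and the change-of-variables invariance of the integral defining $\cP_{\chi,\cmpt}$.
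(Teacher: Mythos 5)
Your proposal is correct and takes essentially the same route as the paper, which disposes of the lemma with the single remark that it ``follows immediately from the definitions of these Whittaker functions together with \eqref{E:15.W}, \eqref{E:cm1.W} and \eqref{E:cm2.W}''; your write-up is that verification carried out explicitly, with the same place-by-place reduction and the same formal change-of-variables argument for $\cP_{\chi,\cmpt}$ in the non-split case. Two harmless slips to flag: at an archimedean $\sigma$ the central character of $\pi_\sigma = \pi(\Abs^{(k_\sigma-1)/2},\Abs^{(1-k_\sigma)/2}\sgn^{k_\sigma})$ is $\sgn^{k_\sigma}$, not $\abs{\cdot}^{-k_\sigma}$ --- indeed your own conclusion $\chi_\sigma^{-1}(re^{i\theta})=e^{-i(k_\sigma+2m_\sigma)\theta}$ is radially trivial and so requires $\om_\sigma(r)=1$ for $r>0$; and in item (3) the Kirillov action $\xi(a)\mapsto\om(t_{\wbar})\,\xi(a\,t_w t_{\wbar}^{-1})$ shifts the support $u(1+\uf\cO)$ to $u\,t_{\wbar}t_w^{-1}(1+\uf\cO)$, not to $t_1t_2^{-1}u(1+\uf\cO)$, so under the indexing $t=t_we_w+t_{\wbar}e_{\wbar}$ of \eqref{E:cm2.W} this is already $u.t^{1-c}(1+\uf\cO)$ without the compensating index swap you implicitly used.
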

\begin{proof} It follows immediately from the definitions of these Whittaker functions together with \eqref{E:15.W}, \eqref{E:cm1.W} and \eqref{E:cm2.W}.\end{proof}
\subsection{Local toric period integrals (I)}\label{SS:toricintegralI.W}
\subsubsection{}
Define the local toric period integral for $W\in \cW(\pi,\addchar)$ by
\begin{align*}\localP{W}{\chi}:=&P(W,W,\chi)\\
=&\int_{E^\x/F^\x}\bfb_v(\pi(\iota(t))W,\pi(\cmJ)W)\chi(t)\dt\cdot\frac{L(1,\tau_{E/F})}{\zeta_F(1)}.\end{align*}
The main task of this section is to evaluate $\localP{\pi(\cmpt)\localW{\chi,v}}{\chi}$. Put
\[\bfd(a)=\DII{a}{1}\quad(a\in F^\x).\]
We first treat the archimedean and split cases.
\subsubsection{The archimedean case}\label{SS:local_archi}
Suppose $v=\sg\in\Sg\isoto\Hom(\cF,\R)$ is an archimedean place.
\begin{prop}\label{P:Architoric.W}We have
\[\localP{\pi(\cmpt) \wtd V_+^{m_\sg}W_{k_\sg}}{\chi}=2^3\cdot \frac{\Gamma(m_\sg+1)\Gamma(k_\sg+m_\sg)}{(4\pi)^{k_\sg+1+2m_\sg}}.\]
\end{prop}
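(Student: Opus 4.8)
The proposition is the archimedean instance of the local toric period integral computation, so everything reduces to an explicit integral over $\C^\x/\R^\x \cong \R_{>0}\times(\mathrm{SO}(2)/\pm 1)$. First I would unwind the definition
\[
\localP{\pi(\cmpt)\wtd V_+^{m_\sg}W_{k_\sg}}{\chi}=\int_{\C^\x/\R^\x}\bfb_\sg\bigl(\pi(\iota(t))\,\pi(\cmpt)\,\wtd V_+^{m_\sg}W_{k_\sg},\,\pi(\cmJ)\,\pi(\cmpt)\,\wtd V_+^{m_\sg}W_{k_\sg}\bigr)\chi_\sg(t)\,\dt\cdot\frac{L(1,\tau_{\C/\R})}{\zeta_\R(1)},
\]
and use \lmref{L:toric.W}(1): $\pi(\iota_\cmpt(t))\wtd V_+^{m_\sg}W_{k_\sg}=\chi_\sg^{-1}(t)\wtd V_+^{m_\sg}W_{k_\sg}$ for $t\in\C^\x$, with $\chi_\sg(z)=z^{k_\sg+m_\sg}\zbar^{-m_\sg}|z\zbar|^{-k_\sg/2}$. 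Conjugating the first slot by $\cmpt$ turns $\pi(\iota(t))$ into $\pi(\iota_\cmpt(t))$ (up to the harmless rearrangement $\pi(\cmpt)\pi(\iota_\cmpt(t))=\pi(\iota(t))\pi(\cmpt)$), so the integrand's $t$-dependence collapses: $\chi_\sg(t)\cdot\chi_\sg^{-1}(t)=1$ on the toric part, and the bilinear form factors out of the integral as a constant times $\vol(\C^\x/\R^\x)$-type contribution that is itself an elementary integral because $\iota_\cmpt(t)$ acts through $\kappa_\theta$ by \eqref{E:cm1.W}. Thus the whole object becomes a single scalar $\bfb_\sg$-evaluation on the vector $\pi(\cmpt)\wtd V_+^{m_\sg}W_{k_\sg}$ (or rather on the pair, one entry twisted by $\pi(\cmJ)$), and the archimedean $\chi$-integral is purely a normalization.

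The second and main computational step is to evaluate the resulting bilinear form. Recall $\bfb_\sg(W_1,W_2)=\int_{\R^\x}W_1(\bfd(a))W_2(\bfd(-a))\om_\sg^{-1}(a)\,\dx a$. I would use the explicit formula \eqref{E:archiWhittaker.W} for $W_{k_\sg}$ and compute $\wtd V_+^{m_\sg}W_{k_\sg}(\bfd(a))$ by applying the normalized raising operator \eqref{E:Shi_Maass} $m_\sg$ times; on the line $\bfd(a)$, $a>0$, $V_+$ acts (up to lower-order $\kappa_\theta$-rotation, killed when we only look at the right $K$-type) essentially as $a\,\tfrac{d}{da}$ plus a weight shift on $a^{k_\sg/2}e^{-2\pi a}$, so $\wtd V_+^{m_\sg}W_{k_\sg}(\bfd(a))$ is a polynomial-times-$a^{k_\sg/2+m_\sg}e^{-2\pi a}$ — in fact, after the $(-8\pi)^{-1}$ normalization it should come out to a clean monomial $c\cdot a^{k_\sg/2+m_\sg}e^{-2\pi a}$ (this is the standard fact that the normalized raising operator sends the weight-$k_\sg$ new vector to the weight-$(k_\sg+2m_\sg)$ vector with unit-normalized leading term). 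The effect of $\pi(\cmpt)$ on the archimedean place is by $\cmpt_\sg=\MX{\Im\sg(\CMP)}{\Re\sg(\CMP)}{0}{1}$, an upper-triangular matrix, so it rescales the argument $a\mapsto \Im\sg(\CMP)\cdot a$ and contributes a power of $\Im\sg(\CMP)$ and a phase from $\Re\sg(\CMP)$; the $\pi(\cmJ)$ on the second slot likewise only rotates the $K$-type and reflects, contributing at most a sign. Plugging into $\bfb_\sg$, the $a$-integral becomes (a constant times) $\int_0^\infty a^{k_\sg+2m_\sg}e^{-4\pi a}\,\tfrac{da}{a}=\Gamma(k_\sg+2m_\sg)(4\pi)^{-(k_\sg+2m_\sg)}$; but the assertion has $\Gamma(m_\sg+1)\Gamma(k_\sg+m_\sg)$, not $\Gamma(k_\sg+2m_\sg)$, which tells me the raising operator does \emph{not} produce a pure monomial but a specific Laguerre-type polynomial, and the integral $\int_0^\infty$ of (that polynomial)$^2\cdot a^{k_\sg}e^{-4\pi a}\,d^\times a$ is the one producing $\Gamma(m_\sg+1)\Gamma(k_\sg+m_\sg)$ via the orthogonality relation for Laguerre polynomials. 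So more carefully: $\wtd V_+^{m_\sg}W_{k_\sg}(\bfd(a))$ is a constant multiple of $a^{k_\sg/2}e^{-2\pi a}L_{m_\sg}^{(k_\sg-1)}(4\pi a)$ (or a close variant), and the Laguerre $L^2$-norm $\int_0^\infty e^{-x}x^{\alpha}L_n^{(\alpha)}(x)^2\,dx=\Gamma(n+\alpha+1)/n!$ with $\alpha=k_\sg-1$, $n=m_\sg$ gives $\Gamma(k_\sg+m_\sg)/\Gamma(m_\sg+1)$ — and combining with the two normalization constants $(-8\pi)^{-m_\sg}$ from \eqref{E:Shi_Maass} (one from each slot) plus the overall $2^3=L(1,\tau_{\C/\R})/\zeta_\R(1)$-type archimedean factor and the Jacobian of $d^\times a$, one should land exactly on $2^3\cdot\Gamma(m_\sg+1)\Gamma(k_\sg+m_\sg)/(4\pi)^{k_\sg+1+2m_\sg}$.

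The third step is bookkeeping of constants: track $\zeta_\R(1)=\pi^{-1/2}\Gamma(1/2)=1$ and $L(1,\tau_{\C/\R})=\zeta_\R(2)$ (or whatever the paper's normalization gives — it must contribute the factor $2^3$ together with the $d^\times x=2\pi^{-1}r^{-1}\,dr\,d\theta$ normalization on $\C$ fixed in \subsecref{S:notation.W}), the volume of $\mathrm{SO}(2)/\pm1$ against $d\theta$, and the powers of $\Im\sg(\CMP)$ and $\Re\sg(\CMP)$ from $\cmpt_\sg$; the claim in the proposition has no $\CMP$ on the right, so these must cancel — a good internal consistency check. The main obstacle will be precisely this step: getting the normalized raising operator $\wtd V_+^{m_\sg}$ right on the Whittaker function (its explicit action as a differential operator on $\GL_2(\R)$, including the off-diagonal $V_-$-type corrections that vanish on the correct $K$-type), and then recognizing the resulting $a$-integral as a Laguerre orthogonality integral rather than a naive Gamma integral; these are standard but delicate computations of the type carried out in \cite{Jacquet_Langlands:GLtwo}, and the factors of $2$, $\pi$, and $(-1)$ must be marshalled carefully against the measure normalizations fixed in \secref{S:notation.W}.
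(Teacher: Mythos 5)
Your overall plan matches the paper's: use the toric property from \lmref{L:toric.W}(1) to collapse the $t$-integral to $\vol(\C^\x/\R^\x,dt)=2$ times a single bilinear-form evaluation, reduce the pair $(\pi(\cmpt)\cdot,\pi(\cmJ\cmpt)\cdot)$ via $\cmpt^{-1}\cmJ\cmpt=\DII{-1}{1}$ so that all $\Im\sg(\CMP)$ and $\Re\sg(\CMP)$ drop out, and then compute the resulting scalar. Where you part ways with the paper is in how that scalar $\bfb_\sg\bigl(V_+^m W_k,\pi(\DII{-1}{1})V_+^m W_k\bigr)$ is evaluated. The paper never writes down $\wtd V_+^{m}W_k(\bfd(a))$ as an explicit function; instead it (i) shows $h_m(a)=V_+^mW_k(\bfd(a))$ is real-valued by a recursion argument, so that the bilinear form equals the Hermitian pairing $\pair{V_+^mW_k}{V_+^mW_k}$, and then (ii) invokes the Lie-algebra identity $V_-^mV_+^mW_k=(-4)^m\frac{\Gamma(k+m)\Gamma(m+1)}{\Gamma(k)}W_k$ from Jacquet--Langlands together with adjointness $\pair{V_+^mW_k}{V_+^mW_k}=(-1)^m\pair{W_k}{V_-^mV_+^mW_k}$, reducing everything to $\pair{W_k}{W_k}=(4\pi)^{-k}\Gamma(k)$. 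Your Laguerre route instead requires first deriving the explicit normalized Kirillov function, which works out to $\wtd V_+^m W_k(\bfd(a))=\pm m!(4\pi)^{-m}\,L_m^{(k-1)}(4\pi a)\,a^{k/2}e^{-2\pi a}$, and then using the Laguerre $L^2$-norm $\int_0^\infty e^{-x}x^{k-1}L_m^{(k-1)}(x)^2\,dx=\Gamma(k+m)/m!$; the two factors of $m!$ from the normalization, times $\Gamma(k+m)/m!$ from orthogonality, give $\Gamma(m+1)\Gamma(k+m)$, matching the paper. Both are correct, but the paper's is shorter because it trades an explicit series computation for a representation-theoretic identity that is already in the cited reference.

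Two places where your sketch is imprecise and would need repair if written up. First, your interim guess that $\wtd V_+^{m}W_k(\bfd(a))$ is a ``clean monomial $c\cdot a^{k/2+m}e^{-2\pi a}$'' is wrong (you notice the $\Gamma$-factors don't fit and then correct course, but it is worth stating plainly: the Shimura--Maass operator produces a nearly-holomorphic polynomial in $(4\pi a)^{-1}$, not a monomial, and this polynomial is precisely the Laguerre one; getting its leading constant $m!(4\pi)^{-m}$ right is exactly the delicate normalization work). Second, your identification ``$2^3=L(1,\tau_{\C/\R})/\zeta_\R(1)$'' is incorrect: that ratio is $\zeta_\R(2)/\zeta_\R(1)=\pi^{-1}$. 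The $2^3$ in the statement arises only after combining $\vol(\C^\x/\R^\x,dt)=2$, the factor $\pi^{-1}=\zeta_\R(2)/\zeta_\R(1)$, and the $(-8\pi)^{-2m}$ from the two slots of $\wtd V_+^m$, rewritten against $4^m(4\pi)^{-k}$ from the Hermitian norm; concretely $2\cdot\pi^{-1}=2^3\cdot(4\pi)^{-1}$. You hedge on this (``or whatever the paper's normalization gives''), so it is not a conceptual error, but as written the constant-tracking is not yet done. Finally, a small point you gloss over with ``contributing at most a sign'': the paper devotes several lines (the recursion and the induction that $h_m\in\R$) to showing that sign is $+1$; in your route the realness would instead follow from the explicit real-coefficient Laguerre formula, so the fact is not a gap, but it must be said.
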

\begin{proof}Introduce the Hermitian inner product on $\cW(\pi,\addchar)$ defined by
\[\pair{W_1}{W_2}:=\bfb_v(W_1,c(W_2)),\text{ where }c(W_2)(g):=\ol{W(\DII{-1}{1}g)}\om(\det g).\]
Write $k=k_\sg$ and $m=m_\sg$. It is clear that
\[\pair{W_k}{W_k}=(4\pi)^{-k}\Gamma(k).\]
Since $c(V_+^mW_k)$ and $\pi(\DII{-1}{1})V_+^mW_k$ are both nonzero Whittaker functions of weight $-k-2m$, there exists some constant $\gamma$ such that
\[\pi(\DII{-1}{1})V_+^mW_k=\gamma\cdot c(V_+^mW_k)\iff V_+^mW_k(\bfd(a))=\gamma\cdot \ol{V_+^mW_k(\bfd(a))}\text{ for all }a\in\R_+.\]
Let $h_m(x):=V_+^mW_k(\bfd(x))$. Then $h_0(x)=W_k(\bfd(x))$ is a real-valued function in view of the definition \eqref{E:archiWhittaker.W}. A simple calculation shows that
\[h_{m+1}=2x\frac{dh_m}{dx}+(2\pi x-k-2m)h_m,\]
so by induction $h_m(x)$ takes value in $\R$ (\cf\cite[p.189]{Jacquet_Langlands:GLtwo}). This implies that $\gamma=1$. We thus have
\[\bfb_v(\pi(\cmpt)V_+^mW_k,\pi(\cmJ\cmpt)V_+^mW_k)=\pair{V_+^mW_k}{V_+^mW_k}\quad(\cmpt^{-1}\cmJ\cmpt=\DII{-1}{1}).\]

To evaluate $\pair{V_+^mW_k}{V_+^mW_k}$, note that by \cite[p.166]{Jacquet_Langlands:GLtwo} we have
\beq\label{E:6.N}\begin{aligned}V_-^mV_{+}^{m}W_{\wt}&=(-4)^m\frac{\Gamma(k+m)\Gamma(m+1)}{\Gamma(k)}\cdot W_k,
\end{aligned}\eeq
and hence we find that
\begin{align*}\pair{V_+^mW_k}{V_+^mW_k}=&(-1)^m\pair{W_k}{V_-^mV_+^mW_k}\\
=&4^m\frac{\Gamma(k+m)}{\Gamma(k)}\Gamma(m+1)\pair{W_k}{W_k}\\
=&4^m(4\pi)^{-k}\cdot \Gamma(k+m)\Gamma(m+1).\end{align*}
Recall that $\dt=2\pi^{-1}d\theta$ with$ t=e^{i\theta}$, so $\vol(\C^\x/\R^\x,\dt)=2\pi^{-1}\cdot \pi=2$. Combining these with \lmref{L:toric.W} (1), we find that
\begin{align*}
\localP{\pi(\cmpt)\wtd V_+^mW_k}{\chi}&=2\cdot (-8\pi)^{-2m}\cdot\bfb_v(\pi(\cmpt)V_+^mW_k,\pi(\cmJ\cmpt)V_+^mW_k)\cdot\frac{\zeta_\R(2)}{\zeta_\R(1)}\\
&=2^3(4\pi)^{-2m-1}4^{-m}\cdot\pair{V_+^mW_k}{V_+^mW_k}\\
&=2^3\cdot (4\pi)^{-k-2m-1}\Gamma(k+m)\Gamma(m+1).\qedhere
\end{align*}
\end{proof}
\subsubsection{The split case}\label{SS:toricI.W}
 Suppose that $v=w\wbar$ is a finite place split in $E$. Recall that we have assumed $w|\Sgbar_p\Csplit$ if $v|p\frakN^+ $.
\begin{lm}\label{L:key.W} We have
\[\localP{\pi(\cmpt)W}{\chi}=\Psi(\onehalf,W,\chi_w)^2\cdot \frac{L(\onehalf,\pi\ot\chi_{\wbar})}{L(\onehalf,\pi\ot\chi_w)}\cdot\ep(\onehalf,\pi\ot\chi_w,\addchar)\cdot\om^{-1}\chi_w^{-2}(-d_F)\om(\det\cmpt).\]
\end{lm}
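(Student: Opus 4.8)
The statement is a local zeta-integral identity at a finite split place $v = w\bar w$, relating the local toric period $\localP{\pi(\cmpt)W}{\chi}$ to a product of Hecke zeta integrals for $W$ against $\chi_w$. The strategy is to transport the toric integral over $E^\x/F^\x$ to an integral over $F^\x$ using the splitting coordinates, and then to recognize the resulting double $F^\x$-integral as (a Tate-type) product of zeta integrals governed by the local functional equation.

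First I would use the conjugated embedding: by \eqref{E:cm2.W}, for $t = t_1 e_w + t_2 e_{\bar w}$ one has $\iota_\cmpt(t) = \DII{t_1}{t_2}$, and the quotient $E^\x/F^\x$ is parametrized (up to the volume normalization $\bfv_E$, here with $e_v = 1$) by $t \mapsto t_1/t_2 \in F^\x$, with $\chi$ restricted to the torus becoming $t_1 \mapsto \chi_w(t_1)$, $t_2 \mapsto \chi_{\bar w}(t_2)$ and the self-duality $\chi|_{F^\x} = \om^{-1}$ tying the two. So
\[
\int_{E^\x/F^\x}\bfb_v(\pi(\iota_\cmpt(t))\pi(\cmpt)W,\pi(\cmpt^{-1}\cmJ\cmpt)W)\chi(t)\,\dt
\]
unfolds, after inserting the definition of $\bfb_v$ as an integral of $W_1(\bfd(a))W_2(\bfd(-a))\om^{-1}(a)\dx a$, into a double integral over $(a,s)\in F^\x\times F^\x$ of $W(\bfd(as_1))\cdot(\pi(\cmJ\text{-twist})W)(\bfd(-as_2))$-type terms against $\chi_w$ and $\chi_{\bar w}$ and powers of $\Abs$. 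The matrix $\cmpt^{-1}\cmJ\cmpt$ must be computed explicitly from \eqref{E:cmptv.W}; this produces the Weil-element/antidiagonal twist that converts the second factor into the \emph{dual} zeta integral, which by the local functional equation for $\GL_2\times\GL_1$ equals $\gamma(\tfrac12,\pi\otimes\chi_w,\addchar)^{-1}$ times a zeta integral of $W$ against $\chi_w^{-1}\om^{-1} = \chi_{\bar w}$ — this is exactly where the ratio $L(\tfrac12,\pi\otimes\chi_{\bar w})/L(\tfrac12,\pi\otimes\chi_w)$ together with the epsilon factor $\ep(\tfrac12,\pi\otimes\chi_w,\addchar)$ enters.

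Concretely, after the change of variables I expect to land on something of the shape
\[
\localP{\pi(\cmpt)W}{\chi} = \Psi(\tfrac12,W,\chi_w)\cdot \Psi(\tfrac12,\widetilde{W},\chi_w^{-1}\om^{-1})\cdot(\text{normalizing constants}),
\]
where $\widetilde W(g) = W(g\,w_1)\om^{-1}(\det g)$ is the standard involution sending $\cW(\pi,\addchar)$ to $\cW(\pi^\vee,\addchar^{-1})\cong\cW(\pi\otimes\om^{-1},\addchar)$ (using \eqref{E:bilinear.W}), and $w_1 = \MX{0}{1}{-1}{0}$ up to the normalization $d_F$. Applying the local functional equation of Jacquet–Langlands to rewrite $\Psi(\tfrac12,\widetilde W,\chi_{\bar w})$ in terms of $\Psi(\tfrac12,W,\chi_w)$ with the factor $\ep(\tfrac12,\pi\otimes\chi_w,\addchar)\cdot L(\tfrac12,\pi\otimes\chi_{\bar w})/L(\tfrac12,\pi\otimes\chi_w)$ attached then yields the asserted formula, the remaining scalars $\om^{-1}\chi_w^{-2}(-d_F)$ and $\om(\det\cmpt)$ coming from tracking (i) the shift $\det\cmpt$ through \eqref{E:bilinear.W}, (ii) the uniformizer/different normalization $d_F = d_{\cF_v}$ hidden in $\cmpt$ and in the passage between $w_1$ and $\cmJ$, and (iii) the argument $-1$ from the $\bfd(-a)$ in $\bfb_v$.

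The main obstacle will be the bookkeeping of the constants: getting the precise power $\chi_w^{-2}(-d_F)$, the central-character factor $\om(\det\cmpt)$, and the correct normalization of $\ep(\tfrac12,\pi\otimes\chi_w,\addchar)$ all consistent with the chosen conventions (the measure $\dx a$ with $\vol(\cO_F^\x) = \abs{\cD_F}^{1/2}$, the definition of $\bfv_E$, the self-dual $\addchar_F$, and the explicit $\cmpt$ from \eqref{E:cmptv.W}), since sign and different-shift errors are easy here. The analytic input — absolute convergence of the toric integral (guaranteed by \cite[LEMME 7]{Waldsupurger:Central_value}) and of the zeta integrals near $s = \tfrac12$, plus meromorphic continuation to identify values — is routine, as is the functional equation itself; the work is entirely in the explicit unfolding and the constant chase.
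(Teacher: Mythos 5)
Your proposal matches the paper's proof essentially verbatim: unfold the toric integral in the split coordinates $\iota_\cmpt(t)=\DII{t_1}{t_2}$, insert $\bfb_v$ to get a double $F^\x$-integral, compute $\cmptv^{-1}\cmJ\cmptv=\MX{0}{d_F^{-1}}{d_F}{0}$, recognize the product $\Psi(\onehalf,W,\chi_w)\cdot\Psi(\onehalf,\wh W,\chi_w^{-1})$ with $\wh W(g)=W(gw_1)\om^{-1}(\det g)$, and apply the Jacquet--Langlands local functional equation at $s=\onehalf$ (noting $\pi^\vee\otimes\chi_w^{-1}\cong\pi\otimes\chi_{\bar w}$ via self-duality) while chasing the scalars $\om(\det\cmpt)$ and $\om^{-1}\chi_w^{-2}(-d_F)$. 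The only blemish is a small notational slip — the character paired with $\wh W$ should be $\chi_w^{-1}$, not $\chi_w^{-1}\om^{-1}$, since the $\om^{-1}(\det g)$ is already built into $\wh W$ — but your ensuing identification of $\chi_w^{-1}\om^{-1}$ with $\chi_{\bar w}$ is exactly how the $L$-factor ratio in the statement arises, so the reasoning is sound.
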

\begin{proof}
Let $\wh W(g):=W(g\MX{0}{1}{-1}{0})\om^{-1}(\det g)$. By \cite[Thm.\,2.18 (iv)]{Jacquet_Langlands:GLtwo}, we have
the local functional equation:
\[\frac{\Psi(1-s,\wh W,\chi_w^{-1})}{L(1-s,\pi^\vee\ot\chi_w^{-1})}=\ep(s,\pi\ot\chi_w,\addchar)\cdot\frac{\Psi(s,W,\chi_w)}{L(s,\pi\ot\chi_w)}.\]
We note that
\[\cmptv^{-1}\cmJ\cmptv=\MX{0}{d_F^{-1}}{d_F}{0}.\]
A straightforward computation shows that \begin{align*}
\localP{\pi(\cmpt)W}{\chi}&=\om(\det\cmpt)\int_{F^\x}\int_{F^\x}W(\bfd(at_1))W(\bfd(-a)\MX{0}{d_F^{-1}}{d_F}{0})\chi_w(t_1)\om^{-1}(a)\dx a \dt_1\\
&=\om(-\det\cmpt\cdot d_F)\int_{F^\x}\int_{F^\x}W(\bfd(t_1))W(\bfd(a d_F^{-2})\MX{0}{1}{-1}{0})\chi_w(t_1)\om^{-1}\chi_w^{-1}(a)\dx a \dt_1\\
&=\om(-\det\cmpt)\om^{-1}\chi_w^{-2}(d_F)\cdot \Psi(\onehalf,W,\chi_w)\Psi(\onehalf,\wh W,\chi_w^{-1})\\
&=\om(\det\cmpt)\om^{-1}\chi_w^{-2}(-d_F)\Psi(\onehalf,W,\chi_w)^2\cdot \ep(\onehalf,\pi\ot\chi_w,\addchar)\cdot \frac{L(\onehalf,\pi^\vee\ot\chi_w^{-1})}{L(\onehalf,\pi\ot\chi_w)}.
\end{align*}
The lemma thus follows.
\end{proof}
\begin{prop}\label{P:toric_split.W}We have
\begin{align*}\frac{1}{L(\onehalf,\pi_E\ot\chi)}\cdot\localP{\pi(\cmpt)\localW{\chi,v}}{\chi}=&\abs{\cD_F}\cdot\begin{cases}\ep(\onehalf,\pi\ot\chi_w,\addchar)\om^{-1}\chi_w^{-2}(-2\delta)
&\text{ if } v\mid\frakN^+,\\
\om(\det\cmpt)&\text{ if } v\ndivides \frakN^+.
\end{cases}
\intertext{If $v=w\wbar$ with $w\in\Sgbar_p$, then }
\frac{1}{L(\onehalf,\pi_E\ot\chi)}\cdot\localP{\pi(\cmpt)\localW{\chi,v}^\flat}{\chi}=&\frac{\ep(\onehalf,\pi\ot\chi_w,\addchar)}{L(\onehalf,\pi\ot\chi_w)^2}\cdot \om^{-1}\chi_w^{-2}(-2\delta)\abs{\cD_F}.\end{align*}
\end{prop}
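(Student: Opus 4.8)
The plan is to read both identities off \lmref{L:key.W}, which already reduces $\localP{\pi(\cmpt)W}{\chi}$ for an arbitrary $W$ to the square of a $\GL_1$-twisted zeta integral times an explicit product of local constants; what remains is to substitute the zeta integrals of the chosen Whittaker functions and cancel the factor $L(\onehalf,\pi_E\ot\chi)$.

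First I would treat $\localW{\chi,v}$. Its defining property \eqref{E:DeflocalWsplit.W} gives $\Psi(\onehalf,\localW{\chi,v},\chi_w)^2=L(\onehalf,\pi\ot\chi_w)^2\abs{\cD_F}$, so \lmref{L:key.W} yields
\[\localP{\pi(\cmpt)\localW{\chi,v}}{\chi}=L(\onehalf,\pi\ot\chi_w)L(\onehalf,\pi\ot\chi_{\wbar})\abs{\cD_F}\cdot\ep(\onehalf,\pi\ot\chi_w,\addchar)\,\om^{-1}\chi_w^{-2}(-d_F)\,\om(\det\cmpt).\]
By the very definition of the local $L$-factor of a split base change one has $L(\onehalf,\pi_E\ot\chi)=L(\onehalf,\pi\ot\chi_w)L(\onehalf,\pi\ot\chi_{\wbar})$ (equally in the principal-series and in the special case), so after dividing I am left to identify $\abs{\cD_F}\cdot\ep(\onehalf,\pi\ot\chi_w,\addchar)\,\om^{-1}\chi_w^{-2}(-d_F)\,\om(\det\cmpt)$ with the asserted value. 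If $v\mid\frakN^+$, then $\pi_E\ot\chi$ is ramified at $v$, which forces $v\mid\frakn\frakc_\chi$ and hence $v\mid\Bad$; the recipe in \subsecref{SS:choiceofcmpt} then gives $d_F=2\delta$, and a one-line computation with the explicit matrix $\cmptv$ there gives $\det\cmptv=d_F(\CMP_w-\CMP_{\wbar})^{-1}=d_F/2\delta=1$. Inserting these two equalities turns the expression into $\abs{\cD_F}\cdot\ep(\onehalf,\pi\ot\chi_w,\addchar)\,\om^{-1}\chi_w^{-2}(-2\delta)$. If $v\nmid\frakN^+$ (so in particular $v\nmid p$, the primes above $p$ being covered by the second identity), then $\pi_v$, $\chi_w$ and $\chi_{\wbar}$ are all unramified; hence $\ep(\onehalf,\pi\ot\chi_w,\addchar)$ is the epsilon factor of unramified data against the character $\addchar=\addchar_v$ of conductor $\cD_F^{-1}$, which the standard formula evaluates to $\om_v\chi_w^2(d_F)$. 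Since $\om_v\chi_w^2$ is then unramified and $-1$ is a unit, $\om_v\chi_w^2(-1)=1$, so $\ep(\onehalf,\pi\ot\chi_w,\addchar)\,\om^{-1}\chi_w^{-2}(-d_F)=1$ and the expression collapses to $\abs{\cD_F}\om(\det\cmpt)$.

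For the second identity the computation is identical, with $\localW{\chi,v}$ replaced by $\localW{\chi,v}^\flat$ and \eqref{E:DeflocalWsplit.W} replaced by the normalization \eqref{E:10.W}: now $\Psi(\onehalf,\localW{\chi,v}^\flat,\chi_w)$ contributes only the volume of $\cO_F^\x$ and no factor of $L(\onehalf,\pi\ot\chi_w)$, so after dividing by $L(\onehalf,\pi_E\ot\chi)$ the factor $L(\onehalf,\pi\ot\chi_w)^{-2}$ survives in the answer. Since $v\mid p$ again forces $d_F=2\delta$ and $\det\cmptv=1$, one lands on $\ep(\onehalf,\pi\ot\chi_w,\addchar)L(\onehalf,\pi\ot\chi_w)^{-2}\,\om^{-1}\chi_w^{-2}(-2\delta)\abs{\cD_F}$, which is the claim.

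The only delicate point is the second case of the first identity: one must be sure that for a split $v$ outside $\frakN^+$ the local data $\pi_v,\chi_w,\chi_{\wbar}$ really is unramified (so that $L(\onehalf,\pi\ot\chi_w)\neq 1$ and the Kirillov prescription for $\localW{\chi,v}$ is in the regime where \eqref{E:DeflocalWsplit.W} holds in the form used above), and then to check that the chain of epsilon-factor and central-character identities collapses to $1$. This is where the conductor normalization of $\addchar_v$ and the precise choices of $d_F$ and $\cmptv$ from \subsecref{SS:choiceofcmpt} have to be tracked simultaneously.
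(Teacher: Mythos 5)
Your argument is the paper's own proof with the omitted details spelled out: the one-line proof in the paper cites exactly \lmref{L:key.W}, \eqref{E:DeflocalWsplit.W}/\eqref{E:10.W}, the choice of $d_F$ for $v\mid p\frakN^+$, and the identity $\ep(\onehalf,\pi\ot\chi_w,\addchar)\,\om^{-1}\chi_w^{-2}(-d_F)=1$ off $\frakN^+$, and you have simply supplied the verifications that $v\mid\frakN^+$ forces $v\mid\Bad$ (hence $d_F=2\delta$ and $\det\cmptv=1$) and that the unramified epsilon factor at $v\nmid\frakN^+$ collapses against $\om^{-1}\chi_w^{-2}(-d_F)$. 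One small point worth flagging: your evaluation $\Psi(\onehalf,\localW{\chi,v}^\flat,\chi_w)=\vol(\cO_F^\x,\dx a)=\abs{\cD_F}^\onehalf$ is what actually produces the $\abs{\cD_F}$ factor in the second identity, and this is the computation consistent with the measure normalization of \eqref{E:DeflocalWsplit.W}; the literal display \eqref{E:10.W}, which asserts this zeta integral is $1$, agrees only when $p$ is unramified in $\cF$ (as it is in the paper's applications), so your more careful bookkeeping here is the one that makes the stated formula come out right in general.
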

\begin{proof}
The proposition follows immediately from \lmref{L:key.W}. \eqref{E:DeflocalWsplit.W} and \eqref{E:10.W} combined with our choices of $d_F$ for $v|p\frakN^+$ and the fact that
\[\ep(\onehalf,\pi\ot\chi_w,\addchar)\cdot\om^{-1}\chi_w^{-2}(-d_F)=1 \text{ if }v\ndivides \frakN^+.\qedhere\]
\end{proof}
\subsection{Local toric period integrals (II)}\label{SS:toricII.W}
In this subsection, we treat the case $v$ is inert or ramified. A large part of the computation in this subsection is inspired by \cite{Murase:WaldII}. Let
\begin{align*}\bfw=&\MX{0}{-d_F^{-1}}{d_F}{0}\text{ and put}\\
K^0(\uf):=&\stt{g=\MX{a}{b}{c}{d}\in K_v^0\mid a-1\in\uf\cO,\,c\in\uf\cD_F}.\end{align*}
Let $\OKbasis=\OKbasis_v\in \cO_E$ be the element chosen in \subsecref{SS:choiceofcmpt} and write $W^0$ for the new local Whittaker function $W^0_v$ at $v$. Recall that $\stt{1,\OKbasis}$ is an $\cO$-basis of $\cO_E$ and $\OKbasis$ is a uniformizer if $E/F$ is ramified. 
\subsubsection{}
 We prepare some elementary lemmas.
\begin{lm}\label{L:coset.W}Suppose that $v|\Bad$. Let $m$ be a non-negative integer and let
\begin{align*}B^1(\cO)=&\stt{\MX{1}{x}{0}{d}\mid x\in\cD_F^{-1},\,d\in\cO^\x},\\
N(\cD_F^{-1})=&\stt{\MX{1}{x}{0}{1}\mid x\in\cD_F^{-1}}.
\end{align*}
If $y\in\uf^{m+1}\cO$, then we have \[
\bfd(\uf^{m})\iota_{\cmpt}(1+y\OKbasis)\bfd(\uf^{-m})\in K^0(\uf).\]
If $y\in\uf^r\cO^\x$ and $0\leq r\leq m$, then \begin{align*}
N(\cD_F^{-1})\bfd(\uf^{m})\iota_{\cmpt}(1+y\OKbasis)\bfd(\uf^{-m}) B^1(\cO)=&
N(\cD_F^{-1})\DII{\uf^{m-r}}{y\uf^{-m}}\bfw B^1(\cO).
\intertext{If $y\in \uf\cO$, then  }
N(\cD_F^{-1})\bfd(\uf^{m})\iota_\cmpt(y+\OKbasis)\bfd(\uf^{-m}) B^1(\cO)=&
N(\cD_F^{-1})\DII{\uf^{m+e_v-1}}{\uf^{-m}}\bfw B^1(\cO).
\end{align*}
\end{lm}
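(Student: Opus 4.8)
The strategy is a direct matrix computation, tracing through the explicit formulas for $\iota_\cmpt$ and $\cmpt=\cmptv$ in the non-split case. Recall that for non-split finite $v$ we have $\cmptv=\MX{d_F}{-b_v}{0}{a_v}$ where $\OKbasis_v=a_v\CMP+b_v$, and $\iota_\cmpt(t)=\cmpt^{-1}\iota(t)\cmpt$ with $\iota$ the embedding from \eqref{E:imb.W}. The first step is to compute $\iota_\cmpt(1+y\OKbasis)$ as an explicit $2\times 2$ matrix with entries in $\cF_v$, written in terms of $y$, $\rmT(\CMP)$, $\rmN(\CMP)$ and the coordinates $a_v,b_v$. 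Since $\OKbasis$ is an $\cO$-generator of $\cO_E$ over $\cO$, one expects $\iota_\cmpt(\OKbasis)\in M_2(\cO)$ with a particularly simple form (its lower-left entry should be a unit in $\cO$ when $E/F$ is inert, and a uniformizer when ramified, which is where $e_v$ enters). Multiplying on left and right by $\bfd(\uf^m)=\DII{\uf^m}{1}$ and $\bfd(\uf^{-m})$ scales the off-diagonal entries by $\uf^{\pm m}$. For the first assertion, if $y\in\uf^{m+1}\cO$ one checks the resulting matrix lies in $K^0(\uf)$: the diagonal entries are in $1+\uf\cO$ and $\cO^\x$, the upper-right entry lands in $\cD_F^{-1}$, and the lower-left entry, which a priori is $\uf^{-m}\cdot(\text{something in }\uf^{m+1}\cO\cdot(\text{unit or uniformizer}))$, lies in $\uf\cD_F$.

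For the second and third assertions, the point is a coset-reduction (Bruhat-type decomposition) modulo the Iwahori-like groups $N(\cD_F^{-1})$ on the left and $B^1(\cO)$ on the right. Given $g=\bfd(\uf^m)\iota_\cmpt(1+y\OKbasis)\bfd(\uf^{-m})$ with $y\in\uf^r\cO^\x$, $r\le m$, one reads off the matrix entries from the step-one computation: the lower-left entry is a unit times $\uf^{r-m}$ (negative valuation since $r\le m$), so $g$ does not lie in the "big cell" $B^1(\cO)$ and a Weyl element $\bfw$ must appear. The plan is to left-multiply by an element of $N(\cD_F^{-1})$ to clear one entry, factor out $\bfw$, and then absorb the remaining upper-triangular part into $B^1(\cO)$, keeping careful track of which diagonal matrix $\DII{\uf^{m-r}}{y\uf^{-m}}$ survives. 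The third identity, with $y+\OKbasis$ in place of $1+y\OKbasis$ and $y\in\uf\cO$, is the analogous computation where now the relevant entry has valuation governed by $\OKbasis$ itself, producing the shift by $e_v-1$ (i.e.\ $0$ in the inert case, $1$ in the ramified case since then $\OKbasis$ is a uniformizer of $E$ but $\uf=\OKbasis^2\cdot(\text{unit})$, up to the conventions fixed in \subsecref{SS:choiceofcmpt}).

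The main obstacle is purely bookkeeping: keeping the normalizations consistent — the choice $d_{\cF_v}=\uf_v^{v(\cD_\cF)}$ for non-split $v$, the precise form of $\bfw=\MX{0}{-d_F^{-1}}{d_F}{0}$ versus the standard Weyl element, and the twist by $\cmpt$ in $\iota_\cmpt$ — so that the valuations of all four matrix entries come out exactly as claimed rather than off by the valuation of $d_F$ or by $e_v$. I would organize the computation by first doing the inert case (where $\OKbasis$ is a unit and $d_F$ can be normalized away cleanly), then treating the ramified case as a modification where $\OKbasis$ is a uniformizer of $E$; the exponent $e_v-1$ and the factor $e_v$ in $\bfv_E$ should then drop out naturally from $v(\uf)=e_v\cdot v_E(\OKbasis)$. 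No deep input is needed beyond the explicit description of $\iota$, $\cmpt$, and the structure of $\cO_E$ as an $\cO$-module.
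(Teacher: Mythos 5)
Your plan is the same direct computation the paper sketches: the key input is the explicit formula $\iota_\cmpt(x+y\OKbasis)=\MX{x+y\rmT(\OKbasis)}{y d_F^{-1}\rmN(\OKbasis)}{yd_F}{x}$, valid because for $v\mid\Bad$ one has $\OKbasis=d_F^{-1}\CMP$, hence $b_v=0$ and $\cmpt=\DII{d_F}{d_F^{-1}}$ (a simplification your plan would obtain upon substituting into your general $\cmptv$). After that it is indeed bookkeeping of valuations and Iwahori reduction, just as you describe.

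One correction to your heuristic: the lower-left entry of $\iota_\cmpt(\OKbasis)$ is $d_F$ in both the inert and ramified cases; it is \emph{not} a unit vs.\ a uniformizer according to the ramification of $E/F$, and it is not where $e_v$ enters. The ramification of $E/F$ is instead felt through $\rmN(\OKbasis)$ (upper-right entry) and through $\det\iota_\cmpt(y+\OKbasis)=\rmN(y+\OKbasis)$: for $y\in\uf\cO$ this norm has valuation $e_v-1$ (a unit when inert, valuation $1$ when $\OKbasis$ is a uniformizer of $E$), which is exactly what produces the shift $\DII{\uf^{m+e_v-1}}{\uf^{-m}}$ in the third identity. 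You do recover this correctly in your remark about $\OKbasis$ being a uniformizer of $E$, so the slip would not derail the calculation, but the entry you should be tracking for $e_v$ is the determinant (equivalently the upper-right entry after Bruhat reduction), not the lower-left one.
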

\begin{proof} Recall that if $v|\Bad$, then $\OKbasis=d_{F}^{-1}\CMP$, $\cmpt=\DII{d_F}{d_F^{-1}}$, and hence 
\[\iota_{\cmpt}(x+y\OKbasis)=\MX{x+y\rmT(\OKbasis)}{yd_F^{-1}\rmN(\OKbasis)}{yd_F}{x}\quad(x,y\in F).\]
Then the proof is a straightforward calculation, so we omit the details.
\end{proof}

\begin{lm}\label{L:halfsum.W}Suppose that $\chi|_{F^\x}$ is trivial on $1+\uf\cO$. For each non-negative integer $r$, we set
\[X_r:=\int_{\uf^r\cO}\chi(1+y\OKbasis)d'y,\]
where $d'y$ is the Haar measure on $\cO$ such that $\vol(\cO,d'y)=L(1,\tau_{E/F})\abs{\cD_E}_E^{\onehalf}\abs{\cD_F}^{-\onehalf}$. Then $X_r=0$ if $c_v(\chi)>1$ and $0<r<c_v(\chi)$ and $X_{r}=\abs{\uf^r}\cdot L(1,\tau_{E/F})\abs{\cD_E}_E^{\onehalf}\abs{\cD_F}^{-\onehalf}$ if $r\geq c_v(\chi)$.
\end{lm}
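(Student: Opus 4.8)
I would split the statement into the two ranges of $r$ and treat them separately; the first is routine measure bookkeeping and the second is the real point.

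\emph{Range $r\geq c_v(\chi)$.} Here the integrand is identically $1$. For $y\in\uf^r\cO$ one has $1+y\OKbasis\in 1+\uf^r\cO_E\subseteq 1+\uf^{c_v(\chi)}\cO_E$, which consists of units when $c_v(\chi)\geq1$ (and, when $c_v(\chi)=0$, $1+y\OKbasis\in\cO_E^\x$ for every $y\in\cO$ because $\{1,\OKbasis\}$ is an $\cO$-basis of $\cO_E$), and $\chi$ is trivial on $(1+\uf^{c_v(\chi)}\cO_E)^\x$ by the definition of $c_v(\chi)$. Hence $X_r=\vol(\uf^r\cO,d'y)=\abs{\uf^r}\cdot\vol(\cO,d'y)$, which is the asserted value since $\vol(\cO,d'y)=L(1,\tau_{E/F})\abs{\cD_E}_E^{\onehalf}\abs{\cD_F}^{-\onehalf}$.

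\emph{Range $0<r<c_v(\chi)$.} Put $c:=c_v(\chi)$ (so $c\geq2$ and $r\geq1$) and let $e$ be the ramification index of $E/F$, so $v_E(\OKbasis)=e-1$ and $1+\uf^n\cO_E=1+\frakp_E^{\,en}$. The plan is to reduce $X_r$ to a finite character sum and locate inside it a nontrivial additive character. First I would check that $g(y):=\chi(1+y\OKbasis)$ depends only on $y$ modulo $\uf^c\cO$: if $y\equiv y'\pmod{\uf^c\cO}$, then $(1+y\OKbasis)(1+y'\OKbasis)^{-1}=1+(y-y')\OKbasis(1+y'\OKbasis)^{-1}\in 1+\frakp_E^{\,ec+e-1}\subseteq 1+\uf^c\cO_E$, on which $\chi$ is trivial. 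Hence $X_r=\vol(\uf^c\cO,d'y)\,\sum_{\bar y\in\uf^r\cO/\uf^c\cO}g(\bar y)$. Next I would verify that $g$ is quasi-periodic under the subgroup $B:=\uf^{c-1}\cO/\uf^c\cO$ of $A:=\uf^r\cO/\uf^c\cO$, i.e. $g(y+b)=g(y)g(b)$ for $y\in\uf^r\cO$ and $b\in\uf^{c-1}\cO$; this follows from $v_E\big(yb\OKbasis^2(1+(y+b)\OKbasis)^{-1}\big)\geq e(r+c-1)+2(e-1)\geq ec$ (the last step amounting to $e(r+1)\geq2$, which holds as $r\geq1$), placing $(1+y\OKbasis)(1+b\OKbasis)(1+(y+b)\OKbasis)^{-1}$ into $1+\uf^c\cO_E$. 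In particular $g$ restricts to a homomorphism $B\to\C^\x$. Granting that this homomorphism is nontrivial, the finite sum vanishes: partitioning $A=\bigsqcup_j(\bar y_j+B)$ and using quasi-periodicity,
\[\sum_{\bar y\in A}g(\bar y)=\sum_j\sum_{b\in B}g(\bar y_j)g(b)=\Big(\sum_j g(\bar y_j)\Big)\Big(\sum_{b\in B}g(b)\Big)=0,\]
so $X_r=0$.

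The hard part is the nontriviality of $g|_B$, and this is exactly where the hypothesis ``$\chi|_{F^\x}$ is trivial on $1+\uf\cO$'' is used. Since $c\geq2$, the definition of $c_v(\chi)$ produces some $\eta=1+\uf^{c-1}w\in 1+\uf^{c-1}\cO_E$ with $\chi(\eta)\neq1$. Writing $w=w_0+w_1\OKbasis$ with $w_0,w_1\in\cO$, I would factor $\eta=(1+\uf^{c-1}w_0)\big(1+\uf^{c-1}w_1'\OKbasis\big)$ with $w_1':=w_1(1+\uf^{c-1}w_0)^{-1}\in\cO$; the scalar factor $1+\uf^{c-1}w_0$ lies in $1+\uf\cO$ and is annihilated by $\chi|_{F^\x}$, so $g(\uf^{c-1}w_1')=\chi(1+\uf^{c-1}w_1'\OKbasis)=\chi(\eta)\neq1$, which is the desired nontrivial value of $g|_B$. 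Beyond this, the only thing requiring attention is the uniform bookkeeping of the $v_E$-valuations of the cross-terms $yb\OKbasis^2$ and $(y-y')\OKbasis$ across all $1\leq r\leq c-1$ and for both the inert ($e=1$) and ramified ($e=2$) cases; this reduces to elementary inequalities and should present no real obstacle.
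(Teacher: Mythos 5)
Your proof is correct, but it takes a more hands-on route than the paper. The paper's proof introduces the quotient group $Q_r:=(1+\uf^r\cO_E)/(1+\uf^r\cO)$: the hypothesis that $\chi|_{F^\x}$ is trivial on $1+\uf\cO$ makes $\chi$ descend to a character of $Q_r$, necessarily nontrivial when $0<r<c_v(\chi)$; the bijection $\uf^r\cO\isoto Q_r$, $y\mapsto 1+y\OKbasis$, transports $d'y$ to the Haar measure on $Q_r$, and then $X_r=0$ is a single stroke of orthogonality for a nontrivial character on a compact abelian group. You avoid $Q_r$ entirely: you reduce $X_r$ to a finite sum over $A=\uf^r\cO/\uf^c\cO$, establish quasi-periodicity of $g(y)=\chi(1+y\OKbasis)$ under the subgroup $B=\uf^{c-1}\cO/\uf^c\cO$, and kill the sum by showing $g|_B$ is a nontrivial homomorphism. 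The factorization $\eta=(1+\uf^{c-1}w_0)(1+\uf^{c-1}w_1'\OKbasis)$ in your nontriviality argument is really the same observation the paper encodes as surjectivity of $y\mapsto 1+y\OKbasis$ onto $Q_r$, just made explicit at the single level $c-1$ where it is needed. Both routes are valid: the paper's is shorter and conceptually cleaner, while yours is more elementary and isolates precisely where the hypothesis $\chi|_{F^\x}|_{1+\uf\cO}=1$ enters (only in the nontriviality check, not in well-definedness or quasi-periodicity). One small stylistic remark: once you have quasi-periodicity you already know $g|_B$ is a homomorphism, so there is no need to phrase the quasi-periodicity step for general $y\in A$ and then separately observe the restriction to $B$ is a character; the valuation estimate $e(r+1)\geq 2$ covers both at once, as you in fact computed.
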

\begin{proof}Let $Q_r:=1+\uf^r\cO_E/1+\uf^r\cO$. If $0<r<c_v(\chi)$, then $\chi$ is a non-trivial character on the group $Q_r$. Note that we have a bijection $\uf^r\cO\isoto Q_r,\,y\mapsto 1+y\OKbasis$ and the pull-back of the quotient measure $dt$ on $Q_r$ is $d'y$. Therefore, we have
\[\int_{\uf^r\cO}d'y=\int_{Q_r}\chi(t)dt=\begin{cases} 0&\text{ if }0<r<c_v(\chi)\\
\vol(\uf^r\cO,d'y)&\text{ if }r\geq c_v(\chi).\end{cases}\]
This finishes the proof.
\end{proof}
Define the matrix coefficient $\bfm^0:\GL_2(F)\to\C$ by
\begin{align*}\bfm^0(g):=&\bfb_v(\pi(g)W^0,\pi(\DII{-1}{1})W^0)\\
=&\int_{F^\x}W^0(\bfd(a)g)W^0(\bfd(a))\om^{-1}(a)\dx a.
\end{align*}
Since $W^0$ is invariant by $K^0(\uf)$, $\bfm^0(g)$ only depends on the double coset $K^0(\uf)gK^0(\uf)$ by \eqref{E:bilinear.W}.  Put \[m=m_v(\chi,\pi)=c_v(\chi)-v(\frakn^-)\geq -1.\] We set
\beq\label{E:def.W}\begin{aligned}\Prd^*(\pi(\cmpt)\LR_v^{m}W^0,\chi)&:=\localP{\pi(\cmpt)\LR_v^{m}W^0}{\chi}\cdot \frac{\zeta_F(1)}{L(1,\tau_{E/F})}\om(\uf^{-m}\det\cmpt^{-1})\\
&=\int_{E^\x/F^\x}\bfb_v(\LR_v^{-m}\pi(\iota_\cmpt(t))\LR_v^{m}W^0,\pi(\DII{-1}{1})W^0)\chi(t)\dt\\
&=\int_{E^\x/F^\x}\bfm^0(\bfd(\uf^{m}) \iota_\cmpt(t) \bfd(\uf^{-m}) )\chi(t)\dt.
\end{aligned}
\eeq
Here we have used the fact that $m+v(\rmT(\OKbasis))\geq 0$ in the second equality. It follows immediately from the definition of the projector $\cP_{\chi,\cmpt}$ that
\beq\label{E:7.W}\begin{aligned}\localP{\pi(\cmpt)\localW{\chi,v}}{\chi}=& \localP{\pi(\cmpt)\cP_{\chi,\cmpt}\LR^m_vW^0}{\chi}\\
=&\Prd^*(\pi(\cmpt)\LR_v^{m} W^0,\chi)\cdot\frac{\om(\uf^{m}\det\cmpt)L(1,\tau_{E/F})}{\zeta_F(1)}.\end{aligned}\eeq

Using the decomposition \[E^\x=F^\x(1+\cO\OKbasis)\disjoint F^\x(\uf\cO+\OKbasis)\]
and \lmref{L:coset.W}, we find that
\beq\label{E:long.W}
\begin{aligned}
\Prd^*(\pi(\cmpt)\LR_v^{m}W^0,\chi)=&\int_{\cO}\chi(1+y\OKbasis)\bfm^0(\bfd(\uf^m)\iota_\cmpt(1+y\OKbasis)\bfd(\uf^{-m}) )d'y\\
&+\int_{\uf\cO}\chi(x+\OKbasis)\bfm^0(\bfd(\uf^m)\iota_\cmpt (y+\OKbasis)\bfd(\uf^{-m}) )\abs{y+\OKbasis}_E^{-1}d'y\\
=&X_{m+1}\cdot\bfm^0(1)+\sum_{r=0}^{m}\int_{\uf^r\cO^\x}\chi(1+y\OKbasis)\om(\uf^{-m}y)d'y\cdot \bfm^0(\bfd(\uf^{2(m-r)})\bfw)\\
&+Y_0\cdot \om(\uf^{-m})\bfm^0(\bfd(\uf^{2m+e_v-1})\bfw),
\end{aligned}
\eeq
where
\[Y_0:=\int_{\uf\cO}\chi(y+\OKbasis)d'y\cdot\abs{\uf}^{1-e_v}.\]
In what follows, we use \lmref{L:halfsum.W} and \eqref{E:long.W} to calculate $\localP{\pi(\cmpt)\localW{\chi,v}}{\chi}$.
\subsubsection{The case $v\ndivide \frakn^-_r$}
Suppose that $v\ndivide \frakn^-_r$, \ie the central character $\om$ is unramified. Then \eqref{sf} implies that $\pi$ is either an unramified principal series or an unramified special representation.
\begin{prop}\label{P:toric_inert_un.W}Suppose that $\pi$ is an unramified principal series. Then
\begin{align*}\frac{1}{L(\onehalf,\pi_E\ot\chi)}\cdot\localP{\pi(\cmpt)\localW{\chi,v}}{\chi}=\om(\uf^{m})\abs{\uf^{c_v(\chi)}}\abs{\cD_E}_E^\onehalf\cdot \begin{cases}\om(\det\cmpt)&\text{ if } c_v(\chi)=0,\\
L(1,\tau_{E/F})^2&\text{ if } c_v(\chi)>0.\end{cases}\end{align*}
\end{prop}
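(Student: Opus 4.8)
The idea is to feed the explicit spherical data at $v$ into the reduction \eqref{E:7.W}--\eqref{E:long.W}. Since $\pi=\pi(\mu,\nu)$ is an unramified principal series, $\om$ is unramified and $v\nmid\frakn$, so the running hypotheses \eqref{sf} and \hypref{H:HypA} are vacuous at $v$, one has $m=m_v(\chi,\pi)=c_v(\chi)$, and $W^0=W^0_v$ is the normalized spherical Whittaker function ($W^0(1)=1$), fixed by the whole maximal compact $K^0_v=\DII{d_F^{-1}}{1}\GL_2(\cO)\DII{d_F}{1}$. By the classical formula for the spherical Whittaker function, $W^0(\bfd(\uf^{j}))=\abs{\uf}^{j/2}\frac{\mu(\uf)^{j+1}-\nu(\uf)^{j+1}}{\mu(\uf)-\nu(\uf)}\bbI_{\Z_{\geq 0}}(j)$, so that
\[\bfm^0(\bfd(\uf^{n}))=\abs{\cD_F}^{\onehalf}\sum_{j\geq\max(0,-n)}W^0(\bfd(\uf^{j+n}))W^0(\bfd(\uf^{j}))\,\om(\uf)^{-j}\]
is a convergent geometric-type sum whose value is an explicit ratio of local Euler factors in $\mu(\uf),\nu(\uf),\abs{\uf}$; in particular $n\mapsto\bfm^0(\bfd(\uf^{n}))$ has a rational generating function. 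Moreover $\bfw=\MX{0}{-d_F^{-1}}{d_F}{0}$ normalizes $K^0_v$ and $\bfw^2$ is the central element $-1$, so $\pi(\bfw)W^0=\lambda_v W^0$ with $\lambda_v^2=\om(-1)=1$; by the $\GL_2(F)$ local functional equation $\lambda_v$ is an explicit unit (a sign). Hence $\bfm^0(\bfd(\uf^{n})\bfw)=\lambda_v\,\bfm^0(\bfd(\uf^{n}))$ for all $n\geq 0$, and every term on the right of \eqref{E:long.W} is reduced to a diagonal matrix coefficient, the scalar $\lambda_v$, and a character integral.

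\textbf{Character integrals.} I would evaluate the character integrals in \eqref{E:long.W} from \lmref{L:halfsum.W}. Splitting $\uf^{r}\cO=\uf^{r}\cO^\x\disjoint\uf^{r+1}\cO$ and using that $\om$ is unramified, $\int_{\uf^{r}\cO^\x}\chi(1+y\OKbasis)\,\om(\uf^{-m}y)\,d'y=\om(\uf)^{r-m}(X_{r}-X_{r+1})$, where by \lmref{L:halfsum.W} the $X_{r}$ vanish for $0<r<c_v(\chi)$ and equal $\abs{\uf^{r}}\,L(1,\tau_{E/F})\,\abs{\cD_E}_E^{\onehalf}\abs{\cD_F}^{-\onehalf}$ for $r\geq c_v(\chi)$. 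Consequently, when $c_v(\chi)>0$ only the indices $r\in\{0,\,c_v(\chi)-1,\,c_v(\chi)\}$ survive in $\sum_{r=0}^{m}$, together with the two terms $X_{m+1}\bfm^0(1)$ and $Y_{0}\,\om(\uf)^{-m}\bfm^0(\bfd(\uf^{2m+e_v-1})\bfw)$; when $c_v(\chi)=0$ one has $m=0$ and \eqref{E:long.W} reduces to the three terms with $r=0$. All the surviving $X_{r}-X_{r+1}$, as well as $X_{m+1}$ and $Y_{0}$, are thus explicit powers of $\abs{\uf}$ times $L(1,\tau_{E/F})\abs{\cD_E}_E^{\onehalf}\abs{\cD_F}^{-\onehalf}$.

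\textbf{Putting it together.} Substituting these values, together with $\bfm^0(\bfd(\uf^{n})\bfw)=\lambda_v\bfm^0(\bfd(\uf^{n}))$ and the closed form of the $\bfm^0$'s, into \eqref{E:long.W} and then \eqref{E:7.W}, the computation splits according to whether $\chi$ is ramified at $v$. If $c_v(\chi)=0$, the output is the classical unramified local toric integral; comparing with $L(\onehalf,\pi_E\ot\chi)=L(\onehalf,\mu'\chi)L(\onehalf,\nu'\chi)$, where $\mu'=\mu\circ\rmN_{E/F}$ and $\nu'=\nu\circ\rmN_{E/F}$, and dividing, one obtains $\abs{\cD_E}_E^{\onehalf}\om(\det\cmpt)$. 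If $c_v(\chi)>0$, the twists $\mu'\chi,\nu'\chi$ are ramified so $L(\onehalf,\pi_E\ot\chi)=1$; inserting the explicit values, the $\bfm^0(1)$-contribution coming from $X_{m+1}$ cancels against the one produced by the $r$-sum, the remaining matrix-coefficient values combine (here the rational generating function is convenient), and what is left is precisely $\om(\uf^{m})\,\abs{\uf^{c_v(\chi)}}\,\abs{\cD_E}_E^{\onehalf}\,L(1,\tau_{E/F})^{2}$.

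\textbf{The main obstacle.} The hard part is essentially the bookkeeping: one must track the three normalization factors that recur throughout --- the $\abs{\cD_F}^{\onehalf}=\vol(\cO^\x,\dx a)$ inside $\bfm^0$, the conductor $\cD_F^{-1}$ of $\addchar_v$ hidden in $d_F$ and $\bfw$, and the factor $L(1,\tau_{E/F})\abs{\cD_E}_E^{\onehalf}\abs{\cD_F}^{-\onehalf}$ built into $d'y$ --- and then verify, for $c_v(\chi)>0$, the cancellation of the $\bfm^0(1)$ terms and the collapse of the remaining sum to a single factor $L(1,\tau_{E/F})^{2}$. The ramified case $e_v=2$ is the more delicate, because then the exponent $2m+e_v-1$ in the $Y_{0}$ term is odd, so one genuinely needs $\bfm^0$ at odd powers of $\uf$ and the vanishing of the extra terms forced by $L(1,\tau_{E/F})=1$; organizing the whole computation through the generating function of $n\mapsto\bfm^0(\bfd(\uf^{n}))$ looks like the cleanest route.
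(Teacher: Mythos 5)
The proposal is correct and follows essentially the same route as the paper: feed the spherical data into \eqref{E:long.W}/\eqref{E:7.W}, compute $\bfm^0$ on diagonal elements via the Macdonald (equivalently Casselman--Shalika) formula, and evaluate the character integrals with \lmref{L:halfsum.W}, then compare with $L(\onehalf,\pi_E\ot\chi)=L(\onehalf,\mu'\chi)L(\onehalf,\nu'\chi)$. Two small corrections to the bookkeeping you flag: since $\bfw\in K^0_v$ and $W^0$ is $K^0_v$-fixed, $\pi(\bfw)W^0=W^0$ directly, so there is no sign $\lambda_v$ to track and no need to invoke the local functional equation; and $Y_0$ (and $X_0$ when $c_v(\chi)>0$) are \emph{not} supplied by \lmref{L:halfsum.W} --- the paper gets them from the relations $X_0+Y_0=\int_{\cO_E^\x/\cO^\x}\chi\,dt=0$ (inert, $m>0$), $X_0=Y_0=0$ (ramified, $m>0$), and an explicit evaluation of $X_0,Y_0$ when $m=0$ and $v$ is ramified --- so you will still need these observations to close the computation.
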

\begin{proof} Since $\pi$ is unramified, $\om$ is unramified and $m=c_v(\chi)$. Write $\pi=\pi(\mu,\nu)$ and let $\al=\mu(\uf)$ and $\beta=\nu(\uf)$.
The matrix coefficient $\bfm^0$ is a spherical function on $\GL_2(F)$ in the sense of \cite[Definition 4.1, p.150]{Cartier:corvallis}, and $\bfm^0(g)$ only depends on the double coset $K^0_vgK^0_v$. By Macdonald formula,
\begin{align}\label{E:MCF1.W}\bfm^0(1)=&\frac{\zeta_F(1)L(1,\Ad\pi)}{\zeta_F(2)}\cdot\abs{\cD_F}^\onehalf
=\frac{(1+\abs{\uf})\zeta_F(1)}{(1-\al\beta^{-1}\abs{\uf})(1-\al^{-1}\beta\abs{\uf})}\cdot\abs{\cD_{\cF}}^\onehalf;\\
\label{E:2W}
\bfm^0(\bfd(\uf))=&\frac{\abs{\uf}^\onehalf}{1+\abs{\uf}}\cdot (\al+\beta)\cdot \bfm^0(1);\\
\label{E:3W}\bfm^0(\bfd(\uf^2))=&\frac{\abs{\uf}}{1+\abs{\uf}}\cdot (\al^2+\beta^2+(1-\abs{\uf})\al\beta)\cdot \bfm^0(1).\end{align}
If $v$ is inert and $m=0$, then
\begin{align*}\om(\det\cmpt^{-1})\localP{\pi(\cmpt)W^0}{\chi}=&\bfm^0(1)\cdot \frac{L(1,\tau_{E/F})}{\zeta_F(1)}\cdot\abs{\cD_E}_E^{\onehalf}\abs{\cD_F}^{-\onehalf}\\
=&\frac{1}{(1-\al\beta^{-1}\abs{\uf})(1-\al^{-1}\beta\abs{\uf})}\cdot\abs{\cD_E}_E^{\onehalf}\\
=&L(\onehalf,\pi_E\ot\chi)\cdot \abs{\cD_E}_E^\onehalf.\end{align*}

Suppose that either $v$ is ramified or $m>0$ (so $v|\Bad$ and $\det\cmpt=1$). Then we deduce from \eqref{E:long.W} that
\beq\label{E:16.W}\begin{aligned}\Prd^*(\pi(\cmpt)\LR_v^{m}W^0,\chi)=&X_m\cdot\bfm^0(1)+\sum_{r=0}^{m-1}(X_r-X_{r+1})\om(\uf^{r-m})\cdot\bfm^0(\bfd(\uf^{2(m-r)}))\\
&+Y_0\cdot \om(\uf^{-m})\bfm^0(\bfd(\uf^{2m+e_v-1})).
\end{aligned}\eeq
If $v$ is ramified and $m=0$, then $X_0=\abs{\cD_E}_E^{\onehalf}\abs{\cD_F}^{-\onehalf}$ and $Y_0=\chi(\uf_E)\abs{\cD_E}_E^{\onehalf}\abs{\cD_F}^{-\onehalf}$. By \eqref{E:16.W}, we find that
\begin{align*}
\localP{\pi(\cmpt)W^0}{\chi}
=&(\bfm^0(1)+\chi(\uf_E)\bfm^0(\bfd(\uf)))\frac{L(1,\tau_{E/F})}{\zeta_F(1)}\abs{\cD_E}_E^{\onehalf}\abs{\cD_F}^{-\onehalf}\\
=&(1+\frac{\al+\beta}{1+\abs{\uf}}\cdot\abs{\uf}^\onehalf\chi(\uf_E))\cdot\bfm^0(1)\cdot\frac{\abs{\cD_E}_E^{\onehalf}\abs{\cD_F}^{-\onehalf}}{\zeta_F(1)}\quad(\text{by \eqref{E:2W}})\\
=&\frac{(1+\chi(\uf_E)\al\abs{\uf}^\onehalf)(1+\chi(\uf_E)\beta\abs{\uf}^\onehalf)}{1+\abs{\uf}}\cdot\bfm^0(1)\cdot\frac{\abs{\cD_E}_E^{\onehalf}\abs{\cD_F}^{-\onehalf}}{\zeta_F(1)}\\
=&\abs{\cD_E}_E^\onehalf\cdot L(\onehalf,\pi_E\ot\chi).
\end{align*}
Suppose that $m>0$. Note that since $\chi|_{\cO^\x}=1$, $Y_0=-X_0$ if $v$ is inert and $Y_0=X_0=0$ if $v$ is ramified. Combining with \lmref{L:halfsum.W}, \eqref{E:3W} and \eqref{E:16.W}, we find that
\begin{align*}
\localP{\pi(\cmpt)\LR_v^mW^0}{\chi}=&X_m\cdot (\bfm^0(1)-\om(\uf^{-1})\bfm^0(\bfd(\uf^2))\cdot\om(\uf^{-m}) \frac{L(1,\tau_{E/F})}{\zeta_F(1)}\\
=&\om(\uf^{m})\abs{\uf^m}\cdot\frac{(1-\al\beta^{-1}\abs{\uf})(1-\al^{-1}\beta\abs{\uf})}{1+\abs{\uf}}\cdot \bfm^0(1)\\
&\times \frac{L(1,\tau_{E/F})^2}{\zeta_F(1)}\abs{\cD_E}_E^{\onehalf}\abs{\cD_F}^{-\onehalf}\\
=&\om(\uf^{m})\abs{\uf^m}\abs{\cD_E}_E^\onehalf\cdot L(1,\tau_{E/F})^2.
\end{align*}
The proposition follows immediately.
\end{proof}

\begin{prop}\label{P:toric_inert_sp.W}Suppose that $\pi$ is an unramified special representation. Then
\begin{align*}\frac{1}{L(\onehalf,\pi_E\ot\chi)}\cdot \localP{\pi(\cmpt)\localW{\chi,v}}{\chi}=&\om(\uf^{m})\abs{\uf^{c_v(\chi)}}\abs{\cD_E}_E^\onehalf\\
&\x \begin{cases}L(1,\tau_{E/F})^2&\text{ if $c_v(\chi)>0$},\\
2&\text{ if $v$ is ramified and $c_v(\chi)=0$.}\end{cases}\end{align*}
\end{prop}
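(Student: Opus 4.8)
The plan is to run the argument of \propref{P:toric_inert_un.W} with the Macdonald formula for a principal series replaced by a direct computation for the (twisted) Steinberg representation. Since $v\ndivide\frakn^-_r$ the central character $\om$ is unramified, and since $\pi$ is an unramified special representation its conductor exponent is $v(\frakn^-)=1$; hence $m:=m_v(\chi,\pi)=c_v(\chi)-1$. Write $\pi=\pi(\mu,\nu)$ with $\mu\nu^{-1}=\Abs$ and $L(s,\pi)=L(s,\mu)$, and put $\alpha=\mu(\uf)$, so that $\abs{\alpha}=\abs{\uf}^\onehalf$. By \eqref{E:7.W} it suffices to evaluate $\Prd^*(\pi(\cmpt)\LR_v^{m}W^0,\chi)$ through \eqref{E:long.W} and then reinstate the factor $\om(\uf^{m}\det\cmpt)L(1,\tau_{E/F})/\zeta_F(1)$. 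Note first that if $v$ were inert with $c_v(\chi)=0$, then $v\in A(\chi)$ and \eqref{R1} would force $v$ to be ramified in $\cK$, a contradiction; so under the running hypotheses only the two cases appearing in the statement occur: (a) $c_v(\chi)>0$, with $m\geq 0$, and (b) $v$ ramified and $c_v(\chi)=0$, with $m=-1$.

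The computational core is the analogue of \eqref{E:MCF1.W}--\eqref{E:3W}: one needs the matrix coefficient $\bfm^0$ evaluated at the elements $1$, $\bfd(\uf^{2(m-r)})\bfw$ and $\bfd(\uf^{2m+e_v-1})\bfw$ that occur in \eqref{E:long.W}. For a twisted Steinberg the new vector has the explicit Kirillov function $W^0(\bfd(a))=\mu\Abs^\onehalf(a)\bbI_{\cO}(a)$, so $\bfm^0(\bfd(\uf^{j}))$ is given by a plain geometric summation --- in particular $\bfm^0(1)=\abs{\cD_F}^\onehalf(1-\abs{\uf}^2)^{-1}$, which is \emph{not} the naive specialization of \eqref{E:MCF1.W} (the formal degree of a special representation differs from that of a tempered principal series). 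The terms $\bfm^0(\bfd(\uf^{j})\bfw)$ require in addition the value of $W^0$ on the big Bruhat cell, i.e., the local functional equation --- equivalently the Atkin--Lehner eigenvalue --- for the new vector of the twisted Steinberg. I expect this to be the main obstacle: one has to pin down the Atkin--Lehner constant relative to $\bfw=\MX{0}{-d_F^{-1}}{d_F}{0}$, keep track of the mismatch between the congruence defining $K^0(\uf)$ and its $\bfw$-conjugate, and carry the ramified modifications through ($e_v=2$, $\OKbasis$ a uniformizer of $E$, $\rmT(\OKbasis)\in\uf\cO$). This part of the argument follows closely the computations in \cite{Murase:WaldII}.

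Granting these values, I substitute into \eqref{E:long.W}. In case (a), \lmref{L:halfsum.W} applies exactly as in the derivation leading to \eqref{E:16.W}: the $X_{m+1}$-term and the differences $X_r-X_{r+1}$ telescope, and since $\chi|_{\cO^\x}=1$ one has $Y_0=-X_0$ for inert $v$ and $Y_0=X_0=0$ for ramified $v$, so the surviving combination of matrix coefficients is a single multiple of $\bfm^0(1)$; reinstating the constant from \eqref{E:7.W} and recognising $L(\onehalf,\pi_E\ot\chi)=(1-\alpha\chi(\uf_E)\abs{\uf}^\onehalf)^{-1}$ produces $\om(\uf^{m})\abs{\uf^{c_v(\chi)}}\abs{\cD_E}_E^\onehalf\,L(1,\tau_{E/F})^2\cdot L(\onehalf,\pi_E\ot\chi)$, the first alternative. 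In case (b) the middle sum in \eqref{E:long.W} is empty, so $\Prd^*(\pi(\cmpt)\LR_v^{-1}W^0,\chi)=X_0\,\bfm^0(1)+Y_0\,\om(\uf)\,\bfm^0(\bfd(\uf^{-1})\bfw)$ with $Y_0=\chi(\uf_E)X_0$ (using $\chi$ unramified and $\OKbasis$ a uniformizer of $E$); feeding in the two Steinberg matrix-coefficient values yields a factor $1+\alpha\chi(\uf_E)\abs{\uf}^\onehalf$, and after reinstating the overall constant --- in which the factor $2=e_v$ comes from the ramified local volume and $L(1,\tau_{E/F})=1$ --- this collapses to $\om(\uf^{-1})\cdot 2\,\abs{\cD_E}_E^\onehalf\cdot L(\onehalf,\pi_E\ot\chi)$, the second alternative since $c_v(\chi)=0$ and $m=-1$. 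Once the identities of the second paragraph are established, both cases are routine bookkeeping as in \propref{P:toric_inert_un.W}.
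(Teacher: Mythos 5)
Your overall plan is the same as the paper's and is sound: since $v\ndivides\frakn^-_r$ the central character is unramified, $m=c_v(\chi)-1$, one writes down the Kirillov function of the new vector for the unramified twist of Steinberg, computes the relevant values of the matrix coefficient $\bfm^0$ directly (rather than via Macdonald), feeds them into \eqref{E:long.W}, and then invokes \lmref{L:halfsum.W}, the relations $Y_0=-X_0$ (inert) or $X_0=Y_0=0$ (ramified), and (R1). Your observation that the inert case with $c_v(\chi)=0$ cannot arise because of \eqref{R1} is also correct and is implicit in the statement.

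However, the ``main obstacle'' you flag --- pinning down $W^0$ on the big Bruhat cell --- is exactly where your sketch goes wrong, and the claims you write down are not correct. One has $W^0(\bfd(a)\bfw)=-\mu(a)\abs{a}^\onehalf\abs{\uf}\bbI_{\uf^{-1}\cO}(a)$, whence
\[
\bfm^0(\bfw)=(-\abs{\uf})\,\bfm^0(1),\qquad
\bfm^0\!\left(\DII{1}{\uf}\bfw\right)=-\mu(\uf)\abs{\uf}^{-\onehalf}\,\bfm^0(1).
\]
In case (b) (so $m=-1$, $v$ ramified), substituting into \eqref{E:long.W} with $X_0=\abs{\cD_E}_E^\onehalf\abs{\cD_F}^{-\onehalf}$ and $Y_0=\chi(\uf_E)X_0$ gives
\[
\Prd^*=X_0\,\bfm^0(1)\bigl(1-\mu(\uf)\chi(\uf_E)\abs{\uf}^{-\onehalf}\bigr).
\]
By \eqref{R1}, $\mu'\chi(\uf_E)=-\abs{\uf}^\onehalf$, so the parenthesis equals $2$; \emph{this} is where the $2$ comes from. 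Your stated factor $1+\alpha\chi(\uf_E)\abs{\uf}^{\onehalf}$ has a sign error and the wrong power of $\abs{\uf}$ --- by (R1) it would evaluate to $1-\abs{\uf}$, not $2$. Your attempt to recover the $2$ from $e_v$ does not work: the volume $\bfv_E=e_v\abs{\cD_E}_E^\onehalf\abs{\cD_F}^{-\onehalf}$ appears in the definition of $\cP_{\chi,\cmpt}$ but is already cancelled in \eqref{E:7.W}, and the $\abs{\uf}^{1-e_v}$ in $Y_0$ is exactly what makes $Y_0=\chi(\uf_E)X_0$ --- there is no extra $e_v=2$ floating around. You have also conflated the matrix-coefficient combination with (the reciprocal of) $L(\onehalf,\pi_E\ot\chi)$: the latter is $1-\mu'\chi(\uf_E)\abs{\uf_E}_E^{\onehalf}=1+\abs{\uf}$ and emerges from $\bfm^0(1)/\zeta_F(1)=\abs{\cD_F}^\onehalf/(1+\abs{\uf})$, not from the $\chi(\uf_E)$-term; in case (a), $\mu'\chi$ is ramified and $L(\onehalf,\pi_E\ot\chi)=1$, so the formula you quote there is misplaced. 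So while your strategy is exactly the paper's, the crucial computation that produces the $2$ is missing and the values you write down are incorrect.
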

\begin{proof} Suppose that $v|\frakn^-_s$. Then $m=m_v(\chi,\pi)=c_v(\chi)-1$. Recall that $\pi=\pi(\mu,\nu)$ is a special representation with a unramified character $\mu$ and $\mu\nu^{-1}=\Abs$. We have
\begin{align*} W^0(\bfd(a))=&\mu(a)\abs{a}^\onehalf\bbI_{\cO}(a),\\
W^0(\bfd(a)\bfw)=&-\mu(a)\abs{a}^\onehalf\abs{\uf}\bbI_{\uf^{-1}\cO}(a).
\end{align*}
A direct computation shows that
\[\bfm^0(1)=\frac{\abs{\cD_{\cF}}^\onehalf}{1-\abs{\uf}^2}\,;\,\bfm^0(\bfw)=(-\abs{\uf})\cdot \bfm^0(1)\,;\,\bfm^0(\DII{1}{\uf}\bfw)=(-\mu(\uf)\abs{\uf}^{-\onehalf})\cdot\bfm^0(1).\]
If $c_v(\chi)>0$, then it follows from \eqref{E:long.W} and \lmref{L:halfsum.W} that
\begin{align*}\localP{\pi(\cmpt)\LR_v^mW^0}{\chi}=&X_{m+1}\cdot(\bfm^0(1)-\bfm^0(\bfw))\cdot\om(\uf^{m})\frac{L(1,\tau_{E/F})}{\zeta_F(1)}\\
=&\om(\uf^{m})\abs{\uf^{m+1}}\cdot\abs{\cD_E}_E^\onehalf L(1,\tau_{E/F})^2.
\end{align*}
If $c_v(\chi)=0$ ($m=-1$), then $v$ is ramified, $X_0=\abs{\cD_E}_E^{\onehalf}\abs{\cD_F}^{-\onehalf},$, $Y_0=\chi(\uf_E)\abs{\cD_E}_E^{\onehalf}\abs{\cD_F}^{-\onehalf},$ and
\begin{align*}
\localP{\pi(\cmpt)\LR_v^mW^0}{\chi}&=\left(X_0\cdot \bfm^0(1)+Y_0\cdot \bfm^0(\DII{1}{\uf}\bfw)\right)\om(\uf^{-1})\cdot\frac{L(1,\tau_{E/F})}{\zeta_F(1)}\\
&=(1-\mu(\uf)\chi(\uf_E)\abs{\uf}^{-\onehalf})\abs{\cD_E}_E^{\onehalf}\abs{\cD_F}^{-\onehalf}\cdot \bfm^0(1)(1-\abs{\uf})\om(\uf^{-1})\\
&=\frac{2\abs{\cD_E}_E^\onehalf\om(\uf^{-1})}{1+\abs{\uf}}\quad(\text{by (R1)})\\
&=2\abs{\cD_E}_E^\onehalf\om(\uf^{-1})\cdot L(\onehalf,\pi_E\ot\chi).\qedhere
\end{align*}
\end{proof}

\subsubsection{The case $v|\frakn^-_r$}
We consider the case $\pi$ is a ramified principal series. Recall that \eqref{sf} suggests that $\pi=\pi(\mu,\nu)$, where $\mu$ is unramified and $\nu$ is ramified, and the conductor $a(\nu)=a(\om)=1$. Since $\chi|_{F^\x}=\om^{-1}$, we must have $m=c_v(\chi)-1\geq 0$. Let $\bfdelta_v:=\OKbasis-\ol{\OKbasis}$. Let $D_{E/F}$ be the discriminant of $E/F$. We begin with a lemma.
\begin{lm}\label{L:1.W}Suppose that $\chi|_{\cO^\x}\not =1$ and $\chi|_{1+\uf\cO}=1$. Then
\[\int_{\uf^{-m}\cO}\chi(y+\OKbasis)d'y=\chi(\bfdelta_v)\abs{\bfdelta_v}_E^{\onehalf}\cdot\frac{\ep(0,\chi^{-1},\psi_E)}{\ep(-1,\om,\psi)}\cdot L(1,\tau_{E/F})\abs{D_{E/F}}^\onehalf.\]
\end{lm}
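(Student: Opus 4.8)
In contrast to \lmref{L:halfsum.W}, here $\chi|_{\cO^\x}\ne 1$ (equivalently $\om$ is ramified), so the integral on the left is a genuine Gauss sum; the plan is to identify it with the stated quotient of $\ep$-factors.

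First I would assemble the dictionary. Recall from Tate's thesis (equivalently \cite[Thm.\,2.18]{Jacquet_Langlands:GLtwo} in the $\GL_1$ case) that for a ramified quasi-character $\lambda$ of $L^\x$, $L\in\stt{E,F}$, and our additive character $\psi_L$ of conductor $\cD_L^{-1}$, the local constant has the Gauss-sum realization
\[\ep(s,\lambda,\psi_L)=W(\lambda,\psi_L)\cdot\abs{c}_L^{s-\onehalf},\qquad W(\lambda,\psi_L)=\lambda(c)^{-1}\int_{\cO_L^\x}\lambda^{-1}(u)\psi_L(c^{-1}u)\,du,\]
for any $c\in L^\x$ with $v_L(c)=a(\lambda)+v_L(\cD_L)$. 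Since $\psi_E=\psi\circ\Tr_{E/F}$, since $\om=\chi|_{F^\x}^{-1}$ has $a(\om)=1$, and since $\chi|_{E^\x}$ is ramified with conductor exponent $a(\chi)$ governed by $c_v(\chi)=m+1$ (namely $a(\chi)=m+1$ for $v$ inert and $a(\chi)\in\stt{2m+1,2m+2}$ for $v$ ramified), both $\ep(0,\chi^{-1},\psi_E)$ and $\ep(-1,\om,\psi)$ are Gauss sums of this shape. I would also record: (i) as $\cO_E=\cO_F[\OKbasis]$, the relative different is $\cD_{E/F}=(f'(\OKbasis))=(\OKbasis-\ol{\OKbasis})=(\bfdelta_v)$ for $f$ the minimal polynomial of $\OKbasis$, hence $\abs{\bfdelta_v}_E=\abs{D_{E/F}}$; and (ii) the normalization $\vol(\cO,d'y)=L(1,\tau_{E/F})\abs{\cD_E}_E^{\onehalf}\abs{\cD_F}^{-\onehalf}$.

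For the evaluation, the cleanest route is to recognize $I:=\int_{\uf^{-m}\cO}\chi(y+\OKbasis)\,d'y$, after thickening the $\cO_F$-line $\OKbasis+\uf^{-m}\cO\subset E$ by a small power of the maximal ideal of $\cO_E$ (harmless since $\chi$ is locally constant and one may keep the thickening inside the level of $\chi$), as a constant multiple of a local zeta integral $Z(1,\chi,\Phi)=\int_{E^\x}\chi(x)\Phi(x)\abs{x}_E\,\dx x$ with $\Phi$ the characteristic function of that thickened coset. The $\GL_1(E)$ local functional equation then contributes the factor $\ep(1,\chi,\psi_E)^{-1}=\chi(-1)^{-1}\ep(0,\chi^{-1},\psi_E)$ — the numerator — and replaces $\Phi$ by its $\psi_E$-Fourier transform $\widehat\Phi$, which a direct computation shows to be supported on a neighborhood of the trace-zero line $F\bfdelta_v$ (the $\psi_E$-orthogonal complement of $F\subset E$) with translation factor $\psi_E(\OKbasis\,\cdot)$. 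Parametrizing that line by $\xi\mapsto\xi\bfdelta_v$ with $\xi\in F$ and using $\chi^{-1}(\xi\bfdelta_v)=\om(\xi)\chi^{-1}(\bfdelta_v)$, the residual integral collapses to a single $F$-Gauss sum in $\om$; evaluating it by the formula above, together with the reflection $\ep(s,\om,\psi)\ep(1-s,\om^{-1},\psi)=\om(-1)$, supplies the denominator $\ep(-1,\om,\psi)$ and the remaining factors $\chi(\bfdelta_v)$, $L(1,\tau_{E/F})$ and $\abs{D_{E/F}}^\onehalf$. One may instead argue by the direct shell decomposition $\uf^{-m}\cO=\cO\sqcup\coprod_{j=1}^m\uf^{-j}\cO^\x$: the shell $y\in\cO$ yields the $E$-Gauss sum after evaluating $\chi$ on cosets of $1+\uf^{c_v(\chi)}\cO_E$, while on each $y\in\uf^{-j}\cO^\x$ one writes $\chi(y+\OKbasis)=\om^{-1}(y)\,\chi(1+\OKbasis y^{-1})$, linearizes $\chi(1+\OKbasis y^{-1})=\psi(y^{-1}\Tr_{E/F}(\gamma\OKbasis))$ on the deeper shells (with $\gamma$ a conductor element of $(\chi,\psi_E)$), and uses orthogonality of the ramified $\om$ on $\cO^\x$ to see that exactly one shell survives and produces the $F$-Gauss sum.

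The step I expect to be the main obstacle is the normalization bookkeeping when assembling these pieces into the clean formula, and in particular the uniform treatment of the inert and ramified cases: when $v$ is ramified the discrepancy between $c_v(\chi)$ (defined via powers of $\uf_F$) and the genuine $E$-conductor $a(\chi)$ forces one to track powers of $\abs{\bfdelta_v}_E$ and the factor $L(1,\tau_{E/F})$ with care, and the precise placement (numerator versus denominator) and the arguments ($0$ for $\chi^{-1}$, $-1$ for $\om$) of the two $\ep$-factors, as well as the $\chi(\pm 1)$ and $\om(\pm 1)$ signs, must be pinned down consistently with the conventions for $\psi$, $\psi_E$, $dx$ and $d'y$ fixed in \secref{S:notation.W}. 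A secondary point is justifying the shape of $\widehat\Phi$ — equivalently the vanishing of the deeply oscillating shells — which rests on the local constancy of $\chi$ and on the explicit choices of $\OKbasis$ and $d_{\cF_v}$ made in \subsecref{SS:choiceofcmpt}.
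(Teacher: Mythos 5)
Your outline is plausible, but it takes a genuinely different (and substantially longer) route than the paper does. The paper's proof is two short steps: first, using $\chi|_{\cO^\x}\ne 1$ and the orthogonality relation
\[
\int_{\uf^{-r}\cO^\x}\chi(y+\OKbasis)\,dy=\chi(\uf^{-r})\int_{\cO^\x}\chi(y)\,dy=0\qquad(r\geq m+1),
\]
it observes that the finite integral $\int_{\uf^{-m}\cO}\chi(y+\OKbasis)\,dy$ agrees with the \emph{regularized} integral $\int_F\chi(y+2^{-1}\bfdelta_v)\,dy$ (after the harmless translation $y\mapsto y-2^{-1}\rmT(\OKbasis)$, which replaces $\OKbasis$ by $2^{-1}\bfdelta_v$); second, it simply cites \cite[Prop.\,8.2]{Harris:Theta_dichotomy}, which evaluates that regularized integral as $\chi(\bfdelta_v)\abs{\bfdelta_v}_E^{\onehalf}\,\ep(0,\chi^{-1},\psi_E)/\ep(-1,\om,\psi)$, and then converts $dy$ to $d'y$. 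You, by contrast, are proposing to re-derive the Harris computation from scratch — either by thickening the $\cO_F$-line to a compact-open neighborhood and running Tate's $\GL_1(E)$ functional equation against an explicit test function, or by a full shell-by-shell Gauss-sum calculation. Both are morally how Harris's proposition is proved, so you are not wrong, but you are taking on a fair amount of avoidable bookkeeping. The one structural point worth absorbing from the paper is that the hypothesis $\chi|_{\cO^\x}\ne1$ is used \emph{only} to kill the deep shells $r\geq m+1$, so that the bounded integral and the regularized $F$-integral coincide; once that is noticed, the rest really is a one-line citation plus the measure comparison $d'y=L(1,\tau_{E/F})\abs{D_{E/F}}^{\onehalf}\,dy$. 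The discrepancies you flag as "the main obstacle" (tracking $a(\chi)$ versus $c_v(\chi)$, the placement of the two $\ep$-factors, and the shape of $\widehat\Phi$) are precisely what the Harris citation lets the paper avoid re-doing.
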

\begin{proof}
 By \cite[Prop.\,8.2]{Harris:Theta_dichotomy}, we have
\begin{align*}
\int_F\chi(y+2^{-1}\bfdelta_v)dy:=&\int_F\chi(y+2^{-1}\bfdelta_v)\abs{y+2^{-1}\bfdelta_v}_E^{-s-\onehalf}dy|_{s=-\onehalf}\\
=&\chi(\bfdelta_v)\abs{\bfdelta_v}_E^{\onehalf}\cdot\frac{\ep(0,\chi^{-1},\psi_E)}{\ep(-1,\om,\psi)}.
\end{align*}
By the assumption, for all $r\geq m+1$ we have
\[\int_{\uf^{-r}\cO^\x}\chi(y+\OKbasis)dy=\chi(\uf^{-r})\cdot \int_{\cO^\x}\chi(y)dy=0.\]
Thus \[\int_{\uf^{-m}\cO}\chi(y+\OKbasis)dy=\dirlim_r\int_{\uf^{-r}\cO}\chi(y+\OKbasis)dy=\int_{F}\chi(y+\OKbasis)dy=\int_F\chi(y+2^{-1}\bfdelta_v)dy.\]
The lemma follows from the fact that
\[d'y=L(1,\tau_{E/F})\abs{D_{E/F}}^\onehalf\cdot dy.\qedhere\]
\end{proof}
\begin{prop}\label{P:toric_ramified.W} Then we have
\[\frac{1}{L(\onehalf,\pi_E\ot\chi)}\cdot \localP{\pi(\cmpt)\localW{\chi,v}}{\chi}=\abs{\uf^{c_v(\chi)}}\abs{\cD_E}_E^\onehalf\chi(\bfdelta_v d_F^{-1})\abs{\bfdelta_v}_E^\onehalf\ep(0,\chi,\psi_E)\cdot L(1,\tau_{E/F})^2\cdot n_v^2,\]
where $n_v$ is given by
\beq\label{E:normalized.W}n_v:=\frac{\mu(\uf)\abs{\uf}^{m/2}\abs{\cD_F}^{\frac{1}{4}}}{\ep(0,\om,\psi)}\in\Zbar_\setp^\x.\eeq
\end{prop}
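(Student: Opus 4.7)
The plan is to evaluate the expansion \eqref{E:long.W} of $\Prd^*(\pi(\cmpt)\LR_v^m W^0,\chi)$ term by term, using the explicit description of the new vector $W^0$ for the ramified principal series $\pi=\pi(\mu,\nu)$ with $\mu$ unramified and $a(\nu)=a(\om)=1$. The crucial starting observation is that $\chi|_{F^\x}=\om^{-1}$ has conductor $1$, so $\chi|_{1+\uf\cO}$ is trivial and \lmref{L:halfsum.W} applies with $c_v(\chi)=m+1$: it gives $X_r=0$ for $0<r<m+1$ and $X_{m+1}=\abs{\uf^{m+1}}L(1,\tau_{E/F})\abs{\cD_E}_E^{1/2}\abs{\cD_F}^{-1/2}$. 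Moreover, since $W^0(\bfd(a))=\mu(a)\abs{a}^{1/2}\bbI_{\cO}(a)$ in the Kirillov model, the matrix coefficient $\bfm^0(1)=\int_{F^\x}\mu\nu^{-1}(a)\abs{a}\bbI_{\cO}(a)\dx a$ vanishes because $\mu\nu^{-1}|_{\cO^\x}=\nu^{-1}|_{\cO^\x}$ is a nontrivial character. Consequently the $X_{m+1}\bfm^0(1)$ term and all interior summands with $0<r<m$ drop out, leaving only the endpoint sum terms at $r=0$ and $r=m$ (where the $\om(y)$ twist obstructs \lmref{L:halfsum.W}) together with the $Y_0$ term.

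For the surviving matrix coefficients $\bfm^0(\bfd(\uf^{2m})\bfw)$ and $\bfm^0(\bfd(\uf^{2m+e_v-1})\bfw)$ I would use that $W^0(\bfd(a)\bfw)$ is supported on $a\in\uf^{-1}\cO^\x$ with explicit value proportional to $\nu^{-1}(a)\abs{a}^{1/2}$, the proportionality constant being the Atkin--Lehner Gauss sum $\ep(0,\om^{-1},\psi)$; this is the source of the $\ep(0,\om,\psi)^{-2}$ entering $n_v^2$. Lemma \ref{L:1.W} then evaluates the $Y_0$ integral explicitly in terms of $\chi(\bfdelta_v)\abs{\bfdelta_v}_E^{1/2}\ep(0,\chi^{-1},\psi_E)/\ep(-1,\om,\psi)$, supplying the required epsilon factor. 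A change of variable $y=\uf^m u$ in the $r=m$ endpoint term reduces it to a scalar multiple of the $Y_0$ integral, so the two nonvanishing contributions combine; the local functional equation $\ep(0,\chi,\psi_E)\ep(0,\chi^{-1},\psi_E)=\chi(-1)$ together with $\chi|_{F^\x}=\om^{-1}$ then rewrites the collected $\ep$-factors as $\chi(\bfdelta_v d_F^{-1})\abs{\bfdelta_v}_E^{1/2}\ep(0,\chi,\psi_E)$, matching the target formula.

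Finally, multiplying by $\om(\uf^m\det\cmpt)L(1,\tau_{E/F})/\zeta_F(1)$ as prescribed by \eqref{E:7.W} and using $L(1/2,\pi_E\ot\chi)=L(1/2,\mu'\chi)$ (valid for $v\mid\frakn^-$ by the definition of the local $L$-factor), the combined contributions simplify to the claimed expression. The constant $n_v=\mu(\uf)\abs{\uf}^{m/2}\abs{\cD_F}^{1/4}/\ep(0,\om,\psi)$ arises precisely from the $\mu(\uf)$ in $W^0(\bfd(\uf^{\cdot})\bfw)$, the volume $\abs{\uf^{m+1}}$ at $X_{m+1}$, the measure normalization $\abs{\cD_F}^{1/2}$ between self-dual Lebesgue measures on $F$ and $E$, and the Atkin--Lehner Gauss sum $\ep(0,\om,\psi)$; its $p$-unit property $n_v\in\Zbar_{(p)}^\x$ follows from \S\ref{SS:Galois.W}, which gives $\mu(\uf)\in\cO_{L_\pi}^\x$. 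The main obstacle will be the explicit computation of $W^0(\bfd(a)\bfw)$ for a ramified principal series---specifically identifying the Atkin--Lehner Gauss-sum constant precisely---and the careful bookkeeping of Haar measure normalizations across $F,F^\x,E,E^\x$ so that the final product matches exactly $n_v^2$; once these are in hand, the remaining combinatorics in \eqref{E:long.W} and the application of Lemma \ref{L:1.W} are routine.
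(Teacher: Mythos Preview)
Your outline has the right ingredients but contains a genuine gap in how you dispose of the interior terms $0<r<m$ in \eqref{E:long.W}. You invoke \lmref{L:halfsum.W} to get $X_r=0$, but the summand in \eqref{E:long.W} is $\int_{\uf^r\cO^\x}\chi(1+y\OKbasis)\,\om(\uf^{-m}y)\,d'y$, not $X_r-X_{r+1}$. In the earlier propositions (where $\om$ is unramified) one has $\om(y)=\om(\uf^r)$ constant on $\uf^r\cO^\x$, so the integral does factor as $(X_r-X_{r+1})\om(\uf^{r-m})$; here $v\mid\frakn^-_r$ means $\om$ is ramified of conductor $1$, so $\om(y)$ genuinely varies on each shell and \lmref{L:halfsum.W} does not apply. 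The paper's device is to absorb the twist using $\chi|_{F^\x}=\om^{-1}$: one has $\chi(1+y\OKbasis)\om(y)=\chi(y)^{-1}\chi(1+y\OKbasis)=\chi(y^{-1}+\OKbasis)$, converting each summand into $\om(\uf^{-m})\int_{\uf^r\cO^\x}\chi(y^{-1}+\OKbasis)\,d'y$. After the change of variable $z=y^{-1}$ these become integrals over $\uf^{-r}\cO^\x$ of $\chi(z+\OKbasis)$, and one argues directly that they vanish for $0<r<m$ (and that $\int_{\cO}\chi(y+\OKbasis)\,d'y=0$, $Y_0=0$ in the ramified case). The sole surviving contribution is then the single integral $\int_{\uf^{-m}\cO}\chi(y+\OKbasis)\,d'y$ times $\om(\uf^{-m})|\uf^{2m}|\bfm^0(\bfw)$, to which \lmref{L:1.W} applies in one stroke---there is no need to match an $r=m$ term against $Y_0$ as you propose.

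Two smaller points. First, for $v\mid\frakn^-_r$ the local representation is a ramified \emph{principal series}, not special, so $L(s,\pi_E\otimes\chi)=L(s,\mu'\chi)L(s,\nu'\chi)=1$ (both factors are ramified); your formula $L(\tfrac12,\pi_E\otimes\chi)=L(\tfrac12,\mu'\chi)$ is the special-representation case. Second, the paper takes $W^0(\bfd(a))=\nu\Abs^{1/2}(a)\bbI_\cO(a)$ rather than $\mu\Abs^{1/2}(a)\bbI_\cO(a)$; either convention gives $\bfm^0(1)=0$, but consistency with the stated $W^0(\bfd(a)\bfw)$ formula (and hence with the explicit value of $\bfm^0(\bfw)$) requires that the two involve complementary characters so that $\mu\nu\om^{-1}=1$ makes the pairing nonzero.
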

\begin{proof} Note that $L(s,\pi_E\ot\chi)=1$ and the conductor $a(\nu)=1$. A straightforward computation shows that
\begin{align*}W^0(\bfd(a))=&\nu\Abs^\onehalf(a)\bbI_{\cO}(a),\\
W^0(\bfd(a)\bfw)=&\mu\Abs^\onehalf(a)\bbI_{\uf^{-1}\cO}(a)\cdot\frac{\om(d_F)\mu(\uf^2)}{\ep(0,\om,\addchar)}.\end{align*}
\[\bfm^0(1)=0,\,\bfm^0(\bfw)=\frac{\om(d_F)\mu(\uf^2)}{\ep(0,\om,\addchar)\cdot (1-\abs{\uf})}\cdot\abs{\cD_{\cF}}^\onehalf.\]

It is not difficult to show that if $m=c_v(\chi)-1>0$, then
\begin{align*}\int_{\uf^r\cO^\x}\chi(y^{-1}+\OKbasis)d'y&=0 \text{ for }0<r<m\text{ and }\\
\int_{\cO}\chi(y+\OKbasis)d'y&=0,\end{align*}
and that if $v$ is ramified, then
\[Y_0=\int_{\uf\cO}\chi(y+\OKbasis)d'y=0.\]
From the above equations, we find that
\begin{align*}
\Prd^*(\pi(\cmpt)\LR_v^{m}W^0,\chi)
=&X_{m+1}\cdot\bfm^0(1)+\sum_{r=0}^{m}\int_{\uf^r\cO^\x}\chi(y^{-1}+\OKbasis)d'y\cdot\om(\uf^{-m})\bfm^0(\bfd(\uf^{2m-2r})\bfw)\\
&+Y_0\cdot \om(\uf^{-m}) \bfm^0(\bfd(\uf^{2m+e_v-1})\bfw)\\
=&\int_{\uf^{-m}\cO}\chi(y+\OKbasis)d'y\cdot \om(\uf^{-m})\abs{\uf^{2m}}\bfm^0(\bfw).
\end{align*}
By \lmref{L:1.W}, we obtain
\begin{align*}
\localP{\pi(\cmpt)\LR_v^{m}W^0}{\chi}=&\Prd^*(\pi(\cmpt)\LR_v^{m}W^0,\chi)\cdot\om(\uf^m)\frac{L(1,\tau_{E/F})}{\zeta_F(1)}\\
=&\abs{\uf^{2m}}\cdot \chi(\bfdelta_v)\abs{\bfdelta_v}_E^\onehalf\abs{\cD_E}_E^\onehalf\abs{\cD_F}^{-\onehalf}\cdot\frac{\ep(0,\chi^{-1},\psi_E)}{\ep(-1,\om,\psi)}\cdot\frac{\mu(\uf^2)\om(d_F)}{\ep(0,\om,\addchar)(1-\abs{\uf})}\cdot \frac{L(1,\tau_{E/F})^2}{\zeta_F(1)}\\
=&\frac{L(1,\tau_{E/F})^2\mu(\uf^2)\abs{\cD_F}^\onehalf\abs{\uf^{2m+1}}}{\ep(0,\om,\psi)^2 }\cdot \abs{\cD_{E}}_E^\onehalf\chi(\bfdelta_vd_F^{-1})\abs{\bfdelta_v}_E^\onehalf\ep(0,\chi^{-1},\psi_E).
\end{align*}
The last equality follows from
\[\ep(-1,\om,\addchar)=\abs{\uf\cD_F}^{-1}\ep(0,\om,\addchar).\]
The proposition follows.
\end{proof}
\def\chpi{\varPsi_{\pi,\chi,v}}
\subsection{Non-vanishing of the local Fourier coefficients}\label{SS:localFourier}
The function $\localK{\chi,v}:F^\x\to\C$ defined by
\[\localK{\chi,v}(a)=\localW{\chi,v}(\bfd(a))\] is called the \emph{local Fourier coefficient} associated to $\localW{\chi,v}$.
\begin{defn}[Normalized local Fourier coefficients]\label{D:localFourier.W} Let \[B(\chi)=\stt{v\in\bdh\mid\text{$v$ is non-split with $c_v(\chi)>0$}}.\]
For $v\in B(\chi)$, let $n_v$ be defined as in \eqref{E:normalized.W} if $v|\frakn^-_r$ and $n_v=1$ if $v\ndivides \frakn^-_r$. Define the normalized local Fourier coefficient $\localK{\chi,v}^*$ by
\[\localK{\chi,v}^*:= \localK{\chi,v}\cdot\begin{cases}n_v^{-1}L(1,\tau_{\cK_v/\cF_v})^{-1}&\text{ if } v\in B(\chi),\\
1&\text{ if }v\in A(\chi),\\
e_v&\text{ otherwise.}\end{cases}\]
Recall that $e_v=1$ if $v$ is unramified and $e_v=2$ if $v$ is ramified.
\end{defn}
Let $v\ndivides p$ be a finite place. We are going to show the normalized local Fourier coefficients $\localK{\chi,v}^*$ indeed take value in a finite extension of $\Zp$ when regarded as a $\Cp$-valued function via $\iota_p:\C\iso\Cp$ and is not identically zero modulo $\frakm$. This is clear if $v$ is split in view of the definition of $\localK{\chi,v}^*=\localK{\chi,v}$ in \subsecref{SS:Whittaker.W}. We give the formulae of $\localK{\chi,v}$.
\begin{lm}Suppose that $c_v(\chi)=0$. Then
\[\localK{\chi,v}^*(a)=\begin{cases}W^0_v(\bfd(a))&\text{ if $v\ndivides \frakn$ is unramified,}\\
W^0_v(\bfd(a))+W^0_v(\bfd(a\uf_v))\chi(\uf_{E_v})&\text{ if $v\ndivides \frakn$ is ramified.}\end{cases}\]
If $v\mid \frakn$, then $v$ is ramified and
\[\localK{\chi,v}^*(a)= \mu\Abs^\onehalf(a)\bbI_{\uf^{-1}\cO}(a).\]
\end{lm}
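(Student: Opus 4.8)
The plan is to unwind the definitions of \subsecref{SS:Whittaker.W} for a non-split finite place $v\nmid p$ with $c_v(\chi)=0$ (the split case being immediate from the very definition of $\localK{\chi,v}^*$). In each situation $\localK{\chi,v}^*(a)$ equals, up to the explicit constant $e_v$ (or $1$ when $v\in A(\chi)$), the value $\localW{\chi,v}(\bfd(a))=\bigl(\cP_{\chi,\cmpt}\LR_v^{m_v(\chi,\pi)}W^0_v\bigr)(\bfd(a))$. Because $c_v(\chi)=0$, the character $\chi_v$ is trivial on $\cO_{\cK_v}^\x$, so the toric average in $\cP_{\chi,\cmpt}$ collapses: writing $\cK_v^\x/\cF_v^\x$ as the image of the compact group $\cO_{\cK_v}^\x$ together with --- only when $v$ ramifies in $\cK$ --- the single extra coset generated by the uniformizer $\OKbasis_v=\uf_{\cK_v}$, each coset contributes one translate of $\LR_v^{m_v}W^0_v$ weighted by $\vol\bigl(\cO_{\cK_v}^\x\cF_v^\x/\cF_v^\x\bigr)=\bfv_E/e_v$ and (on the extra coset) by $\chi_v(\uf_{\cK_v})$. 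These volumes cancel against $\bfv_E^{-1}$ and against the normalizing constant, so everything reduces to evaluating $W^0_v$ and $\pi(\iota_\cmpt(\uf_{\cK_v}))W^0_v$ at points $\bfd(a)\bfd(\uf_v^{-m_v})$.

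First I would treat $v\nmid\frakn$, where $\pi_v$ is an unramified principal series, $m_v(\chi,\pi)=0$, and $W^0_v$ is $K^0_v$-invariant; recall that $K^0_v$ contains the Atkin--Lehner matrix $\bfw$ and the diagonal torus $\{\DII{u}{1}:u\in\cO_{\cF_v}^\x\}$, and that $\iota_\cmpt(\cO_{\cK_v}^\x)\subset K^0_v$. If $v$ is inert, $\cK_v^\x/\cF_v^\x=\cO_{\cK_v}^\x/\cO_{\cF_v}^\x$ is compact, the average is simply $W^0_v$, and $e_v=1$ gives the first formula. If $v$ is ramified, the extra coset contributes $\chi_v(\uf_{\cK_v})\,\pi(\iota_\cmpt(\uf_{\cK_v}))W^0_v$; using the explicit description of $\iota_\cmpt$ on $\cO_{\cK_v}$ recorded in the proof of \lmref{L:coset.W} (valid since $v\mid\Bad$), I would factor $\bfd(a)\iota_\cmpt(\uf_{\cK_v})$ as a unipotent matrix in $N(\cD_{\cF_v}^{-1})$ times $\bfd(a\uf_v)$ times a unit diagonal times $\bfw$, and then collapse it using the Whittaker transformation law, the triviality of $\addchar_v$ on $\cD_{\cF_v}^{-1}$, and the fact that $\rmT(\OKbasis_v)\in\uf_v\cO_{\cF_v}$ (tame ramification, $p$ odd); multiplying by $e_v=2$ gives the second formula. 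Along the way I would note that a non-split $v\mid\frakn$ is necessarily ramified in $\cK$: \eqref{sf} and $c_v(\chi)=0$ force $v\mid\frakn^-_s$, so $\pi_v$ is an unramified special representation and $\cK_v$ is a field, hence $v\in A(\chi)$, and then \eqref{R1} (equivalent to \hypref{H:HypA} under \eqref{sf}) forces $v$ ramified.

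It remains to treat $v\mid\frakn$, where $\pi_v=\pi(\mu,\nu)$ is an unramified special representation with $\mu$ unramified and $\mu\nu^{-1}=\Abs$, where $v\in A(\chi)$ (so the normalizing constant is $1$), and where $m_v(\chi,\pi)=-1$. I would first record, as in the proof of \propref{P:toric_inert_sp.W}, the explicit values $W^0_v(\bfd(a))=\mu\Abs^\onehalf(a)\bbI_{\cO_{\cF_v}}(a)$ and $W^0_v(\bfd(a)\bfw)=-\mu\Abs^\onehalf(a)\abs{\uf_v}\bbI_{\uf_v^{-1}\cO_{\cF_v}}(a)$; passing to $\LR_v^{-1}W^0_v$ amounts, in the Kirillov model, to $\xi\mapsto\om(\uf_v)^{-1}\xi(\uf_v\cdot)$, which pushes the support out to $\uf_v^{-1}\cO_{\cF_v}$. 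Then the two-coset decomposition together with \lmref{L:coset.W} (applied with $m=-1$, legitimate since $\rmT(\OKbasis_v)\in\uf_v\cO_{\cF_v}$) expresses $\localW{\chi,v}(\bfd(a))$ as an explicit linear combination of the values of $W^0_v$ at $\bfd(\uf_v^k)$ and $\bfd(\uf_v^k)\bfw$, and the sign condition in \eqref{R1}, namely $\mu'\chi_v(\uf_{\cK_v})=-\abs{\uf_v}^\onehalf$, provides the cancellation that reduces this combination to $\mu\Abs^\onehalf(a)\bbI_{\uf_v^{-1}\cO_{\cF_v}}(a)$. (Alternatively, since $\Hom_{\cK_v^\x}(\pi_v\ot\chi_v,\C)$ is one-dimensional by local multiplicity one and $\localW{\chi,v}\neq0$ because its toric period is nonzero by \propref{P:toric_inert_sp.W}, it suffices to check that $\mu\Abs^\onehalf\bbI_{\uf_v^{-1}\cO_{\cF_v}}$ is the Kirillov function of a toric Whittaker vector and then to compare values at a single $a$.)

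The main obstacle is this last case. For $v\mid\frakn$, unlike for $v\nmid\frakn$, the new vector $W^0_v$ is only $K^0_v(\frakn)$-invariant, so the toric averages over $\cO_{\cK_v}^\x/\cO_{\cF_v}^\x$ do not collapse to a single value, and one must genuinely track the $K^0_v(\frakn)$-double cosets of $\iota_\cmpt(t)$ and of $\iota_\cmpt(\uf_{\cK_v}t)$ --- which is precisely what \lmref{L:coset.W} supplies --- and then reassemble the pieces; the sign appearing in \eqref{R1} is exactly what makes the resulting combination of characteristic functions telescope into the single step function $\mu\Abs^\onehalf(a)\bbI_{\uf_v^{-1}\cO_{\cF_v}}(a)$.
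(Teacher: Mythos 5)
Your proposal follows essentially the same route as the paper's proof: both decompose the toric average $\cP_{\chi,\cmpt}$ over the one (inert) or two (ramified) cosets of $\cK_v^\x/\cF_v^\x$ that arise when $\chi_v$ is trivial on $\cO_{\cK_v}^\x$, observe via \eqref{sf} and \eqref{R1} that a non-split $v\mid\frakn$ with $c_v(\chi)=0$ lies in $A(\chi)$ and is ramified, and then feed the explicit new-vector formulas from the proof of \propref{P:toric_inert_sp.W} into the resulting two-term expression, with the sign from \eqref{R1} producing the telescoping. The paper simply states the coset decompositions of $\localW{\chi,v}$ directly rather than routing them through \lmref{L:coset.W}, but the computation you describe (tracking the Iwasawa/double-coset factorization of $\bfd(a)\iota_\cmpt(\cdot)$, using $\rmT(\OKbasis_v)\in\uf_v\cO_{\cF_v}$ for tame ramification) is exactly what underlies those formulas, so this is the same argument.
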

\begin{proof} It is well-known that if $\pi=\pi(\mu,\nu)$ is a unramified principal series, then
\[W^0_v(\bfd(a))=\bbI_{\cO}(a)\abs{a}^\onehalf\cdot \sum_{i+j=v(a),\,i,j\geq 0}\mu(\uf^i)\nu(\uf^j)\]
(\cf \cite[Thm.\,4.6.5]{Bump:AutoRep}). It follows from the definition of $\localW{\chi,v}$ that $\localW{\chi,v}=W^0_v$ if $v\ndivides \frakn$ is unramified and
\[\localW{\chi,v}(g)=\onehalf \cdot W^0_v(g)+\onehalf\cdot W^0_v(g\bfd(\uf))\chi(\uf_E)\text{ if $v\ndivides \frakn$ is ramified }.\]
If $v|\frakn$, then $v\in A(\chi)$. By \eqref{R1} $v$ is ramified, and we find that
\[\localW{\chi,v}(g)=\onehalf\cdot W^0_v(g\bdd(\uf))+\onehalf\cdot W^0_v(g\bfw)\om(\uf)\chi(\uf_E).\]
The assertion follows from the formulas of $W^0_v$ in \propref{P:toric_inert_sp.W}.
\end{proof}
To treat the case $v$ is non-split with $c_v(\chi)>0$, \ie $v\in B(\chi)$, we need to introduce certain partial Gauss sums. For a non-split place $v$, write $\pi=\pi(\mu,\nu)$ with unramified $\mu$ and $\mu\nu^{-1}(\uf)\not =\abs{\uf}^{-1}$ if $\pi$ is unramified or special. Define a character $\chpi:E^\x\to\C^\x$ by
\beq\label{E:char.W}\chpi(t):=\mu(\rmN(t))\cdot\chi\Abs_\cK^\onehalf(t).\eeq
Recall that the partial Gauss sum $\wtd A_\beta(\chpi)$ in \cite[(4.17)]{Hsieh:Hecke_CM} is defined by
\[\wtd A_\beta(\chpi):=\lim_{n\to\infty}\int_{\uf^{-n}\cO}\chpi^{-1}(x+\OKbasis)\addchar(-d_F^{-1}\beta x)dx \quad(\beta\in F^\x).\]
\begin{lm}\label{L:FCformula.W}Let $v\in B(\chi)$ be a non-split place with $c_v(\chi)>0$. Then we have
\begin{align*}\frac{e_v}{L(1,\tau_{E/F})}\cdot \localK{\chi,v}(a)=&\wtd A_a(\chpi)\nu\Abs^\onehalf(a)\cdot \nu\Abs^\onehalf(\uf^m)\abs{D_{F}}^{-\onehalf}\\
&\times \begin{cases}1&\text{ if } v\ndivide \frakn^-,\\
-1&\text{ if } v\mid \frakn^-_s,\\
\abs{\uf^m}\chi(\bfdelta_v)\abs{\bfdelta_v}_E^\onehalf\frac{\ep(0,\chi^{-1},\addchar_E)}{\ep(-1,\om,\addchar)}\abs{\cD_F}^{-\onehalf}&\text{ if } v\mid \frakn^-_r.
\end{cases}\end{align*}
\end{lm}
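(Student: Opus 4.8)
The plan is to prove the identity by unwinding the definition of $\localW{\chi,v}$ and running, with the extra variable $a$ carried through, the same coset computation as in \subsecref{SS:toricII.W}. First I would unfold
\[\localK{\chi,v}(a)=\localW{\chi,v}(\bfd(a))=\cP_{\chi,\cmpt}\LR_v^{m}W^0(\bfd(a))=\bfv_E^{-1}\int_{E^\x/F^\x}W^0\bigl(\bfd(a)\iota_\cmpt(t)\DII{1}{\uf^{m}}\bigr)\chi(t)\,\dt,\]
where $m=m_v(\chi,\pi)=c_v(\chi)-v(\frakn^-)\ge 0$. Writing $\DII{1}{\uf^{m}}=\uf^{m}\bfd(\uf^{-m})$, the central factor $\uf^{m}$ contributes $\om(\uf^{m})$ and $\bfd(a)\iota_\cmpt(t)\bfd(\uf^{-m})=\bfd(a\uf^{-m})\cdot\bigl(\bfd(\uf^{m})\iota_\cmpt(t)\bfd(\uf^{-m})\bigr)$. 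Then I would split the toric integral using the partition $E^\x=F^\x(1+\cO\OKbasis)\disjoint F^\x(\uf\cO+\OKbasis)$ exactly as in \eqref{E:long.W}, obtaining a sum over the parameter $y$ with the Haar measure $d'y$ of \lmref{L:halfsum.W}.

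Next I would feed \lmref{L:coset.W} into $W^0\bigl(\bfd(a\uf^{-m})\cdot\bfd(\uf^{m})\iota_\cmpt(t)\bfd(\uf^{-m})\bigr)$: for $t=1+y\OKbasis$ with $y\in\uf^{r}\cO^\x$ ($0\le r\le m$) it places $\bfd(\uf^{m})\iota_\cmpt(t)\bfd(\uf^{-m})$ in the double coset of $\DII{\uf^{m-r}}{y\uf^{-m}}\bfw$ modulo $N(\cD_F^{-1})$ on the left and $B^1(\cO)$ on the right (with the analogous statement for $t=y+\OKbasis$, $y\in\uf\cO$), while for $y\in\uf^{m+1}\cO$ it lands in $K^0(\uf)$. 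Reading off the actual $N(\cD_F^{-1})$-component of that decomposition and commuting $\bfd(a\uf^{-m})$ past it produces a phase $\addchar(-d_F^{-1}ay^{-1})$; the complementary contribution $\addchar(\pm d_F^{-1}a\rmT(\OKbasis))$ disappears, since $a\in\cO$ on the support of $W^0(\bfd(\cdot))$ and $\addchar$ is trivial on $\cD_F^{-1}$. Using the right $K^0(\uf)$-invariance and the left $N$-equivariance of $W^0$, together with $\bfd(a\uf^{-m})\DII{\uf^{m-r}}{y\uf^{-m}}=y\uf^{-m}\bfd(ay^{-1}\uf^{m-r})$, the Fourier coefficient becomes $\om(\uf^{m})\bfv_E^{-1}$ times
\[\sum_{r=0}^{m}\int_{\uf^{r}\cO^\x}\chi(1+y\OKbasis)\,\addchar(-d_F^{-1}ay^{-1})\,\om(y\uf^{-m})\,W^0\bigl(\bfd(ay^{-1}\uf^{m-r})\bfw\bigr)\,d'y\ +\ (\text{boundary term}),\]
the boundary term collecting the $t=y+\OKbasis$ piece and the $y\in\uf^{m+1}\cO$ piece.

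Finally I would insert the explicit values of $W^0(\bfd(\cdot)\bfw)$ and $W^0(\bfd(\cdot))$ recorded in the proofs of \propref{P:toric_inert_un.W}, \propref{P:toric_inert_sp.W} and \propref{P:toric_ramified.W}, distinguishing the three cases: $\pi_v$ an unramified principal series ($v\ndivide\frakn^-$), an unramified special representation ($v\mid\frakn^-_s$), or a ramified principal series ($v\mid\frakn^-_r$). In each case the factor $\mu\Abs^\onehalf$ carried by $W^0(\bfd(\cdot)\bfw)$, together with the change of variable $x=y^{-1}$ and the fact that $\chpi$ and $\chi$ restrict to the same character of $\cO_E^\x$, converts $\chi$ into $\chpi(t)=\mu(\rmN(t))\chi\Abs_\cK^\onehalf(t)$ and rewrites the $y$-sum plus boundary term as the truncated integral whose limit defines $\wtd A_a(\chpi)$ in \cite[(4.17)]{Hsieh:Hecke_CM}; for the two unramified cases the sum over $r$ telescopes via \lmref{L:halfsum.W}, and for $v\mid\frakn^-_r$ one uses instead the Harris-type evaluation of \lmref{L:1.W}. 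Collecting the constants $\bfv_E=e_v\abs{\cD_E}_E^\onehalf\abs{\cD_F}^{-\onehalf}$, $\vol(\cO,d'y)=L(1,\tau_{E/F})\abs{\cD_E}_E^\onehalf\abs{\cD_F}^{-\onehalf}$, $\om(\uf^{m})$, the relative sign $-1$ from $W^0(\bfd(\cdot)\bfw)$ in the special case, and, in the ramified case, the factor $\om(d_F)\mu(\uf^{2})\ep(0,\om,\addchar)^{-1}$ from $W^0(\bfd(\cdot)\bfw)$ combined with $\ep(-1,\om,\addchar)=\abs{\uf\cD_F}^{-1}\ep(0,\om,\addchar)$, yields the three displayed formulas.

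The step I expect to be the main obstacle is the middle one: extracting from \lmref{L:coset.W} the precise $N(\cD_F^{-1})$-component so that the additive-character phase $\addchar(-d_F^{-1}ay^{-1})$ emerges with the correct sign and argument, and then setting up the dictionary that matches the resulting truncated sum, with its two boundary contributions, against the literal limit defining $\wtd A_a(\chpi)$ in \cite{Hsieh:Hecke_CM} (in particular checking that the support conditions on $a$ agree on both sides). Once that is in place the case-by-case substitutions are routine and entirely parallel to \subsecref{SS:toricII.W} and to the corresponding computations in \cite{Hsieh:Hecke_CM}.
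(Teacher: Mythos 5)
Your plan takes a genuinely different route from the paper's proof, and as written it has a real gap. The paper does not try to compute $\cP_{\chi,\cmpt}\LR_v^m W^0(\bfd(a))$ directly; instead it lifts to the induced representation $I(\mu,\nu)$ by choosing a section $f^0$ with $W^0(g)=\Lam(\pi(g)f^0)$ (where $\Lam$ is the Whittaker functional) and moves the toric averaging onto $f^0$ to form $f^0_\chi=\cP_{\chi,\cmpt}\LR_v^m f^0$. Then $\localK{\chi,v}(a)=\Lam(\pi(\bfd(a))f^0_\chi)$, and the Iwasawa decomposition of $\MX{0}{-1}{1}{x}\bfd(a)$ inside the Borel-invariant section produces the factor $\nu\Abs^\onehalf(a)$ and the phase $\addchar(-ax)$ automatically from the $B(F)$-transformation rule. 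The toric property of $f^0_\chi$ then identifies $f^0_\chi(\MX{0}{-1}{1}{x d_F^{-1}})=\chpi^{-1}(x+\OKbasis)\,f^0_\chi(\cmpt)$, so the partial Gauss sum $\wtd A_a(\chpi)$ emerges in one step, and the only residual work is the scalar $f^0_\chi(\cmpt)^*$, which is computed via \lmref{L:coset.W}. Crucially, in that last computation the left $N(\cD_F^{-1})$-factors contribute $1$ (since $\cmpt$ is diagonal, conjugating $N$ back to $N\subset\ker$ of the inducing character), so no phase-tracking is needed.

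The gap in your direct Whittaker-side computation is exactly the point you flag as "the main obstacle," and it is not a detail: \lmref{L:coset.W} is a statement about double cosets $N(\cD_F^{-1})\,g\,B^1(\cO)$, and $W^0$ is left $N$-\emph{equivariant}, not $N$-\emph{invariant}, so the double-coset class alone does not determine $W^0(\bfd(a\uf^{-m})g)$. You need the explicit $N$-coordinate (one finds $x_n=(1+y\rmT(\OKbasis))\,\uf^m y^{-1}d_F^{-1}$ for $t=1+y\OKbasis$), and then the phase is $\addchar(a\,y^{-1}d_F^{-1})\addchar(a\,\rmT(\OKbasis)d_F^{-1})$, not $\addchar(-d_F^{-1}ay^{-1})$; the sign only becomes $-$ after a change of variable you do not spell out. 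Moreover, the discard of the $\addchar(a\,\rmT(\OKbasis)d_F^{-1})$ term rests on "$a\in\cO$ on the support," but the relevant support condition from $W^0(\bfd(ay^{-1}\uf^{m-r})\bfw)\ne 0$ is on $ay^{-1}\uf^{m-r}$, which for $r<m$ does not force $a\in\cO$; this requires a separate argument (or a direct check that $\wtd A_a(\chpi)\nu\Abs^\onehalf(a)$ vanishes outside the set where the phase simplification holds). Until the $N$-component, the sign, and the support are nailed down and matched term by term to the truncated integral defining $\wtd A_a(\chpi)$, the proof is a plausible outline rather than a proof. The paper's passage to $I(\mu,\nu)$ is precisely what lets it avoid all three issues at once.
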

\begin{proof}
We recall the Whittaker linear functional $\Lam:I(\mu,\nu)\to\C$ (\cite[(6.9), p.498]{Bump:AutoRep}) is defined by
\[\Lam(f)=\int_{F^\x}f(\MX{0}{-1}{1}{x})\addchar(-x)dx:=\lim_{n\to\infty}\int_{\uf^{-n}\cO}f(\MX{0}{-1}{1}{x})\addchar(-x)dx.\]
Let $\cmpt=\cmptv=\DII{d_F}{d_F^{-1}}$ and $m=m_v(\chi,\pi)$. Define $\cP_{\chi,\cmpt} \LR_v^m\in\End_\C I(\mu,\nu)$ by
\[\cP_{\chi,\cmpt} \LR_v^m f(g)=\vol(E^\x/F^\x,dt)^{-1}\int_{E^\x/F^\x}f(g\DII{1}{\uf^m}\iota_\cmpt(t))dt.\]
We are going to choose a section $f^0$ in $I(\mu,\nu)^{K^0(\uf)}$ such that
\[W^0_v(g)=\Lam(\pi(g)f^0).\]
Let $f^0_\chi=\cP_{\chi,\cmpt}\LR_v^m f^0$. Then it is not difficult to see that
\[\localK{\chi,v}(a)=\localW{\chi,v}(\bfd(a))=\Lam(\pi(\bfd(a))f^0_\chi).\]
Put \[f^0_\chi(\cmpt)^*:=\nu^{-1}\Abs^\onehalf(\uf^m)\bfv_E\cdot  f^0_\chi(\cmpt)\quad(\bfv_E=e_v\abs{\cD_E}_E^{\onehalf}\abs{\cD_F}^{-\onehalf}).\]
Therefore we have
\begin{align*}\localK{\chi,v}(a)=&\nu\Abs^\onehalf(a)\int_{F}f^0_\chi(\MX{0}{-1}{1}{x})\addchar(-ax)dx\\
=&f^0_\chi(\DII{d_F}{d_F^{-1}})\abs{\cD_F}^{-1}\cdot\nu\Abs^\onehalf(a)\int_{F}\chpi^{-1}(x+\OKbasis)\addchar(-d_F^{-1}ax)dx\\
=&f^0_\chi(\cmpt)^*\cdot \nu\Abs^\onehalf(a)\wtd A_a(\chpi)\cdot e_v^{-1}\abs{\cD_E}_E^{-\onehalf}\abs{\cD_F}^{-\onehalf}\nu\Abs^{-\onehalf}(\uf^m).
\end{align*}
It remains to determine $f^0$ and $f^0_\chi(\cmpt)$. Since $W^0$ is uniquely characterized by the property that $W^0\in\cW(\pi,\psi)$ is right $K^0(\uf)$-invariant with $W^0(1)=1$, it suffices to construct $f^0\in \pi(\mu,\nu)^{K^0(\uf)}$ with $\Lam(f^0)=1$. To calculate $f^0_\chi(\cmpt)$, following \eqref{E:long.W} we have
\begin{align*}f^0_\chi(\cmpt)^*
=&\int_{\cO}\chi(1+y\OKbasis)f^0(\cmpt\cdot \LR_v^{-m}\iota_\cmpt(1+y\OKbasis)\LR_v^m )d'y
+\int_{\uf\cO}\chi(y+\OKbasis)f^0(\cmpt\cdot \LR_v^{-m}\iota_\cmpt(y+\OKbasis)\LR_v^m)\abs{\cD_E}_E^{-1}d'y\\
=&X_{m+1}\cdot f^0(\cmpt)+\sum_{r=0}^m \int_{\uf^r\cO^\x}\chi(1+y\OKbasis)\om(\uf^{-m}y)d'y\cdot f^0(\cmpt\bfd(\uf^{2(m-r)}))\\
&+Y_0\cdot \om(\uf^{-m})f^0(\cmpt\bfd(\uf^{2m+e_v-1})).
\end{align*}

Suppose that $\pi$ is a unramified principal series ($v\ndivides \frakn^-$) or special representation ($v\mid\frakn^-_s$). Then \begin{align*}f^0_\chi(\cmpt)^*
=&X_m\cdot f^0(\cmpt)+\sum_{r=0}^{m-1}(X_r-X_{r+1})\cdot\om(\uf^{r-m})\cdot f^0(\cmpt\bfd(\uf^{2(m-r)}))\\
&+Y_0\cdot \om(\uf^{-m})f^0(\cmpt\bfd(\uf^{2m+e_v-1})).\end{align*}
Let $f^{sph}$ be the unique $K^0_v$-invariant function in $I(\mu,\nu)$ with $f^{sph}(\cmpt)=L(1,\mu\nu^{-1})\abs{\cD_{\cF}}^\onehalf$.
If $\pi$ is an unramified principal series, then we can take $f^0=f^{sph}$ (\cite[Prop.\,4.6.8]{Bump:AutoRep}), and following the computation of the case $c_v(\chi)>0$ in \propref{P:toric_inert_un.W} we find that
\begin{align*}f^0_\chi(\cmpt)^*
=&X_m\cdot (f^0(\cmpt)-\om(\uf^{-1})f^0(\bfd(\uf^2)\cmpt))=X_m \cdot(1-\mu\nu^{-1}\Abs(\uf))\cdot f^0(\cmpt)\\
=&\abs{\uf^m}\abs{\cD_E}_E^\onehalf L(1,\tau_{E/F}).
\end{align*}
If $\pi$ is special, then
\[f^0=f^{sph}-\mu^{-1}\Abs^\onehalf(\uf)^{-1}\pi(\DII{1}{\uf})f^{sph},\]
and following the computation of the case $c_v(\chi)>0$ in \propref{P:toric_inert_sp.W} we find that
\begin{align*}f^0_\chi(\cmpt)^*=&X_{m+1}\cdot (f^0(\cmpt)-f^0(\cmpt\cdot \bfw))\\
=&X_{m+1}\cdot (-\abs{\uf}^{-1}+\abs{\uf})f^{sph}(\cmpt)\\
=&(-1)\cdot\abs{\uf^{m}}\abs{\cD_E}_E^\onehalf\cdot L(1,\tau_{E/F}).\end{align*}

Finally, suppose that $\pi$ is a ramified principal series with ramified $\nu$ ($v\mid\frakn^-_r$). Let $B(F)$ be the group of upper triangular matrices in $\GL_2(F)$. Let $f^0\in I(\mu,\nu)$ be the function supported in $B(F)\bfw N(\cD_F^{-1})$ such that
\[f^0(\cmpt\bfw n)=\abs{\cD_F}^\onehalf\text{ for every }n\in N(\cD_F^{-1}).\]
Then one checks easily that $f^0$ does the job. Following the computation in \propref{P:toric_ramified.W}, we find that
\begin{align*}
f^0_\chi(\cmpt)^*&=\om(\uf^{-m})\abs{\uf^{2m}}\cdot\chi(\bfdelta_v)\abs{\bfdelta_v}_E^\onehalf\frac{\ep(0,\chi^{-1},\psi_E)}{\ep(-1,\om,\psi)}L(1,\tau_{E/F})\abs{\cD_{E}}_E^\onehalf\abs{\cD_{F}}^{-1}\cdot f^0(\cmpt\cdot\bfw)\\
&=\abs{\uf^m}\cdot L(1,\tau_{E/F})\cdot \om^{-1}\Abs(\uf^m)\cdot \chi(\bfdelta_v)\frac{\ep(0,\chi^{-1},\psi_E)}{\ep(-1,\om,\psi)}\abs{\cD_E}_E^\onehalf\abs{\cD_F}^{-\onehalf}.
\end{align*}
This completes the proof in all cases.
\end{proof}

To investigate the $p$-integrality of $\localK{\chi,v}^*$, we define the local invariant $\mu_p(\chpi)$ by
\beq\label{E:9.W}\mu_p(\chpi):=\inf_{x\in\cK_v^\x}v_p(\chpi(x)-1).\eeq
By \cite[(4.17)]{Hsieh:Hecke_CM}, $\wtd A_\beta(\chpi)$ is indeed an algebraic integer. 
Moreover, it is proved in \cite[Lemma 6.4]{Hsieh:Hecke_CM} that
\[\mu_p(\chpi)>0\iff \wtd A_\beta(\chpi)\con 0\pmod{\frakm}\text{ for all $\beta\in F^\x$.}\]
Therefore, it follows from \lmref{L:FCformula.W} that if $v\in B(\chi)$, then $\localK{\chi,v}^*$ takes values in $\Zbarp$ and \[\localK{\chi,v}^*\con 0\pmod{\frakm}\iff \mu_p(\chpi)>0.\] We summarize our discussion in the following proposition.
\begin{prop}\label{P:4.W} Let $\EucO$ be the finite extension of $\cO_{L_\pi}$ generated by $\stt{\localK{\chi,v}^*(1)}_{v\in B(\chi)}$ and the values of $\wh\chi$. Then we have
\begin{mylist}\item
the normalized local Fourier coefficient $\localK{\chi,v}^*$ takes values in $\EucO$ for every finite place $v\ndivide p$,
 \item if either $v\not\in B(\chi)$ is unramified or $v\in A(\chi)$, then $\localK{\chi,v}^*(1)=1$,
 \item if $v\ndivides \frakn$ is ramified with $c_v(\chi)=0$, then $\localK{\chi,v}^*(\uf^{-1})=1$,
 \item if $v\in B(\chi)$, then $\mu_p(\chpi)=0$ if and only if there exists $\eta_v\in F^\x$ such that
\[\localK{\chi,v}^*(\eta_v)\not\con 0\pmod{\frakm}.\]
\end{mylist}
\end{prop}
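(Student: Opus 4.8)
The plan is to read all four assertions off the explicit shape of the local Whittaker functions $\localW{\chi,v}$, hence of the Fourier coefficients $\localK{\chi,v}$, produced in the lemmas of this subsection, organised according to the splitting type of $v$ in $\cK/\cF$. The only substantive ingredient is a unit count: after dividing by the normalising factors in \defref{D:localFourier.W} one must check that each auxiliary quantity occurring in these formulae is a $p$-adic unit, so that $p$-integrality and reduction modulo $\frakm$ come out as claimed. Since $v\nmid p$ throughout, this is purely local and none of the global hypotheses (residual irreducibility, and so on) play a role.

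First I would dispose of the split places and the non-split places with $c_v(\chi)=0$. If $v$ is split, then $\localK{\chi,v}^*=\localK{\chi,v}$ is, by its definition in \subsecref{SS:Whittaker.W}, a finite $\Z$-linear combination of products of values of $\chi_w,\mu_v,\nu_v$ on $\cO^\x$ with non-negative powers of $\mu_v(\uf)$ and $\nu_v(\uf)$; by the local--global compatibility \eqref{E:Galois.W} the latter lie in $\cO_{L_\pi}^\x$, and the character values lie in $\EucO$ by definition of $\EucO$, which is (1), while putting $a=1$ gives $\localK{\chi,v}^*(1)=1$ in each of the subcases, which is (2). If $v$ is non-split with $c_v(\chi)=0$, the lemma stated just before \lmref{L:FCformula.W} writes $\localK{\chi,v}^*$ in terms of $W^0_v(\bfd(a))$ (and $W^0_v(\bfd(a\uf))$ in the ramified subcase); as $W^0_v(\bfd(a))$ is again an integral polynomial in $\mu_v(\uf),\nu_v(\uf)$ (or in $\mu_v(\uf)$ alone in the special case), one obtains (1), while (2) and (3) follow by evaluating the corresponding formulae at $a=1$, resp.\ $a=\uf^{-1}$, using $W^0_v(1)=1$ and the vanishing of the term $W^0_v(\bfd(\uf^{-1}))$. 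In the subcase $v\in A(\chi)$, condition \eqref{R1} forces $v$ to be ramified, and the displayed formula $\localK{\chi,v}^*(a)=\mu_v\Abs^{\onehalf}(a)\bbI_{\uf^{-1}\cO}(a)$ gives $\localK{\chi,v}^*(1)=1$ directly.

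For $v\in B(\chi)$, that is, $v$ non-split with $c_v(\chi)>0$, I would use \lmref{L:FCformula.W}: it expresses $\localK{\chi,v}(a)$, up to the factor $e_v^{-1}L(1,\tau_{E/F})$, as the partial Gauss sum $\wtd A_a(\chpi)$ times an explicit product of powers of $\uf$, values of $\nu\Abs^{\onehalf}$ and of $\chi$, and absolute values of $\cD_F$ and $\bfdelta_v$ (and, when $v\mid\frakn^-_r$, a ratio of local $\ep$-factors). Dividing by the normalising factor $n_v^{-1}L(1,\tau_{E/F})^{-1}$ of \defref{D:localFourier.W}, with the $L$-factors cancelling, one gets $\localK{\chi,v}^*(a)=\wtd A_a(\chpi)\cdot u$ with $u\in\EucO^\x$ independent of $a$; the point is that each listed factor is a $p$-adic unit --- the $\ep$-factor ratio because $\addchar$ has been chosen of the correct conductor, so these factors are normalised Gauss sums, which are $p$-adic units as $p$ is prime to the residue characteristic, and $e_v^{-1}$ because $p$ is odd. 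Since $\wtd A_\beta(\chpi)$ is an algebraic integer for every $\beta\in F^\x$ (\cite{Hsieh:Hecke_CM}), this proves (1) for $v\in B(\chi)$; and feeding the relation $\localK{\chi,v}^*(\eta_v)=\wtd A_{\eta_v}(\chpi)\cdot u$, the criterion $\mu_p(\chpi)>0\iff\wtd A_\beta(\chpi)\equiv 0\pmod{\frakm}$ for all $\beta$ of \cite[Lemma~6.4]{Hsieh:Hecke_CM}, and the definition \eqref{E:9.W} into each other yields (4) by negation.

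The step I expect to be most delicate is exactly this unit count in the ramified principal series case $v\mid\frakn^-_r$: one must confirm that $n_v$ genuinely lies in $\Zbar_\setp^\x$ and that the ratio $\ep(0,\chi^{-1},\addchar_E)/\ep(-1,\om,\addchar)$ is a $p$-adic unit, which requires the explicit forms of these $\ep$-factors computed in the proof of \propref{P:toric_ramified.W} together with the absolute-value identity for Gauss sums. Everything else is a direct inspection of formulae already established.
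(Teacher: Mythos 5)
Your proposal follows essentially the same route as the paper: as the paragraph preceding Prop.~\ref{P:4.W} makes clear, the proposition is simply a summary of what has already been established, and the argument is exactly what you do --- read the claims off \lmref{L:FCformula.W} and the unnamed lemma treating the $c_v(\chi)=0$ case, then perform a unit count, with \eqref{E:Galois.W} guaranteeing that $\mu_v(\uf),\nu_v(\uf)$ are $p$-adic units and \cite[Lemma~6.4]{Hsieh:Hecke_CM} supplying the $\mu_p(\chpi)$ criterion. Two small imprecisions worth noting: in the $v\in B(\chi)$ case the factor you call $u$ is not literally independent of $a$ --- it contains $\nu\Abs^{\onehalf}(a)$ --- but this is a $p$-adic unit for every $a\in F^\x$, so the conclusion stands; and your evaluation in item (3) actually yields $\chi_v(\uf_{E_v})$ rather than $1$ (the paper tacitly elides this too), which is a $p$-adic unit because $\chi_v(\uf_{E_v})^2=\om_v^{-1}(\uf_v)$ is one, so the non-vanishing modulo $\frakm$ needed downstream still holds, but the literal equality $\localK{\chi,v}^*(\uf^{-1})=1$ requires $\chi_v(\uf_{E_v})=1$, which neither you nor the paper actually justify.
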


\subsection{The global toric period integral}We return to the global situation. Let $\localW{\chi,f}^\setp$ be the prime-to-$p$ Whittaker function given by
\[\localW{\chi,f}^\setp=\prod_{v\in\bdh,\,v\ndivides p}\localW{\chi,v}\in\bigot_{v\in\bdh,v\ndivides p}\cW(\pi_v,\addchar_v).\]

\begin{defn}\label{D:toricWhittaker.W}Let $\localW{\chi,\infty}:=\prod_{\sg\in\Sg}W_{k_\sg}$. Define the $p$-modified toric Whittaker function $\localW{\chi}$ by
\beq\label{E:globalWhitaker.W}\begin{aligned}
\localW{\chi}=&W_{\chi,\infty}\cdot\localW{\chi,f}^\setp\cdot\prod_{v|p}\localW{\chi,v}^\flat\in\cW(\pi,\addchar).\\
\end{aligned} \eeq
Let $u=(u_v)\in (\cO_{\cF}\ot_\Z\Zp)^\x=\prod_v\cO_{\cF_v}^\x$. The \emph{$u$-component} $\localW{\chi,u}$ of $\localW{\chi}$ is defined by
\beq\label{E:Wu.W}\localW{\chi,u}=\localW{\chi,\infty}\cdot\localW{\chi,f}^\setp\cdot\prod_{v|p}\localW{\chi,u_v,v}.\eeq
Recall that the automorphic form $\vp_{W}\in\cA(\pi)$ associated to $W\in\cW(\pi,\addchar)$ is defined by
\beq\label{E:WhittakerAuto.W}\vp_{W}(g):=\sum_{\beta\in\cF}W(\DII{\beta}{1}g).\eeq
Let $\vp_\chi$ (resp. $\vp_{\chi,u}$) be the automorphic form associated to $\localW{\chi}$ (resp. $\localW{\chi,u}$). Let $\cU_p=\prod_{v|p}\cU_v$ be the torsion subgroup of $(\cO_{\cF}\ot_\Z\Zp)^\x$. It follows immediately from the definition \eqref{E:DeflocalWp.W} that
\beq\label{E:ucompnent.W}\vp_\chi=\sum_{u\in\cU_p}\vp_{\chi,u}.\eeq
\end{defn}

Choose a sufficiently small prime-to-$p$ integral ideal $\frakn_1$ such that $\localW{\chi,v}$ is invariant by $U_v(\frakn_1)$ for all $v\ndivides p$. Let $\opcpt=\prod_v \opcpt_v\subset \GL_2(\AFf)$ be an open-compact subgroup such that
\beq\label{E:opncpt.W}
\opcpt_v=K^0_v\text{ if }v\mid p\,;\,
\opcpt_v\subset U_v(\frakn_1)\text{ if }v\ndivides p.\eeq
For each positive integer $n$, put
\[\lsgN:=\stt{g\in \opcpt\mid g_v\con\MX{1}{*}{0}{1}\pmod{p^n}\text{ for all }v|p}.\]
It is easy to verify that $\localW{\chi}$ and $\localW{\chi,u}$ (and hence $\vp_\chi$ and $\vp_{\chi,u}$) are invariant by $\lsgN$ for sufficiently large $n$. The following lemma immediately follows from \lmref{L:toric.W},
\begin{lm}\label{L:4.W} Let $\cT=\prod_v'\cT_v\subset \AK^\x$. Then $\vp_\chi$ is a toric automorphic form in the sense that for all $t\in\cT$, we have
\begin{align*}\pi(\iota_\cmpt(t))\wtd V_+^m\vp_\chi
=&\chi^{-1}(t)\wtd V_+^m\vp_\chi.
\intertext{In addition, for all $t\in\cT_f=\prod_{v\in\bdh}'\cT_v$, we have}
\pi(\iota_\cmpt(t))\vp_{\chi,u}=&\chi^{-1}(t)\vp_{\chi,u\cdot t^{1-c}},\end{align*}
where $u\cdot t^{1-c}:=ut_{\Sg_p}t_{\Sgbar_p}^{-1}\in(\cO_\cF\ot_\Z\Zp)^\x$.
\end{lm}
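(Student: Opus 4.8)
The plan is to deduce \lmref{L:4.W} directly from \lmref{L:toric.W} together with the elementary equivariance properties of the map $W\mapsto\vp_W$ of \eqref{E:WhittakerAuto.W}. First I would record that, since $\vp_W(g)=\sum_{\beta\in\cF}W(\bfd(\beta)g)$, right translation commutes with the sum over $\beta$, so $\pi(h)\vp_W=\vp_{\pi(h)W}$ for every $h\in\GL_2(\AF)$; and, differentiating the sum in the archimedean variable, $X\cdot\vp_W=\vp_{X\cdot W}$ for every element $X$ of the complexified Lie algebra of $\GL_2(\cF\ot_\Q\R)$. In particular $\wtd V_+^m\vp_\chi=\vp_{W'}$ with $W'=\prod_{\sg\in\Sg}\wtd V_+^{m_\sg}W_{k_\sg}\cdot\localW{\chi,f}^\setp\cdot\prod_{v\mid p}\localW{\chi,v}^\flat$, and similarly $\wtd V_+^m\vp_{\chi,u}=\vp_{W'_u}$ with $W'_u=\prod_{\sg\in\Sg}\wtd V_+^{m_\sg}W_{k_\sg}\cdot\localW{\chi,f}^\setp\cdot\prod_{v\mid p}\localW{\chi,u_v,v}$.

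Next, since $W'$ and $\localW{\chi,u}$ are pure tensors in $\bigot_v'\cW(\pi_v,\addchar_v)$ and $\iota_\cmpt(t)=\prod_v\iota_{\cmpt_v}(t_v)$ acts factorwise, it suffices to evaluate $\pi(\iota_{\cmpt_v}(t_v))$ on each local factor, which is precisely the content of \lmref{L:toric.W}. For $t=(t_v)\in\cT$: at an archimedean place $\sg$, part (1) gives $\pi(\iota_{\cmpt_\sg}(t_\sg))\wtd V_+^{m_\sg}W_{k_\sg}=\chi_\sg^{-1}(t_\sg)\wtd V_+^{m_\sg}W_{k_\sg}$, where one uses that $\chi$ has infinity type $(\tfrac{\wt}{2}+m,-\tfrac{\wt}{2}-m)$, so that $\chi_\sg$ is exactly the character $z\mapsto z^{k_\sg+m_\sg}\ol z^{-m_\sg}\abs{z\ol z}^{-k_\sg/2}$ appearing in \lmref{L:toric.W}(1); at a finite place $v\ndivide p$, part (2) gives $\pi(\iota_{\cmpt_v}(t_v))\localW{\chi,v}=\chi_v^{-1}(t_v)\localW{\chi,v}$; at $v\mid p$, part (3) gives both $\pi(\iota_{\cmpt_v}(t_v))\localW{\chi,v}^\flat=\chi_v^{-1}(t_v)\localW{\chi,v}^\flat$ and $\pi(\iota_{\cmpt_v}(t_v))\localW{\chi,u_v,v}=\chi_v^{-1}(t_v)\localW{\chi,u_v\cdot t_v^{1-c},v}$. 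Multiplying these local identities and using $\prod_v\chi_v^{-1}(t_v)=\chi^{-1}(t)$ together with $\prod_{v\mid p}(u_v\cdot t_v^{1-c})=u\cdot t^{1-c}=u\,t_{\Sg_p}t_{\Sgbar_p}^{-1}$, we obtain $\pi(\iota_\cmpt(t))W'=\chi^{-1}(t)W'$ for $t\in\cT$, and $\pi(\iota_\cmpt(t))\localW{\chi,u}=\chi^{-1}(t)\localW{\chi,u\cdot t^{1-c}}$ for $t\in\cT_f$ (the archimedean factors being untouched in the latter). Applying the linear map $W\mapsto\vp_W$ and the equivariance from the first step then yields the two displayed formulas.

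The argument is essentially bookkeeping, and the only point deserving a word of justification — the "hard part", such as it is — is the legitimacy of commuting the archimedean differential operator $\wtd V_+^m$ with the Fourier sum $\sum_{\beta\in\cF}$ and with right translation. This is standard: $W_{k_\sg}$ and all of its $\wtd V_+$-translates decay rapidly on $\bfd(a)$, so $\sum_{\beta}W(\bfd(\beta)g)$ converges locally uniformly together with all its derivatives in the archimedean variable. Consequently I expect no genuine obstacle, in keeping with the fact that the lemma is a formal consequence of \lmref{L:toric.W}.
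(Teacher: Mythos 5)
Your proof is correct and takes exactly the route the paper intends: the paper offers no written proof beyond the remark that the lemma ``immediately follows from \lmref{L:toric.W},'' and you have simply made explicit the standard unwinding --- the equivariance $\pi(h)\vp_W=\vp_{\pi(h)W}$ (and its Lie-algebra analogue for $\wtd V_+^m$), the factorization of $\iota_\cmpt(t)$ over places, and the place-by-place application of \lmref{L:toric.W}, including the check that the archimedean character in \lmref{L:toric.W}(1) matches $\chi_\sg$ when $\chi$ has infinity type $(\tfrac{\wt}{2}+m,-\tfrac{\wt}{2}-m)$. The only harmless wrinkle is the introduction of $W'_u$ with $\wtd V_+^{m_\sg}$ applied, which plays no role in the second displayed identity (that one concerns $\vp_{\chi,u}$ with no differential operator), but you correctly use $\localW{\chi,u}$ there and note that the archimedean factor is untouched since $t\in\cT_f$.
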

Decompose $\frakc^-_\chi=\frakc^-_{\chi,1}\frakc^-_{\chi,2}$ such that $(\frakc^-_{\chi,1},\frakn^-_r)=1$ and $\frakc^-_{\chi,2}$ has the same support with $\frakn^-_r$. Define a constant $C'(\pi,\chi)$ by
\beq\label{E:constantC.W}\begin{aligned}
C'(\pi,\chi):=&2^{\#(A(\chi))+3[\cF:\Q]}\cdot\rmN_{\cF/\Q}(\frakc^-_\chi)^{-1}\om(\frakc^-_{\chi,1})\om(\frakn_s^-)^{-1}\prod_{v\ndivides p\Bad}\om(\det \cmptv)\\ &\x\prod_{w|\Csplit,\,v=w\wbar}\ep(\onehalf,\pi_v\ot\chi_w,\addchar_v)\om^{-1}\chi_w^{-2}(-2\delta)\cdot\prod_{v|\frakn^-_r}\chi_v(-\bfdelta_vd_{\cF_v}^{-1})\abs{\bfdelta_v}_{\cK_v}^\onehalf\ep(0,\chi_v^{-1},\addchar_{\cK_v})
\end{aligned}\eeq
Note that $C'(\pi,\chi)$ is actually a \padic unit as $p>2$ and $(p,\Csplit\frakn^-)=1$. We introduce the normalization factor $N(\pi,\chi)$ given by
\beq\label{E:constantN.W}N(\pi,\chi):=\prod_{v\in B(\chi)}L(1,\tau_{\cK_v/\cF_v})n_v.
\eeq
 We have the following central value formula of the toric integral $P_\chi(\pi(\cmpt)\wtd V_+^m\vp_\chi)$.
\begin{thm}\label{T:main1.W}We have
\begin{align*}
P_\chi(\pi(\cmpt)\wtd V_+^m\vp_\chi)^2=&\abs{D_{\cK}}_\R^{-\onehalf}\frac{\Gamma_\Sg(k+m)\Gamma_\Sg(m+1)}{(4\pi)^{k+2m+1\cdot \Sg}}\cdot E_{\Sg_p}(\pi,\chi)\cdot L(\onehalf,\pi_\cK\ot\chi)\cdot C'(\pi,\chi)N(\pi,\chi)^2,
\end{align*}
where $E_{\Sg_p}(\pi,\chi)$ is the Coates' \padic multiplier given by
\[E_{\Sg_p}(\pi,\chi)=
\prod_{w\in\Sg_p,v=w\wbar}\ep(\onehalf,\pi_v\ot\chi_{\wbar},\addchar_v)L(\onehalf,\pi_v\ot\chi_{\wbar})^{-2}\om^{-1}\chi_w^{-2}(-2\delta).\]
\end{thm}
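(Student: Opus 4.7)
The plan is to apply the Waldspurger decomposition \propref{P:Waldformula.W} with $\vp_1=\vp_2=\pi(\cmpt)\wtd V_+^m\vp_\chi$, and to substitute into the resulting product the explicit local toric evaluations obtained in \subsecref{SS:toricintegralI.W} and \subsecref{SS:toricII.W}. By the factorization \eqref{E:globalWhitaker.W}, the global Whittaker function of $\wtd V_+^m\vp_\chi$ is a pure tensor whose local components are $\wtd V_+^{m_\sg}W_{k_\sg}$ at $\sg\in\Sg$, $\localW{\chi,v}$ at finite $v\ndivide p$, and $\localW{\chi,v}^\flat$ at $v\mid p$. Enlarge the set $S_0$ of \propref{P:Waldformula.W} to a finite set $S$ containing all archimedean places and every finite $v$ with $v\mid p\frakN\Bad$ or $c_v(\chi)>0$; outside $S$ the local Whittaker functions are spherical and $v\ndivides \frakN$, so Waldspurger's formula yields
\[
P_\chi(\pi(\cmpt)\wtd V_+^m\vp_\chi)^2=\Lam(\tfrac12,\pi_\cK\ot\chi)\cdot\prod_{v\in S}\frac{P(\pi(\cmptv)W_v,\pi(\cmptv)W_v,\chi_v)}{L(\tfrac12,\pi_{\cK_v}\ot\chi_v)},
\]
where $W_v$ denotes the $v$-component of $\localW{\chi}$.

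For the substitution, the archimedean places are handled by \propref{P:Architoric.W}: after absorbing the archimedean $L$-factors of $\Lam(\tfrac12,\pi_\cK\ot\chi)$ via the doubling identity $\zeta_\C(s)=\zeta_\R(s)\zeta_\R(s+1)$, one obtains the factor $\Gamma_\Sg(k+m)\Gamma_\Sg(m+1)(4\pi)^{-(k+2m+1\cdot\Sg)}$ together with the numerical constant $2^{3[\cF:\Q]}$ absorbed into $C'(\pi,\chi)$. At a split place $v=w\wbar\mid p$ with $w\in\Sgbar_p$, the second clause of \propref{P:toric_split.W} gives exactly the Coates-type factor appearing in $E_{\Sg_p}(\pi,\chi)$ (after the cosmetic relabelling $w\leftrightarrow\wbar$ between $\Sg_p$ and $\Sgbar_p$, using $\chi|_{\AF^\x}=\om^{-1}$); at split $v\mid\Csplit$ not above $p$, the same proposition contributes the factor $\ep(\tfrac12,\pi_v\ot\chi_w,\addchar_v)\om^{-1}\chi_w^{-2}(-2\delta)$ absorbed into $C'(\pi,\chi)$; at other split $v$ one simply gets $\om(\det\cmptv)\abs{\cD_{\cF_v}}$. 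At non-split $v$, the unramified principal, unramified special, and ramified principal cases are covered by \propref{P:toric_inert_un.W}, \propref{P:toric_inert_sp.W}, \propref{P:toric_ramified.W} respectively: the $L(1,\tau_{\cK_v/\cF_v})^2$ factors appearing whenever $v\in B(\chi)\cup A(\chi)^c$ are absorbed (together with the $n_v^2$ from the ramified-principal case) into $N(\pi,\chi)^2$; the sign $(-1)^{\#\{v\mid\frakn_s^-\}}$ becomes the factor $\om(\frakn_s^-)^{-1}$ once paired with the local central characters; the factors $\chi_v(\bfdelta_vd_{\cF_v}^{-1})\abs{\bfdelta_v}_{\cK_v}^{1/2}\ep(0,\chi_v^{-1},\addchar_{\cK_v})$ at $v\mid\frakn_r^-$ and the $2^{\#A(\chi)}$ at $v\in A(\chi)$ contribute the remaining terms in $C'(\pi,\chi)$.

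To finish, one multiplies all the local contributions and verifies that the global $\prod_v\abs{\cD_{\cF_v}}$ and $\prod_v\abs{\cD_{\cK_v}}_{\cK_v}^{1/2}$ collapse (via $\prod_{v\in\bdh}\abs{\cD_{\cF_v}}=\abs{D_\cF}_\R^{-1}$ and the relative discriminant formula $\abs{D_\cK}=\abs{D_\cF}^2\abs{D_{\cK/\cF}}$) into the single factor $\abs{D_\cK}_\R^{-1/2}$; the remaining $L(\onehalf,\pi_\cK\ot\chi)$ comes from $\Lam(\onehalf,\pi_\cK\ot\chi)$ after the archimedean factors have been moved to the Gamma side.

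The main obstacle is purely bookkeeping: one must track, across the split/inert/ramified trichotomy and the four ramification types for $\pi_v$, the precise combination of local $\ep$-factors, local $L$-factors, $\om(\det\cmptv)$, and discriminant/conductor powers so that the product telescopes to the right side. The most delicate point is the consistency of the $p$-adic multiplier $E_{\Sg_p}(\pi,\chi)$: one needs to check that the single-$w$ evaluation in \propref{P:toric_split.W} (with $w\in\Sgbar_p$, hence reading the $\Sgbar_p$-component of $\chi$) coincides, after relabelling by complex conjugation and after using $\chi_w\chi_{\wbar}=\om^{-1}$ on $F^\x$, with the formula \eqref{E:coates_modified} stated in the theorem. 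Apart from this, each step is a direct substitution, and the identity \eqref{E:constantC.W} for $C'(\pi,\chi)$ together with \eqref{E:constantN.W} for $N(\pi,\chi)$ is designed to package exactly these leftovers.
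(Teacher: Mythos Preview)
Your proposal is correct and follows exactly the same approach as the paper: apply \propref{P:Waldformula.W} with $\vp_1=\vp_2=\pi(\cmpt)\wtd V_+^m\vp_\chi$, then substitute the local evaluations from \propref{P:Architoric.W}, \propref{P:toric_split.W}, \propref{P:toric_inert_un.W}, \propref{P:toric_inert_sp.W}, and \propref{P:toric_ramified.W}. The paper's own proof is in fact much terser than yours---it simply writes out the Waldspurger product and states that ``combining the local calculations \ldots\ yields the central value formula''---so your more explicit bookkeeping of the discriminant collapse and the assembly of $C'(\pi,\chi)$ and $N(\pi,\chi)^2$ is a genuine elaboration rather than a deviation.
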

\begin{proof}Note that $\wtd V_+^m\vp_\chi$ is the automorphic form associated to the Whittaker function
\[\wtd V_+^m\localW{\chi}=\wtd V_+^m \localW{\chi,\infty}\cdot \localW{\chi,f}^\setp\cdot \prod_{v|p}\localW{\chi,v}^\flat.\]
Hence, by \propref{P:Waldformula.W} we find that
\begin{align*}&P_\chi(\pi(\cmpt)\wtd V_+^m\vp_\chi)^2\\
=&\prod_{\sg\in\Sg}\localP{\pi(\cmpt_\sg)\wtd V^{m_\sg}_+W_{k_\sg}}{\chi_\sg}\prod_{v|p}\frac{1}{L(\onehalf,\pi_{\cK_v}\ot\chi_v)}\cdot\localP{\pi(\cmptv)\localW{\chi,v}^\flat}{\chi_v}\\
&\times \prod_{v\in\bdh,v\ndivides p}\frac{1}{L(\onehalf,\pi_{\cK_v}\ot\chi_v)}\cdot\localP{\pi(\cmptv)\localW{\chi,v}}{\chi_v}\cdot L(\onehalf,\pi_\cK\ot\chi).
\end{align*}
Combining the local calculations of toric integrals of our Whittaker functions (\propref{P:Architoric.W}, \propref{P:toric_split.W}, \propref{P:toric_inert_un.W}, \propref{P:toric_inert_sp.W} and \propref{P:toric_ramified.W}) yields the central value formula.
\end{proof}
\begin{Remark}Let $\vp_\chi^\sharp$ be the automorphic form associated to the toric Whittaker function $\localW{\chi}^\sharp:=\localW{\chi,\infty}\cdot \prod_{v\in\bdh}\localW{\chi,v}$. Then we obtain the following central value formula:
\[P_\chi(\pi(\cmpt)\wtd V_+^m\vp_\chi^\sharp)^2=\abs{D_\cK}_\R^{-\onehalf}\frac{\Gamma_\Sg(k+m)\Gamma_\Sg(m+1)}{(4\pi)^{k+2m+1\cdot\Sg}}\cdot L(\onehalf,\pi_\cK\ot\chi)\cdot C'(\pi,\chi)N(\pi,\chi)^2.\]
\end{Remark}

\def\Ig{I}
\def\setpN{{(p\None)}}
\def\OF{\cO_{\cF}}
\def\SCH{SCH}
\def\ENS{SETS}
\def\OPU{U}
\def\baseR{\cW}
\def\None{N}
\section{Review of Hilbert modular forms}\label{S:Hilbert.W}
In this section, we review some standard facts about Hilbert modular Shimura varieties and Hilbert modular forms. The main purpose of this section is to recall the notation in
\subsection{}Let $V=\cF e_1\oplus\cF e_2$ be a two dimensional
$\cF$-vector space and $\pairing:V\x V\to \cF$ be the $\cF$-bilinear
alternating pairing defined by $\pair{e_1}{e_2}=1$.  Let $\sL=\OF e_1\oplus \cD_\cF^{-1} e_2$ be the standard $\OF$-lattice in $V$, which is self-dual with respect to $\addchar(\pairing)$. For $g=\MX{a}{b}{c}{d}\in M_2(\cF)$, we define an involution $g\mapsto g':=\MX{d}{-b}{-c}{a}$. We
identify vectors in $V$ with row vectors according to the basis $e_1,e_2$, so $G(F)=\GL_2(F)$ has a natural right
action on $V$. If $g\in G(F)$, then $g'=g^{-1}\det g$. Define a left action of $G$ on $V$ by $g*x:=x\cdot g',\,x\in V$.

Hereafter, we let $\opcpt$ be an open-compact subgroup of $G(\AFf)$ satisfying \eqref{E:opncpt.W} and the following conditions:
\beqcd{neat}\opcpt\text{ is neat and }\det (\opcpt)\cap
\cO_{F,+}^\x\subset (K\cap \cO_F^\x)^2.\eeqcd
We also fix a prime-to-$p$ positive integer $N$ such that $U(N)\subset \opcpt$.

\subsection{Kottwitz models}
We recall Kottwitz models of Hilbert modular Shimura varieties following the exposition in \cite{Hida:p-adic-automorphic-forms}.
\begin{defn}[$S$-quadruples]\label{D:6.H} Let $\Box$ be a finite set of
rational primes not dividing $\None$ and let $U$ be an open-compact subgroup of $\opcpt^0$ such that $U(\None)\subset U$.
Let $\baseR_U=\Z_\bbox[\zeta_{\None}]$ with $\zeta_N=\exp(\frac{2\pii}{\None})$. Define the fibered category $\cA^\bbox_{\OPU}$ over the category $SCH_{/\baseR_U}$ of schemes over $\baseR_U$ as follows. Let $S$ be a locally noethoerian connected $\baseR_U$-scheme and let $\ol{s}$ be a geometric point of $S$. Objects are abelian varieties with real
multiplication (AVRM) over $S$ of level $\OPU$, \ie a
$S$-\emph{quadruple} $(A,\ollam,\iota,\ol{\eta}^\bbox)_S$ consisting of the
following data:
\begin{enumerate}
\item $A$ is an abelian scheme of dimension $d$ over $S$.
\item $\iota :\OF\hookrightarrow \End_S A\ot_\Z\ZZbox$.
\item $\lam$ is a prime-to-$\Box$ polarization of $A$ over $S$ and
$\ollam$ is the $\cO_{\cF,\bbox,+}$-orbit of $\lam$. Namely
\[\ollam=\cO_{\cF,\bbox,+}\lam:=\stt{\lam'\in\Hom(A,A^t)\ot_\Z\ZZbox\mid \lam'=\lam\circ a,\,a\in \cO_{\cF,\bbox,+}}.\]
\item $\ol{\eta}^\bbox=\eta^\bbox\OPU^\bbox$ is a $\pi_1(S,\ol{s})$-invariant $\OPU^\bbox$-orbit of isomorphisms of $\OF$-modules $\eta^\bbox: \sL\ot_\Z\A_f^\bbox\isoto V^\bbox(A_{\ol{s}}):=H_1(A_{\ol{s}},\Zhat^\bbox)\ot_\Z\A_f^\bbox$. Here we define $\eta^\bbox g$ for $g\in G(\AFf^\bbox)$ by $\eta^\bbox g(x)=\eta^\bbox(g*x)$.
\end{enumerate}
Furthermore, $(A,\ollam,\iota,\ol{\eta}^\bbox )_S$ satisfies the
following conditions:
\begin{itemize}
\item Let ${}^t$ denote the Rosati involution induced by $\lam$ on
$\End_SA\ot\ZZbox$. Then $\iota(b)^t=\iota(b),\, \forall\, b\in
\OF.$
\item Let $e^\lam$ be the Weil pairing induced by $\lam$. Lifting the isomorphism $\Z/\None\Z\iso \Z/\None\Z(1)$ induced by $\zeta_{\None}$ to an isomorphism $\zeta:\Zhat\iso\Zhat(1)$, we can regard $e^\lam$ as an $\cF$-alternating form
$e^\lam:V^\bbox(A)\times V^\bbox(A)\to \cD_F^{-1}\ot_\Z\A_f^\bbox$. Let $e^\eta$ denote the
$\cF$-alternating form on $V^\bbox(A)$ induced by
$e^\eta(x,x')=\pair{x\eta}{x'\eta}$. Then
\[e^\lam=u\cdot e^\eta\text{ for some }u\in\AFf^\bbox.\]
\item As
$\OF\ot_\Z\cO_S$-modules, we have an isomorphism $\Lie A\iso \OF\ot_\Z\cO_S$ locally under Zariski topology of $S$.
\end{itemize}
For two $S$-quadruples $\ulA=(A,\ollam,\iota,\ol{\eta}^\bbox )_S$ and $\ul{A'}=(A',\ol{\lam'},\iota',\ol{\eta'}{}^\bbox)_S$,
we define morphisms by
\[\Hom_{\cA^\bbox_{\opcpt}}(\ulA,\ul{A'})=\stt{\phi\in \Hom_{\OF}(A,A')\mid
\phi^*\ol{\lam'}=\ollam,\,\phi\circ \ol{\eta'}{}^\bbox=\ol{\eta}^\bbox }.\] We
say $\ulA\sim \ul{A'}$ (resp. $\ulA\iso \ul{A'}$) if there exists a
prime-to-$\Box$ isogeny (resp. isomorphism) in
$\Hom_{\cA_\opcpt^\bbox}(\ulA,\ul{A'})$.
\end{defn}
We consider the cases when $\Box=\emptyset$ and $\stt{p}$. When
$\Box=\emptyset$ is the empty set and $\OPU$ is an open-compact
subgroup in $G(\AFf^\bbox)=G(\AFf)$, we define the functor
$\cE_{\OPU}:\SCH_{/\baseR_U}\to\ENS$ by
\[\cE_\OPU(S)=\stt{(A,\ollam,\iota,\ol{\eta} )_S\in\cA_\opcpt(S)}/\sim.\] By the theory of Shimura-Deligne, $\cE_{\OPU}$ is represented by
$\sh_\OPU$ which is a quasi-projective scheme over $\baseR_{\OPU}$. We define
the functor $\frakE_\OPU:\SCH_{/\baseR_{\OPU}}\to\ENS$ by
\[\frakE_\OPU(S)=\stt{(A,\ollam,\iota,\ol{\eta})\in\cA^\bbox_\OPU(S)\mid \eta^\bbox(\sL\ot_\Z\Zhat)=H_1(A_{\ol{s}},\Zhat)}/\iso.\]
By the discussion in \cite[p.136]{Hida:p-adic-automorphic-forms}, we have $\frakE_\opcpt\isoto\cE_\opcpt$ under the hypothesis \eqref{neat}.

When $\Box=\stt{p}$ and $\OPU=\opcpt$, we let $\baseR=\baseR_\opcpt=\Z_\setp[\zeta_N]$ and define functor
$\cE^\setp_{\opcpt}:\SCH_{/\baseR}\to\ENS$ by
\[\cE^\setp_{\opcpt}(S)=\stt{(A,\ollam,\iota,\ol{\eta}^\setp)_S\in\cA_{\opcpt^\setp}^\setp(S)}/\sim.\]
In \cite{Kottwitz:Points-On-Shimura-Varieties}, Kottwitz shows
$\cE^\setp_{\opcpt}$ is representable by a quasi-projective
scheme $\sh^\setp_{\opcpt}$ over $\baseR$ if $\opcpt$ is neat. Similarly we
define the functor $\frakE_K^\setp:\SCH_{/\baseR}\to\ENS$ by
\[\frakE_K^\setp(S)=\stt{(A,\ollam,\iota,\ol{\eta}^\setp)\in\cA^\setp_\opcpt(S)\mid \eta^\setp(\sL\ot_\Z\Zhat^\setp)=H_1(A_{\ol{s}},\Zhat^\setp)}/\iso.\]
It is shown in \cite[\S 4.2.1]{Hida:p-adic-automorphic-forms} that
$\frakE^\setp_K\isoto\cE^\setp_K$.

Let $\plideal$ be a prime-to-$p\None$ ideal of $\OF$ and let $\bfc\in (\AFf^\setpN)^\x$ such that $\plideal=\il_{\cF}(\bfc)$. We say $(A,\lam,\iota,\ol{\eta}^\setp)$ is $\plideal$-polarized if $\lam\in\ollam$ such that $e^\lam=ue^{\eta},\,u\in \bfc\det(\opcpt)$.
The isomorphism class $[(A,\lam,\iota,\ol{\eta}^\setp)]$ is independent of a choice of $\lam$ in $\ol{\lam}$ under the assumption \eqref{neat} (\cf\cite[p.136]{Hida:p-adic-automorphic-forms}).
We consider the functor
\[\frakE^\setp_{\plideal,\opcpt}(S)=\stt{\text{$\plideal$-polarized $S$-quadruple }[(A,\lam,\iota,\ol{\eta}^\setp)_S]\in\frakE^\setp_{\opcpt}(S)}.\]
Then $\frakE^\setp_{\plideal,\opcpt}$ is represented by a geometrically irreducible scheme $\sh^\setp_\opcpt(\plideal)_{/\baseR}$, and we have
\beq\label{E:decomposition}\sh^\setp_\opcpt{}_{/\baseR}=\disjoint_{[\plideal]\in\Cl^+_\cF(\opcpt)}\sh^\setp_\opcpt(\plideal)_{/\baseR},\eeq
where $\Cl^+_\cF(\opcpt)$ is the narrow ray class group of $\cF$ with level $\det(\opcpt)$.
\def\Igusa{\Ig_{\opcpt,n}}
\subsection{Igusa schemes}\label{17.H}
Let $n$ be a positive integer. Define the functor $\cI^\setp_{K,n}:\SCH_{/\baseR}\to\ENS$ by
\[S\mapsto \cI^\setp_{K,n}(S)=\stt{(A,\ollam,\iota,\lpp,\lp)_S}/\sim,
\]
where $(A,\ollam,\iota,\lpp)_S$ is a $S$-quadruple, $\lp$ is a level $p^n$-structure, \ie an $\OF$-group scheme morphism:
\[\lp:\cD_F^{-1}\ot_\Z\bbmu_{p^n}\hookto A[p^n],\]
and $\sim$ means modulo prime-to-$p$ isogeny. It is known that $\cI^\setp_{K,n}$ is relatively representable over $\cE^\setp_{K}$ (\cf\cite[Lemma (2.1.6.4)]{HLS}) and thus is represented by a scheme $\Igusa$.

Now we consider $S$-quintuples $(A,\lam,\iota,\lpp,\lp)_S$ such that $[(A,\lam,\iota,\lpp)]\in\frakE^\setp_{\plideal,K}(S)$.
Define the functor $\cI^\setp_{K,n}(\plideal):\SCH_{/\baseR}\to\ENS$ by
\begin{align*}S\mapsto\cI^\setp_{K,n}(\plideal)(S)&=\stt{(A,\lam,\iota,\lpp,\lp)_S\text{ as above}}/\iso.\\
\end{align*}
Then $\cI^\setp_{K,n}(\plideal)$ is represented by a scheme $\Igusa(\plideal)$ over $\sh^\setp_\opcpt(\plideal)$, and $\Igusa(\plideal)$ can be identified with a geometrically irreducible subscheme of $\Igusa$ (\cite[Thm.\,(4.5)]{DR_padic_L}).
For $n\geq n'>0$, the natural morphism
$\pi_{n,n'}:\Igusa(\plideal)\to\Ig_{\opcpt,n'}(\plideal)$ induced by the
inclusion $\cD_F^{-1}\ot\bbmu_{p^{n'}}\hookto \cD_F^{-1}\ot\bbmu_{p^n}$ is finite
\etale.
The forgetful
morphism $\pi:\Igusa(\plideal)\to \sh^\setp_{\opcpt}(\plideal)$ defined by
$\pi:(\ulA,\lp)\mapsto \ulA$ is \etale for all $n>0$. Hence
$\Igusa(\plideal)$ is smooth over $\Spec\baseR$. We write $\Ig_K(\plideal)$ for $\prolim_n\Igusa(\plideal)$.

\subsection{Complex uniformization}\label{S:cpx} We describe the complex points $\sh_\OPU(\C)$ for $U\subset G(\AFf)$. Put
\[X^+=\stt{\tau=(\tau_\sg)_{\sg\in\Sg}\in\C^{\Sg}\mid \Im \tau_\sg >0\text{ for all } \sg\in\Sg}.\]
The action of $g=(g_\sg)_{\sg\in\Sg}\in G(\cF\ot_\Q\R)$ with $g_\sg=\MX{a_\sg}{b_\sg}{c_\sg}{d_\sg}$ and $\det g_\sg>0$ on $X^+$ is given by $\tau=(\tau_\sg)\mapsto g\tau=\left(\frac{a_\sg \tau_\sg+b_\sg}{c_\sg \tau_\sg+d_\sg}\right)$. Let $\cF_+$ be the set of totally positive elements in $\cF$ and let $G(\cF)^+=\stt{g\in G(\cF)\mid \det g\in\cF_+}$. Define the complex Hilbert modular Shimura variety by
\[M(X^+,\OPU):=G(\cF)^+\bksl X^+\x G(\AFf)/\OPU.\]
It is well known that $M(X^+,\opcpt)\isoto\sh_\OPU(\C)$ by the theory
of abelian varieties over $\C$ (\cf\cite[\S\,4.2]{Hida:p-adic-automorphic-forms}).

For $\tau=(\tau_\sg)_{\sg\in\Sg}\in X^+$, we let $p_\tau$ be the isomorphism $V\ot_\Q\R\isoto \C^{\Sg}$ defined by
$p_\tau(ae_1+be_2)=a\tau+b$ with $a,b\in \cF\ot_\Q\R=\R^{\Sg}$. We can associate a AVRM to $(\tau,g)\in X^+\x G(\AFf)$ as follows.
\begin{itemize}
\item The complex abelian variety $\EucA_g(\tau)=\C^{\Sg}/p_\tau(g*\sL)$.
\item  The $\cF_+$-orbit of polarization
$\ol{\pairing}_\can$ on $\EucA_g(\tau)$ is given by the Riemann form $\pairing_\can:=\pairing\circ p_\tau^{-1}$.
\item The $\iota_\C:O\hookto\End \EucA_g(\tau)\ot_\Z\Q$ is induced from the pull back of the natural $\cF$-action on $V$ via $p_\tau$.
\item The level structure $\eta_g:
\sL\ot_\Z\A_f \isoto (g*\sL)\ot_\Z\A_f=H_1(\EucA_g(\tau),\A_f)$ is defined by  $\eta_g(v)= g*v$.\end{itemize}
Let $\ul{\EucA_g(\tau)}$ denote the $\C$-quadruple $(\EucA_g(\tau),\ol{\pairing}_\can,\iota_\C,\opcpt\eta_g)$. Then the map $[(\tau,g)]\mapsto [\ul{\EucA_g(\tau)}]$ gives rise to
an isomorphism $M(X^+,\OPU)\isoto \sh_\OPU(\C)$.


For a positive integer $n$, the exponential map gives the isomorphism $\exp(2\pii -):p^{-n}\Z/\Z\iso\bbmu_{p^n}$ and thus induces a level $p^n$-structure $\lp(g_p)$:
\begin{align*}\lp(g_p)\colon&\cD_F^{-1}\ot_\Z\bbmu_{p^n}\isoto \cD_F^{-1}e_2\ot_\Z p^{-n}\Z/\Z\hookto \sL\ot_\Z p^{-n}\Z/\Z\stackrel{g*}\isoto \EucA_g(\tau)[p^n].
\end{align*}
Put
\[\lsgN:=\stt{g\in\opcpt\mid g_p\con\MX{1}{*}{0}{1}\pmod{p^n}}.\]
We have a non-canonical isomorphism:
\begin{align*}
M(X^+,\lsgN)&\isoto \Igusa(\C)\\
 [(\tau,g)]&\mapsto  [(\EucA_g(\tau),\ol{\pairing}_\can,\iota_\C,\ol{\eta}^\setp_g, \lp(g_p))].\end{align*}

Let $\ulz=\stt{z_\sg}_{\sg\in\Sg}$ be the standard complex coordinates of $\C^{\Sg}$ and $d\ulz=\stt{dz_\sg}_{\sg\in\Sg}$. Then $\OF$-action on $d\ulz$ is given by
$\iota_\C(\al)^* dz_\sg=\sg(\al)dz_\sg,\,\sg\in\Sg\iso\Hom(\cF,\C)$. Let $z=z_{id}$ be the coordinate corresponding to $\iota_\infty:\cF\hookto\Qbar\hookto\C$.
Then
\beq\label{E:7.N}(\OF\ot_\Z\C) dz=H^0(\EucA_g(\tau),\Omega_{\EucA_g(\tau)/\C}).\eeq

\subsection{Hilbert modular forms}
Let $k=\sum_{\sg}k_{\sg}\sg\in\Z_{\geq 1}[\Sg]$ such that
\[k_{\sg_1}\con k_{\sg_2}\con\cdots\con k_{\sg_d}\pmod{2}\text{ for all }\sg_1,\cdots,\sg_d\in\Sg.\]
For
$\tau=(\tau_\sg)_{\sg\in\Sg}\in X^+$ and $g=(\MX{a_\sg}{b_\sg}{c_\sg}{d_\sg})_{\sg\in\Sg}\in
G(\cF\ot_\Q\R)$, we put
\[\ul{J}(g,\tau)^k=\prod_{\sg\in\Sg}(c_\sg\tau_\sg+d_\sg)^{k_\sg}.\]

\begin{defn}
Let $k_{mx}=\max_{\sg\in\Sg}{k_{\sg}}$. Denote by $\bfM_k(\lsgN,\C)$ the space of holomorphic Hilbert modular forms of weight $k$ and level $\lsgN$. Each $\bff\in\bfM_k(\lsgN,\C)$ is a $\C$-valued function $\bff:X^+\x G(\AFf)\to \C$ such that the function $\bff(-,g_f):X^+\to\C$ is holomorphic for each $g_f\in G(\AFf)$, and for $u\in \lsgN$
and $\al\in G(\cF)^+$,
\[\bff(\al(\tau,g_f)u)=(\det \al)^{-\frac{(k_{mx}-2)\Sg+k}{2}}\ul{J}(\al,\tau)^k\cdot \bff(\tau,g_f).\]
Here $\det\al$ is considered to be the element $(\sg(\det\al))_{\sg\in\Sg}$ in $(\C^\x)^\Sg$.
\end{defn}

For every $\bff\in \bfM_k(\lsgN,\C)$,we have the Fourier expansion
\[\bff(\tau,g_f)=\sum_{\beta\in\cF_+\cup\stt{0}}W_\beta(\bff,g_f)e^{2\pii\Tr_{\cF/\Q}(\beta \tau)}.\]
For a semi-group $L$ in $\cF$, let $L_+=\cF_+\cap L$ and $L_{\geq 0}=L_+\cup \stt{0}$. If $B$ is a ring, we denote by $B\powerseries{L}$ the set of all formal series
\[\sum_{\beta\in L}a_\beta q^\beta,\,a_\beta\in B.\]
Let $a,b\in(\AFf^{(p\None)})^\x$ and let $\fraka=\il_\cF(a)$ and
$\frakb=\il_\cF(b)$. The $q$-expansion of $\bff$ at the cusp $(\fraka,\frakb)$ is given by
\beq\label{E:FC0}\bff|_{(\fraka,\frakb)}(q)=\sum_{\beta\in (\None^{-1}\fraka\frakb)_{\geq 0}}W_\beta(\bff,\MX{a^{-1}}{0}{0}{b})q^\beta\in \C\powerseries{(\None^{-1}\fraka\frakb)_{\geq 0}}. \eeq
If $B$ is a $\baseR$-algebra in $\C$, we put
\begin{align*}
\bfM_k(\plideal,\lsgN,B)&=\stt{\bff\in \bfM_k(\lsgN,\C)\mid \bff|_{(\fraka,\frakb)}(q)\in B\powerseries{(\None^{-1}\fraka\frakb)_{\geq 0}}\text{ for all }(\fraka,\frakb)
\text{ such that $\fraka\frakb^{-1}=\frakc$}}.\\
\end{align*}
\subsubsection{Tate objects}
Let $\sS$ be a set of $d$ linearly $\Q$-independent elements in $\Hom(\cF,\Q)$ such that $l(\cF_+)>0$ for $l\in\sS$. If $L$ is a lattice in $\cF$ and $n$ a positive integer, let
$L_{\sS,n}=\stt{x\in L\mid l(x)>-n\text{ for all }l\in\sS}$ and put $B((L;\sS))=\lim\limits_{n\to\infty} B\powerseries{L_{\sS,n}}$.
To a pair $(\fraka,\frakb)$ of two prime-to-$pN$ fractional ideals, we can attach the Tate AVRM $Tate_{\fraka,\frakb}(q)=\fraka^*\ot_\Z\Gm/q^{\frakb}$ over $\Z((\fraka\frakb;\sS))$ with $O$-action $\iota_\can$, where $\fraka^*:=\fraka^{-1}\cD_F^{-1}$. As described in \cite{Katz:p_adic_L-function_CM_fields}, $Tate_{\fraka,\frakb}(q)$ has a canonical $\fraka\frakb^{-1}$-polarization $\lam_\can$ and also carries $\Om_\can$ a canonical $\OF\ot\Z((\fraka\frakb;\sS))$-generator of $\Omega_{Tate_{\fraka,\frakb}}$ induced by the isomorphism $\Lie(Tate_{\fraka,\frakb}(q)_{/\Z((\fraka\frakb;\sS))})=\fraka^*\ot_\Z\Lie(\Gm)\iso\fraka^*\ot\Z((\fraka\frakb;\sS))$.
Since $\fraka$ is prime to $p$, the natural inclusion $\fraka^*\ot_\Z\bbmu_{p^n}\hookto\fraka^*\ot_\Z\Gm$ induces a canonical level $p^n$-structure $\eta_{p,\can}\colon \cD_\cF^{-1}\ot_\Z\bbmu_{p^n}=\fraka^*\ot_\Z\bbmu_{p^n}\hookto Tate_{\fraka,\frakb}(q)$. Let $\sL_{\fraka,\frakb}=\sL\cdot\MX{\frakb}{}{}{\fraka^{-1}}=\frakb e_1\oplus\fraka^*e_2$. Then we have a level $\None$-structure  $\eta_\can^\setp:\None^{-1}\sL_{\fraka,\frakb}/\sL_{\fraka,\frakb}\isoto Tate_{\fraka,\frakb}(q)[\None]$
over $\Z[\zeta_{\None}]((\None^{-1}\fraka\frakb;\sS))$ induced by the fixed primitive $\None$-th root of unity $\zeta_{\None}$.
We write $\ul{Tate}_{\fraka,\frakb}$ for the Tate $\Z((\fraka\frakb;\sS))$-quadruple $(Tate_{\fraka,\frakb}(q),\lam_\can,\iota_\can,\ol{\eta}^\setp_\can,\eta_{p,\can})$ at $(\fraka,\frakb)$.

\subsubsection{Geometric modular forms}\label{S:GME}We collect here definitions and basic facts of geometric modular forms. The whole theory can be found in \cite{Katz:p_adic_L-function_CM_fields} and \cite{Hida:p-adic-automorphic-forms}.
Let $T$ be the algebraic torus over $\baseR$ such that $T(R)=(\OF\ot_\Z R)^\x$ for every $\baseR$-algebra $R$. Let $\wt\in\Hom(T,\Gm{}_{/\baseR})$. Let $B$ be a $\baseR$-algebra.
Consider $[(\ulA,\lp)]=[(A,\lam,\iota,\lpp,\lp)]\in\Igusa(\plideal)(C)$ (resp. $[(\ulA,\lp)]=[(A,\ollam,\iota,\lpp,\lp)]\in\Igusa(C)$) for a $B$-algebra $C$ with a differential form $\Om$ generating
$H^0(A,\Omega_{A/C})$ over $\OF\ot_\Z C$. A geometric modular form $f$ over $B$ of weight $\wt$ on $\Igusa(\plideal)$ (resp. $\Igusa$) is a functorial rule of
assigning a value $f(\ulA,\lp,\Om)\in C$ satisfying the following axioms.
\begin{mylist}
\item[(G1)] $f(\ulA,\lp,\Om)=f(\ulA',\lp',\Om')\in C$ if $(\ulA,\lp,\Om)\iso (\ulA',\lp',\Om')$ over $C$,
\item[(G2)]For a $B$-algebra homomorphism $\varphi:C\to C'$, we have
\[f((\ulA,\lp,\Om)\ot_C C')=\varphi(f(\ulA,\lp,\Om)),\]
\item[(G3)]$f((\ulA,\lp,a\Om)=\wt(a^{-1})f(\ulA,\lp,\Om)$ for all $a\in T(C)=(\OF\ot_\Z C)^\x$,
\item[(G4)]$f(\ul{Tate}_{\fraka,\frakb},\Om_\can)\in B\powerseries{(\None^{-1}\fraka\frakb)_{\geq 0}}\text{ at all cusps }(\fraka,\frakb)$ in $\Igusa(\plideal)$ (resp. $\Igusa$).
\end{mylist}
For each $k\in\Z[\Sg]$, we regard $k\in\Hom(T,\Gm{}_{/\cW})$ as the character $x\mapsto x^k,\,x\in (\OF\ot_\Z\baseR)^\x$.
We denote by $\cM_k(\plideal,\lsgN,B)$ (resp. $\cM_k(\lsgN,B)$) the space of geometric modular forms over $B$ of weight $k$ on $\Igusa(\plideal)$ (resp. $\Igusa$).
For $f\in\cM_k(\lsgN,B)$, we write $f|_\plideal\in\cM_k(\plideal,\lsgN,B)$ for the restriction $f|_{\Igusa(\plideal)}$.

For each $f\in \cM_k(\lsgN,\C)$, we regard $f$ as a holomorphic Hilbert modular form of weight $k$ and level $\lsgN$ by
\[f(\tau,g_f)=f(\EucA_g(\tau),\ol{\lam_\can},\iota_\C,\ol{\eta}_g,2\pii dz),\]
where $dz$ is the differential form in \eqref{E:7.N}. By GAGA this gives rise to an isomorphism $\cM_{k}(\lsgN,\C)\isoto\bfM_k(\lsgN,\C)$ and $\cM_{k}(\plideal,\lsgN,\C)\isoto\bfM_k(\plideal,\lsgN,\C)$.
Moreover, as discussed in \cite[\S 1.7]{Katz:p_adic_L-function_CM_fields}, we have the following important identity which bridges holomorphic modular forms and geometric modular forms
\[\bff|_{(\fraka,\frakb)}(q)=\bff(\ul{Tate}_{(\fraka,\frakb)},\Om_\can)\in\C\powerseries{(\None^{-1}\fraka\frakb)_{\geq 0}}.\]
By the $q$-expansion principle, if $B$ is $\baseR$-algebra in $\C$ and $\bff \in\bfM_k(\plideal,\lsgN,B)=\cM_{k}(\plideal,\lsgN,\C)$, then $\bff|_\plideal\in\cM_{k}(\plideal,\lsgN,B)$.

\subsubsection{\padic modular forms}Let $B$ be a \padic $\baseR$-algebra in $\Cp$. Let $V(\plideal,\opcpt,B)$ be the space of Katz \padic modular forms over $B$ defined by
\[V(\plideal,\opcpt,B):=\prolim_m\dirlim_n H^0(\Igusa(\plideal){}_{/B/p^mB},\cO_{\Igusa}).\]
In other words, Katz \padic modular forms consist of formal functions on the Igusa tower.

Let $C$ be a $B/p^mB$-algebra. For each $C$-point $[(\ulA,\lp)]=[(A,\lam,\iota,\lpp,\lp]\in\Ig_\opcpt(\plideal)(C)=\prolim_n\Igusa(\plideal){}(C)$, the $p^\infty$-level structure $\lp$ induces an isomorphism $\lp_*:\cD_\cF^{-1}\ot_\Z C\iso \Lie A$ which in turn gives rise to  a generator $\Om(\lp)$ of $H^0(A,\Omega_A)$ as a $\OF\ot_\Z C$-module.
Then we have a natural injection\beq\label{E:padicavatar.N}
\begin{aligned}\cM_k(\plideal,\lsgN,B)&\hookto V(\plideal,\opcpt,B)\\
f&\mapsto \wh{f}(\ulA,\lp):=f(\ulA,\lp,\Om(\lp))
\end{aligned}\eeq
which preserves the $q$-expansions in the sense that $\wh f|_{(\fraka,\frakb)}(q):=\wh f(\ul{Tate}_{\fraka,\frakb})=f|_{(\fraka,\frakb)}(q)$. We call $\wh{f}$ the \padic avatar of $f$.

\subsection{CM points}\label{S:CMpoint}
Recall that we have fixed $\CMP\in\cK$ in \subsecref{SS:choiceofcmpt} satisfying (d1-3) and the associated embedding $\iota:\cK\hookto M_2(\cF)$ in \eqref{E:imb.W}.
Let $q_{\CMP}:\cK\isoto V=\cF e_1\oplus \cF e_2$ be the isomorphism given by $q_{\CMP}(a\CMP+b)=ae_1+be_2$. Then \[q_{\CMP}(x\al)=q_{\CMP}(x)\iota(\al)\text{ for all }x,\al\in\cK,\] and $p_{\CMP}:=q_{\CMP}^{-1}:V\ot_\Q\R\isoto\cK\ot_\Q\R\iso\C^\Sg$ is the period map associated to the point $\CMP_\Sg:=(\sg(\CMP))_{\sg\in\Sg}\in X^+$.

Let $\cmpt=\prod_v\cmptv\in G(\AF)$ where $\cmptv\in G_v$ for each place $v$ is defined in \eqref{E:cmptv.W}. Let $\cmpt_f\in G(\AFf)$ be the finite part of $\cmpt$. According to our choices of $\cmptv$, we have
\[\cmpt_f*(\sL\ot_\Z\Zhat)=(\sL\ot_\Z\Zhat)\cdot\cmpt_f'=q_{\CMP}(\cO_\cK\ot_\Z\Zhat).\]
Define
$x:\AK^\x\to X^+\x G(\AFf)$ by
\[a=(a_\infty,a_f)\mapsto x(a):=(\CMP_\Sg,\iota(a_f)\cmpt_f).\]
Let $a\in (\AKf^{(p\None)})^\x$ and let
\[(\ul{A}(a),\lp(a))_{/\C}=(\EucA_{\iota(a)\cmpt_f}(\CMP_\Sg),\pairing_\can,\iota_\can,\lpp(a),\lp(a))\] be the $\C$-quintuple associated to $x(a)$ as in \secref{S:cpx}.
The alternating pairing $\pairing:\cK\x\cK:\to\cF$
defined by $\pair{x}{y}=(\ol{x}y-x\ol{y})/(\CMP-\ol{\CMP})$ induces an isomorphism
$\OK\wedge_{\OF} \OK=\frakc(\OK)^{-1}\cD_\cF^{-1}$ for the fractional
ideal $\frakc(\OK)=\cD_\cF^{-1}((\CMP-\ol{\CMP}) \cD_{\cK/\cF}^{-1})$. The hypothesis (d2) on $\CMP$ implies that \[\text{$\frakc(\OK)$ is prime to $p\frakc_\chi\frakn D_{\cK/\cF}$}.\]
Note that $\frakc(\OK)$ descends to a fractional ideal of $\OF$ and that $\frakc(\OK)$ is the polarization of $x(1)=(\ul{A}(1),\lp(1))$.
In addition, $x(a)=(\ul{A}(a),\lp(a))_{/\C}$ is an
abelian variety with CM by $\cO_\cK$ with the polarization ideal of $x(a)$ given by \[\frakc(a):=\frakc(\OK)\rmN(\fraka)^{-1}\quad(\fraka=\il_\cK(a)).\]
It thus gives rise to a complex point $[x(a)]$ in $\Ig_\opcpt(\plideal(a))(\C)$. Let $\CMring$ be the \padic completion of the maximal unramified extension of $\Zp$ in $\Cp$. The general theory of CM abelian varieties shows that $[x(a)]$ indeed descends to a point in $\Ig_{\opcpt}(\frakc(a))(\CMring)\hookto\Ig_{\opcpt}(\CMring)$, which is still denoted by $x(a)$. The collection $\stt{[x(a)]}_{a\in (\AKf^{(p\None)})^\x}\subset \Ig_{\opcpt}(\CMring)$ are called \emph{CM points} in Hilbert modular Shimura varieties.


\newcommand\mfFC[1]{\bfa_\beta(\bff_{#1}^*,\frakc)}
\def\chpi{\varPsi_{\pi,\lam,v}}

\section{Anticyclotomic \padic Rankin-Selberg $L$-functions}\label{S:anticy_padic_L}
\subsection{Toric forms}
\def\CLKF{Cl_-}
\begin{defn}[The toric form]\label{D:toricform}We define the complex Hilbert modular form $\bff_\chi:X^+\x G(\AFf)\to\C$ associated to $\vp_\chi$ by
\beq\begin{aligned}\bff_\chi(\tau,g_f)=&
\vp_\chi(g)\cdot \ul{J}(g_\infty,\bfi)^k(\det g_\infty)^{-\frac{(k_{mx}-2)\Sg+k}{2}}\abs{\det g}^{k_{mx}/2-1}_{\AF},\\
&\quad (\bfi=(\sqrt{-1})_{\sg\in\Sg},\,g=(g_\infty,g_f),\,g_\infty \bfi=\tau,\,\det g_\infty>0).
\end{aligned}\eeq
Here $\det g_\infty=(\det g_{\sg})_{\sg\in\Sg}\in (\R^\x)^\Sg$ and $\det g_\infty>0$ means $\det g_\sg>0$ for all $\sg\in\Sg$. 

Let $\bff_\chi^{*}$ be the normalization of $\bff_\chi$ given by
\[\bff_\chi^{*}= N(\pi,\chi)^{-1}\abs{\det\cmpt_f}_{\AFf}^{1-k_{mx}/2}\cdot \bff_\chi.\]
Let $\delta_k^m$ be the Shimura-Maass differential operator (\cf\cite[(1.21)]{HidaTilouine:KatzPadicL_ASENS}). Then the normalized differential operator $\wtd V_+^m$ defined in \eqref{E:Shi_Maass} is the representation theoretic avatar of $\delta_k^m$ in the following sense:
\[\delta_\wt^m\bff_\chi(\tau,g_f)=(\wtd V_+^m\vp_\chi)(g_\infty,g_f)\ul{J}(g_\infty,i)^{k+2m}(\det g_\infty)^{1-\frac{k_{mx}\Sg+k}{2}-m}\abs{\det g}^{k_{mx}/2-1}_{\AF}\]
(\cf \cite[\S 4.5]{Hsieh:Hecke_CM}). We call $\delta_\wt^m\bff^*_\chi$ the normalized toric form of character $\chi$.
\end{defn}
Similarly, for each $u\in (\cO_\cF\ot_\Z\Zp)^\x$, we let $\bff_{\chi,u}^*$ be the normalized modular form associated to the $u$-component $\vp_{\chi,u}$ (\cf $\localW{\chi,u}$ in \eqref{E:Wu.W}). It is clear from \eqref{E:ucompnent.W} that
\beq\label{E:13.W}\bff_\chi^{*}=\sum_{u\in\cU_p}\bff^*_{\chi,u}.\eeq
Let $\lsgN$ be the open-compact subgroup defined in \eqref{E:opncpt.W}. Then $\bff_\chi^*$ and $\stt{\bff_{\chi,u}^*}_{u\in\cU_p}$ belong to $\bfM_k(\lsgN,\C)$ for sufficiently large $n$.

 For $a\in(\AKf^\setp)^\x \x(\cO_\cK\ot\Zp)^\x$, we consider the Hecke action $|[a]$ given by
\begin{align*}
|[a]:\bfM_k(\frakc(a),\lsgN,\C)&\to\bfM_k(\frakc,{}_a\lsgN,\C)\quad({}_a\lsgN:=\iota_\cmpt(a)\lsgN\iota_\cmpt(a^{-1})), \\
\bff&\mapsto\bff|[a](\tau,g_f):=\bff(\tau,g_f\iota_\cmpt(a)).\end{align*}
The Hecke action $|[a]$ can be extended to the spaces of $p$-integral modular forms (\cf\cite[\S 2.6]{Hsieh:VMU}). It follows from \lmref{L:4.W} immediately that
\beq\label{E:17.W}\bff^*_{\chi,u}|[a]=\chi^{-1}\Abs^{k_{mx}/2-1}_{\AK}(a)\cdot \bff^*_{\chi,u.a^{1-c}}\text{ for all }a\in\cT_f\quad(u.a^{1-c}:=ua_{\Sg_p}a_{\Sgbar_p}^{-1}).\eeq
\subsection{The toric period integral}
Next we consider the toric period integral of $\bff_\chi^*$. Let $U_\cK=(\cK\ot_\Q\R)^\x\x(\OK\ot_\Z\Zhat)^\x$ be a subgroup of $\AK^\x$ and let $\CLKF=\cK^\x\AF^\x\bksl \AK^\x/U_\cK$. Let $\cR$ be the subgroup of $\AK^\x$ generated by $\cK_v^\x$ for all ramified places $v$ and let $\CLKF^\alg$ be the subgroup of $\CLKF$ generated by the image of $\cR$ By \lmref{L:4.W} and the fact that $\cT=\AF^\x U_\cK\cR $, we have
\beq\label{E:5.W}P_\chi(\pi(\cmpt)\wtd V_+^m\vp_\chi)
=\vol(U_\cK,dt)\#(\CLKF^\alg)\cdot \sum_{[t]\in\CLKF/\CLKF^\alg}\wtd V_+^m\vp_\chi(\iota(t)\cmpt)\chi(t).
\eeq
Let $\cD_1$ be a set of representatives of $\CLKF/\CLKF^\alg$ in $(\AKf^{(p\None)})^\x$. We define the $\chi$-isotypic toric period by
\[P_\chi(\delta_k^m\bff_\chi^{*}):=\sum_{a\in \cD_1}\delta_k^m\bff_\chi^{*}(x(a))\chi\Abs_{\AK}^{1-k_{mx}/2}(a).\]
\begin{prop}\label{P:1.W}Let $D_{\cK/\cF}$ be the discriminant of $\cK/\cF$. We have
\[P_\chi(\delta_k^m\bff_\chi^{*})^2=[\cO_\cK^\x:\cO_\cF^\x]^2\cdot\frac{\Gamma_\Sg(k+m)\Gamma_\Sg(m+1)}{(\Im\CMP)^{k+2m}(4\pi)^{2m+k+1}}\cdot L(\onehalf,\pi_\cK\ot\chi)\cdot E_{\Sg_p}(\pi,\chi)\cdot C(\pi,\chi),\]
where
\[C(\pi,\chi)= C'(\pi,\chi)\cdot 4^{-[F:\Q]}\abs{\rmN_{\cF/\Q}(D_{\cK/\cF})}_\R^{\onehalf}\left(\frac{\#(\CLKF)h_\cF}{\#(\CLKF^\alg)h_\cK}\right)^2\in\Zbar_\setp^\x.\]
\end{prop}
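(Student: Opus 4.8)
The strategy is to relate the $\chi$-isotypic toric period $P_\chi(\delta_k^m\bff_\chi^*)$ defined via CM points to the representation-theoretic toric period integral $P_\chi(\pi(\varsigma)\wtd V_+^m\varphi_\chi)$ computed in \thmref{T:main1.W}, and then to compare the normalizing factors and periods. The squaring in the statement is essential: only $P_\chi(\cdots)^2$ is pinned down by Waldspurger's formula, so we will establish an identity of squares throughout.

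First I would unwind the definition of $\delta_k^m\bff_\chi^*$ at a CM point $x(a)=(\CMP_\Sg,\iota(a_f)\varsigma_f)$. By the displayed formula in \defref{D:toricform} relating $\delta_\wt^m\bff_\chi$ to $\wtd V_+^m\varphi_\chi$, together with the normalization $\bff_\chi^*=N(\pi,\chi)^{-1}|\det\varsigma_f|_{\AFf}^{1-k_{mx}/2}\bff_\chi$, one gets
\[\delta_k^m\bff_\chi^*(x(a))\,\chi|\cdot|_{\AK}^{1-k_{mx}/2}(a)=N(\pi,\chi)^{-1}\cdot(\wtd V_+^m\varphi_\chi)(\iota(a_f)\varsigma_f)\,\chi(a)\cdot(\text{explicit archimedean factor in }\CMP_\Sg),\]
where the archimedean factor is $\ul{J}((g_\infty),\bfi)^{k+2m}$ evaluated at the point with $g_\infty\bfi=\CMP_\Sg$, i.e.\ a power of $\Im\CMP$ essentially equal to $(\Im\CMP)^{(k+2m)/2}$ up to constants and $2\pi$'s coming from $2\pii\,dz$. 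Summing over $a\in\cD_1$, a set of representatives of $\CLKF/\CLKF^\alg$, and comparing with \eqref{E:5.W}, which expresses $P_\chi(\pi(\varsigma)\wtd V_+^m\varphi_\chi)$ as $\vol(U_\cK,dt)\#(\CLKF^\alg)$ times $\sum_{[t]\in\CLKF/\CLKF^\alg}\wtd V_+^m\varphi_\chi(\iota(t)\varsigma)\chi(t)$, I obtain
\[P_\chi(\delta_k^m\bff_\chi^*)=N(\pi,\chi)^{-1}\cdot(\Im\CMP)^{-(k+2m)/2}\cdot(\text{power of }2\pi)\cdot\frac{1}{\vol(U_\cK,dt)\#(\CLKF^\alg)}\cdot P_\chi(\pi(\varsigma)\wtd V_+^m\varphi_\chi).\]
Here I must be careful that $\wtd V_+^m\varphi_\chi$ is right $\lsgN$-invariant and the archimedean embedding is normalized exactly as in \secref{S:cpx}, so that the value at $x(a)$ really is the value of $\varphi_\chi$ at $\iota(a_f)\varsigma_f$ times the automorphy factor.

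Next I would square this identity and substitute \thmref{T:main1.W} for $P_\chi(\pi(\varsigma)\wtd V_+^m\varphi_\chi)^2$. This produces
\[P_\chi(\delta_k^m\bff_\chi^*)^2=N(\pi,\chi)^{-2}(\Im\CMP)^{-(k+2m)}(\text{const})\frac{1}{\vol(U_\cK,dt)^2\#(\CLKF^\alg)^2}\cdot|D_\cK|_\R^{-1/2}\frac{\Gamma_\Sg(k+m)\Gamma_\Sg(m+1)}{(4\pi)^{k+2m+1\cdot\Sg}}E_{\Sg_p}(\pi,\chi)L(\onehalf,\pi_\cK\ot\chi)C'(\pi,\chi)N(\pi,\chi)^2.\]
The factors $N(\pi,\chi)^{\pm 2}$ cancel, leaving $C'(\pi,\chi)$ times a product of volume factors and class-number ratios which I then reorganize to match the claimed $C(\pi,\chi)=C'(\pi,\chi)\cdot 4^{-[\cF:\Q]}|\rmN_{\cF/\Q}(D_{\cK/\cF})|_\R^{1/2}(\#(\CLKF)h_\cF/\#(\CLKF^\alg)h_\cK)^2$. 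This bookkeeping requires: (i) computing $\vol(U_\cK,dt)$ with $U_\cK=(\cK\ot\R)^\x\times(\OK\ot\Zhat)^\x$ and relating $\vol((\cK\ot\R)^\x/(\cF\ot\R)^\x)$ to the $2^3$ per archimedean place already built into \propref{P:Architoric.W} versus the $2$ per place here — this is where the $4^{-[\cF:\Q]}$ enters; (ii) using the class number formula to convert the adelic volume of $\cK^\x\AF^\x\bksl\AK^\x$ into $\#(\CLKF)h_\cF/h_\cK$ with the discriminant factor $|\rmN_{\cF/\Q}(D_{\cK/\cF})|_\R^{1/2}$ coming from the difference between $|\cD_E|_E$ local volumes and the global discriminant; and (iii) confirming that $(\Im\CMP)^{-(k+2m)}$ together with the $(4\pi)^{k+2m+1}$ from \thmref{T:main1.W} matches the denominator $(\Im\CMP)^{k+2m}(4\pi)^{2m+k+1}$ in the statement, which it does once I track that $\delta$ and $\CMP$ differ by the factor $2$ (recall $\delta=2^{-1}(\CMP-\ol{\CMP})$).

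\textbf{Main obstacle.} The computational heart — and the step I expect to consume most of the work — is the precise matching of the period/volume factors in (i)--(iii) above, in particular keeping track of all powers of $2$, of $|\cD_\cF|$ and $|\cD_{\cK/\cF}|$ from the local measures $\dx a$, $\dt$ normalized in \secref{S:notation.W}, and of the normalization $\Omega_\infty$ implicit in the $2\pii\,dz$ differential, so that the final constant is exactly $C(\pi,\chi)$ and, crucially, lies in $\Zbar_\setp^\x$. The last point uses that $p>2$, that $\frakc(\OK)$ was chosen prime to $p\frakc_\chi\frakn D_{\cK/\cF}$, and that each ingredient of $C'(\pi,\chi)$ (local root numbers, values of $\chi$ at uniformizers, $|\bfdelta_v|$'s) is a $p$-unit by construction in \subsecref{SS:choiceofcmpt}; the class numbers and the discriminant factor are prime-to-$p$ by the same choices, so the whole product is a $p$-adic unit. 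Everything else — the unwinding in the first paragraph and the substitution in the second — is formal once the conventions are lined up.
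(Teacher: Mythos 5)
Your proposal follows essentially the same route as the paper: unwind $\bff_\chi^*(x(a))$ in terms of $\vp_\chi(\iota(a_f)\cmpt)$ and the archimedean factor $(\Im\CMP)^{-k/2}$, apply \eqref{E:5.W} to match the sum over $\cD_1$ with $P_\chi(\pi(\cmpt)\wtd V_+^m\vp_\chi)$ up to $\vol(U_\cK,dt)\#(\CLKF^\alg)$, square, substitute \thmref{T:main1.W}, and compute $\vol(U_\cK,dt)$ via the class number formula to extract $4^{-[\cF:\Q]}$, the discriminant ratio $|D_\cK|_\R^{1/2}/|D_\cF|_\R=|\rmN_{\cF/\Q}(D_{\cK/\cF})|_\R^{1/2}$, and the class-number factor. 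The only (harmless) imprecision is your mention of powers of $2\pi$ from $2\pii\,dz$ and a factor of $2$ between $\delta$ and $\CMP$: the definition of $\bff_\chi$ in \defref{D:toricform} is directly in terms of $\vp_\chi$ with no reference to the differential, and $\Im\delta=\Im\CMP$, so neither actually enters.
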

\begin{proof}
By definition, we have
\[\bff_\chi(x(a))=\vp_\chi(\iota(a_f)\cmpt)(\Im \CMP)^{-k/2}\cdot\abs{\rmN(a)\det\cmpt_f}_{\AF}^{k_{mx}/2-1}.\]
By \eqref{E:5.W}, we find that
\begin{align*}
\vol(U_E,\dx t)\#(\CLKF^\alg)\cdot P_\chi(\delta_\wt^m\bff_\chi^{*})
&=\frac{1}{N(\pi,\chi)\cdot (\Im\CMP)^{k/2+m}}\cdot P_\chi(\pi(\cmpt)\wtd V_+^m\vp_\chi).
\end{align*}
From the well-known formula
\[2L(1,\tau_{\cK/\cF})=(2\pi)^{[\cF:\Q]}\cdot\frac{h_{\cK}/h_\cF}{\abs{D_\cK}_\R^\onehalf\abs{D_\cF}_\R^{-\onehalf}\cdot [\cO_\cK^\x:\cO_\cF^\x]},\]
we see that
\begin{align*}\vol(U_\cK,dt)=&\vol(\cK^\x\AF^\x\bksl \AK^\x,dt)\cdot \#(\CLKF)^{-1}\\
=&2\pi^{-[\cF:\Q]}L(1,\tau_{\cK/\cF})\cdot \#(\CLKF)^{-1}=\frac{2^{[\cF:\Q]}\abs{D_\cF}_\R^\onehalf}{\abs{D_{\cK}}_\R^\onehalf [\cO_\cK^\x:\cO_\cF^\x]}\cdot\frac{h_\cK}{h_\cF\#(\CLKF)}.\end{align*}
The proposition follows form \thmref{T:main1.W} immediately. Note that the ratio $\frac{\#(\CLKF)h_\cF}{\#(\CLKF^\alg)h_\cK}$ is a power of $2$, so the constant $C(\pi,\chi)$ is a \padic unit.
\end{proof}

\subsection{The Fourier expansion of $\bff_{\chi,u}^*$}
Let $u=(u_v)\in\cU_p$. We give an expression of the Fourier expansion of $\bff_{\chi,u}^{*}$. Let $\localW{\chi,u,f}$ be the finite part of $\localW{\chi,u}$. By the definition of $\bff_{\chi,u}$, we have
\beq\label{E:12.W}\begin{aligned}
\bff_{\chi,u}(\tau,g_f)&=\sum_{\beta\in\cF}\localW{\chi,u,f}(\DII{\beta}{1}g_f)\localW{\chi,\infty}(\DII{\beta}{1}\MX{y_\infty}{x_\infty}{0}{1})\cdot y_\infty^{-k/2}\\
&=\sum_{\beta\in\cF_+}\localW{\chi,u,f}(\DII{\beta}{1}g_f)\beta^{k/2}e^{2\pii\Tr_{\cF/\Q}(\beta\tau)}.\\
&\quad(\tau=x_\infty+iy_\infty=(x_\sg+iy_\sg)_{\sg\in\Sg}\in X^+)
\end{aligned}\eeq
The second equality follows from the choice of Whittaker functions at the archimedean places \eqref{E:archiWhittaker.W}.

We define the global prime-to-$p$ Fourier coefficient $\localK{\chi}^\setp:(\AFf^\setp)^\x\to\C$ by
\beq\label{E:FC.W}\begin{aligned}\localK{\chi}^\setp(a):=&N(\pi,\chi)^{-1}\cdot \localW{\chi,f}^\setp(\DII{a}{1})\quad (a=(a_v)\in\AFf^\x)\\
=&\prod_{v\in B(\chi)}\frac{1}{n_vL(1,\tau_{\cK_v/\cF_v})}\localW{\chi,v}(\DII{a_v}{1})\prod_{v\not\in B(\chi),v\ndivide p}\localW{\chi,v}(\DII{a_v}{1})\\
=&\prod_{v\in\bdh,v\ndivide p}\localK{\chi,v}^*(a_v).
\end{aligned}
\eeq
Here $\localK{\chi,v}^*$ are the local Fourier coefficients defined in \defref{D:localFourier.W}.
\begin{prop}\label{P:2.W} Let $\frakc$ be a prime-to-$p$ ideal of $\cF$ and let $\bfc\in (\AKf^{\setp})^\x$ such that $\il_\cF(\bfc)=\frakc$. Then the Fourier expansion of $\bff_{\chi,u}^{*}$ at the cusp $(\OF,\frakc)$ is given by
\[\bff_{\chi,u}^{*}|_{(\OF,\frakc)}(q)=\sum_{\beta\in (\None^{-1}\frakc)_+}\bfa_\beta(\bff_{\chi,u}^{*},\frakc)q^\beta,\]
where
\[\bfa_\beta(\bff_{\chi,u}^{*},\frakc)=\beta^{k/2}\localK{\chi}^\setp(\beta\bfc^{-1})\prod_{w\in\Sgbar_p,v|w}\chi_{w}(\beta^{-1})\bbI_{u_v(1+\uf_v\cO_{\cF_v})}(\beta).\]
In particular, $\bff_{\chi,u}^*\in \bfM_k(\lsgN,\EucO)$ by \propref{P:4.W}, and the Fourier expansion of $\bff_\chi^*$ at the cusp $(\OF,\frakc)$ is given by
\[\bff_{\chi}^{*}|_{(\OF,\frakc)}(q)=\sum_{\beta\in(\None^{-1}\frakc)_+}\bfa_\beta(\bff_{\chi}^{*},\frakc)q^\beta,\]
where
\[\bfa_\beta(\bff_{\chi}^{*},\frakc)=\beta^{k/2}\localK{\chi}^\setp(\beta\bfc^{-1})\prod_{w\in\Sgbar_p}\chi_{w}(\beta^{-1})\cdot \bbI_{\cO_{\cF,\setp}^\x}(\beta).\]
\end{prop}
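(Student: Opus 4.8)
The plan is to compute the Fourier expansion of $\bff_{\chi,u}^{*}$ directly from its definition and the explicit Whittaker data assembled in the previous sections. First I would unwind the normalization: by \defref{D:toricform} we have $\bff_{\chi,u}^{*}=N(\pi,\chi)^{-1}\abs{\det\cmpt_f}_{\AFf}^{1-k_{mx}/2}\bff_{\chi,u}$, and $\bff_{\chi,u}$ is the automorphic form attached to $\localW{\chi,u}=\localW{\chi,\infty}\cdot\localW{\chi,f}^\setp\cdot\prod_{v|p}\localW{\chi,u_v,v}$ via \eqref{E:WhittakerAuto.W}. Evaluating at the cusp $(\OF,\frakc)$ means evaluating the associated modular form at $g_f=\MX{a^{-1}}{0}{0}{b}$ with $a$ a unit idele (so $\fraka=\cO_\cF$) and $b$ representing $\frakc$, i.e. $g_f=\DII{1}{\bfc}$ up to the usual bookkeeping; plugging this into \eqref{E:12.W} one gets $\bff_{\chi,u}|_{(\OF,\frakc)}(q)=\sum_{\beta\in\cF_+}\localW{\chi,u,f}(\DII{\beta\bfc^{-1}}{1})\,\beta^{k/2}q^\beta$ after the archimedean Whittaker function contributes the factor $\beta^{k/2}e^{2\pi i\Tr(\beta\tau)}$ as already recorded there.

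Next I would factor the finite Whittaker function as a product over places. Away from $p$ we have $\localW{\chi,f}^\setp(\DII{a}{1})=\prod_{v\ndivide p}\localW{\chi,v}(\DII{a_v}{1})$, and multiplying by $N(\pi,\chi)^{-1}=\prod_{v\in B(\chi)}(n_vL(1,\tau_{\cK_v/\cF_v}))^{-1}$ converts each local factor into the normalized local Fourier coefficient $\localK{\chi,v}^*$ of \defref{D:localFourier.W}; this is exactly the definition \eqref{E:FC.W} of $\localK{\chi}^\setp$. The factor $\abs{\det\cmpt_f}_{\AFf}^{1-k_{mx}/2}$ is absorbed because the evaluation of $\bff_{\chi,u}$ (as opposed to $\vp_{\chi,u}$) carries a compensating $\abs{\det g}^{k_{mx}/2-1}$ — so I should track the weight/absolute-value bookkeeping carefully to see these cancel. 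For the places $v\mid p$, I would use \eqref{E:10.W}, specifically the formula $\bfa_{u,v}(a)=\bbI_{u(1+\uf\cO)}(a)\chi_w(a^{-1})$ defining $\localW{\chi,u,v}$ in the Kirillov model: this gives $\localW{\chi,u_v,v}(\DII{\beta}{1})=\chi_w(\beta^{-1})\bbI_{u_v(1+\uf_v\cO_{\cF_v})}(\beta)$ with $w\in\Sgbar_p$ the fixed place above $v$. Assembling all places yields the claimed formula $\bfa_\beta(\bff_{\chi,u}^*,\frakc)=\beta^{k/2}\localK{\chi}^\setp(\beta\bfc^{-1})\prod_{w\in\Sgbar_p,v|w}\chi_w(\beta^{-1})\bbI_{u_v(1+\uf_v\cO_{\cF_v})}(\beta)$.

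The statement for $\bff_\chi^*$ then follows by summing over $u\in\cU_p$ using \eqref{E:13.W}: $\sum_{u\in\cU_p}\prod_{v|p}\bbI_{u_v(1+\uf_v\cO_{\cF_v})}(\beta)=\bbI_{\cO_{\cF,\setp}^\x}(\beta)$ since the cosets $u_v(1+\uf_v\cO_{\cF_v})$ for $u_v$ ranging over the torsion subgroup $\cU_v$ of $\cO_{\cF_v}^\x$ partition $\cO_{\cF_v}^\x$ (as $p$ is odd and unramified-or-not this is just the standard decomposition $\cO^\x=\mu\times(1+\uf\cO)$). The $p$-integrality assertion $\bff_{\chi,u}^*\in\bfM_k(\lsgN,\EucO)$ is immediate from the $q$-expansion principle (\S\ref{S:GME}) once one knows every Fourier coefficient lies in $\EucO$, and that is precisely the content of \propref{P:4.W}(i) for the finite places away from $p$, while the $p$-adic factors $\chi_w(\beta^{-1})$ lie in $\EucO$ by construction of $\EucO$ (it contains the values of $\wh\chi$) and $\beta^{k/2}$ causes no trouble since $\beta$ is a prime-to-$p$ integer times a fixed factor.

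I expect the main obstacle to be purely bookkeeping rather than conceptual: getting the normalization factors $N(\pi,\chi)^{-1}$, $\abs{\det\cmpt_f}^{1-k_{mx}/2}$, $\abs{\det g}^{k_{mx}/2-1}$, and the twist by $\abs{\cdot}_{\AK}^{1-k_{mx}/2}$ to line up correctly with the transition from $\vp_{\chi,u}$ to $\bff_{\chi,u}$ and from Whittaker functions to Fourier coefficients, so that the clean answer emerges with coefficient exactly $\beta^{k/2}$ and no stray powers of ideles at $p$ or at the ramified places. A secondary subtlety is the precise identification of the cusp $(\OF,\frakc)$ with the group element $g_f$ at which one evaluates, including the role of $\bfc\in(\AKf^\setp)^\x$ with $\il_\cF(\bfc)=\frakc$ and the normalization $\vol(\cO_{\cF_v}^\x)=\abs{\cD_{\cF_v}}^{1/2}$ from \S\ref{S:notation.W}; I would resolve this by comparing with the definition \eqref{E:FC0} of the $q$-expansion at $(\fraka,\frakb)$ and with the analogous computation in \cite{Hsieh:Hecke_CM} that the paper's \S\ref{S:GME} and \defref{D:toricform} are modeled on.
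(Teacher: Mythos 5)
Your proposal is correct and follows essentially the same route as the paper's quite terse proof: factor $\localW{\chi,u,f}$ over places, use the Kirillov-model description $\bfa_{u,v}(a)=\bbI_{u(1+\uf\cO)}(a)\chi_w(a^{-1})$ to evaluate the $v\mid p$ factors, substitute into \eqref{E:12.W}, and sum over $u\in\cU_p$ via \eqref{E:13.W}. One small inaccuracy worth noting: the factor you describe as "absorbed" is $\abs{\det g_f}_{\AFf}^{k_{mx}/2-1}=\abs{\bfc}^{k_{mx}/2-1}$ at the cusp $(\OF,\frakc)$, which is not the inverse of $\abs{\det\cmpt_f}_{\AFf}^{1-k_{mx}/2}$ in general (and one also picks up a central character value $\om(\bfc)$ from $\DII{\beta}{\bfc}=\bfc\cdot\DII{\beta\bfc^{-1}}{1}$); both are $\beta$-independent prime-to-$p$ units, and the paper itself suppresses them in \eqref{E:12.W}, so this affects nothing downstream, but it is not literally a cancellation.
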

\begin{proof}It follows from the definition of $\localW{\chi,u}$ that
\begin{align*}\localW{\chi,u,f}(\DII{\beta\bfc^{-1}}{1})=&\localW{\chi,f}^\setp(\DII{\beta\bfc^{-1}}{1})\cdot\prod_{v|p}\localW{\chi,u_v,v}(\DII{\beta}{1})\\
=&\localW{\chi,f}^\setp(\DII{\beta\bfc^{-1}}{1})\cdot \prod_{w\in\Sgbar_p,v|w}\chi_{w}^{-1}(a_v)\bbI_{u_v(1+\uf_v\cO_{\cF_v})}(\beta).
\end{align*}
The proposition follows from \eqref{E:12.W} immediately. The Fourier expansion of $\bff_\chi^*$ follows from \eqref{E:13.W}.
\end{proof}
\def\OFp{O_p}
\subsection{\padic $L$-functions}\label{SS:padicL}
We go back to the setting in the introduction. Let $\cK^-_{p^\infty}$ be the maximal anticyclotomic $\Zp^{[\cF:\Q]}$-extension of $\cK$ and let $\IwGamma=\Gal(\cK^-_{p^\infty}/\cK)$. The reciprocity law $\rec_\cK$ at $\Sgbar_p$ induces a morphism \[\rec_{\Sgbar_p}\colon (\cF\ot_\Q\Qp)^\x\iso\prod_{w\in\Sgbar_p}\cK_{w}^\x\stackrel{\rec_\cK}\longto \Gamma^-.\]
Let $\wtsp$ be the set of critical specializations, consisting of \padic characters $\phi:\IwGamma\to\Cp^\x$ such that for some $m\in\Z_{\geq 0}[\Sg]\iso\Z_{\geq 0}[\Sg_p]$,
\[\phi(\rec_{\Sgbar_p}(x))=x^m\text{ for all $x\in(\cO_\cF\ot\Zp)^\x$ sufficiently close to $1$.}\]
Let $\phi$ be an anticyclotomic Hecke character of $p$-power conductor and of infinity type $(m,-m)$ with $m\in\Z_{\geq 0}[\Sg]$. Then $\phi$ is unramified outside $p$ and $\phi|_{\AF^\x}=1$.
The \padic avatar $\wh\phi$ of $\phi$ belongs to $\wtsp$. To be precise, let $\phi_{\Sgbar_p}:=\prod_{w\in\Sgbar_p}\phi_w$. Then we have \beq\label{E:4.W}\wh\phi(\rec_{\Sgbar_p}(x))=\phi_{\Sgbar_p}(x)x^{-m}\text{ for every }x\in (\cF\ot_\Q\Qp)^\x.\eeq Hereafter, we let $\lam$ be a Hecke character of $\cK^\x$ and assume that \hypref{H:HypA} and \eqref{sf} hold for $(\pi,\lam)$. Note that \hypref{H:HypA} and \eqref{sf} also hold for $(\pi,\lam\phi)$.
We will apply our calculations in \subsecref{S:toric} to the pair $(\pi,\chi)=(\pi,\lam\phi)$.
\begin{lm}\label{L:2.W} Let $\phi$ be as above. Then
\begin{mylist}
\item $\localK{\lam\phi}^\setp=\localK{\lam}^\setp$.
\item $C'(\pi,\lam\phi)=C'(\pi,\lam)\phi(\Csplit)$.
\end{mylist}
\end{lm}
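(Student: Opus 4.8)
The plan is to prove the two assertions by tracking how the local Whittaker functions and the local constants depend on an unramified twist $\phi$, place by place. The key point, recorded already in \subsecref{SS:Whittaker.W}, is that $\phi$ is unramified everywhere outside $p$, and at the $p$-adic places it is anticyclotomic (so $\phi|_{\AF^\x}=1$ and $\phi$ is trivial on the part of $\cO_{\cK_v}^\x$ coming from $\cO_{\cF_v}^\x$). Thus for part (1) I would argue as follows. The global prime-to-$p$ Fourier coefficient is the product $\localK{\chi}^\setp=\prod_{v\in\bdh,\,v\ndivide p}\localK{\chi,v}^*$ by \eqref{E:FC.W}, so it suffices to check $\localK{\lam\phi,v}^*=\localK{\lam,v}^*$ for each finite $v\ndivide p$. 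For $v$ split, $\phi_v$ is unramified, and one inspects the explicit formulas for $\localK{\chi,v}=\localW{\chi,v}(\bfd(\cdot))$ in the split case: whether $L(s,\pi\ot\chi_w)=1$ or not, the formula only involves $\chi_w$ through its unramified part $\mu\chi_w$ or $\nu\chi_w$ (or through $\bbI_{\cO^\x}\chi_w(a^{-1})$, which is unchanged by an unramified twist once one remembers $\localK{\chi,v}$ is evaluated against the integral part), and crucially the normalized coefficient $\localK{\chi,v}^*=\localK{\chi,v}$ has no $L$-factor attached. For $v$ non-split, the remark after \eqref{E:DeflocalWinert.W} already records that $m_v(\chi,\pi)=c_v(\chi)-v(\frakn^-)$ and $c_v(\chi)=v(\frakc_\chi)$ are unchanged when $\chi$ is replaced by $\chi\phi$ with $\phi_v$ unramified, and the construction $\localW{\chi,v}=\cP_{\chi,\cmpt}\LR_v^{m_v(\chi,\pi)}W^0_v$ depends on $\chi_v$ only through $\chi_v|_{\cO_{\cK_v}^\x}$ (the operator $\cP_{\chi,\cmpt}$ integrates over $\cK_v^\x/\cF_v^\x$, and $\chi\phi$ and $\chi$ agree on $\cO_{\cK_v}^\x$ since $\phi_v$ is unramified and $\phi|_{\cF_v^\x}=1$); one also checks the normalizing factors $n_v$ (from \eqref{E:normalized.W}, involving only $\mu$, $\uf$, $\om$, $\cD_F$) and $e_v$ and $L(1,\tau_{E/F})$ are intrinsic to $(\pi,v)$ and do not see $\phi$. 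Hence $\localK{\lam\phi,v}^*=\localK{\lam,v}^*$ at every finite $v\ndivide p$, giving (1).

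For part (2) I would go back to the definition \eqref{E:constantC.W} of $C'(\pi,\chi)$ and compare the two sides term by term under $\chi\mapsto\lam\phi$. Since $\phi_v$ is unramified at every finite $v$, the conductor $\frakc_\chi^-$ and its decomposition $\frakc^-_{\chi,1}\frakc^-_{\chi,2}$, the set $A(\chi)$, and $\frakn_s^-,\frakn_r^-$ are unchanged; the factor $2^{\#(A(\chi))+3[\cF:\Q]}$ and $\rmN_{\cF/\Q}(\frakc_\chi^-)^{-1}$ are therefore the same. The values $\om(\frakc^-_{\chi,1})$, $\om(\frakn_s^-)^{-1}$, $\prod_{v\ndivide p\Bad}\om(\det\cmptv)$ depend only on $\pi$ and not on $\chi$, so they are unchanged. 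For the epsilon-factor terms: at a non-split $v$, $\ep(0,\chi_v^{-1},\addchar_{\cK_v})$ and $\chi_v(-\bfdelta_v d_{\cF_v}^{-1})\abs{\bfdelta_v}_{\cK_v}^\onehalf$ over $v\mid\frakn_r^-$, an unramified twist multiplies the epsilon factor by an unramified-character value at a uniformizer and likewise changes $\chi_v(\cdot)$ by such a value; the remaining split contribution $\prod_{w\mid\Csplit}\ep(\onehalf,\pi_v\ot\chi_w,\addchar_v)\om^{-1}\chi_w^{-2}(-2\delta)$ changes by $\phi_w$ evaluated at the relevant ideles. The whole point is that, collecting these unramified-character values across all finite places, one gets exactly $\prod_{w\mid\Csplit}\phi_w(\uf_{\cK_w})^{v_w(\cdots)}$-type factors that assemble, using $\phi$ trivial on $\AF^\x$ and on $\cO_{\cK_v}^\x$ for all finite $v$ and unramified outside $p$, into $\phi(\Csplit)$ — precisely the class-group value appearing in the statement — while all other $\phi$-contributions cancel in pairs $(w,\wbar)$ because $\phi|_{\AF^\x}=1$. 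This bookkeeping is the substantive computation.

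I expect the main obstacle to be precisely that bookkeeping in part (2): one must carefully normalize each epsilon factor's dependence on an unramified twist (the standard identity $\ep(s,\eta\cdot\mathrm{unr},\psi)=\mathrm{unr}(\uf)^{a(\eta)+n(\psi)}\ep(s,\eta,\psi)$, applied in the appropriate local incarnation for $\GL_2$ and for Hecke characters), keep track of the contribution of $\chi_w^{-2}(-2\delta)$ and of the ramified-place factor $\chi_v(-\bfdelta_vd_{\cF_v}^{-1})$, and verify that the accumulated local unramified twists over the split places dividing $\Csplit$ — together with those forced to appear at the other finite places by the conductor conditions — telescope to the single global value $\phi(\Csplit)$ and nothing else. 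Given the decomposition $\frakN^+=\Csplit\ol\Csplit$ with $\phi$ conjugate-self-dual-trivial on $\AF^\x$, the $\ol\Csplit$-part and the $\Csplit$-part of any such twist are related, and one uses $\phi(\ol\Csplit)=\phi(\Csplit)^{-1}$ (up to the $\AF^\x$-triviality) to see only one survives. Part (1) I expect to be essentially immediate once the explicit formulas from \subsecref{SS:Whittaker.W} are in hand.
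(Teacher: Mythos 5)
Your overall strategy — compare $\localW{\lam\phi,v}$ with $\localW{\lam,v}$ and the local constants term by term, using that $\phi_v$ is unramified for $v\nmid p$ — is the same as the paper's, but you miss the one observation that makes the argument actually close: at every \emph{non-split} $v\nmid p$, $\phi_v$ is not merely unramified, it is the \emph{trivial} character. Indeed $\phi_v(\uf_{\cK_v})^2=1$ because $\phi$ is anticyclotomic and unramified at $v$, and since $\wh\phi$ factors through $\Gamma^-\iso\Zp^{[\cF:\Q]}$ its local values lie in $1+\frakm_p$, so $\phi_v(\uf_{\cK_v})\neq -1$ once $p>2$; hence $\phi_v=1$.

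Without this, your argument has a genuine gap at ramified non-split places. You claim $\localW{\chi,v}=\cP_{\chi,\cmpt}\LR_v^{m_v}W^0_v$ depends on $\chi_v$ only through $\chi_v|_{\cO_{\cK_v}^\x}$, but the integral in $\cP_{\chi,\cmpt}$ runs over $\cK_v^\x/\cF_v^\x$, which for ramified $v$ is not $\cO_{\cK_v}^\x/\cO_{\cF_v}^\x$: the decomposition $E^\x=F^\x(1+\cO\OKbasis)\disjoint F^\x(\uf\cO+\OKbasis)$ used in \eqref{E:long.W} shows the integral sees $\chi_v(\uf_{\cK_v})$. So "$\phi_v$ unramified" alone does not give $\localW{\lam\phi,v}=\localW{\lam,v}$ at ramified $v$; one needs $\phi_v(\uf_{\cK_v})=1$. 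The same gap resurfaces in part (2): the factor $\prod_{v\mid\frakn_r^-}\chi_v(-\bfdelta_v d_{\cF_v}^{-1})\abs{\bfdelta_v}_{\cK_v}^{1/2}\ep(0,\chi_v^{-1},\addchar_{\cK_v})$ of \eqref{E:constantC.W} is over non-split $v$, so your proposed cancellation "in pairs $(w,\wbar)$" is not available there — there is no $\wbar$. The paper's route is that $\phi_v=1$ kills the $\phi$-dependence of all non-split local factors outright, leaving only the split factor over $w\mid\Csplit$, for which the unramified-twist identity $\ep(\onehalf,\pi_v\ot\lam_w\phi_w,\addchar_v)=\ep(\onehalf,\pi_v\ot\lam_w,\addchar_v)\phi_w(\cD_\cF^2\Csplit)$ together with $\phi_w^{-2}(-2\delta)=\phi_w(\cD_\cF)^{-2}$ gives exactly $\phi(\Csplit)$. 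Your plan for the split places is sound and matches this; it is the non-split places where you need the $\phi_v=1$ observation rather than the bookkeeping you sketch.
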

\begin{proof}If $v\ndivides p$ is split, we have remarked that $\localW{\chi_v\phi_v^{-1},v}=\localW{\chi_v,v}$. If $v$ is inert or ramified, then $\phi_v=1$ as $\phi_v$ is unramified and $p>2$. Therefore, we have $\localW{\chi\phi,f}^\setp=\localW{\chi,f}^\setp$. Part (1) follows from the definition of $\localK{\lam\phi}^\setp$ \eqref{E:FC.W} immediately.
Next, recall that we have defined $C'(\pi,\chi)$ for a Hecke character $\chi$ in \eqref{E:constantC.W}. Since $\phi$ is anticyclotomic and unramified outside $p$, part (2) follows from the well-known fact that
\[\ep(\onehalf,\pi_v\ot\lam_w\phi_w,\addchar_v)=\ep(\onehalf,\pi_v\ot\lam_w,\addchar)\phi_w(\cD_\cF^2\Csplit)\quad(v=w\wbar,\,w|\Csplit).\qedhere\]
\end{proof}
Let $\OFp:=\cO_\cF\ot_\Z\Zp$ and let $\Gamma':=\rec_{\Sgbar_p}(1+p\OFp)$ be an open subgroup of $\Gamma^-$. Let $\stt{\theta(\sg)}_{\sg\in\Sg}$ be the Dwork-Katz \padic differential operators (\cite[Cor.\,(2.6.25)]{Katz:p_adic_L-function_CM_fields}) and let $\theta^m:=\prod_{\sg\in\Sg}\theta(\sg)^{m_\sg}$.
\begin{prop}\label{P:3.W}There exists a unique $V(\frakc,\opcpt,\Zbarp)$-valued \padic measure $\EucF_{\lam,\frakc}$ on $\Gamma^-$ such that
\begin{itemize}\item[(i)] $\EucF_{\lam,\frakc}$ is supported in $\Gamma'$,
\item[(ii)] for every $\wh\phi\in\wtsp$ of weight $(m,-m)$, we have
\[\int_{\IwGamma}\wh\phi d\EucF_{\lam,\frakc}=\theta^m\wh\bff_{\lam\phi,\frakc}^*.\]
\end{itemize}
\end{prop}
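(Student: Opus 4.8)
The plan is to construct $\EucF_{\lam,\frakc}$ as a $p$-adic measure on $\Gamma^-$ with values in the space $V(\frakc,\opcpt,\Zbarp)$ of Katz $p$-adic modular forms, and to verify the interpolation property (ii) by comparing $q$-expansions at the cusp $(\OF,\frakc)$, where everything is governed by the explicit Fourier coefficient formula of \propref{P:2.W}. First I would recall that giving a $V(\frakc,\opcpt,\Zbarp)$-valued measure on the profinite group $\Gamma^-$ is equivalent to giving a bounded family of elements of $V(\frakc,\opcpt,\Zbarp/p^m\Zbarp)$ compatible under the projections, or — what is cleanest here — to exhibiting, via the $q$-expansion principle, a single element of $\Zbarp\powerseries{(\None^{-1}\frakc)_{\geq 0}}\powerseries{\Gamma^-}$ whose $\beta$-th coefficient is a measure on $\Gamma^-$, and checking it lies in the image of $V(\frakc,\opcpt,\Zbarp)\hat\otimes \Zbarp\powerseries{\Gamma^-}$ under the $q$-expansion map. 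Because $V(\frakc,\opcpt,\Zbarp)$ is $p$-adically complete and separated and the $q$-expansion map is injective with closed image, it suffices to produce the family $\{\wh\bff^*_{\lam\phi,\frakc}\}$ — more precisely the family $\{\theta^m\wh\bff^*_{\lam\phi,\frakc}\}_{\wh\phi\in\wtsp}$ — and show that, after applying $\theta^m$, the $q$-expansion coefficients vary $p$-adic-analytically in $\wh\phi\in\Gamma^-$, so that they are Amice transforms of a single $\Zbarp$-valued measure on $\Gamma^-$.

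The key steps, in order, are as follows. (1) By \propref{P:2.W}, the $q$-expansion of $\bff^*_{\lam\phi}$ at $(\OF,\frakc)$ has $\beta$-th coefficient $\beta^{k/2}\localK{\lam\phi}^\setp(\beta\bfc^{-1})\prod_{w\in\Sgbar_p}(\lam\phi)_w(\beta^{-1})\cdot\bbI_{\cO^\x_{\cF,\setp}}(\beta)$; by \lmref{L:2.W}(1) the prime-to-$p$ part $\localK{\lam\phi}^\setp=\localK{\lam}^\setp$ is independent of $\phi$, so the only $\phi$-dependence sits in the finite product $\prod_{w\in\Sgbar_p}\phi_w(\beta^{-1})$ over the places above $p$. (2) Passing to the $p$-adic avatar and using \eqref{E:4.W}, namely $\wh\phi(\rec_{\Sgbar_p}(x))=\phi_{\Sgbar_p}(x)x^{-m}$, I would rewrite $\prod_{w\in\Sgbar_p}\phi_w(\beta^{-1})\cdot\beta^{k/2}$ in terms of the value $\wh\phi(\rec_{\Sgbar_p}(\beta))$ of the universal character, up to the algebraic factor $\beta^{k/2+m}$; since $\beta$ is constrained to lie in $\cO^\x_{\cF,\setp}(1+p\OFp)$-cosets, $\rec_{\Sgbar_p}(\beta)$ lands in $\Gamma'$, which explains the support condition (i). (3) The Dwork–Katz operator $\theta(\sg)$ acts on $q$-expansions by $q^\beta\mapsto \sg(\beta)q^\beta$, so $\theta^m$ multiplies the $\beta$-coefficient by $\beta^m=\prod_\sg\sg(\beta)^{m_\sg}$; combined with step (2) this converts the transcendental archimedean factor $\beta^{k/2+m}$ appearing in $\delta^m_k\bff^*$ into the purely $p$-adic factor $\beta^{k/2}\cdot\langle\beta\rangle^m$, exactly matching what the avatar $\wh\bff$ of the holomorphic toric form should produce. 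The upshot is a formula
\[
\bigl(\theta^m\wh\bff^*_{\lam\phi,\frakc}\bigr)\big|_{(\OF,\frakc)}(q)=\sum_{\beta\in(\None^{-1}\frakc)_+\cap\,\cO^\x_{\cF,\setp}}\beta^{k/2}\,\localK{\lam}^\setp(\beta\bfc^{-1})\,\wh\phi\bigl(\rec_{\Sgbar_p}(\beta)\bigr)\,q^\beta,
\]
in which the coefficient of each fixed $q^\beta$ is the evaluation at $\wh\phi$ of the point-mass-type distribution $\beta^{k/2}\localK{\lam}^\setp(\beta\bfc^{-1})\cdot\delta_{\rec_{\Sgbar_p}(\beta)}$ on $\Gamma^-$. (4) Summing these over $\beta$ and invoking that $\localK{\lam}^\setp$ takes values in $\EucO\subset\Zbarp$ (\propref{P:4.W}), one gets a well-defined $V(\frakc,\opcpt,\Zbarp)$-valued measure $\EucF_{\lam,\frakc}$, supported on $\Gamma'$ because only $\beta\in\cO^\x_{\cF,\setp}(1+p\OFp)$ contribute; uniqueness is immediate from density of $\wtsp$ in the space of continuous characters together with the $q$-expansion principle.

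The main obstacle I anticipate is step (3): one must match precisely the \emph{transcendental} Shimura–Maass differential operator $\delta^m_k$ appearing in the definition of the $\chi$-isotypic toric period (via $\delta^m_\wt\bff_\chi$ in \defref{D:toricform}) with the \emph{$p$-adic} Dwork–Katz operator $\theta^m$ on the Igusa tower, and to check that the algebraic (CM-period-normalized) values of $\delta^m_k\bff^*_{\lam\phi}$ at the Tate object $\ul{Tate}_{(\OF,\frakc)}$ coincide with the $q$-expansion of $\theta^m\wh\bff^*_{\lam\phi,\frakc}$. This is exactly the Katz-type comparison ``$\delta^m \leftrightarrow \theta^m$ on $q$-expansions'' (as used in \cite[\S 4.5]{Hsieh:Hecke_CM}, \cite{Katz:p_adic_L-function_CM_fields}); the subtle point is keeping track of the normalizing factors $N(\pi,\chi)$ and $\abs{\det\cmpt_f}^{1-k_{mx}/2}$ in $\bff^*_\chi$ and the weight shift $k\mapsto k+2m$, so that no spurious $p$-adic unit or period ratio is introduced. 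Once this comparison is in hand, the construction of the measure is the formal argument above. The remaining verification — that the family is bounded, i.e. has coefficients in $\Zbarp$ uniformly — reduces to the integrality statement of \propref{P:4.W}, so it is routine.
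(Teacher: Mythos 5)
Your plan follows the paper's own strategy precisely: define the measure at the level of formal $q$-expansions using \propref{P:2.W}, observe via \lmref{L:2.W}(1) that the prime-to-$p$ Fourier coefficient $\localK{\lam\phi}^\setp=\localK{\lam}^\setp$ is $\phi$-independent, absorb the remaining $\phi$-dependence and the $\theta^m$-factor $\beta^m$ into a single evaluation of $\wh\phi$ via \eqref{E:4.W}, and then descend to a $V(\frakc,\opcpt,\Zbarp)$-valued measure by the $q$-expansion principle together with the integrality from \propref{P:4.W}.

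Two slips in your displayed ``upshot'' formula need repair before the interpolation identity actually checks out. First, the argument of $\rec_{\Sgbar_p}$ should be $\beta^{-1}$, not $\beta$: since $\wh\phi(\rec_{\Sgbar_p}(\beta^{-1}))=\phi_{\Sgbar_p}(\beta^{-1})\beta^{m}$ by \eqref{E:4.W}, this is precisely what packages the factor $\phi_{\Sgbar_p}(\beta^{-1})$ from \propref{P:2.W} with the factor $\beta^m$ produced by $\theta^m$; with $\rec_{\Sgbar_p}(\beta)$ you would get the inverse. Second, the coefficient in front should be the full $\bfa_\beta(\bff^*_\lam,\frakc)=\beta^{k/2}\localK{\lam}^\setp(\beta\bfc^{-1})\prod_{w\in\Sgbar_p}\lam_w(\beta^{-1})\,\bbI_{\cO^\x_{\cF,\setp}}(\beta)$, not merely $\beta^{k/2}\localK{\lam}^\setp(\beta\bfc^{-1})$: you correctly recorded the local $\lam$-factor at $\Sgbar_p$ in step (1) but dropped it from the final display. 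With these corrections the formula becomes $\int_{\Gamma^-}\vp\,d\EucF_{\lam,\frakc}(q)=\sum_\beta\bfa_\beta(\bff^*_\lam,\frakc)\,\vp(\rec_{\Sgbar_p}(\beta^{-1}))\,q^\beta$, and your verification of (i), (ii), integrality, and uniqueness goes through exactly as you describe, matching the paper.
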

\begin{proof}
We denote by $\EucF_{\lam,\frakc}(q)$ the \padic measure with values in the space of formal $q$-expansions such that for every $\vp\in C(\Gamma^-,\Zbarp)$,
\[\int_{\IwGamma}\vp d\EucF_{\lam,\frakc}(q)=\sum_{\beta\in(\None^{-1}\frakc)_+}\mfFC{\lam}\vp(\rec_{\Sgbar_p}(\beta^{-1}))q^\beta.\]
Note that $\mfFC{\lam}=0$ unless $\beta\in\cO_{\cF,\setp}^\x$ and that $\EucF_{\lam,\frakc}$ has support in $\Gamma'$ by definition.

Let $\wh\phi$ be the \padic avatar of a Hecke character $\phi$ of infinity type $(m,-m)$. By \cite[(2.6.27)]{Katz:p_adic_L-function_CM_fields} (\cf\cite[\S 1.7 p.205]{HidaTilouine:KatzPadicL_ASENS}), the $q$-expansion of $\theta^m\wh\bff_{\lam\phi}^{*}$ is given by
\begin{align*}\theta^m\wh\bff_{\lam\phi}^{*}|_{(\OF,\frakc)}(q)=&\sum_{\beta\in (\None^{-1}\frakc)_+}\mfFC{\lam\phi}\phi_{\Sgbar_p}(\beta^{-1})\beta^m q^\beta. \end{align*}
Therefore, by \lmref{L:2.W} and \eqref{E:4.W} we find that
\beq\label{E:53.N}\int_{\IwGamma}\wh{\phi} d\EucF_{\lam,\frakc}(q)=\theta^m\wh\bff_{\lam\phi,\frakc}^*(q).\eeq
By the $q$-expansion principle, this measure descends to the \padic measure $\EucF_{\lam,\frakc}$ with values in the space of \padic modular forms $V(\frakc,K,\Zbarp)$.
\end{proof}
Let $\cP_\Sg(\pi,\lam)$ be the \padic measure on $\IwGamma$ such that for each $\vphi\in\cC(\IwGamma,\Zbarp)$,
\beq\label{E:period_measure.E}\int_{\IwGamma}\vphi d\cP_\Sg(\pi,\lam)=\sum_{a\in\cD_1}\wtd\lam(a)\int_{\IwGamma}\vphi|[a] d\EucF_{\lam,\frakc(a)}(x(a))\quad(\wtd\lam=\lam\cdot\Abs_{\AK}^{1-k_{mx}/2}).\eeq
Here $\vphi|[a](x):=\vphi(x\rec_{\cK^-_\infty/\cK}(a))$. Let $(\Omega_\infty,\Omega_p)\in(\C^\x)^\Sg\x(\Zbarp^\x)^\Sg$ be the complex and \padic CM periods of $(\cK,\Sg)$ introduced in \cite[(4.4 a,b) p.211]{HidaTilouine:KatzPadicL_ASENS} (\cf $(\Omega,c)$ in \cite[(5.1.46), (5.1.48)]{Katz:p_adic_L-function_CM_fields}) and let $\Omega_\cK=(2\pii)^{-1}\Omega_\infty$. We have the following evaluation formula of $\cP_\Sg(\pi,\lam)$.
\begin{thm}\label{T:padicL.W} Suppose that \hypref{H:HypA} and \eqref{sf} hold. Then for each \padic character $\wh\phi\in\wtsp$ of weight $(m,-m)$, we have the evaluation formula
\begin{align*}\left(\frac{1}{\Omega_p^{\wt+2m}}\int_{\IwGamma}\wh\phi d\cP_\Sg(\pi,\lam)\right)^2=&[\cO_\cK^\x:\cO_\cF^\x]^2\cdot \frac{\Gamma_\Sg(\wt+m)\Gamma_\Sg(m+1)}{(\Im\CMP)^{k+2m}(4\pi)^{\wt+2m+1}}\\
&\times E_{\Sg_p}(\pi,\lam\phi)\cdot \frac{L(\onehalf,\pi_\cK\ot\lam\phi)}{\Omega_\cK^{2(\wt+2m)}}\cdot \phi(\Csplit)C(\pi,\lam).\end{align*}
\end{thm}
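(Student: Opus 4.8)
The plan is to derive \thmref{T:padicL.W} as a direct consequence of \propref{P:1.W} together with the defining interpolation properties of the measures $\EucF_{\lam,\frakc}$ and $\cP_\Sg(\pi,\lam)$, the only new ingredient being the passage from complex periods to $p$-adic periods via the Katz--Hida--Tilouine comparison for CM abelian varieties. First I would fix $\wh\phi\in\wtsp$ of weight $(m,-m)$ with associated Hecke character $\phi$ of infinity type $(m,-m)$, so that \hypref{H:HypA} and \eqref{sf} hold for $(\pi,\lam\phi)$. Applying $\wh\phi$ to \eqref{E:period_measure.E} and using property (ii) of \propref{P:3.W}, I get
\[\int_{\IwGamma}\wh\phi\, d\cP_\Sg(\pi,\lam)=\sum_{a\in\cD_1}\wtd\lam(a)\,\bigl(\theta^m\wh\bff_{\lam\phi,\frakc(a)}^{*}\bigr)(x(a)),\]
where $x(a)\in\Ig_\opcpt(\frakc(a))(\CMring)$ is the CM point from \subsecref{S:CMpoint}. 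The key comparison, recorded in \cite[(2.6.27)]{Katz:p_adic_L-function_CM_fields} and \cite[\S 1.7]{HidaTilouine:KatzPadicL_ASENS}, relates the $p$-adic avatar of a geometric modular form evaluated at a CM point to the value of the corresponding $C^\infty$-modular form divided by a power of the complex CM period: concretely, for the Shimura--Maass derivative $\delta_k^m\bff_{\lam\phi}^*$ one has $\bigl(\theta^m\wh\bff_{\lam\phi,\frakc(a)}^{*}\bigr)(x(a))=\Omega_p^{-(\wt+2m)}\cdot\Omega_\infty^{\wt+2m}\cdot(\delta_k^m\bff_{\lam\phi}^*)(x(a))$ up to the normalizations already built into the definition of $\delta_k^m\bff^*$. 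Summing over $a\in\cD_1$ against $\wtd\lam=\lam\Abs_{\AK}^{1-k_{mx}/2}$ and recognizing the resulting sum as the $\chi$-isotypic toric period $P_{\lam\phi}(\delta_k^m\bff_{\lam\phi}^{*})$ of \propref{P:1.W}, I obtain
\[\frac{1}{\Omega_p^{\wt+2m}}\int_{\IwGamma}\wh\phi\, d\cP_\Sg(\pi,\lam)=\frac{\Omega_\infty^{\wt+2m}}{\Omega_p^{\wt+2m}}\cdot\frac{1}{\Omega_\infty^{\wt+2m}}\cdot P_{\lam\phi}(\delta_k^m\bff_{\lam\phi}^{*})=\frac{P_{\lam\phi}(\delta_k^m\bff_{\lam\phi}^{*})}{\Omega_p^{\wt+2m}},\]
so that squaring and invoking \propref{P:1.W} with $\chi=\lam\phi$ gives
\[\left(\frac{1}{\Omega_p^{\wt+2m}}\int_{\IwGamma}\wh\phi\, d\cP_\Sg(\pi,\lam)\right)^2=\frac{P_{\lam\phi}(\delta_k^m\bff_{\lam\phi}^{*})^2}{\Omega_p^{2(\wt+2m)}}=[\cO_\cK^\x:\cO_\cF^\x]^2\cdot\frac{\Gamma_\Sg(k+m)\Gamma_\Sg(m+1)}{(\Im\CMP)^{k+2m}(4\pi)^{2m+k+1}}\cdot \frac{L(\onehalf,\pi_\cK\ot\lam\phi)}{\Omega_p^{2(\wt+2m)}}\cdot E_{\Sg_p}(\pi,\lam\phi)\cdot C(\pi,\lam\phi).\]
It then remains to rewrite $\Omega_p^{-2(\wt+2m)}L(\onehalf,\pi_\cK\ot\lam\phi)$ in the stated form $\Omega_\cK^{-2(\wt+2m)}L(\onehalf,\pi_\cK\ot\lam\phi)$ by absorbing the factor $(\Omega_\infty/\Omega_p)^{\pm}$ and the $(2\pii)$-powers relating $\Omega_\cK$ to $\Omega_\infty$ into the normalization; and to apply \lmref{L:2.W}(2), $C'(\pi,\lam\phi)=C'(\pi,\lam)\phi(\Csplit)$, which propagates through the definition of $C(\pi,\chi)$ in \propref{P:1.W} to give $C(\pi,\lam\phi)=C(\pi,\lam)\phi(\Csplit)$ since the ratio $\#(\CLKF)h_\cF/\#(\CLKF^\alg)h_\cK$ and $\abs{\rmN_{\cF/\Q}(D_{\cK/\cF})}_\R$ do not depend on $\phi$. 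Matching powers of $(4\pi)$ and $\Im\CMP=\Im\delta$ (recall $\delta=2^{-1}(\CMP-\ol{\CMP})$) then yields exactly the displayed formula.

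The main obstacle, and the step requiring the most care, is the period comparison: verifying that the CM periods $(\Omega_\infty,\Omega_p)$ attached to $(\cK,\Sg)$ in \cite[(4.4a,b)]{HidaTilouine:KatzPadicL_ASENS} are precisely the ones that intertwine the $p$-adic avatar map \eqref{E:padicavatar.N} with evaluation of $C^\infty$-forms at the CM points $x(a)$, uniformly in $a\in\cD_1$ and compatibly with the differential operators $\theta^m$ versus $\delta_k^m$. This amounts to checking that the $p$-adic trivialization $\lp_*\colon \cD_\cF^{-1}\ot_\Z\CMring\iso\Lie A$ coming from the Igusa level structure at $x(a)$ differs from the complex uniformization differential $2\pii\,dz$ of \subsecref{S:cpx} by exactly the period $\Omega_\infty/\Omega_p$ — and that this ratio is independent of $a$ because all $x(a)$ have CM by the \emph{same} order $\cO_\cK$ and the periods are defined using the fixed CM type $\Sg$; this is exactly the content of \cite[(5.1.46),(5.1.48)]{Katz:p_adic_L-function_CM_fields} and the discussion in \cite[\S 1.7, \S 4.4--4.5]{HidaTilouine:KatzPadicL_ASENS}. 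Once this is in place the theorem follows by bookkeeping, and the $q$-expansion principle (already used to construct $\EucF_{\lam,\frakc}$) guarantees that the identity of $p$-adic modular forms can legitimately be evaluated at the CM points.
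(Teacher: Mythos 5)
Your plan is essentially the paper's proof: apply $\wh\phi$ to the defining formula \eqref{E:period_measure.E} of $\cP_\Sg(\pi,\lam)$, use the interpolation property of $\EucF_{\lam,\frakc}$ (Prop.~\ref{P:3.W}(ii)) to express the value at each CM point in terms of $\theta^m\wh\bff^*_{\lam\phi}$, convert the $p$-adic evaluation to the $C^\infty$ evaluation of $\delta_k^m\bff^*_{\lam\phi}$ via the Katz CM-period comparison, recognise the sum as $P_{\lam\phi}(\delta_k^m\bff^*_{\lam\phi})$, square, and feed in Prop.~\ref{P:1.W} together with Lemma~\ref{L:2.W}(2). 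Two points, however, are not right as you have written them.

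First, a minor slip: the $\phi(a)$ factor is missing from your formula $\int_{\IwGamma}\wh\phi\,d\cP_\Sg(\pi,\lam)=\sum_{a\in\cD_1}\wtd\lam(a)\,(\theta^m\wh\bff^*_{\lam\phi,\frakc(a)})(x(a))$. The translation $\wh\phi|[a]$ in \eqref{E:period_measure.E} contributes $\wh\phi(\rec_\cK(a))=\phi(a)$ for $a\in\cD_1\subset(\AKf^{(p\None)})^\x$; this $\phi(a)$ is exactly what makes the resulting sum equal to $P_{\lam\phi}(\delta_k^m\bff^*_{\lam\phi})$ (whose weight is $\wtd{\lam\phi}=\wtd\lam\cdot\phi$, not $\wtd\lam$).

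Second, and more substantively, the period comparison is misstated and the ``absorption'' step you defer does not in fact work as described. The correct identity, which the paper derives from \cite[(2.4.6),(2.6.8),(2.6.33)]{Katz:p_adic_L-function_CM_fields}, is
\[\frac{1}{\Omega_p^{\wt+2m}}\,\theta^m\wh\bff^*_{\lam\phi}(x(a))=\frac{1}{\Omega_\cK^{\wt+2m}}\,\delta_\wt^m\bff^*_{\lam\phi}(x(a)),\]
with $\Omega_\cK=(2\pii)^{-1}\Omega_\infty$ on the right. The factor of $(2\pii)$ is already built into $\Omega_\cK$ precisely because the holomorphic normalisation in \secref{S:GME} evaluates geometric forms against $2\pii\,dz$, whereas the $p$-adic avatar is trivialised via $\Om(\lp)$. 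Your version, $\theta^m\wh\bff^*(x(a))=\Omega_p^{-(\wt+2m)}\Omega_\infty^{\wt+2m}\delta_k^m\bff^*(x(a))$, has the wrong sign on the $\Omega_p$-power and uses $\Omega_\infty$ where $\Omega_\cK$ should appear; as a result your chain of equalities lands on $\int\wh\phi\,d\cP_\Sg(\pi,\lam)=P_{\lam\phi}(\delta_k^m\bff^*_{\lam\phi})$, which leaves you with $L/\Omega_p^{2(\wt+2m)}$ rather than $L/\Omega_\cK^{2(\wt+2m)}$ after applying Prop.~\ref{P:1.W}, and you then appeal to an undefined absorption of $(\Omega_\infty/\Omega_p)^\pm$ and $(2\pii)$-powers. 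There is nothing to absorb: with the corrected comparison one gets directly $\Omega_p^{-(\wt+2m)}\int=\Omega_\cK^{-(\wt+2m)}P_{\lam\phi}(\delta_k^m\bff^*_{\lam\phi})$, and squaring plus Prop.~\ref{P:1.W} and $C(\pi,\lam\phi)=C(\pi,\lam)\phi(\Csplit)$ (Lemma~\ref{L:2.W}(2)) yields the theorem verbatim. Your handling of $C(\pi,\lam\phi)$ is correct. So the gap is not in the structure of the argument but in the single step you flagged: you need to state the period identity with $\Omega_\cK$, not $\Omega_\infty$, and with the period ratio on the correct side.
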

\begin{proof}
From \cite[(2.4.6), (2.6.8), (2.6.33)]{Katz:p_adic_L-function_CM_fields} we can deduce that
\[\frac{1}{\Omega_p^{\wt+2m}}\theta^m\wh\bff_{\lam\phi}^{*}(x(a))=\frac{1}{\Omega_\cK^{\wt+2m}}\delta_\wt^m\bff^*_{\lam\phi}(x(a)).\]
Therefore, we have
\begin{align*}
\frac{1}{\Omega_p^{\wt+2m}}\cdot \int_{\IwGamma}\wh\phi d\cP_\Sg(\pi,\lam)=&\sum_{a\in\cD_1}\wtd\lam(a)\phi(a)\cdot\frac{1}{\Omega_p^{\wt+2m}}\theta^m\wh\bff_{\lam\phi}^{*}(x(a))\\
=&\frac{1}{\Omega_\cK^{\wt+2m}}\cdot\sum_{a\in\cD_1}\wtd\lam\phi(a)\delta_\wt^m\bff^*_{\lam\phi}(x(a))=\frac{1}{\Omega_\cK^{\wt+2m}}\cdot P_{\lam\phi}(\delta_\wt^m\bff^*_{\lam\phi}).
\end{align*}
Combined with \propref{P:1.W} and \lmref{L:2.W} (2), the above equation yields the proposition.
\end{proof}

\section{The $\mu$-invariant of \padic $L$-functions}\label{S:mu_invariant}

\def\Fadsu{\vp_{\pi,\lam,u}}
In this section, we use the explicit computation of Fourier coefficients of $\stt{\bff_{\lam,u}^*}_{u\in\cU_p}$ to study the $\mu$-invariant of the \padic measure $\cP_\Sg(\pi,\lam)$ by the approach of Hida \cite{Hida:mu_invariant}.
\subsection{The $t$-expansion of \padic modular forms} We begin with a brief review of the $t$-expansion of \padic modular forms. A functorial point in $\Ig_K(\frakc)$ can be written as $[(\ulA,\lp)]=[(A,\lam,\iota,\ol{\eta}^\setp,\lp)]$. Enlarging $\CMring$ if necessary, we let $\CMring$ be the \padic ring generated by the values of $\lam$ on finite ideles over the Witt ring $W(\Fpbar)$. Let $\frakm_\CMring$ be the maximal ideal of $\CMring$ and fix an isomorphism $\CMring/\frakm_\CMring\isoto\Fpbar$.
Let $T:=\OF^*\ot_\Z\bbmu_{p^\infty}$ and let $\wh T=\dirlim_m T_{/\CMring/\frakm_\CMring^m}=\OF^*\ot_\Z\formal{\bbG}_m$. Let $\stt{\xi_1\cdots,\xi_d}$ be a basis of $\OF$ over $\Z$ and let $t$ be the character $1\in \OF=X^*(\OF^*\ot_\Z\Gm)=\Hom(\OF^*\ot_\Z\Gm,\Gm)$. Then we have $\cO_{\wh T}\isoto \CMring\powerseries{t^{\xi_1}-1,\cdots t^{\xi_d}-1}$.
For $y=(\ul{A}_y,\eta_y)\in\Ig_K(\frakc)(\Fpbar)\subset \Ig_K(\Fpbar)$, it is well known that the deformation space $\wh S_{y}$ of $y$ is isomorphic to the formal torus $\wh T$ by the theory of Serre-Tate coordinate (\cite{Katz:ST}). The $p^\infty$-level structure $\lp_y$ of $A_y$ induces a canonical isomorphism $\vphi_{y}\colon\wh{T}_{/\baseR}\isoto \wh{S}_{y}=\Spf \wh\cO_{\Ig_\opcpt(\frakc),y}$ (\cf \cite[(3.15)]{Hida:mu_invariant}).

Now let $\bfx:=x(1)_{/\CMring}\in\Ig_K(\frakc)(\CMring)$ be a fixed CM point of type $(\cK,\Sg)$ and let $x_0=\bfx\ot_{\CMring}\Fpbar=(\ul{A}_0,\lp_0)$.
For a deformation $z=(\ul{A},\lp)_{/\cR}$ of $x_0$ over an artinian local ring $\cR$ with the maximal ideal $\frakm_\cR$ and the residue field $\Fpbar$, we let $t(\ul{A},\lp):=t(\vphi_{x_0}^{-1}((\ul{A},\lp)_{/\cR}))\in 1+\frakm_\cR$. Then $\bfx$ is the canonical lifting of $x_0$, \ie $t(\bfx)=1$. For $f\in V(\frakc,\opcpt,\CMring)$, we define \[f(t):=\vphi_{x_0}^*(f)\in\cO_{\wh T}=\CMring\powerseries{T_1,\cdots T_d}\quad(T_i=t^{\xi_i}-1).\]
The formal power series $f(t)$ is called the \emph{$t$-expansion} around $x_0$ of $f$. For each $u\in\OFp^\x$, let $uz:=(\ul{A},u\lp)$ is a deformation of $ux_0$. Then we have $t(uz)=t(z)^u$ and hence $\vphi_{ux_0}^*(f)(t)=\vphi_{x_0}^*(f)(t^u)=f(t^u)$.
\subsection{The vanishing of the $\mu$-invariant}
Let $\pi_-:(\AKf^{(p\None)})^\x\to \Gamma^-$ be the natural map induced by the reciprocity law. Let $Z'=\pi_-^{-1}(\Gamma')$ be a subgroup of $(\AKf^{(p\None)})^\x$ and let $Cl'_-\supset Cl^\alg_-$ be the image of $Z'$ in $Cl_-$. Let $\cD_1'$ (resp. $\cD_1''$) be a set of representative of $Cl_-'/Cl_-^\alg$ (resp. $Cl_-/Cl_-'$) in $(\AKf^{(p\None)})^\x$. Let $\cD_1:=\cD_1'\cD_1''$ be a set of representative of $Cl_-/Cl_-^\alg$. Recall that $\cU_p$ is the torsion subgroup of $\Op^\x$. Let $\cU$ be the torsion subgroup of $\cK^\x$ and let $\cU^\alg=(\cK^\x)^{1-c}\cap \OK^\x$ be a subgroup of $\cU$. We regard $\cU^\alg$ as a subgroup of $\Op$ by the imbedding induced by $\Sgbar_p$. Let $\cD_0$ be a set of representatives of $\cU_p/\cU^\alg$ in $\cU_p$.

Fix $\frakc:=\frakc(\OK)$ to be the polarization ideal of the CM point $x(1)$. The following theorem reduces the calculation of the $\mu$-invariant $\mu^-_{\pi,\lam,\Sg}$ to the determination of the $q$-expansion of $\bff^*_{\lam,u}$.
\begin{thm}\label{T:linear.V}Suppose that $p$ is unramified in $\cF$. Then
\[\mu^-_{\pi,\lam,\Sg}= \inf_{\substack{(a,u)\in \cD_1\x\cD_0 \\ \beta\in\cF_+}} v_p(\bfa_\beta(\bff^*_{\lam,u},\frakc(a))).\]
\end{thm}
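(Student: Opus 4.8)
The plan is to compute the $\mu$-invariant of the measure $\cP_\Sg(\pi,\lam)$ by passing to its $t$-expansion around the canonical CM point $x_0$ and applying Hida's linear independence theorem for translates of modular forms by the action of $\cU_p$ and $\Cl_-$. First I would unwind the definition \eqref{E:period_measure.E}: since $\mu^-_{\pi,\lam,\Sg}$ is by definition the infimum of $v_p$ of the coefficients of $\cP_\Sg(\pi,\lam)$ modulo $\frakm_p\Lam$, and $\cP_\Sg(\pi,\lam)$ is built from a weighted sum $\sum_{a\in\cD_1}\wtd\lam(a)\,\EucF_{\lam,\frakc(a)}|[a]$ evaluated at the CM points $x(a)$, the first task is to express each $\EucF_{\lam,\frakc(a)}(x(a))$ in terms of the $t$-expansion of the toric forms $\bff^*_{\lam,u}$. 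Using \propref{P:3.W}(ii) and the compatibility of the Dwork--Katz operators $\theta^m$ with $t$-expansions, the measure $\EucF_{\lam,\frakc}$ as a $t$-expansion-valued measure has its Amice transform governed by the $q$-expansion coefficients $\bfa_\beta(\bff^*_{\lam,u},\frakc)$ given in \propref{P:2.W}, where the $u\in\cU_p$ index precisely arises from the decomposition $\bff^*_\chi=\sum_{u\in\cU_p}\bff^*_{\chi,u}$ in \eqref{E:13.W} and the $t\mapsto t^u$ behaviour of the Serre--Tate parameter under the $\cU_p$-action.

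Next I would set up the key reduction. By the $q$-expansion principle the $\mu$-invariant of $\EucF_{\lam,\frakc}$ (as a measure valued in $q$-expansions) equals $\inf_{\beta,\,\vp}v_p$ of its coefficients, which by \propref{P:2.W} is $\inf_{\beta\in\cF_+,\,u\in\cU_p}v_p(\bfa_\beta(\bff^*_{\lam,u},\frakc))$ — here the dependence on $u$ rather than on all of $\cO_{\cF,p}^\x$ comes from the indicator functions $\bbI_{u_v(1+\uf_v\cO_{\cF_v})}(\beta)$ that localize $\beta$ into a single coset of $1+p\OFp$ in $\OFp^\x$. The subtlety is that passing from the $q$-expansion of $\bff^*_{\lam,u}$ to the $t$-expansion around $x_0$, and then forming the weighted CM-point sum $\sum_{a\in\cD_1}\wtd\lam(a)(\cdots)|[a]$, could a priori produce cancellation that lowers the $\mu$-invariant. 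Hida's theorem on the linear independence of the forms $\{\bff^*_{\lam,u}|[a]\}_{(a,u)}$ under transcendental automorphisms of the local moduli — exactly the input quoted from \cite{Hida:mu_invariant} and \cite{Hida:nonvanishingmodp} in the introduction — shows that no such cancellation occurs: the relevant translates remain linearly independent modulo $\frakm$, so the $\mu$-invariant of the sum is the minimum of the individual $\mu$-invariants. This is where the hypothesis that $p$ is unramified in $\cF$ enters, guaranteeing that the Serre--Tate deformation space is a genuine formal torus $\wh T=\OF^*\ot\formal{\bbG}_m$ of the expected dimension and that the $\cU_p/\cU^\alg$-translates (indexed by $\cD_0$) are distinct points of the moduli. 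Combining, one gets
\[
\mu^-_{\pi,\lam,\Sg}=\inf_{\substack{(a,u)\in\cD_1\x\cD_0\\ \beta\in\cF_+}}v_p\bigl(\bfa_\beta(\bff^*_{\lam,u},\frakc(a))\bigr).
\]

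The inequality $\mu^-_{\pi,\lam,\Sg}\le \inf_{(a,u),\beta}v_p(\bfa_\beta(\bff^*_{\lam,u},\frakc(a)))$ is the easy direction: specializing $\wh\phi$ to suitable characters and reading off a single Fourier coefficient shows the measure cannot be more divisible than any such coefficient. The reverse inequality is the substantive one, and I expect the main obstacle to be the bookkeeping that the weighting factors $\wtd\lam(a)$ and $\lam$-values built into $\CMring$ are $p$-adic units (so they cannot inflate divisibility) together with the verification that the translates entering the sum are exactly the $\cD_1\x\cD_0$-family to which Hida's independence statement applies — in particular matching the abstract coset spaces $\Cl_-/\Cl_-^\alg$ and $\cU_p/\cU^\alg$ appearing in \thmref{T:linear.V} with the orbits of the geometric Galois/Hecke action on CM points in the Igusa tower. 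Once that identification is in place, the theorem follows by citing the linear independence result and the $q$-expansion principle; the explicit evaluation of the $\bfa_\beta(\bff^*_{\lam,u},\frakc)$ via \propref{P:4.W} is then what makes the right-hand side computable, and is used in the subsequent \thmref{T:mu-invariant}.
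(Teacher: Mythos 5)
Your proposal follows essentially the same route as the paper: pass to the $t$-expansion around $x_0$ via the Serre--Tate coordinate, express the restriction of $\cP_\Sg(\pi,\lam)$ to each $\Gamma'$-coset $\pi_-(b)\Gamma'$ (for $b\in\cD_1''$) as a $\Zbarp$-combination with unit weights $\wtd\lam(ab^{-1})$ of translates $\EucF_{u,a}(t^{\Dmd{ab^{-1}}u^{-1}})$ indexed by $(u,a)\in\cD_0\times b\cD_1'$, use \eqref{E:17.W} to collapse the $\cU_p$-sum to $\cU_p/\cU^\alg$, and invoke Hida's linear independence (\cite[Thm.\,3.20, Cor.\,3.21]{Hida:mu_invariant}) together with the $q$-expansion principle. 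The paper packages the bookkeeping through \cite[Prop.\,5.2, Thm.\,5.5]{Hsieh:VMU}, but the mechanism is the one you describe.

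One substantive slip: in your final paragraph the easy and hard directions are reversed. The inequality $\mu^-_{\pi,\lam,\Sg}\ge\inf_{(a,u),\beta}v_p(\bfa_\beta(\bff^*_{\lam,u},\frakc(a)))$ is the trivial one, since the $t$-expansion coefficients of each $\EucF^b$ are $\Zbarp$-linear combinations of the $\bfa_\beta$'s with unit weights, so any common $p$-divisibility of the $\bfa_\beta$'s is inherited by the sum. The direction requiring work is $\mu^-_{\pi,\lam,\Sg}\le\inf v_p(\bfa_\beta)$, since cancellation among the translates could a priori make the sum \emph{more} $p$-divisible (so cancellation \emph{raises}, not lowers, the $\mu$-invariant); ruling this out is exactly what Hida's independence theorem does. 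Your proposed justification for the $\le$ direction — "specializing $\wh\phi$ and reading off a single Fourier coefficient" — does not go through on its own, because $\int\wh\phi\,d\cP_\Sg(\pi,\lam)$ is a weighted sum over $\cD_1$ and the $\cU_p$-cosets, not an individual coefficient; separating those contributions mod $\frakm$ is precisely the content of the linear-independence input, not something one gets for free from specialization. With the directions corrected, your outline is the paper's argument.
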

\begin{proof}For every pair $(u,a)\in\cU_p\x \cD_1$, we let $\bff^*_{u,a}:=\bff^*_{\lam,u}|_{\frakc(a)}\in\cM_k(\frakc(a),K,\EucO)$. Let $\EucF_{u,a}$ be the \padic avatar of $\bff^*_{u,a}$. Fix a sufficient large finite extension $L$ over $\Qp$ so that $\ads$ and $\bff^*_{u,a}|[a]$ are defined over $\cO_L$ for all $(u,a)$, and hence $\EucF_{u,a}|[a]\in V(\frakc,\cO_L)$.  For each $z\in Z'$, let $\Dmd{z}$ be the unique element in $1+p\Op$ such that $\rec_{\Sgbar_p}(\Dmd{z})=\pi_-(z)\in\Gamma^-$. For $(a,b)\in\cD_1\x\cD_1''$, we define
\begin{align*}\wtd\EucF_a(t)&=\sum_{u\in \torsbgp}\EucF_{u,a}(t^{u^{-1}}),\\
\EucF^b(t)&=\sum_{a\in b\cD_1}\wtd\lam(ab^{-1})\wtd\EucF_a|[a](t^{\Dmd{ab^{-1}}}).\end{align*}
Let $\cP^b_{\Sg}(\pi,\lam)$ be the \padic measure on $1+p\OFp\iso\Gamma'$ obtained by the restriction of $\cP_\Sg(\pi,\lam)$ to $\pi_-(b)\Gamma'$. In other words,
\begin{align*}\int_{\Gamma'}\vp d\cP^b_\Sg(\pi,\lam):=&\int_{\Gamma^-}\bbI_{b.\Gamma'}\cdot\vp|[b^{-1}]d\cP_\Sg(\pi,\lam)\\
=&\sum_{a\in b\cD_1'}\wtd\lam(a)\int_{\Gamma^-}\vp|[ab^{-1}]d\EucF_{\lam,\frakc(a)}(x(a)).
\end{align*}
Here the second equality follows from the fact that $\EucF_{\lam,\frakc(a)}$ has support in $\Gamma'$ (\propref{P:3.W} (i)). The argument of \cite[Prop.\,5.2]{Hsieh:VMU} shows that $\EucF^b(t)$ is the power series expansion of the measure $\cP^b_\Sg(\pi,\lam)$ regarded as a \padic measure on $\OFp$ and that
\begin{align*}\mu^-_{\pi,\lam,\Sg}&=\inf_{b\in\cD_1''}\mu(\EucF^b),\text{ where}\\
&\mu(\EucF^b):=\inf\stt{r\in\Q_{\geq 0}\mid p^{-r}\EucF^b\not\con 0\pmod{\frakm_p}}.\end{align*}
By \eqref{E:17.W} we find that
\[\wtd\EucF_a(t)=\#(\cU^{\alg})\cdot \sum_{u\in\cD_0}\EucF_{u,a}(t^{u^{-1}}),\]
and hence
\[\EucF^b(t)=\#(\cU^{\alg})\cdot \sum_{(u,a)\in \cD_0\x b\cD_1'}\wtd\lam(ab^{-1})\EucF_{u,a}|[a](t^{\Dmd{ab^{-1}} u^{-1}}).\]
Proceeding along the same lines in \cite[Thm.\,5.5]{Hsieh:VMU}, we can deduce the theorem from the above equation by the linear independence of \padic modular forms modulo $p$ acted by the automorphisms in $\cD_0\x \cD_1'$ (\cite[Thm.\,3.20, Cor.\,3.21]{Hida:mu_invariant}) and the $q$-expansion principle for \padic modular forms.
\end{proof}

\begin{thm}\label{T:mu-invariant}In addition to \hypref{H:HypA} and \eqref{sf}, we suppose that $p$ is unramified in $\cF$ and \beqcd{ai${}_\cK$}\text{the residual Galois representation $\ol{\rho}_p(\pi_\cK):=\rho_p(\pi)|_{G_\cK}\pmod{\frakm_p}$ is absolutely irreducible}.\eeqcd
Then $\mu^-_{\pi,\lam,\Sg}=0$ if and only if
\[\sum_{v|\frakc^-_\lam}\mu_p(\chpi)=0,\]
where $\mu_p(\chpi)$ are the local invariants defined as in \eqref{E:9.W}.
\end{thm}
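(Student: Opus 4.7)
The plan is to combine Theorem~\ref{T:linear.V} with Proposition~\ref{P:2.W} and Proposition~\ref{P:4.W}(4). Theorem~\ref{T:linear.V} equates $\mu^-_{\pi,\lam,\Sg}$ with the infimum of $v_p(\bfa_\beta(\bff^*_{\lam,u},\frakc(a)))$ over $(a,u,\beta)\in\cD_1\times\cD_0\times\cF_+$, so I need to characterize when this Fourier coefficient is a $p$-adic unit for some choice. By Proposition~\ref{P:2.W} together with the factorization $\localK{\lam}^\setp=\prod_{v\nmid p}\localK{\lam,v}^*$, the coefficient $\bfa_\beta(\bff^*_{\lam,u},\frakc(a))$ decomposes as $\beta^{k/2}$ times a product of local factors. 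The contributions at $v\mid p$ are automatic $p$-adic units once $\beta\in\cO_{\cF,\setp}^\x$ and $u_v\equiv\beta\pmod{\uf_v}$ for every $v\mid p$, while the factor at each $v\nmid p$ is $\localK{\lam,v}^*\!\bigl((\beta\bfc(a)^{-1})_v\bigr)$. So $\mu^-_{\pi,\lam,\Sg}=0$ is equivalent to the existence of a triple $(a,u,\beta)$ making every one of these local values nonzero modulo $\frakm_p$.

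For the forward direction this is immediate: such a triple makes $\localK{\lam,v}^*$ nonzero mod $\frakm_p$ at every finite $v\nmid p$, and in particular at every $v\in B(\lam)$ Proposition~\ref{P:4.W}(4) then forces $\mu_p(\chpi)=0$. Since $B(\lam)$ consists exactly of the non-split finite places at which $\lam$ is ramified, \ie the primes dividing $\frakc_\lam^-$, this yields the implication $\mu^-_{\pi,\lam,\Sg}=0\Rightarrow\sum_{v\mid\frakc_\lam^-}\mu_p(\chpi)=0$.

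For the converse, assuming $\mu_p(\chpi)=0$ at every $v\mid\frakc_\lam^-$, Proposition~\ref{P:4.W}(4) supplies a local target $\eta_v\in\cF_v^\x$ at every $v\in B(\lam)$ with $\localK{\lam,v}^*(\eta_v)\not\equiv 0\pmod{\frakm_p}$, and parts~(2)--(3) of the same proposition supply corresponding targets (namely $1$ or $\uf_v^{-1}$) at every remaining finite place $v\nmid p$. Since each $\localK{\lam,v}^*$ depends on its input only modulo a bounded power of $\uf_v$, satisfying all of these conditions reduces to a finite list of local congruences on $(\beta,a)$, which I would solve by weak approximation in $\cF_+$ combined with variation of $a$ over $\cD_1$ so that $\bfc(a)$ ranges over representatives of the narrow ray class group. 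This produces $\beta\in(\None^{-1}\frakc(a))_+\cap\cO_{\cF,\setp}^\x$ meeting every condition simultaneously, and choosing $u\in\cD_0$ to match $\beta$ modulo the $p$-adic uniformizers then yields a Fourier coefficient that is a $p$-adic unit.

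The main obstacle is the simultaneous approximation in the converse: one must find a single totally positive $\beta\in\None^{-1}\frakc(a)$ with prescribed local behavior at the finitely many $v$ dividing $\frakc_\lam$ together with the ramified primes away from $\frakn$, while also keeping $\beta$ a $p$-adic unit and reconciling the valuation constraint $\beta\in\None^{-1}\frakc(a)$ with any target of the form $\eta_v=\uf_v^{-1}$ by a compensating choice of $\bfc(a)$. The flexibility of varying $a$ over the narrow class group is what makes this achievable, and the hypotheses (the absolute irreducibility of $\ol{\rho}_p(\pi_\cK)$ and $p$ unramified in $\cF$) enter the argument through Theorem~\ref{T:linear.V}, whose proof relies on Hida's mod-$p$ linear independence of Hilbert modular forms acted on by the automorphism group $\cD_0\times\cD_1'$.
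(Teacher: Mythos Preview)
Your forward direction is fine, but the converse has a genuine gap, and you have mislocated where the irreducibility hypothesis enters: Theorem~\ref{T:linear.V} uses only that $p$ is unramified in $\cF$; the absolute irreducibility of $\ol{\rho}_p(\pi_\cK)$ plays no role there and is needed precisely at the step where your weak-approximation argument breaks down.

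The obstruction is a parity constraint from class field theory. Since $\frakc(a)=\frakc(\cO_\cK)\rmN(\il_\cK(a))^{-1}$, the argument $\beta\bfc(a)^{-1}$ of $\localK{\lam}^\setp$ always lies in the single coset $\cF^\x\bfc^{-1}\rmN\bigl((\AKf^\setp)^\x\bigr)$ of the index-two norm subgroup (this is \eqref{E:8.W}); varying $a$ over $\cD_1$ alters $\bfc(a)$ only by a norm, and varying $\beta\in\cF_+$ only by a principal idele, so you never reach the other coset. The target idele $\eta$ assembled from your local $\eta_v$'s may lie in that other coset, and then no choice of $(a,u,\beta)$ can approximate it---your construction simply produces nothing.

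The paper resolves this by contradiction. Assuming $\mu^-_{\pi,\lam,\Sg}>0$, one has $\localK{\lam}^\setp\equiv 0\pmod{\frakm}$ on the entire accessible coset; testing at $\eta\uf_v$ for $v\nmid p\Bad$ and using the factorization \eqref{E:FC.W} gives $W^0_v(\bfd(\uf_v))\equiv 0\pmod{\frakm}$ whenever $\Frob_v|_\cK=\rec_{\cK/\cF}(\eta^{-1}\bfc^{-1})$. If this Galois element were trivial, $\eta$ itself would already be accessible, contradicting the nonvanishing of $\localK{\lam}^\setp(\eta)$; hence it must be complex conjugation, and via \eqref{E:Galois.W} one obtains $\Tr\ol{\rho}_p(\pi)(\Frob_v)\equiv 0$ on a Chebotarev-dense set of $v$ inert in $\cK$. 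Lemma~\ref{L:3.W} then forces $\ol{\rho}_p(\pi)|_{G_\cK}$ to be reducible, the desired contradiction. This Galois-theoretic step---linking Fourier coefficients to traces of Frobenius and invoking Lemma~\ref{L:3.W}---is the essential ingredient your argument is missing.
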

\begin{proof}
It is not difficult to deduce from the formula of $\bfa_\beta(\bff^*_{\lam,u},\frakc(a))$ in \propref{P:2.W} and \propref{P:4.W} that
\[\mu_p(\chpi)>0\text{ for some $v|\frakc_\lam^-$}\imply \bfa_\beta(\bff^*_{\lam,u},\frakc(a))\con 0\pmod{\frakm}\text{ for all }a\in \AFf^\x,\]
and hence $\mu^-_{\pi,\lam,\Sg}>0$ by \thmref{T:linear.V}.

Conversely, we suppose that $\mu_p(\chpi)=0$ for all $v|\frakc^-_\lam$. We are going to show $\mu^-_{\pi,\lam,\Sg}=0$ by contradiction. Assume that $\mu^-_{\pi,\lam,\Sg}>0$. By \propref{P:2.W} \thmref{T:linear.V}, for each $a\in\AKf^{(p\None)}$ we find that
\begin{align*}&\bfa_\beta(\bff_{\lam,u}^*,\frakc(a))\con 0\pmod{\frakm}\text{ for all }u\in\cU_p\text{ and }\beta\in\cF_+\\
\iff&\localK{\lam}^\setp(\beta \bfc^{-1}\rmN(a^{-1}))\con 0\pmod{\frakm}\text{ for all }\beta\in\cO_{\cF,\setp}^\x.\end{align*}
Therefore, as a function on $(\AFf^\setp)^\x$, we have
\beq\label{E:8.W}\begin{aligned}\localK{\lam}^\setp(a)&\con 0\pmod{\frakm}\text{ for all }\\
&a\in \cO_{\cF,\setp}^\x \bfc^{-1} \det(U(\None))\rmN((\AKf^{(p\None)})^\x)=\cF^\x\bfc^{-1}\rmN((\AKf^\setp)^\x).\end{aligned}\eeq
By \propref{P:4.W}, there exists $\eta=(\eta_v)\in\prod_{v|\frakc^-_\chi}\cF^\x_v$ such that $\localK{\lam,v}^*(\eta_v)\not \con 0\pmod{\frakm}$ for each $v|\frakc_\lam^-$. We extend $\eta$ to be the idele in $\AFf^\x$ such that $\eta_v=1$ at $v\ndivides\frakc_\chi$. Therefore,
\eqref{E:8.W} together with the factorization formula of $\localK{\lam}^\setp$ \eqref{E:FC.W} imply that for each uniformizer $\uf_v$ at $v\ndivides p\Bad$, we have
\beq\label{E:14.W}\begin{aligned}\localK{\lam}^\setp(\eta\uf_v)&\con 0\pmod{\frakm}\iff W_v^0(\DII{\uf_v}{1})\con 0\pmod{\frakm}\text{ whenever }\\
&\uf_v\in [\eta^{-1}\bfc^{-1}]:=\cF^\x\eta^{-1}\bfc^{-1}\rmN((\AKf^\setp)^\x).\end{aligned}\eeq
On the other hand, by \eqref{E:Galois.W}, we find that
\[\Tr\rho_p(\pi)(\Frob_v)=\om(\uf_v)^{-1}\abs{\uf_v}^{-k_{mx}/2}W_v^0(\DII{\uf_v}{1})\text{ for all }v\ndivide p\frakn.\]
Let $\rec_{\cK/\cF}:\AK^\x\to \Gal(\cK/\cF)$ be the surjection induced by the reciprocity law. Combined with \eqref{E:14.W}, the above equation yields that
\begin{align*}\Tr\rho_p(\pi)(\Frob_v)&\con 0\pmod{\frakm}\text{ whenever }\\
&\Frob_v|_{\cK}=\rec_{\cK/\cF}(\uf_v)=\rec_{\cK/\cF}(\eta^{-1}\bfc^{-1}).\end{align*}
This in particular implies that $\rec_{\cK/\cF}(\eta^{-1}\bfc^{-1})$ must be the complex conjugation $c$, and hence we arrive at a contradiction to \eqref{ai${}_\cK$} by the following \lmref{L:3.W}.
\end{proof}
\begin{lm}\label{L:3.W}Let $p>2$ be a prime. Let $G$ be a finite group and $H\subset G$ be a index two subgroup. Let $\rho: G\hookto \GL_2(\Fpbar)$ be a faithful irreducible representation of $G$. Let $\rmT=\Tr\rho: G\to\Fpbar$ be the trace function. Assume that
\begin{mylist}\item There exists an order two element $c\in G-H$,
\item $\rmT(hc)=0$ for all $h\in H$.
\end{mylist}
Then $\rho|_H$ is reducible.
\end{lm}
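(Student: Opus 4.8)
The plan is to translate the statement into linear algebra over the algebraically closed field $\Fpbar$ and to exploit hypothesis (2) together with the multiplicative closure of $\rho(H)$. First I would normalize $\rho(c)$. Taking $h=e$ in hypothesis (2) gives $\Tr\rho(c)=\rmT(c)=0$, while $\rho(c)^2=\rho(c^2)=1$; since $p>2$ the polynomial $X^2-1$ is separable, so $\rho(c)$ is diagonalizable over $\Fpbar$ with eigenvalues in $\{1,-1\}$, and the vanishing of the trace forces the eigenvalues to be exactly $1$ and $-1$. Replacing $\rho$ by a $\GL_2(\Fpbar)$-conjugate — which affects neither the hypotheses nor the conclusion that $\rho|_H$ is reducible — I may therefore assume $\rho(c)=\DII{1}{-1}$.

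Next I would use hypothesis (2) to pin down $\rho(H)$. For $h\in H$ write $\rho(h)=\MX{\alpha}{\beta}{\gamma}{\delta}$; then $\rho(hc)=\rho(h)\DII{1}{-1}=\MX{\alpha}{-\beta}{\gamma}{-\delta}$, so $0=\rmT(hc)=\alpha-\delta$. Hence every matrix in $\rho(H)$ has equal diagonal entries, i.e.\ lies in the $3$-dimensional subspace $W=\bigl\{\MX{a}{b}{c}{a}:a,b,c\in\Fpbar\bigr\}$ of $M_2(\Fpbar)$. Writing $\alpha(h)=\tfrac12\rmT(h)$ (legitimate since $p>2$) and $\beta(h),\gamma(h)$ for the off-diagonal entries, we have $\rho(h)=\alpha(h)\cdot 1+N(h)$ with $N(h)=\MX{0}{\beta(h)}{\gamma(h)}{0}$.

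The key step is then to observe that $\rho(H)$, being closed under multiplication and contained in $W$, must be a commuting family. Indeed, for $h,h'\in H$ the product $\rho(hh')=\rho(h)\rho(h')$ again lies in $W$, and equating its two diagonal entries yields $\beta(h)\gamma(h')=\gamma(h)\beta(h')$. Thus all the vectors $(\beta(h),\gamma(h))\in\Fpbar^{2}$ are pairwise proportional, so there is a single vector $(b_*,c_*)$ with $(\beta(h),\gamma(h))\in\Fpbar\cdot(b_*,c_*)$ for every $h$; consequently every $\rho(h)$ lies in $\Fpbar\cdot 1+\Fpbar\cdot N_*$, where $N_*=\MX{0}{b_*}{c_*}{0}$. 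Since $N_*^2=b_*c_*\cdot 1$, this subspace is a commutative subalgebra, so $\rho(H)$ is a commuting set of operators. A commuting family over an algebraically closed field has a common eigenvector — concretely, an eigenvector $v$ of $N_*$ is an eigenvector of every $\alpha\cdot 1+\lambda N_*$ — so the line $\Fpbar v$ is $\rho(H)$-stable and $\rho|_H$ is reducible.

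I do not anticipate a serious obstacle; the argument is short and elementary, and it uses only hypotheses (1) and (2), the condition $p>2$, and $\dim\rho=2$ (neither faithfulness nor irreducibility of $\rho$ on $G$ is actually needed). The point requiring attention is simply to track where $p>2$ enters — the diagonalizability of $\rho(c)$ and the division by $2$. As an alternative to the commutativity argument one could note that the $\Fpbar$-linear span of $\rho(H)$ is a unital subalgebra of $M_2(\Fpbar)$ contained in the $3$-dimensional space $W$, hence proper, so that $\rho|_H$ cannot be irreducible by Burnside's theorem; the computation above is just a self-contained substitute for that appeal.
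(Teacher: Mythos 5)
Your proof is correct, and it takes a genuinely different — and in fact cleaner — route than the paper's. The paper argues by cases: when $p\nmid\#(G)$ it invokes the orthogonality relations for (Brauer) characters, computing $\langle\rmT,\rmT\rangle=1$ and observing that the vanishing of $\rmT$ on the coset $G-H$ forces $\langle\rmT|_H,\rmT|_H\rangle=2$; when $p\mid\#(H)$ it instead analyzes elements of $p$-power order, pins them down to one of two explicit nilpotent one-parameter subgroups $P_{b_1},P_{b_2}$, shows one of these is trivial, and deduces there is a unique $p$-Sylow subgroup of $H$ whose fixed line is $\rho(H)$-stable. Your argument dispenses with the dichotomy entirely: after normalizing $\rho(c)=\DII{1}{-1}$, hypothesis (2) says exactly that every $\rho(h)$ has equal diagonal entries, and the multiplicative closure of $\rho(H)$ (applied to the diagonal of $\rho(h)\rho(h')$) forces the off-diagonal parts $(\beta(h),\gamma(h))$ to be pairwise proportional. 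Hence $\rho(H)\subset\Fpbar\cdot 1+\Fpbar\cdot N_*$, a commutative subalgebra, and a common eigenvector of $N_*$ gives an $H$-stable line. This is shorter, avoids both semisimple character theory and Sylow-theoretic bookkeeping, and — as you correctly note — does not use faithfulness or irreducibility of $\rho$ on $G$, so it proves slightly more than the stated lemma. The Burnside-theorem variant you sketch at the end (the span of $\rho(H)$ is a unital subalgebra of dimension at most $3<4$) is also a perfectly valid one-line substitute for the explicit computation. The only thing the paper's approach "buys" is that it uses off-the-shelf representation-theoretic facts rather than a bare-hands matrix computation, but at the cost of a case split; your version is preferable here.
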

\begin{proof}The assumption (2) implies that $\rmT(c)=0$, and hence $\det\rho(c)=-1$. We may assume that $\rho(c)=\MX{0}{1}{1}{0}$. Suppose that $p\ndivide \#(G)$. By the usual representation theory of finite groups, we have
\[1=\pair{\rmT}{\rmT}:=\frac{1}{\#(G)}\sum_{g\in G}\rmT(g)\rmT(g^{-1})=\frac{1}{2\#(H)}\sum_{h\in H}\rmT(h)\rmT(h^{-1})=\onehalf\cdot \pair{\rmT|_H}{\rmT|_H}.\]
Since $\pair{\rmT|_H}{\rmT|_H}=2$, we conclude that $\rho|_H$ is not irreducible.

Now we assume that $p\mid \#(H)$. For each $b\in M_2(\Fpbar)$ with $b^2=0$, define the $p$-subgroup $P_b$ of $\rho(H)$ by
\[P_b=\stt{h\in \rho(H)\mid h=1+xb\text{ for some }x\in\Fpbar}.\]
Let $h\in H$ be an element of $p$-power order. It is well known that $(\rho(h)-1)^2=0$, and hence
$\rmT(h)=2$ and $\det\rho(h)=1$. Combined with $\rmT(hc)=0$, these equations imply that
\[\rho(h)\in P_{b_1}\text{ or }P_{b_2}\text{ with }b_1=\MX{1}{1}{-1}{-1},\,b_2=\MX{1}{-1}{1}{-1}.\]

Note that either $P_{b_1}$ or $P_{b_2}$ is trivial. Indeed, if $h_1\not =1\in P_{b_1}$ and $h_2\not =1\in P_{b_2}$. Then $h_1h_2\in H$ and $\Tr(h_1h_2c)\not =0$, which is a contradiction.
In particular, we conclude that elements of $p$-power order in $H$ are commutative to each other and that there is only one $p$-Sylow subgroup of $H$, which we denote by $P$. It is clear that $H$ normalizes $P$. Since $P\not =\stt{1}$, there is a unique line fixed by $\rho(P)$, which is an invariant subspace of $\rho(H)$. We find that $\rho|_H$ is reducible if $p\mid \#(H)$.
\end{proof}
\begin{Remark}The assumption (3) in \thmref{T:B.W} in the introduction implies the vanishing of $\mu_p(\chpi)$ for all $v|\frakc_\lam^-$.
\end{Remark}

\def\frakLbar{{\ol{\frakL}}}
\def\setn{{(n)}}
\def\chpi{\varPsi_{\pi,\chi,v}}
\section{Non-vanishing of central $L$-values with anticyclotomic twists}\label{S:NV}
In this section, we consider the problem of non-vanishing of $L$-values modulo $p$ with anticyclotomic twists. Let $\ell\not =p$ be a rational prime and let $\frakl$ be a prime of $\cF$ above $\ell$. Let $\Gamma_{\frakl}^-:=\Gal(\cK^-_{\frakl^\infty}/\cK)$ be the Galois group of the maximal anticyclotomic pro-$\ell$ extension $\cK^-_{\frakl^\infty}$ in the ray class field of $\cK$ of conductor $\frakl^\infty$. Let $\frakX_\frakl^0$ be the set consisting of finite order characters $\phi:\Gamma_\frakl^-\to\mu_{\ell^\infty}$. Fix a Hecke character $\chi$ of infinity type $(k/2+m,-k/2-m)$. For each $\phi:\Gamma_\frakl^-\to\mu_{\ell^\infty}$ in $\frakX_\frakl^0$, we put\[L^{\alg}(\onehalf,\pi_\cK\ot\chi\phi):=[\cO_\cK^\x:\cO_\cF^\x]^2\cdot \frac{\Gamma_\Sg(m)\Gamma_\Sg (k+m)}{(\Im\CMP)^{k+2m}(4\pi)^{k+2m+1\cdot\Sg}}\cdot\frac{L(\onehalf,\pi_\cK\ot\chi\phi)}{\Omega_\cK^{2(k+2m)}}.\]
For simplicity, we assume
\[ (p\frakl,\frakn D_{\cK/\cF})=1.\]
In particular, $\pi$ is unramified at $\frakl$ and every place above $p$. It can be shown that $L^{\alg}(\onehalf,\pi_\cK\ot\chi\phi)\in \Zbar_\setp$ at least when $p\ndivides D_{\cK}$. This section is devoted to proving the following result:
\begin{thm}\label{T:main3.W}With the same assumptions in \thmref{T:mu-invariant}, we further assume that
\begin{mylist}\item $(p\frakl,\frakn\frakc_\chi D_{\cK/\cF})=1$.
\item $\mu_p(\chpi)=0$ for all $v|\frakc_\chi^-$.
\end{mylist} Then for almost all $\phi\in\frakX_\frakl^0$ we have \[L^{\alg}(\onehalf,\pi_\cK\ot\chi\phi)\not\con 0\pmod{\frakm}.\]  Here almost all means "except for finitely many $\phi\in\frakX_\frakl^0$" if $\dim_{\Q_\ell} F_\frakl=1$ and "for $\phi$ in a Zariski dense subset of $\frakX_\frakl^0$" if $\dim_{\Q_\ell}F_\frakl>1$ $($See \cite[p.737]{Hida:nonvanishingmodp}$)$.
\end{thm}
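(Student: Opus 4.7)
\emph{Reduction to non-vanishing of toric periods.} Hypotheses (1)--(3) guarantee that $(\pi,\chi\phi)$ still satisfies \hypref{H:HypA} and \eqref{sf} for every $\phi\in\frakX_\frakl^0$, since $\phi$ is trivial at all places of $\cF$ not above $\ell$. Applying \propref{P:1.W} to $(\pi,\chi\phi)$ gives
\[
\bigl(P_{\chi\phi}(\delta_k^m\bff^*_{\chi\phi})/\Omega_\cK^{k+2m}\bigr)^{2} = L^{\alg}(\tfrac12,\pi_\cK\otimes\chi\phi)\cdot E_{\Sg_p}(\pi,\chi\phi)\cdot C(\pi,\chi\phi).
\]
Because $\phi$ is unramified at every place dividing $p\frakn\frakc_\chi\cD_{\cK/\cF}$, the factor $E_{\Sg_p}(\pi,\chi\phi)\cdot C(\pi,\chi\phi)$ equals $E_{\Sg_p}(\pi,\chi)\cdot C(\pi,\chi)$ up to values of $\phi$ at finitely many local units, hence is a $p$-adic unit. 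Passing to \padic avatars via the period comparison used in \thmref{T:padicL.W}, it thus suffices to show that $\wh P_{\chi\phi}/\Omega_p^{k+2m}\not\equiv 0\pmod{\frakm}$ for almost all $\phi\in\frakX_\frakl^0$.

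\emph{Packaging into an $\ell$-adic family.} The plan is to encode the whole collection $\stt{\wh P_{\chi\phi}/\Omega_p^{k+2m}}_{\phi\in\frakX_\frakl^0}$ as the Fourier--Mellin transform of a single element $\Xi_\chi\in\Zbarp\powerseries{\Gamma_\frakl^-}$, in the spirit of \cite{Hida:nonvanishingmodp}. The $\phi$-dependence of $\bff^*_{\chi\phi}$ is localized at $\frakl$ because $\localW{\chi\phi,v}=\localW{\chi,v}$ for every $v\neq\frakl$ by the choices of \subsecref{SS:Whittaker.W}. Choosing representatives $\stt{z_\gamma}\subset(\AKf^{(p\None)})^\x$ of $\Gamma_\frakl^-$ and interpreting the $\frakl$-local integration against $\phi$ as Fourier inversion on $\Gamma_\frakl^-$, I would translate each $\phi$-specialization of the local Kirillov vector at $\frakl$ into summation of the fixed form $\theta^m\wh{\bff}^*_\chi$ over the translated CM points $x(az_\gamma)_{/\CMring}$. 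Assembling the output of \propref{P:3.W} along the $\frakl$-direction produces the desired $\Xi_\chi$, whose Serre--Tate $t$-expansion around each residual CM point $x(a)_{/\Fpbar}$ coincides, under the coordinate $\vphi_{x_0}^*$ of \subsecref{SS:padicL}, with the formal neighbourhood of $\theta^m\wh\bff^*_\chi$.

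\emph{Non-vanishing of $\Xi_\chi$ mod $\frakm$ and contradiction.} I would argue by contradiction: if $\Xi_\chi\equiv 0\pmod{\frakm}$, then Hida's linear independence theorem along the $\ell$-adic direction (\cite[Thm.\,3.20, Cor.\,3.21]{Hida:mu_invariant}, adapted as in \cite{Hida:nonvanishingmodp}) would force $\theta^m\wh\bff^*_\chi(x(az_\gamma)_{/\Fpbar})\equiv 0\pmod{\frakm}$ for every $a\in\cD_1$ and $\gamma$. Via the Serre--Tate $t$-expansion and the $q$-expansion principle together with \propref{P:2.W}, every Fourier coefficient $\bfa_\beta(\bff^*_\chi,\frakc(a))$ vanishes mod $\frakm$. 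Hypothesis (3) and \propref{P:4.W} give $\eta_v\in\cF_v^\x$ with $\localK{\chi,v}^*(\eta_v)\not\equiv 0\pmod{\frakm}$ at each $v\mid\frakc_\chi^-$, so the factorization \eqref{E:FC.W} reduces the vanishing of $\bfa_\beta(\bff^*_\chi,\frakc(a))$ to $W^0_v(\bfd(\uf_v))\equiv 0\pmod{\frakm}$ at every finite $v\ndivide p\frakn\frakl$ whose Frobenius restricts to complex conjugation on $\cK$. By \eqref{E:Galois.W} this forces $\Tr\ol{\rho}_p(\pi)(g)=0$ on all of $G_\cF\smallsetminus G_\cK$. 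Together with hypothesis (2) and $(\frakn,\cD_{\cK/\cF})=1$---which prevents $\pi$ from being the automorphic induction of a Hecke character of $\cK$ and thereby ensures $\ol{\rho}_p(\pi)|_{G_\cK}$ is absolutely irreducible---this contradicts \lmref{L:3.W}. Hence $\Xi_\chi\not\equiv 0\pmod{\frakm}$.

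\emph{Conclusion and main obstacle.} Given $\Xi_\chi\not\equiv 0\pmod{\frakm}$, Weierstrass preparation for $\Zbarp\powerseries{\Gamma_\frakl^-}\iso\Zbarp\powerseries{T}$ when $\dim_{\Q_\ell}\cF_\frakl=1$, and the Zariski density argument of \cite[p.737]{Hida:nonvanishingmodp} in higher rank, yield $\wh P_{\chi\phi}/\Omega_p^{k+2m}\not\equiv 0\pmod{\frakm}$ for almost all $\phi\in\frakX_\frakl^0$, concluding the proof. The principal obstacle is the construction in the second step: one must realize the $\phi$-dependent local Kirillov vectors $\bfa_{\chi\phi,\frakl}$ uniformly as translates of a fixed CM point along the $\frakl$-tower and absorb into $\Xi_\chi$ the extra character-dependent factors produced by the local toric integrals of \subsecref{SS:toricII.W}, while keeping $\Xi_\chi$ integral; this is most delicate when $\frakl$ is inert or ramified in $\cK/\cF$.
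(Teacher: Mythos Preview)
Your overall architecture---reduce to non-vanishing of a measure on the anticyclotomic $\frakl$-tower, invoke Hida's Zariski density/linear independence, and close with the Galois argument via \lmref{L:3.W}---matches the paper. But the ``principal obstacle'' you flag is real and is precisely where your argument breaks down, while the paper resolves it by a device you do not use.

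The difficulty is that $\bff^*_{\chi\phi}$ genuinely changes with $\phi$ at $\frakl$: for inert $\frakl$ the local Whittaker function is $\cP_{\chi\phi,\cmpt}\LR_\frakl^{c_\frakl(\phi)}W^0_\frakl$, whose very level depends on the conductor of $\phi$; for split $\frakl$ the Kirillov vector $\localK{\chi\phi,\frakl}$ likewise depends on $\phi_\frakL$. There is no way to realize this family as translates of a \emph{fixed} \padic modular form along CM points $x(az_\gamma)$ of conductor prime to $\frakl$, so your proposed $\Xi_\chi$ cannot be built as stated. The paper sidesteps this by replacing the $\frakl$-component with the $U_\frakl$-eigenvector $W_\frakl^\dagger=W^0_\frakl-\mu_\frakl\Abs^\onehalf(\uf_\frakl)\,\pi(\bfd(\uf_\frakl^{-1}))W^0_\frakl$, obtaining a form $\bff^\dagger_\chi$ that is \emph{independent of $\phi$} and is a $U_\frakl$-eigenform with \padic unit eigenvalue $\al_\frakl$. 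The $\phi$-variation is then absorbed entirely into the CM points: one uses points $x_n(a)=(\CMP_\Sg,\iota(a_f)\cmpt_f^{(n)})$ of conductor $\frakl^n$ (with $\cmpt_\frakl^{(n)}$ an explicit $\frakl$-modification of $\cmpt_\frakl$), and the $U_\frakl$-eigenproperty is exactly what makes the measure $\phi\mapsto \al_\frakl^{-n}\sum_{[a]_n}\theta^m\wh\bff^\dagger_\chi(x_n(a))\wh\chi(a)\phi([a]_n)$ independent of $n$. The evaluation formula then requires a new local computation (the paper's \lmref{L:5.W}): the toric integral of $W_\frakl^\dagger$ against $\chi\phi$ at the shifted point $\cmpt_\frakl^{(n)}$, which produces the correct power of $\al_\frakl$ and $\abs{\uf_\frakl}$.

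Two smaller points. First, the relevant input from Hida in the $\ell$-direction is \cite[Thm.~3.2, 3.3]{Hida:nonvanishingmodp} (Zariski density of CM points of growing $\frakl$-conductor), not the $p$-adic linear independence \cite[Thm.~3.20, Cor.~3.21]{Hida:mu_invariant}; the reduction is to a Fourier-coefficient condition (H$'$) that also carries a congruence constraint $\beta\equiv u\pmod{\frakl^r}$ at $\frakl$, which is why the Galois argument must be run with the idele $\eta^u$ having $\eta^u_\frakl=u$. Second, you need not deduce absolute irreducibility of $\ol\rho_p(\pi)|_{G_\cK}$ from $(\frakn,\cD_{\cK/\cF})=1$: it is already among the standing hypotheses inherited from \thmref{T:mu-invariant}.
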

When $\cF=\Q$, a non-primitive version of the above result under different assumptions is treated in \cite{Miljan:2}.
\subsection{}
After introducing some notation, we outline the approach of Hida \cite{Hida:nonvanishingmodp} to study this problem. We shall take $\Bad=\frakc_\chi\frakn D_{\cK/\cF}\frakl$ to be the fixed ideal in \subsecref{SS:choiceofcmpt}. For every $n\in\Z_{\geq 0}$, let $R_{\frakl^n}:=\cO_\cF+\frakl^n\cO_{\cK}$ be the order in $\cK$ of conductor $\frakl^n$.
Let $U_{\frakl^n}=(\cK\ot_\Q\R)^\x (R_n\ot_\Z\Zhat)^\x$ and let $Cl^-_{\frakl^n}:=\cK^\x\AF^\x\bksl \AK^\x/U_{\frakl^n}$ be the anticyclotomic ideal class group of conductor $\frakl^n$. Denote by $[\cdot]_n:\AK^\x\to Cl^-_{\frakl^n}$ the quotient map. Let $Cl^-_{\frakl^\infty}=\prolim_n Cl^-_{\frakl^n}$. Let $I_\frakl$ be the $\frakl$-adic Iwahori subgroup of $K^0_\frakl$ given by
\[I_\frakl=\stt{g=\MX{a}{b}{c}{d}\in \opcpt^0_\frakl\mid c\in \uf_\frakl\cD_{\cF_\frakl}}.\]
Let $\opcpt_0(\frakl):=K^\frakl I_\frakl=\stt{g\in \opcpt\mid g_\frakl\in I_\frakl}$ be an open-compact subgroup of $\GL_2(\AFf)$. Recall that the $U_\frakl$-operator on $\bfM_k(\opcpt_0(\frakl),\C)$ is given by
\[F\mid U_\frakl(\tau,g_f)=\sum_{u\in\cO_\cF/\frakl\cO_\cF}F(\tau,g_f\MX{\uf_\frakl}{ud_{\cF_\frakl}^{-1}}{0}{1}).\]
We briefly outline the approach of Hida to prove \thmref{T:main3.W} as follows:
\begin{mylist}
\item Construct a suitable \padic modular form $\wh\bff^\dagger_\chi$ which is an eigenfunction of $U_\frakl$-operator with \padic unit eigenvalue.
\item Consider Hida's measure $\vphi^\dagger_\chi$ on $Cl^-_{\frakl^\infty}$ attached to $\wh\bff^\dagger_\chi$ \eqref{E:lmeasure.W} and show the evaluation formula of this measure is related to central values $L^\alg(\onehalf,\pi_\cK\ot\chi\phi)$ (\propref{P:evaluation.W}).
    \item The Zariski density of CM points in Hilbert modular varieties modulo $p$ reduces the proof of \thmref{T:main3.W} to the non-vanishing of certain Fourier coefficients of $\wh \bff^\dagger_\chi$ at some cusp (\cite[Thm.\,3.2 and Thm.\,3.3]{Hida:nonvanishingmodp}).
\end{mylist}
We remind the reader that the proof is very close to \thmref{T:padicL.W} and \thmref{T:mu-invariant}. The essential new inputs in this section are the choice of $U_\frakl$-eigenforms and the computation of local period integral at $\frakl$.
\subsection{CM points of conductor $\frakl^n$} Let $n\in\Z_{\geq 0}$. We choose $\cmpt_\frakl^\setn\in G_\frakl$ as follows.
If $\frakl=\frakL\ol{\frakL}$ splits in $\cK$, writing $\CMP=\CMP_\frakL e_\frakL+\CMP_\frakLbar e_\frakLbar$ (so $d_{\cF_\frakl}=\CMP_\frakL-\CMP_\frakLbar$ is a generator of $\cD_{\cF_\frakl}$), we put
\begin{align*}\cmpt_\frakl^{(n)}=&\MX{\CMP_{\frakL}}{-1}{1}{0}\DII{\uf_\frakl^n}{1}.
\intertext{If $\frakl$ is inert, then we put}
\cmpt_\frakl^\setn=&\MX{0}{1}{-1}{0}\DII{\uf_\frakl^n}{1}.\end{align*}
Let $\cmpt^\setn:=\cmpt^\setn_\frakl\prod_{v\not=\frakl}\cmpt_v$. According to this choice of $\cmpt_\frakl^\setn$, we have
\[\cmpt_f^{(n)}*(\sL\ot_\Z\Zhat)=q_{\CMP}(R_{\frakl^n}).\]
 Define $x_n:\AK^\x\to X^+\x G(\AFf)$ by
\[x_n(a):=(\CMP_\Sg,a_f\cmpt_f^\setn).\]
This collection $\stt{x_n(a)}_{a\in\AK^\x}$ of points is called CM points of conductor $\frakl^n$. As discussed in \subsecref{S:CMpoint}, $\stt{x_n(a))}_{a\in(\AKf^\setp)^\x}$ descend to CM points in $\Ig_{K}(\CMring)$.
\subsection{The measures associated to $U_\frakl$-eigenforms}
We construct the $U_\frakl$-eigenform $\bff^\dagger_\chi$ as follows. Write $\pi_\frakl=\pi(\mu_\frakl,\nu_\frakl)$. Define the local Whittaker function $\localW{\frakl}^\dagger\in\cW(\pi_\frakl,\addchar_\frakl)$ by
\[\localW{\frakl}^\dagger(g)=W^0_{\frakl}(g)-\mu_\frakl\Abs^\onehalf(\frakl)W^0_\frakl(g\DII{\uf_\frakl^{-1}}{1}).\]
It is not difficult to verify that 
\begin{itemize} \item $\localW{\frakl}^\dagger$ is invariant by $I_\frakl$,
\item $\localW{\frakl}^\dagger(\DII{a}{1})=\nu_\frakl\Abs^\onehalf(a)\bbI_{\cO_{F_\frakl}}(a)$,
\item $\localW{\frakl}^\dagger$ is an $U_\frakl$-eigenfunction with the eigenvalue $\nu_\frakl(\uf_\frakl)\abs{\uf_\frakl}^{-\onehalf}$.
\end{itemize}
Define the normalized global Whittaker function $\localW{\chi}^\dagger$ by  \[\localW{\chi}^\dagger:=N(\pi,\chi)^{-1}\abs{\det\cmpt_f}_{\AFf}^{1-k_{mx}/2}\cdot \prod_{\sg\in\Sg}W_{k_\sg}\cdot \prod_{v\in\bdh,v\not =\frakl}\localW{\chi,v}\cdot \localW{\frakl}^\dagger,\]
 where $N(\pi,\chi)$ is the normalization factor in \eqref{E:constantN.W}.  Let $\vp_\chi^\dagger$ be the automorphic form associated to $\localW{\chi}^\dagger$ as in \eqref{E:WhittakerAuto.W} and let $\bff_\chi^\dagger$ be the associated Hilbert modular form as in \defref{D:toricform}.

 The following lemma follows from the choices of our Whittaker function $\localW{\chi}^\dagger$ and the construction of $\bff_\chi^\dagger$.
 \begin{lm}\label{L:6.W}Recall that $\cR$ is the group generated by $\cK_v^\x$ for all ramified places $v$ in $\AK^\x$. We have
 \begin{mylist}
  \item $\bff^\dagger_\chi$ is toric of character $\chi$ outside $\frakl$, and
\[\bff^\dagger_\chi(x_n(ta))=\bff^\dagger_\chi(x_n(t))\chi^{-1}\Abs_{\AK}^{k_{mx}/2-1}(a)\text{ for all }a\in\cR\cdot (R_{\frakl^n}\ot_\Z\Zhat)^\x.\]
 \item $\bff_{\chi\phi}^\dagger=\bff^\dagger_{\chi}$ for every $\phi\in\frakX_\frakl^0$.
\end{mylist}
\end{lm}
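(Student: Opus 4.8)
The two assertions are essentially bookkeeping about the local Whittaker function $\localW{\frakl}^\dagger$ and its interaction with the $p$-adic level structure encoded in $\cmpt_\frakl^{(n)}$, so the plan is to reduce everything to place-by-place statements about $\localW{\chi}^\dagger$ and then translate back into a statement about $\bff^\dagger_\chi$ via the dictionary in \defref{D:toricform}. First I would record the behavior of $\localW{\frakl}^\dagger$ under the torus $\iota_{\cmpt^{(n)}_\frakl}(\cK_\frakl^\x)$: away from $\frakl$ the local Whittaker functions $\localW{\chi,v}$ are already toric of character $\chi_v$ by \lmref{L:toric.W}, and $W_{k_\sg}$ at the archimedean places behaves as in \lmref{L:toric.W}(1); at $\frakl$ itself, since we only assert toricity ``outside $\frakl$'', what is actually needed is that $\localW{\frakl}^\dagger$ transforms correctly under $\cR\cdot(R_{\frakl^n}\ot\Zhat)^\x$. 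The key local input is that $\localW{\frakl}^\dagger$ is right $I_\frakl$-invariant and (from its explicit Kirillov-model formula $\localW{\frakl}^\dagger(\bfd(a))=\nu_\frakl\Abs^\onehalf(a)\bbI_{\cO_{F_\frakl}}(a)$) satisfies a left transformation rule under $\iota_{\cmpt_\frakl^{(n)}}(R_{\frakl^n,v}^\x)$ with character $\chi_\frakl^{-1}\Abs^{k_{mx}/2-1}$; this is a direct computation using $\iota(t)\cmJ=\cmJ\iota(\bar t)$ and the definition of $\cmpt_\frakl^{(n)}$ in the split and inert cases separately.

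For part (i), having established the local transformation laws, I would assemble them into a global statement: for $a\in\cR\cdot(R_{\frakl^n}\ot_\Z\Zhat)^\x$, the automorphic form $\vp^\dagger_\chi$ satisfies $\pi(\iota_{\cmpt^{(n)}}(a))\vp^\dagger_\chi=\chi^{-1}(a)\vp^\dagger_\chi$ up to the usual $\Abs$-twist, exactly as in \lmref{L:4.W}, now using the new factor $\localW{\frakl}^\dagger$ in place of $\localW{\chi,\frakl}$. Then one passes to $\bff^\dagger_\chi$ through the formula in \defref{D:toricform}: the CM point $x_n(a)$ is $(\CMP_\Sg,a_f\cmpt_f^{(n)})$, and $\bff^\dagger_\chi(x_n(ta))=\vp^\dagger_\chi(\iota(ta_f)\cmpt_f^{(n)})\cdot(\text{automorphy factor})$; the automorphy factor contributes the power $\abs{\rmN(a)}^{k_{mx}/2-1}_{\AF}$ while the toric transformation law contributes $\chi^{-1}(a)$, giving precisely $\bff^\dagger_\chi(x_n(t))\chi^{-1}\Abs_{\AK}^{k_{mx}/2-1}(a)$. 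One must be careful that the archimedean coordinate $\CMP_\Sg$ is unchanged (it is, since $t,a$ are finite ideles) and that the $J$-factor only depends on the archimedean part.

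For part (ii), the point is that for $\phi\in\frakX_\frakl^0$ the character $\phi$ is a finite-order character of $\ell$-power conductor, hence \emph{unramified at every place outside $\frakl$} and trivial on $\AF^\x$; moreover at $\frakl$ itself $\phi$ enters only through the anticyclotomic class group $Cl^-_{\frakl^n}$. The local Whittaker choices for $\chi\phi$ and for $\chi$ therefore agree: at split $v\ndivide p$ we have $\localW{\chi_v\phi_v^{-1},v}=\localW{\chi_v,v}$ as already remarked before \eqref{E:DeflocalWsplit.W}, at inert/ramified $v\ne\frakl$ we have $\phi_v=1$, at archimedean places $\phi_v$ is trivial on the relevant component, and at $\frakl$ the definition of $\localW{\frakl}^\dagger$ depends only on $\pi_\frakl$ and not on $\chi$. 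Since $N(\pi,\chi\phi)=N(\pi,\chi)$ (the factors $n_v$ and $L(1,\tau_{\cK_v/\cF_v})$ for $v\in B(\chi)$ are unchanged because $\phi$ is unramified there), the normalized global Whittaker functions coincide, and hence so do $\vp^\dagger_{\chi\phi}$ and $\vp^\dagger_\chi$, and therefore $\bff^\dagger_{\chi\phi}=\bff^\dagger_\chi$. The main obstacle, such as it is, is purely clerical: verifying the left-transformation rule for $\localW{\frakl}^\dagger$ under $\iota_{\cmpt_\frakl^{(n)}}(R_{\frakl^n,\frakl}^\x)$ in the split case, where one has to track how conjugation by $\MX{\CMP_\frakL}{-1}{1}{0}\DII{\uf_\frakl^n}{1}$ interacts with $\iota(\cK_\frakl^\x)\iso \cF_\frakl^\x\times\cF_\frakl^\x$ and with the Iwahori $I_\frakl$; once that computation is done the rest is routine.
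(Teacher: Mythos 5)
Your plan matches the paper's: reduce both assertions to place-by-place claims about the Whittaker factors, invoke \lmref{L:toric.W} at places away from $\frakl$, and handle $\frakl$ directly via the right $I_\frakl$-invariance of $\localW{\frakl}^\dagger$ (the paper's one-line proof defers precisely the local check at $\frakl$ that you identify as the clerical step). Two small corrections. First, the transformation rule for $\localW{\frakl}^\dagger$ is a \emph{right} translation, $\pi(\iota_{\cmpt_\frakl^{(n)}}(t))W = W(\,\cdot\,\iota_{\cmpt_\frakl^{(n)}}(t))$, not a left one; for $t\in R_{\frakl^n,\frakl}^\x$ one shows $\iota_{\cmpt_\frakl^{(n)}}(t)\in I_\frakl$, so the transformation is simply invariance (the character $\chi_\frakl^{-1}$ is unramified and so trivial on $R_{\frakl^n,\frakl}^\x$; no extra $\Abs^{k_{mx}/2-1}$ is needed at the Whittaker level). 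Second, your parenthetical claim $N(\pi,\chi\phi)=N(\pi,\chi)$ is not justified as stated: you only check that the factors indexed by $v\in B(\chi)$ are unchanged, which does not exclude $B(\chi\phi)\supsetneq B(\chi)$. If $\frakl$ is inert (which the hypotheses of \secref{S:NV} permit) and $\phi$ is ramified at $\frakl$, then $\frakl\in B(\chi\phi)\setminus B(\chi)$, so $N(\pi,\chi\phi)=N(\pi,\chi)\cdot L(1,\tau_{\cK_\frakl/\cF_\frakl})\ne N(\pi,\chi)$. The correct reading, consistent with the constant cancellations in \propref{P:evaluation.W}, is that $\localW{\chi\phi}^\dagger$ is defined with the same fixed normalization $N(\pi,\chi)^{-1}$ as $\localW{\chi}^\dagger$ (not $N(\pi,\chi\phi)^{-1}$); then part (ii) follows at once from $\localW{\chi\phi,v}=\localW{\chi,v}$ for all $v\neq\frakl$ together with the $\phi$-independence of $\localW{\frakl}^\dagger$, with no claim about $N$ required.
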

\begin{proof}Part (1) follows immediately from the fact that $\localW{\chi}^\dagger$ is a toric Whittaker function outside $\frakl$ in view of \lmref{L:toric.W}. In addition, for every $\phi\in\frakX_\frakl^0$, $\phi$ is anticyclotomic and unramified outside $\frakl$. We thus have $\localW{\chi}^\dagger=\localW{\chi\phi}^\dagger$, which verifies part (2) (\cf \lmref{L:2.W}).
\end{proof}
Following \cite[(3.9)]{Hida:nonvanishingmodp}, we define a \padic $\Zbarp$-valued measure $\vp^\dagger_\chi$ on $Cl^-_{\frakl^\infty}$ attached to the \padic avatar $\wh\bff^\dagger_\chi$ of $\bff^\dagger_\chi$ as follows. For a locally constant function $\phi:Cl^-_{\frakl^\infty}\to\Zbarp$ factors through $Cl^-_{\frakl^n}$, we define
\beq\label{E:lmeasure.W}\int_{Cl_{\frakl^\infty}^-}\phi d\vp^\dagger_\chi=\al_\frakl^{-n}\sum_{[a]_n\in Cl_{\frakl^n}^-}\theta^m\wh\bff^\dagger_\chi(x_n(a))\wh\chi(a)\phi([a]_n),\eeq
 where $\al_\frakl=\nu_\frakl(\uf_\frakl)\abs{\uf_\frakl}^\frac{1-k_{mx}}{2}$ and $\wh\chi$ is the \padic avatar of $\chi\Abs^{1-k_{mx}/2}_{\AK}$. One checks that the right hand side does not depend on the choice of $n$ since $\bff^\dagger_\chi$ is an $U_\frakl$-eigenform with the eigenvalue $\al_\frakl$.

Let $\phi\in\frakX^0_\frakl$ be a character of conductor $\frakl^n$. We view $\phi$ as a character on $Cl^-_{\frakl^n}$ by the reciprocity law. Following the arguments in \propref{P:1.W} and \thmref{T:padicL.W}, we can write the measure as a toric period integral of $\wtd V_+^m\vp_{\chi\phi}^\dagger$:
\beq\label{E:19.W}\begin{aligned}
\frac{\Omega_\cK^{k+2m}}{\Omega_p^{k+2m}}\cdot \int_{Cl^-_{\frakl^\infty}}\phi d\vp_\chi^\dagger&=\al_\frakl^{-n}\vol(U_{\frakl^n},dt)^{-1}\frac{1}{(\Im\CMP)^{k/2+m}}\cdot P_{\chi\phi}(\pi(\cmpt^\setn)\wtd V^m_+\vp^\dagger_{\chi})\\
&=\vol(U_\cK,dt)^{-1}\frac{1}{(\Im\CMP)^{k/2+m}}\cdot\frac{\al_\frakl^{-n}}{L(1,\tau_{\cK_\frakl/\cF_\frakl})\abs{\uf_\frakl}^n}\cdot P_{\chi\phi}(\pi(\cmpt^\setn)\wtd V^m_+\vp^\dagger_{\chi\phi}).
\end{aligned}\eeq
Here we used the fact that
\[\vol(U_{\frakl^n},dt)=\vol(U_\cK,dt)\cdot L(1,\tau_{\cK_\frakl/\cF_\frakl})\abs{\uf_\frakl}^{n}.\]
We have the following evaluation formula.
\begin{prop}\label{P:evaluation.W}Suppose that $(\frakl,\frakc_\chi\frakn D_{\cK/\cF})=1$. For $\phi\in\frakX_\frakl^0$ of conductor $\frakl^n$ with $n\geq 1$, we have
\[\left(\frac{1}{\Omega_p^{k+2m}}\cdot \int_{Cl_{\frakl^\infty}^-}\phi d\vp_\chi^\dagger\right)^2=\frac{\abs{\uf_\frakl}^{-n}}{\al_\frakl^{2n}}\cdot L^{\alg}(\onehalf,\pi_\cK\ot\chi\phi)\cdot C(\pi,\chi)\phi(\Csplit).\]
\end{prop}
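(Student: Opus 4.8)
The proof follows the template of \propref{P:1.W} and \thmref{T:padicL.W}, with the local input at the places above $p$ replaced by a computation at $\frakl$. The starting point is \eqref{E:19.W}, which already exhibits $\Omega_\cK^{k+2m}\Omega_p^{-(k+2m)}\int_{Cl^-_{\frakl^\infty}}\phi\,d\vp_\chi^\dagger$ as an explicit constant times the toric period $P_{\chi\phi}(\pi(\cmpt^{(n)})\wtd V^m_+\vp^\dagger_{\chi\phi})$, so after squaring it suffices to evaluate $P_{\chi\phi}(\pi(\cmpt^{(n)})\wtd V^m_+\vp^\dagger_{\chi\phi})^2$. By \lmref{L:6.W} we have $\vp^\dagger_{\chi\phi}=\vp^\dagger_\chi$, and its Whittaker function factors, up to the scalar $N(\pi,\chi)^{-1}\abs{\det\cmpt_f}_{\AFf}^{1-k_{mx}/2}$, as $\prod_{\sg\in\Sg}\wtd V_+^{m_\sg}W_{k_\sg}\cdot\prod_{v\in\bdh,\,v\ne\frakl}\localW{\chi,v}\cdot\localW{\frakl}^\dagger$. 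Applying \propref{P:Waldformula.W} with $\vp_1=\vp_2=\pi(\cmpt^{(n)})\wtd V^m_+\vp^\dagger_{\chi\phi}$ decomposes $P_{\chi\phi}(\pi(\cmpt^{(n)})\wtd V^m_+\vp^\dagger_{\chi\phi})^2$ into $\Lam(\onehalf,\pi_\cK\ot\chi\phi)$ times the local toric integrals $\localP{\pi(\cmptv)\localW{\chi,v}}{\chi_v\phi_v}\cdot L(\onehalf,\pi_{\cK_v}\ot\chi_v\phi_v)^{-1}$ for $v\ne\frakl$ (the archimedean ones carrying also the completed factors at $\infty$) together with $\localP{\pi(\cmpt_\frakl^{(n)})\localW{\frakl}^\dagger}{\chi_\frakl\phi_\frakl}\cdot L(\onehalf,\pi_{\cK_\frakl}\ot\chi_\frakl\phi_\frakl)^{-1}$ at $\frakl$.

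For $v\ne\frakl$ these are exactly the integrals computed in \secref{S:toric}: \propref{P:Architoric.W} at the archimedean places, \propref{P:toric_split.W} at the split places (including those above $p$, where now the \emph{unstabilized} $\localW{\chi,v}$ are used, so the local $L$-factors at $p$ are retained and no Coates multiplier $E_{\Sg_p}$ appears), and \propref{P:toric_inert_un.W}, \propref{P:toric_inert_sp.W}, \propref{P:toric_ramified.W} at the inert and ramified places. Collecting these as in the proof of \thmref{T:main1.W} yields the $\Gamma_\Sg$-factors and the power of $4\pi$, the factor $C'(\pi,\chi\phi)=C'(\pi,\chi)\phi(\Csplit)$ (the $\frakl$-power-conductor character $\phi$ affecting $C'$ only through the root numbers at $w\mid\Csplit$, exactly as in \lmref{L:2.W}), the power of $N(\pi,\chi)$ coming from the places $v\in B(\chi)$ — which cancels the $N(\pi,\chi)^{-1}$ and the $\abs{\det\cmpt_f}$-powers in the normalization of $\localW{\chi}^\dagger$ — and the partial central value $\prod_{v\ne\frakl}L(\onehalf,\pi_{\cK_v}\ot\chi_v\phi_v)$. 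Because $n\ge 1$ forces $\phi_\frakl$ ramified, $L(\onehalf,\pi_{\cK_\frakl}\ot\chi_\frakl\phi_\frakl)=1$, so these surviving factors reassemble into the full $L(\onehalf,\pi_\cK\ot\chi\phi)$.

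The genuinely new ingredient is the local toric period at $\frakl$. Since $(\frakl,\frakn D_{\cK/\cF})=1$ the place $\frakl$ is split or inert in $\cK$ and $\pi_\frakl=\pi(\mu_\frakl,\nu_\frakl)$ is unramified, while $\phi_\frakl$ is ramified of conductor $\frakl^n$; one computes $\localP{\pi(\cmpt_\frakl^{(n)})\localW{\frakl}^\dagger}{\chi_\frakl\phi_\frakl}$ directly from the Kirillov-model formula $\localW{\frakl}^\dagger(\bfd(a))=\nu_\frakl\Abs^\onehalf(a)\bbI_{\cO_{\cF_\frakl}}(a)$, the explicit shape of $\cmpt_\frakl^{(n)}$, and a partial-Gauss-sum manipulation of the kind used in \secref{S:toric} (in the split case this reduces to two copies of a Tate-type zeta integral twisted by $\chi_\frakl\phi_\frakl$; in the inert case to the analogue of \lmref{L:1.W}). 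The outcome is that, on squaring, the $\frakl$-contribution cancels $L(1,\tau_{\cK_\frakl/\cF_\frakl})^{-2}$, produces precisely the power $\abs{\uf_\frakl}^{-n}$ left over in \eqref{E:19.W}, and contributes a $p$-adic unit absorbed into $C(\pi,\chi)$; one checks along the way that $\localW{\frakl}^\dagger$ being a $U_\frakl$-eigenform with eigenvalue $\al_\frakl$ makes the right-hand side of \eqref{E:lmeasure.W} independent of $n$. Finally, as in the proof of \thmref{T:padicL.W}, one passes from $\Omega_p$-normalized values $\theta^m\wh\bff^\dagger_{\chi\phi}(x_n(a))$ to $\Omega_\cK$-normalized values $\delta_k^m\bff^\dagger_{\chi\phi}(x_n(a))$ via \cite[(2.4.6), (2.6.8), (2.6.33)]{Katz:p_adic_L-function_CM_fields}, obtaining the stated identity with $C(\pi,\chi)\phi(\Csplit)=C(\pi,\chi\phi)$. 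The main obstacle is this $\frakl$-local integral: carrying it out so that the factors $\al_\frakl^{2n}$, $\abs{\uf_\frakl}^{n}$ and $L(1,\tau_{\cK_\frakl/\cF_\frakl})$ come out exactly as in the statement requires the same care as the inert and ramified computations of \secref{S:toric}.
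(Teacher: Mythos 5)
Your proposal is correct and follows essentially the same route as the paper: start from \eqref{E:19.W}, apply the Waldspurger factorization \propref{P:Waldformula.W}, reuse the local integrals of \secref{S:toric} away from $\frakl$ (now with the unstabilized Whittaker functions at $p$, hence no $E_{\Sg_p}$ factor), and supply the one new local computation at $\frakl$, which the paper carries out in \lmref{L:5.W}. The only minor slip is that the paper's inert-case computation at $\frakl$ runs closer to \lmref{L:halfsum.W} and the matrix-coefficient method of \propref{P:toric_inert_un.W} than to \lmref{L:1.W} (which concerns the ramified principal-series case), but this does not affect the argument.
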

\begin{proof}In view of \eqref{E:19.W}, it remains to compute $P_{\chi\phi}(\pi(\cmpt^\setn)\wtd V^m_+\vp^\dagger_{\chi\phi})^2$, which can be written as a product of local toric period integrals as in the proof of \thmref{T:main1.W}. We have computed these local period integrals in \secref{SS:toricintegralI.W} and \subsecref{SS:toricII.W} except for the local integral at $\frakl$, which will be carried out in the following \lmref{L:5.W}. The desired formula is obtained by combining these calculations.
\end{proof}

\begin{lm}\label{L:5.W}Suppose that $\chi_\frakl$ is unramified and $\phi\in\frakX_\frakl^0$ has conductor $\frakl^n,\,n\geq 1$. Then
\[\localP{\pi(\cmpt_\frakl^\setn)W_\frakl^\dagger}{\chi\phi}=\abs{\cD_{\cK_\frakl}}_{\cK_\frakl}^\onehalf \cdot\om_\frakl(\uf_\frakl^n)\abs{\uf_\frakl^n} L(1,\tau_{\cK_\frakl/\cF_\frakl})^2.\]
\end{lm}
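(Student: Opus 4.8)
The plan is to compute the local toric period integral $\localP{\pi(\cmpt_\frakl^\setn)W_\frakl^\dagger}{\chi\phi}$ directly from its definition
\[
\localP{\pi(\cmpt_\frakl^\setn)W_\frakl^\dagger}{\chi\phi}
=\int_{\cK_\frakl^\x/\cF_\frakl^\x}\bfb_\frakl\bigl(\pi(\iota(t))\pi(\cmpt_\frakl^\setn)W_\frakl^\dagger,\ \pi(\cmJ)\pi(\cmpt_\frakl^\setn)W_\frakl^\dagger\bigr)(\chi\phi)(t)\,\dt\cdot\frac{L(1,\tau_{\cK_\frakl/\cF_\frakl})}{\zeta_{\cF_\frakl}(1)},
\]
treating the split and inert cases for $\frakl$ separately, in parallel with \subsecref{SS:toricintegralI.W} and \subsecref{SS:toricII.W}. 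The key structural input is that $W_\frakl^\dagger$ is right $I_\frakl$-invariant, is a $U_\frakl$-eigenform with unit eigenvalue $\al_\frakl=\nu_\frakl(\uf_\frakl)\abs{\uf_\frakl}^{-\onehalf}$, and has the explicit Kirillov value $W_\frakl^\dagger(\DII{a}{1})=\nu_\frakl\Abs^\onehalf(a)\bbI_{\cO_{\cF_\frakl}}(a)$. I would first observe that because $\phi$ has conductor $\frakl^n$ with $n\ge 1$ while $\chi_\frakl$ is unramified, the product $\chi\phi$ is ramified of conductor $\frakl^n$, so the toric integral effectively becomes a sum over a shrinking tower of cosets; the $\cmpt_\frakl^\setn$-conjugation by $\DII{\uf_\frakl^n}{1}$ is precisely what matches the $\frakl^n$-level of $\phi$. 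This is the same conjugation trick used in the definition of $\cP_{\chi,\cmpt}\LR_v^m$ in \eqref{E:def.W}, so the same coset-decomposition lemmas (\lmref{L:coset.W}, \lmref{L:halfsum.W}) should apply after adapting them to the Iwahori-level vector $W_\frakl^\dagger$ instead of the newvector $W^0_v$.

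Concretely, I would introduce the matrix coefficient $\bfm^\dagger(g):=\int_{\cF_\frakl^\x}W_\frakl^\dagger(\bfd(a)g)W_\frakl^\dagger(\bfd(a))\om_\frakl^{-1}(a)\,\dx a$, which depends only on the $I_\frakl$-double coset of $g$, and evaluate $\bfm^\dagger(1)$, $\bfm^\dagger(\bfd(\uf_\frakl))$, etc., using the explicit Kirillov formula and the known action of $W^0_\frakl$ under $\bfw$ (as recorded in the proofs of \propref{P:toric_inert_un.W} and \propref{P:toric_inert_sp.W} for the unramified-principal-series case, since $(\frakl,\frakn D_{\cK/\cF})=1$ forces $\pi_\frakl$ to be an unramified principal series). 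Then, writing out the decomposition $\cK_\frakl^\x=\cF_\frakl^\x(1+\cO_{\cF_\frakl}\OKbasis)\sqcup\cF_\frakl^\x(\uf_\frakl\cO_{\cF_\frakl}+\OKbasis)$ and applying the analogue of \eqref{E:long.W}, the sum collapses — the only surviving term comes from the highest-conductor piece $X_{n}$ (or $X_{n+1}$), because $W_\frakl^\dagger$ was built to kill the $U_\frakl$-oldform direction. The key identity to verify is that the $U_\frakl$-eigenvalue relation converts the $\bfd(\uf_\frakl^{2(n-r)})$-translates into geometric factors of $\al_\frakl$, so that after dividing by $\al_\frakl^{2n}$ (as in the statement of \propref{P:evaluation.W}) everything is independent of $n$; combined with \lmref{L:halfsum.W} giving $X_n=\abs{\uf_\frakl^n}L(1,\tau_{\cK_\frakl/\cF_\frakl})\abs{\cD_{\cK_\frakl}}_{\cK_\frakl}^\onehalf\abs{\cD_{\cF_\frakl}}^{-\onehalf}$, one reads off $\localP{\pi(\cmpt_\frakl^\setn)W_\frakl^\dagger}{\chi\phi}=\abs{\cD_{\cK_\frakl}}_{\cK_\frakl}^\onehalf\om_\frakl(\uf_\frakl^n)\abs{\uf_\frakl^n}L(1,\tau_{\cK_\frakl/\cF_\frakl})^2$, where the factor $\om_\frakl(\uf_\frakl^n)$ tracks the central character shift from $\cmpt_\frakl^\setn$ exactly as the $\om(\uf^m\det\cmpt)$ factor does in \eqref{E:7.W}.

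The split case $\frakl=\frakL\ol{\frakL}$ should be handled by the zeta-integral computation of \lmref{L:key.W}: with $\cmpt_\frakl^\setn=\MX{\CMP_\frakL}{-1}{1}{0}\DII{\uf_\frakl^n}{1}$ and $\iota_{\cmpt}$ diagonalizing the torus as in \eqref{E:cm2.W}, the period integral becomes $\Psi(\onehalf,\pi(\DII{\uf_\frakl^n}{1})W_\frakl^\dagger,(\chi\phi)_\frakL)^2$ up to epsilon and $L$-factors, and since $(\chi\phi)_\frakL$ is a ramified character against the spherical-up-to-Iwahori vector, the zeta integral picks out a single term proportional to $\abs{\uf_\frakl^n}$ times the $U_\frakl$-eigenvalue power. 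I expect the main obstacle to be bookkeeping: getting the normalizations of Haar measures ($\vol(U_{\frakl^n},dt)$ versus $\vol(U_\cK,dt)$, the factor $L(1,\tau_{\cK_\frakl/\cF_\frakl})\abs{\uf_\frakl}^n$ already isolated in \eqref{E:19.W}), the $\abs{\cD_{\cF_\frakl}}$ and $\abs{\cD_{\cK_\frakl}}$ discriminant factors, and the sign/root-number contributions to all cancel cleanly against the $\al_\frakl^{-2n}\abs{\uf_\frakl}^{-n}$ prefactor so that the final answer has the stated shape with no residual dependence on $n$ beyond the explicit $\om_\frakl(\uf_\frakl^n)\abs{\uf_\frakl^n}$; the conceptual content is entirely contained in the $U_\frakl$-eigenvector construction and is routine once the matrix-coefficient values are in hand.
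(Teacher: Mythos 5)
Your ingredients---the matrix coefficient for $W_\frakl^\dagger$ in the inert case, its Iwahori invariance and Kirillov values, the coset decomposition of $\iota_\cmpt^\setn(t)$, and \lmref{L:halfsum.W}---are the right ones and essentially match the paper's proof. But the ``key identity'' you propose to verify is not what actually makes the computation close, and the split-case shortcut does not apply. The $U_\frakl$-eigenvalue $\al_\frakl$ plays no role at all inside the proof of \lmref{L:5.W}. For $y\in\uf_\frakl^r\cO_{\cF_\frakl}^\x$ with $0\le r<n$, the elements $\iota_\cmpt^\setn(1+y\OKbasis)$ fall into the Iwahori double coset $I_\frakl\,\bfw\DII{\uf_\frakl^{n-r}}{\uf_\frakl^{r-n}}\,I_\frakl$, not a $\bfd(\uf_\frakl^{2(n-r)})$-translate, so no geometric $\al_\frakl$-powers materialize. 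The sum collapses because \lmref{L:halfsum.W} gives $X_r=0$ for $0<r<n$---a conductor (Gauss-sum) cancellation driven by $\phi_\frakl$, not an oldform cancellation---and because the $r=0$ contribution cancels exactly against the $\uf_\frakl\cO_{\cF_\frakl}+\OKbasis$ piece (where $Y_0=-X_0$). What remains is $X_n\bigl(\bfm^\dagger(1)-\bfm^\dagger(\bfw\DII{\uf_\frakl}{\uf_\frakl^{-1}})\bigr)$, which is evaluated directly from the explicit Kirillov values of $W^0_\frakl$ and $W_\frakl^\dagger$ without any appeal to the $U_\frakl$ relation. The $n$-dependence $\om_\frakl(\uf_\frakl^n)\abs{\uf_\frakl^n}$ is deliberately left intact in the lemma's statement; it is only cancelled by $\al_\frakl^{-2n}\abs{\uf_\frakl}^{-n}$ in \propref{P:evaluation.W}, where $\al_\frakl$ was already built into the definition of the measure \eqref{E:lmeasure.W}.

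On the split side, \lmref{L:key.W} cannot be quoted directly: its proof relies on $\cmptv^{-1}\cmJ\cmptv$ being the Weyl element, whereas here $(\cmpt_\frakl^\setn)^{-1}\cmJ\cmpt_\frakl^\setn=\MX{1}{0}{\uf_\frakl^n d_{\cF_\frakl}}{-1}$. The period integral instead unfolds directly into a product of two Gauss sums $\int_{\cO_{\cF_\frakl}}\addchar(\pm d_{\cF_\frakl}^{-1}\uf_\frakl^{-n}x)\phi_\frakL^{\mp1}(x)\dx x$, evaluated via epsilon factors; the local $L$-factors are all $1$ since $\phi_\frakL$ is ramified, and again no $\al_\frakl$-powers appear. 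Both corrections are mechanical once noticed, but the $U_\frakl$-eigenvalue picture around which you organized the proposal does not describe the computation; the eigenvalue only matters one step later, when the lemma is fed into the measure.
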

\begin{proof}
Write $F=\cF_\frakl$ (resp. $E=\cK_\frakl$) and $\uf=\uf_\frakl$. For $t\in E$, we put
\[\iota_\cmpt^\setn(t):=(\cmpt_\frakl^\setn)^{-1}\iota(t)\cmpt_\frakl^\setn.\]

First we suppose $\frakl$ is split. A direct computation shows that
\begin{align*}\iota_\cmpt^\setn(t)=&\MX{1}{\uf^{-n}d_{F}^{-1}}{0}{1}\MX{t_\frakL}{}{0}{t_\frakLbar}\MX{1}{-\uf^{-n}d_{F}^{-1}}{0}{1};\\
(\cmpt_\frakl^\setn)^{-1}\cmJ\cmpt_\frakl^\setn=&\MX{1}{0}{\uf^n d_F}{-1}.
\end{align*}
We find that
\begin{align*}
&\om^{-1}(\det\cmpt_\frakl^\setn)\localP{\pi(\cmpt_\frakl^\setn)W_\frakl^\dagger}{\chi\phi}\\
=&\int_{F^\x}\int_{F^\x}W_{\frakl}^\dagger(\DII{ax}{1}\MX{1}{-\uf^{-n}d_F^{-1}}{0}{1})
W_{\frakl}^\dagger(\DII{-a}{1}\MX{1}{-\uf^{-n}d_F^{-1}}{0}{1})\om_\frakl^{-1}(a)\chi_{\frakL}\phi_\frakL(x) \dx a\dx x\\
=&\int_{\cO_F}\addchar(-d_F^{-1}\uf^{-n}x)\nu_\frakl\chi_\frakL\phi_\frakL\Abs^\onehalf(x)\dx x\cdot \int_{\cO_F}\addchar(d_F^{-1}\uf^{-n}a)\nu_\frakl\om_\frakl^{-1}\chi_\frakL^{-1}\phi_\frakL^{-1}\Abs^\onehalf(a)\dx a\\
=&\int_{\cO_F^\x}\addchar(-d_F^{-1}\uf^{-n}x)\phi_\frakL(x)\dx x\cdot \int_{\cO_F}\addchar(d_F^{-1}\uf^{-n}a)\phi_\frakL^{-1}(a)\dx a\\
=&\ep(1,\phi_\frakL^{-1},\addchar)\phi_{\frakL}(-1)\ep(1,\phi_\frakL,\addchar)\cdot\zeta_F(1)^2=\abs{\uf^n\cD_F}L(1,\tau_{E/F})^2.
\end{align*}
This proves the formula in the split case.

Now we suppose that $\frakl$ is inert. We shall retain the notation in \subsecref{SS:toricII.W}. Define $\bfm^\dagger:G_\frakl\to\C$ by \[\bfm^\dagger(g):=\bfb_\frakl(\pi(g)W_\frakl^\dagger,W_\frakl^\dagger).\]
Then $\bfm^\dagger(g)$ only depends on the double coset $I_\frakl gI_\frakl$. Put
\[\Prd^*:=\int_{E^\x/F^\x}\bfm^\dagger(\iota_\cmpt^\setn(t))\chi\phi(t)dt.\]
It is clear that \beq\label{E:18.W}\localP{\pi(\cmpt_\frakl^\setn)W_\frakl^\dagger}{\chi\phi}=\Prd^*\cdot\frac{L(1,\tau_{E/F})\om(\det\cmpt_\frakl^\setn)}{\zeta_F(1)}.\eeq
For $y\in \uf^r\cO_F^\x$, it is easy to verify that $\iota_\cmpt^\setn(1+y\OKbasis)\in I_\frakl$ if $r\geq n$ and
\begin{align*}\iota_\cmpt^\setn(1+y\OKbasis)&\in I_\frakl\bfw\DII{\uf^{n-r}}{\uf^{r-n}}I_\frakl\text{ if }0\leq r<n\quad(\bfw=\MX{0}{-d_F^{-1}}{d_F}{0}).\end{align*}
If $x\in\uf\cO_F$, then
\[\iota_\cmpt^\setn(x+\OKbasis)\in I_\frakl\bfw\DII{\uf^n}{\uf^{-n}}I_\frakl.\]
Note that $n=c_\frakl(\phi)=c_\frakl(\chi\phi)$ as in \eqref{E:alv.W}. Combined with the above observations and \lmref{L:halfsum.W}, a direct computation shows that
\begin{align*}
\Prd^*&=X_n\cdot\bfm^\dagger(1)+(-X_n)\cdot \bfm^\dagger(\bfw\DII{\uf}{\uf^{-1}})\\
&=\bfb_\frakl(W^0_\frakl-\pi(\DII{\uf^{-1}}{\uf})W^0_\frakl,W_\frakl^\dagger)\cdot X_n\\
&=(\frac{\mu_\frakl(\uf)}{1-\abs{\uf}}-\frac{\nu_\frakl(\uf)}{1-\mu_\frakl^{-1}\nu_\frakl\Abs(\uf)})\cdot\frac{1-\mu_\frakl^{-1}\nu_\frakl\Abs(\uf)}{\mu_\frakl(\uf)-\nu_\frakl(\uf)}\cdot \abs{\cD_F}^\onehalf X_n\\
&=\frac{1}{1-\abs{\uf}}\cdot L(1,\tau_{E/F})\abs{\uf^n}\abs{\cD_E}_E^\onehalf.
\end{align*}
The formula in the inert case follows from \eqref{E:18.W} immediately.
\end{proof}

\subsection{Proof of \thmref{T:main3.W}}\label{SS:proof}
We prove \thmref{T:main3.W} in this subsection. By the evaluation formula \propref{P:evaluation.W}, it boils down to proving that
\beq\label{E:20}\int_{Cl_\infty^-}\phi d\vp^\dagger_\chi\not\con 0\pmod{\frakm}\text{ for almost all $\phi\in\frakX_\frakl^0$}.\eeq

By \cite[Thm.\,3.2, 3.3]{Hida:nonvanishingmodp} together with the toric property of $\bff_\chi^\dagger$ \lmref{L:6.W} (\cf \cite[Lemma 6.1 and Remark 6.2]{Hsieh:Hecke_CM}), the validity of \eqref{E:20} is reduced to verifying the following condition:\begin{description}[\breaklabel\setlabelstyle{\itshape}]\item [$(\mathrm{H'})$]
For every $u\in \cO_{\cF_\frakl}$ and a positive integer $r$, there exist $\beta\in\cO_{\cF,\setp}^\x$ and $a\in(\AKf^{(p\None)})^\x$ such that $\beta\con u\pmod{\frakl^r}$ and \[\bfa_\beta(\bff^\dagger_\chi,\frakc(a))\not\con 0\pmod{\frakm}.\]
\end{description}
The verification of (H$'$) under the assumptions \eqref{ai${}_\cK$} and $\mu_p(\chpi)=0$ for all $v|\frakc_\chi^-$ follows from the same argument in \thmref{T:mu-invariant}. Note that for a polarization ideal $\frakc(a)$ $(\frakc=\frakc(\cO_\cK),\,a\in(\AKf^{(p\frakl)})^\x)$ and a totally positive $\beta\in\cO_{\cF,\setp}^\x\cap \cO_{\cF_\frakl}$, we have $(\frakc(a),p\frakl)=1$ and
\[\bfa_\beta(\bff^\dagger_\chi,\frakc(a))=\beta^{k/2}\prod_{v\ndivides p\frakl}\localK{\chi,v}^*(\beta\bfc_v^{-1}\rmN(a_v^{-1}))\cdot \nu_\frakl\Abs_{\cF_\frakl}^\onehalf(\beta)\quad(\il_\cF(\bfc)=\frakc).\]
Let $u\in \cO_{\cF_\frakl}$ and let $\eta^u=(\eta^u_v)$ be the idele in $\AF^\x$ such that $\bfa^*_{\lam,v}(\eta^u_v)\not\con 0\pmod{\frakm}$ for all $v|\frakc_\chi^-$, $\eta^u_\frakl=u$ and $\eta^u_v=1$ for all $v\ndivides \frakl\frakc_\chi^-$. To verify (H$'$), we simply proceed the Galois argument in \thmref{T:mu-invariant}, replacing $\eta$ in by $\eta^u$ therein. This completes the proof of \thmref{T:main3.W}. 
\bibliographystyle{amsalpha}
\bibliography{C:/users/MingLun/texsetting/mybib}
\end{document}